\let\csname ver@amsthm.sty\endcsname\relax
\numberwithin{equation}{section}
\newtheorem{thm}{Theorem}[section]
\newtheorem{lemma}[thm]{Lemma}
\newtheorem{cor}[thm]{Corollary}
\newtheorem{prop}[thm]{Proposition}
\newtheorem{Example}[thm]{Example}
\newenvironment{example}
  {\begin{Example}\rm}{\hfill$\lozenge$\end{Example}}
\newtheorem{Remark}[thm]{Remark}
\newenvironment{remark}
  {\begin{Remark}\rm}{\hfill$\lozenge$\end{Remark}}
\crefname{thm}{Theorem}{Theorems}
\crefname{lemma}{Lemma}{Lemmas}
\crefname{cor}{Corollary}{Corollaries}
\crefname{prop}{Proposition}{Propositions}
\crefname{example}{Example}{Examples}
\crefname{remark}{Remark}{Remarks}
\newcommand{\emailhref}[1]{\email{\href{#1}{#1}}}
\newcommand{\dfn}[1]{\textcolor{blue}{\emph{#1}}}
\newcommand{\RR}{\mathbb{R}}
\newcommand{\RRgt}{\RR_{>0}}
\newcommand{\Zgt}{{\mathbb{Z}}_{> 0}}
\newcommand{\Zge}{{\mathbb{Z}}_{\ge 0}}
\newcommand{\quadrant}{\Zgt^2}
\newcommand{\J}{\mathcal{J}} 
\newcommand{\A}{\mathcal{A}}
\DeclareMathOperator{\vspan}{Span}
\newcommand{\Span}{\vspan_{\RR}}
\DeclareMathOperator{\rk}{rk}
\DeclareMathOperator{\const}{const}
\newcommand{\tog}[1]{\tau_{#1}}
\newcommand{\rktog}[1]{\boldsymbol{\tau}_{#1}}
\DeclareMathOperator{\rowm}{Row}
\DeclareMathOperator{\pan}{Row}
\newcommand{\togPL}[1]{\tau_{#1}^{\mathrm{PL}}}
\newcommand{\rktogPL}[1]{\boldsymbol{\tau}_{#1}^{\mathrm{PL}}}
\newcommand{\rowmPL}{\rowm^{\mathrm{PL}}}
\newcommand{\togB}[1]{\tau_{#1}^{\mathrm{B}}}
\newcommand{\rktogB}[1]{\boldsymbol{\tau}_{#1}^{\mathrm{B}}}
\newcommand{\rowmB}{\rowm^{\mathrm{B}}} 
\newcommand{\Tin}[1]{\mathcal{T}_{#1}^+}
\newcommand{\Tout}[1]{\mathcal{T}_{#1}^-}
\newcommand{\T}[1]{\mathcal{T}_{#1}}
\newcommand{\oii}[1]{\mathds{1}_{#1}}
\newcommand{\rook}[1]{R_{#1}}
\newcommand{\rrook}[1]{\widetilde{R}_{#1}}
\newcommand{\varrook}[1]{R'_{#1}}
\newcommand{\rvarrook}[1]{\widetilde{R}'_{#1}}
\newcommand{\hatz}{\widehat{0}}
\newcommand{\hato}{\widehat{1}}
\newcommand{\aPL}{\alpha^{\mathrm{PL}}}
\newcommand{\oPL}{\omega^{\mathrm{PL}}}
\newcommand{\TinPL}[1]{\mathcal{T}_{#1}^{+,\mathrm{PL}}}
\newcommand{\ToutPL}[1]{\mathcal{T}_{#1}^{-,\mathrm{PL}}}
\newcommand{\TPL}[1]{\mathcal{T}_{#1}^{\mathrm{PL}}}
\newcommand{\oiiPL}[1]{\mathds{1}_{#1}^{\mathrm{PL}}}
\newcommand{\aB}{\alpha^{\mathrm{B}}}
\newcommand{\oB}{\omega^{\mathrm{B}}}
\newcommand{\TinB}[1]{\mathcal{T}_{#1}^{+,\mathrm{B}}}
\newcommand{\ToutB}[1]{\mathcal{T}_{#1}^{-,\mathrm{B}}}
\newcommand{\TB}[1]{\mathcal{T}_{#1}^{\mathrm{B}}}
\newcommand{\oiiB}[1]{\mathds{1}_{#1}^{\mathrm{B}}}
\DeclareRobustCommand{\qbinom}{\genfrac{\lbrack}{\rbrack}{0pt}{}}
\newcommand{\qT}[1]{\mathcal{T}^{q}_{#1}}
\newcommand{\Fz}{\mathbf{F}_0}
\newcommand{\Fo}{\mathbf{F}_1}
\newcommand{\tequiv}{\equiv}
\newcommand{\tequivPL}{\tequiv^{\mathrm{PL}}}
\newcommand{\tequivB}{\tequiv^{\mathrm{B}}}
\newcommand{\qtequiv}{\tequiv^{q}}
\newcommand{\stat}[1]{\mathsf{#1}}
\newcommand{\statPL}[1]{\mathsf{#1}^{\mathrm{PL}}}
\newcommand{\statB}[1]{\mathsf{#1}^{\mathrm{B}}}
\newcommand{\rect}[2]{[#1] \! \times \! [#2]}
\newcommand{\twogrp}{\mathfrak{S}_2}
\newcommand{\sstair}[1]{(\rect{#1}{#1})/\twogrp}
\newcommand{\arootp}[1]{\Phi^+(A_{#1})}
\newcommand{\brootp}[1]{\Phi^+(B_{#1})}
\newcommand{\dtd}[1]{D(#1)}
\newcommand{\esixmin}{\Lambda_{E_6}}
\newcommand{\esevmin}{\Lambda_{E_7}}
\title[Homomesy via toggleability statistics]{Homomesy via toggleability statistics}
\author[C. Defant]{Colin Defant}\emailhref{colindefant@gmail.com}
\address{Department of Mathematics, Harvard University, Cambridge, MA 02138}
\thanks{C.D. was supported by a Fannie and John Hertz Foundation Fellowship and an NSF Graduate Research Fellowship \hphantom{aal}(grant number DGE-1656466).}
\author[S. Hopkins]{Sam Hopkins}\emailhref{samuelfhopkins@gmail.com}
\address{Department of Mathematics, Howard University, Washington, DC 20059}
\thanks{S.H. was supported by an NSF Mathematical Sciences Postdoctoral Research Fellowship (grant number 1802920).}
\author[S. Poznanovi\'c]{Svetlana Poznanovi\'c}\emailhref{spoznan@clemson.edu}
\address{School of Mathematical and Statistical Sciences, Clemson University, Clemson, SC 29634}
\thanks{S.P. was supported by NSF DMS 1815832.}
\author[J. Propp]{James Propp}\emailhref{jamespropp@gmail.com}
\address{Department of Mathematical Sciences, University of Massachusetts Lowell, Lowell, MA 01854}
\thanks{J.P. was supported by a Simons Collaboration Grant.}
\keywords{Homomesy, rowmotion, toggling, piecewise-linear and birational lifts, $q$-analogues}
\begin{document}

\begin{abstract}
The rowmotion operator acting on the set of order ideals of a finite poset has been the focus of a significant amount of recent research. One of the major goals has been to exhibit homomesies: statistics that have the same average along every orbit of the action. We systematize a technique for proving that various statistics of interest are homomesic by writing these statistics as linear combinations of ``toggleability statistics'' (originally introduced by Striker) plus a constant. We show that this technique recaptures most of the known homomesies for the posets on which rowmotion has been most studied. We also show that the technique continues to work in modified contexts. For instance, this technique also yields homomesies for the piecewise-linear and birational extensions of rowmotion; furthermore, we introduce a $q$-analogue of rowmotion and show that the technique yields homomesies for ``$q$-rowmotion'' as well.
\end{abstract}

\maketitle

\section{Introduction} \label{sec:intro}

\dfn{Dynamical algebraic combinatorics}~\cite{roby2016dynamical, striker2017dynamical} is the study of natural dynamical operators acting on objects familiar to algebraic combinatorics. A recurring theme in this field is \dfn{homomesy}, which occurs when a \dfn{statistic} (that is, a numerical function) on the set of objects has the same average along every orbit of the action. Homomesic statistics are, in a precise sense~\cite{proppspectrum}, complementary to \emph{invariant} statistics. But while it is usually hard to find nontrivial invariant statistics for these kinds of actions, there are often many interesting homomesies -- a phenomenon that is \emph{a priori} surprising when there are many orbits.

One of the most intensely studied operators from the point of view of dynamical algebraic combinatorics is the \dfn{rowmotion} operator acting on the distributive lattice~$\J(P)$ of order ideals of a (finite) poset $P$. For an order ideal $I\in\J(P)$, rowmotion applied to~$I$, denoted $\rowm(I)$, is the order ideal generated by $\min(P\setminus I)$, where $\min(P\setminus I)$ is the set of minimal elements of $P\setminus I$. Equivalently, $\rowm(I)$ is the unique $I' \in \J(P)$ with $\max(I')=\min(P \setminus I)$, where $\max(I')$ is the set of maximal elements of $I'$. See~\cite{striker2012promotion} or~\cite[\S 7]{thomas2019rowmotion} for the history of rowmotion. 

\begin{example} \label{ex:intro}
We use the notation $[n] \coloneqq \{1,2,\ldots,n\}$ for the set of natural numbers from~$1$ to $n$, which we also view as an $n$-element chain poset. Let $P=\rect{2}{2}$ be the (Cartesian) product of two $2$-element chains. We represent $\rect{2}{2}$ by a Young diagram (see \cref{subsec:posets} for more details about our conventions for drawing posets). The two rowmotion orbits of~$\J(P)$ are as shown in Figure~\ref{fig:two-orbits}.

We can see that one homomesic statistic for this action is the \dfn{order ideal cardinality} statistic $I \mapsto \#I$, which has average $2$ along both orbits. Another example of a homomesic statistic is the statistic $I\mapsto \#\max(I)$, which has average $1$ along both orbits; we call this the \dfn{antichain cardinality} statistic because $I \mapsto \max(I)$ is a canonical bijection from the set of order ideals to the set of antichains of $P$.
\end{example}

\begin{figure}
\begin{center}
\includegraphics[height=3cm]{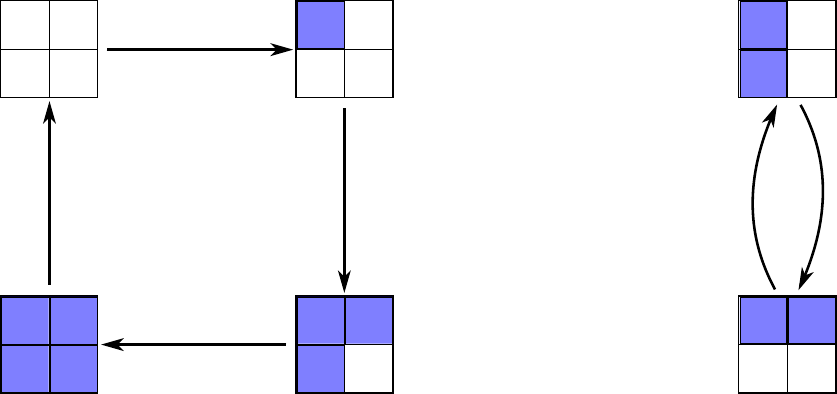}
\end{center}
\caption{The action of rowmotion on $\J(\rect{2}{2})$.}\label{fig:two-orbits}
\end{figure}

\Cref{ex:intro} is a special case of a more general phenomenon: it is known~\cite{propp2015homomesy} that both the order ideal and antichain cardinality statistics are homomesic for the action of rowmotion on $\J(\rect{a}{b})$ for any $a,b\geq 1$.

In this paper, we formalize a technique for proving instances of homomesy for rowmotion that is both powerful and robust. By ``powerful,'' we mean that it allows us to recapture, in a uniform way, most of the known instances of rowmotion homomesy; see \cref{sec:main}. By ``robust,'' we mean that it continues to apply in modified contexts, and hence allows us to obtain new results as well; see \cref{sec:pl_birational,sec:q}.

The basic idea behind the technique is quite simple. For any element $p \in P$, define the \dfn{toggleability statistics} $\Tin{p}, \Tout{p}, \T{p}\colon \J(P)\to \RR$ by
\begin{align*}
\Tin{p}(I) &\coloneqq \begin{cases} 1 &\textrm{if $p\in \min(P\setminus I)$}, \\ 
0 & \textrm{otherwise}; \end{cases} \\
\Tout{p}(I) &\coloneqq \begin{cases} 1 &\textrm{if $p\in \max(I)$}, \\ 
0 & \textrm{otherwise}; \end{cases} \\
\T{p}(I) &\coloneqq \Tin{p}(I) - \Tout{p}(I).
\end{align*}

Here, \dfn{toggling} refers to the operation of changing the status of whether or not~$p$ is in $I$, if possible: we add $p$ to $I$ if $I\cup\{p\}$ is an order ideal; we remove $p$ from $I$ if~$I \setminus\{p\}$ is an order ideal; and we leave $I$ alone otherwise. Since the work of Cameron and Fon-der-Flaass~\cite{cameron1995orbits} (see also~\cite{striker2012promotion}), it has been known that rowmotion can be written as a composition of these toggle involutions. The statistics $\Tin{p}$ and $\Tout{p}$ record when we can ``toggle $p$ into'' and ``toggle $p$ out of'' an order ideal, respectively. 

It is a straightforward fact, first explicitly observed by Striker~\cite{striker2015toggle}, that for any poset $P$ and any~$p\in P$, the \dfn{signed toggleability statistic}~$\T{p}$ is $0$-mesic (i.e., homomesic with average $0$) for rowmotion acting on $\J(P)$. The simple reason for this fact is that $\Tin{p}(I) = \Tout{p}(\rowm(I))$ for all~$p\in P$ and $I \in \J(P)$. Consequently, along any rowmotion orbit, we can toggle $p$ in exactly as often as we can toggle $p$ out.

While it is nice that the statistics $\T{p}$ are always homomesic under rowmotion for any poset, we are more interested in statistics that have more obvious combinatorial significance such as the order ideal and antichain cardinalities. Indeed, the statistics that have previously been studied from the point of view of rowmotion homomesy are all linear combinations of (the indicator functions of) ``is $p$ in the order ideal~$I$?''\ or ``is~$p$ in the antichain $\max(I)$?''\ for poset elements $p\in P$. The~$\T{p}$ themselves are not linear combinations of such indicator functions. However, if by some miracle we can write the statistic $\stat{f} \colon \J(P) \to \RR$ that we care about in the form
\[ \stat{f} = c + \sum_{p\in P} c_p \T{p},\]
for constants $c,c_p \in \RR$, then Striker's observation tells us that~$\stat{f}$ is $c$-mesic (i.e., homomesic with average $c$) for $\rowm$ acting on $\J(P)$. The remarkable fact is that, for many of the posets $P$ and statistics $\stat{f}$ about which we care, it \emph{is} possible to write $\stat{f}$ as a constant plus a linear combination of signed toggleability statistics. This was first established, for the antichain cardinality statistic and several posets~$P$ coming from Young diagrams (including $\rect{a}{b}$), by Chan, Haddadan, Hopkins, and Moci~\cite{chan2017expected}. We will show that this is also true for the order ideal cardinality statistic when $P=\rect{a}{b}$, and indeed is true for most statistics known to be homomesic under rowmotion when $P$ is a poset that exhibits good rowmotion behavior (see, e.g.,~\cite{armstrong2013uniform, propp2015homomesy, haddadan2021homomesy, rush2015homomesy}).

\begin{example} \label{ex:intro_continued}
Consider $P=\rect{2}{2}$, as in \cref{ex:intro}. Let $\stat{f}'\colon \J(P)\to \RR$ denote the order ideal cardinality statistic, i.e., $\stat{f}'\colon I\mapsto\#I$. Then we have
\[ \stat{f}' = 2 + \left(-2\cdot \T{(1,1)} -\frac{3}{2}\cdot\T{(1,2)}-\frac{3}{2}\cdot\T{(2,1)}-2\cdot\T{(2,2)}\right).\]
Let $\stat{f}''\colon \J(P)\to \RR$ denote the antichain cardinality statistic, i.e., $\stat{f}''\colon I\mapsto\#\max(I)$. Then we have
\[ \stat{f}'' = 1 + \left(-1\cdot \T{(1,1)} -\frac{1}{2}\cdot\T{(1,2)}-\frac{1}{2}\T{(2,1)}+0\cdot\T{(2,2)}\right).\]
These equations (which the reader can verify by inspecting~\cref{tab:tog_combs_ex}) ``explain'' the rowmotion homomesies we saw in \cref{ex:intro}.
\end{example}

\begin{table}
\[\renewcommand{\arraystretch}{1.5} \begin{array}{c|cccc|cc}
 I & \T{(1,1)}\!\!\!\! & \T{(1,2)}\!\!\!\! & \T{(2,1)}\!\!\!\! & \T{(2,2)}\! & I \mapsto \#I & I\mapsto \#\max(I) \\ \hline
\parbox{0.3in}{\begin{center}\includegraphics[height=0.7cm]{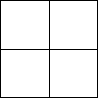}\end{center}} & +1 & 0 & 0 & 0\ \ & 0 & 0 \\
\parbox{0.3in}{\begin{center}\includegraphics[height=0.7cm]{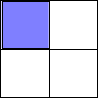}\end{center}} & -1 & +1 & +1 & 0\ \ & 1 & 1 \\
\parbox{0.3in}{\begin{center}\includegraphics[height=0.7cm]{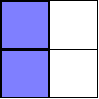}\end{center}} & 0 & +1 & -1 & 0\ \ & 2 & 1 \\
\parbox{0.3in}{\begin{center}\includegraphics[height=0.7cm]{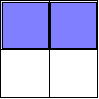}\end{center}} & 0 & -1 & +1 & 0\ \ & 2 & 1 \\
\parbox{0.3in}{\begin{center}\includegraphics[height=0.7cm]{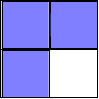}\end{center}} & 0 & -1 & -1 & +1\ \ & 3 & 2 \\
\parbox{0.3in}{\begin{center}\includegraphics[height=0.7cm]{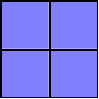}\end{center}} & 0 & 0 & 0 & -1\ \ & 4 & 1 
\end{array}\]
\caption{For \cref{ex:intro_continued}: various statistics on $\J(\rect{2}{2})$.}\label{tab:tog_combs_ex}
\end{table}

At a technical level, the key tools in our proofs are so-called \emph{rook} statistics, which are clever combinations of toggleability statistics that sum to a constant function. These rooks, first introduced by Chan, Haddadan, Hopkins, and Moci in~\cite{chan2017expected}, allow us to move from ``local'' to ``global'' statistics. Rooks have been subsequently extended for further applications in~\cite{hopkins2017cde, hopkins2019cde}. But until now, the focus has exclusively been on antichain cardinality; our aim here is to advertise how the technique is powerful and applies more broadly to many other statistics.

We also wish to advertise how this technique is, as mentioned, robust in the sense that it continues to work when we modify the setting in various ways. Let us briefly record some examples of this robustness: 

\begin{enumerate}
\item Rowmotion can be written as a composition of toggles. For many other operators written as a combination of toggles (including \emph{gyration}), Striker's observation persists, so this technique allows us to establish homomesies for these other operators as well (see~\cite{hopkins2017cde,striker2015toggle}).

\item There has been a great deal of interest lately in \emph{piecewise-linear} and \emph{birational} extensions of rowmotion~\cite{einstein2018combinatorial, grinberg2015birational2, grinberg2016birational1, musiker2018paths, okada2020birational}. As was essentially explained in~\cite{hopkins2021minuscule}, a representation of a statistic as a linear combination of toggleability statistics automatically ``lifts'' to the piecewise-linear and birational levels. Striker's observation is also easily seen to hold at the piecewise-linear and birational levels. Hence, when we use this technique to establish a homomesy for a certain statistic at the combinatorial level, we get \emph{for free} that the homomesy extends to the piecewise-linear and birational levels as well. In particular, using this automatic lifting to the higher realms, we are able to affirmatively resolve a recent homomesy conjecture of Bernstein, Striker, and Vorland~\cite{bernstein2021pstrict}; see~\cref{rem:bsv}.

\item In this paper, we define a $q$-analogue of rowmotion, which acts on a certain set of labelings of a finite poset $P$. The number of such labelings can be seen as a $q$-analogue of the number of order ideals of $P$. For example, $\J([a]\times[b])$ has $\binom{a+b}{b}$ order ideals (they are in bijection with lattice paths in an $a\times b$ grid), and the set of labelings on which \emph{$q$-rowmotion} acts has cardinality $\qbinom{a+b}{b}_q$ (or a related quantity), where $\qbinom{a+b}{b}_q$ is the $q$-binomial coefficient. We show that the toggleability statistics technique still works to establish homomesies for $q$-rowmotion. 
\end{enumerate}

We will discuss these points in much more detail later in the paper.

\begin{remark}
Let us briefly mention another example of the robustness of this technique that we will not discuss further after this remark:
\begin{enumerate}
\setcounter{enumi}{3}
\item Rowmotion, as we have defined it, acts on a distributive lattice. There has been recent interest~\cite{barnard2019canonical, semidistrim, thomas2019independence, thomas2019rowmotion} in extending rowmotion to act on other classes of lattices (or even other classes of posets). In~\cite{hopkins2019cde}, the toggleability statistics technique is used to establish some homomesies for rowmotion acting on certain \emph{semi}distributive lattices.
\end{enumerate}
\end{remark}

Four final points regarding how this technique is applied in practice merit emphasis. First, when one is looking at a particular poset $P$ and a particular statistic~$\stat{f}$ on $\J(P)$, the question of whether~$\stat{f}$ can be expressed in the form $c+\sum_{p\in P} c_p  \T{p}$ is just a matter of linear algebra and can easily be checked by computer. Second, since $\#\J(P)$ is typically much larger than~$\#P+1$, one would \emph{a priori} expect there to be {\em no} such expressions; the fact that in many cases of interest there are such expressions is part of the magic of the method that we do not understand. Third, Theorem~\ref{thm:linearindependence} guarantees that when a statistic~$\stat{f}$ can be expressed in the form $\stat{f} = c + \sum_{p\in P} c_p \T{p}$, the expression is unique. This means that if one is studying not just an individual poset but an infinite family of posets, as is usually the case, the experimenter does not need to make clever choices in a systematic way that will permit a unified analysis. Indeed, no choices (clever or otherwise) present themselves. When the technique is applicable, the challenge of applying it is not guessing coefficients that work but rather finding patterns governing the uniquely-determined coefficients and then finding a proof. Fourth, with this technique, it is often possible to present proofs of homomesy in an especially streamlined way, omitting explicit attention to the coefficients $c_p$. 

The rest of the paper is structured as follows. In \cref{sec:background}, we review background on posets, toggling, rowmotion, and homomesy. We explain Striker's observation that the signed toggleability statistics are $0$-mesic under rowmotion. We also prove that the signed toggleability statistics are linearly independent (in fact, we obtain a $q$-analogue of this result). In \cref{sec:main}, we explain in earnest the technique of writing a function as a linear combination of signed toggleability statistics plus a constant. We go through four important families of posets known to have good behavior of rowmotion and show how the technique works for each of these posets. We do this using ``rooks,'' which constitute our main technical tool.  In \cref{sec:pl_birational}, we explain the lifting to the piecewise-linear and birational realms. (As mentioned above, this lifting allows us to resolve a conjecture of Bernstein, Striker, and Vorland from \cite{bernstein2021pstrict}.) In \cref{sec:q}, we introduce $q$-rowmotion and explain how the toggleability statistics technique continues to work to prove interesting homomesy results for $q$-rowmotion. The definition of $q$-rowmotion and our results concerning it are some of the major new contributions of this article, and we believe it deserves further attention. \Cref{sec:pl_birational,,sec:q} are independent of one another and can be read in either order. For those who are just interested in $q$-rowmotion, we note that the definition of $q$-rowmotion at the beginning of \cref{sec:q} is self-contained and does not require the extensive machinery developed in \cref{sec:main}. Lastly, in \cref{sec:final}, we conclude with some final remarks and possible directions for future research.

\subsection*{Acknowledgments} 
Our collaboration was initiated at the 2020 Banff International Research Station (BIRS) online workshop on Dynamical Algebraic Combinatorics. We would like to thank BIRS, and the other organizers and participants of the conference, for creating a stimulating research environment. We are also very grateful to Noga Alon and Noah Kravitz for their help in discovering the proof of \cref{thm:linearindependence}. In doing this research, we found SageMath~\cite{sage} to be a valuable tool. For the sake of others who may wish to do computer experiments using our approach, we have publicly posted our Sage code (along with examples) at \begin{center}
\url{https://cocalc.com/share/ade9e90d5d9ccb662c5887908ccee47550c20c95/toggleability_statistics_code.sagews}
\end{center}

\section{Background} \label{sec:background}

\subsection{Posets} \label{subsec:posets}
We assume the reader is familiar with the basic notions and notations for (finite) posets, as laid out for instance in~\cite[Chapter 3]{stanley1996ec1}. In particular, we recall that $[n] \coloneqq \{1,2,\ldots,n\}$ is our notation for the $n$-element \dfn{chain} poset.  All posets in this article are assumed to be finite, and we drop this adjective from now on.

Henceforth in this section, let $P$ be a poset. All the constructions we care about will decompose in an obvious way into connected components; hence, we also tacitly assume $P$ is \dfn{connected} (i.e., the graph of its comparability relations is connected). 

An \dfn{order ideal} of $P$ is a subset $I\subseteq P$ that is downward-closed, i.e., for which $y \in I$ and $x \leq y \in P$ imply $x \in I$. We write $\J(P)$ for the set of order ideals of $P$. An \dfn{antichain} of $P$ is a subset $A\subseteq P$ of pairwise incomparable elements; we write~$\A(P)$ for the set of antichains of $P$. For an arbitrary subset~$S\subseteq P$, we use $\min(S)$ to denote the set of minimal elements of~$S$ and $\max(S)$ to denote the set of maximal elements. There is a natural bijection $\J(P)\to\A(P)$ given by $I \mapsto \max(I)$; the inverse of this bijection sends an antichain $A$ to the order ideal \dfn{generated} by $A$, which is defined to be $\{x\in P:x\leq y\text{ for some }y\in A\}$.

A \dfn{linear extension} of $P$ is a listing $p_1,\ldots,p_n$ of all of the elements of~$P$ that respects the order of $P$, i.e., for which $p_i \leq p_j$ implies $i \leq j$.

For $x,y\in P$, we say $x$ is \dfn{covered} by $y$, written $x\lessdot y$, if $x < y$ and there does not exist $z \in P$ with $x < z < y$; we also write $y \gtrdot x$ in this case and say $y$ \dfn{covers} $x$. We say that $P$ is \dfn{ranked} if there exists a \dfn{rank function} $\rk\colon P\to\Zge$ for which $\rk(y)=\rk(x)+1$ whenever $x\lessdot y$. We always assume $P$ contains an element~$p$ with $\rk(p)=0$; this assumption guarantees that a rank function is unique if it exists. The \dfn{rank} $\rk(P)$ of a ranked poset~$P$ is defined to be the maximum value of $\rk(p)$ over all elements $p\in P$.

Although it is traditional to depict a poset via its \dfn{Hasse diagram} (i.e., its graph of cover relations), we will instead use a more rectilinear representation. Indeed, all of the particular posets considered in this article can be viewed as subsets of~$\quadrant$.  In these depictions, we represent each element of the poset by a box and make the convention that two boxes $\square_1$ and $\square_2$ satisfy $\square_1\leq\square_2$ if and only if $\square_2$ appears weakly southeast of $\square_1$. (These depictions are thus analogous to \dfn{Young diagrams} of partitions.) For example, \cref{fig:four-elts} represents a $4$-element poset with cover relations $a\lessdot c$, $b\lessdot c$, and $c\lessdot d$. An order ideal is thus a northwest-closed set of boxes. We use ``matrix coordinates'' for the boxes; hence $(i,j)$ represents the box in row~$i$ (counted from top to bottom) and column~$j$ (counted from left to right). Thus, in \cref{fig:four-elts}, we have $a=(2,1)$, $b=(1,2)$, $c=(2,2)$, and $d=(2,3)$. The posets we will consider can in fact always be represented as \emph{contiguous} subsets of~$\quadrant$ and for that reason will all be ranked. The rank of $(i,j)$ in such a poset $P$ is~$\rk(i,j)=i+j-\min \{i'+j'\colon (i',j') \in P\}$.

\begin{figure}
\begin{center}
\includegraphics[height=1.2cm]{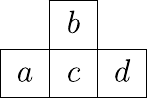}
\end{center}
\caption{A poset $\{a,b,c,d\}$ with covering relations $a\lessdot c$, $b\lessdot c$, and $c\lessdot d$.}\label{fig:four-elts}
\end{figure}

\subsection{Toggling and rowmotion} \label{subsec:toggling}

Recall from \cref{sec:intro} that \dfn{rowmotion} is the invertible map $\rowm\colon \J(P)\to \J(P)$ defined by
\[ \rowm(I)\coloneqq \{x\in P\colon x\leq y \textrm{ for some $y\in \min(P\setminus I)$}\}.\]
However, there is another way to describe rowmotion, in terms of toggles, that is also very useful and that we now review.

For each $p\in P$, define the \dfn{toggle} $\tog{p}\colon \J(P)\to\J(P)$ by 
\[\tog{p}(I)\coloneqq\begin{cases} I\cup\{p\} &\textrm{if $p\in \min(P\setminus I)$}, \\
I\setminus\{p\} &\textrm{if $p\in \max(I)$},\\ 
I &\textrm{otherwise}. \end{cases}\] 
Cameron and Fon-der-Flaass~\cite[Lemma 1]{cameron1995orbits} showed that
\[ \rowm=\tog{p_1}\circ\cdots\circ\tog{p_n} \]
for any linear extension $p_1,\ldots,p_n$ of $P$. (Here, as usual, functions in a composition are applied from right to left.) Note that different linear extensions give rise to the same operator because the toggles $\tog{p_i}$ and $\tog{p_j}$ commute whenever $p_i$ and $p_j$ are incomparable in $P$.

If $P$ is ranked and $0\leq i\leq\rk(P)$, we let $\rktog{i} \coloneqq \prod_{\rk(p)=i}\tog{p}$ be the product of the toggles at the elements of rank $i$; this product is well-defined because all of these toggles commute with each other. Observe that 
\[\rowm=\rktog{0} \circ \rktog{1} \circ \cdots \circ \rktog{\rk(P)}.\] 
One can also define \dfn{rank-permuted} variants of rowmotion as follows. Given a permutation $\sigma$ of the numbers $0,1,\ldots,\rk(P)$, we define
\[\rowm_{\sigma}\coloneqq \rktog{\sigma(0)} \circ \rktog{\sigma(1)} \circ \cdots \circ \rktog{\sigma(\rk(P))}.\]
For example, \dfn{gyration}~\cite[\S8.2]{striker2012promotion} is $\rowm_{1,3,5,\ldots,0,2,4,\ldots}$, the map that first toggles all the even ranks and then toggles all the odd ranks. We note that Cameron and Fon-der-Flaass~\cite[Lemma 2]{cameron1995orbits} showed that all the $\rowm_{\sigma}$ are conjugate, via toggles, to $\rowm$.

\begin{remark} \label{rem:promo_1}
Rowmotion toggles all the elements of $P$ ``from top to bottom.'' For various $P$ realized as subsets of $\quadrant$, a different ``left to right'' toggling of all the elements of $P$ called \emph{promotion} has also received significant attention (see, for instance, \cite{striker2012promotion}). We note that while promotion is also conjugate to rowmotion via toggles, it is \emph{not} of the form $\rowm_{\sigma}$, and, in general, our results do not apply to promotion. But for more on what we \emph{can} say about promotion, see \cref{rem:promo_2}.
\end{remark}

\begin{remark}
Via the bijection $I\mapsto \max(I)$ between $\J(P)$ and $\A(P)$, we can also view rowmotion as acting on the set of antichains of $P$, and this perspective is also often useful (for instance, in \cref{subsec:independence} below).
\end{remark}

\subsection{The statistics of interest} \label{subsec:statistics}

We now introduce notation for the statistics on~$\J(P)$ that will serve as building blocks for all the statistics we will prove to be homomesic under rowmotion. (Recall that by ``statistic,'' we just mean any function $\stat{f}\colon \J(P)\to\RR$.\footnote{Throughout, we work over the real numbers just for convenience and concreteness. Since all the statistics we care about are actually integer-valued, we could just as easily work over the rationals, or over the complex numbers, or over any field of characteristic zero. In \cref{sec:q}, we will consider statistics with values in $\RR(q)$ as well.}) 
Let us restate, in slightly altered form, the definitions of the \dfn{toggleability statistics} $\Tin{p}, \Tout{p}\colon \J(P)\to \RR$ and the \dfn{signed toggleability statistic} $\T{p}\colon \J(P)\to \RR$, for $p\in P$, from \cref{sec:intro}:
\begin{align*}
\Tin{p}(I) &\coloneqq \begin{cases} 1 &\textrm{if $p\notin I$ and $\tog{p}(I)=I\cup\{p\}$}, \\ 
0 & \textrm{otherwise}; \end{cases} \\
\Tout{p}(I) &\coloneqq \begin{cases} 1 &\textrm{if $p\in I$ and $\tog{p}(I)=I\setminus\{p\}$}, \\ 
0 & \textrm{otherwise}; \end{cases} \\
\T{p}(I) &\coloneqq \Tin{p}(I) - \Tout{p}(I).
\end{align*}
We have emphasized the direct connection between the statistics $\Tin{p}$, $\Tout{p}$ and the toggles~$\tog{p}$.  Observe that $\Tin{p}(I)=1$ if and only if $p \in \min (P \setminus I)$ and that $\Tout{p}(I)=1$ if and only if $p \in \max(I)$. For each $p\in P$, we also define the \dfn{order ideal indicator function} $\oii{p}\colon \J(P)\to\RR$ by 
\[\oii{p}(I) \coloneqq \begin{cases} 1 &\textrm{if $p\in I$}, \\ 
0 & \textrm{otherwise}. \end{cases}\]
Finally, by a slight abuse of notation, for $c\in \RR$, we write $c$ for the function that is identically equal to $c$. For instance, we will routinely consider the function $1\colon \J(P)\to\RR$ that is equal to $1$ on all order ideals.

Note that $\oii{p}$ is the indicator function of ``is $p$ in the order ideal $I$?,'' and similarly, $\Tout{p}$ is the indicator function of ``is $p$ in the antichain $\max(I)$?'' For that reason, we also refer to the $\Tout{p}$ as \dfn{antichain indicator functions}. Depending on whether we are viewing the action as order ideal or antichain rowmotion, we think of the~$\oii{p}$ or the~$\Tout{p}$ as the basic ``observable'' functions. Hence, the statistics we will prove to be homomesic under rowmotion are all either linear combinations of the $\oii{p}$ for $p\in P$ or linear combinations of the $\Tout{p}$ for~$p \in P$; i.e., they will belong to either $\Span\{\oii{p}\colon p\in P\}$ or $\Span\{\Tout{p}\colon p \in P\}$. The two prototypical such examples, to which we will devote the most attention, are the \dfn{order ideal cardinality statistic} $\sum_{p\in P}\oii{p}$ and the \dfn{antichain cardinality statistic} $\sum_{p\in P}\Tout{p}$. More generally, we might be interested in refinements of these cardinality statistics. We say that the collection of statistics $\stat{f}_1,\ldots,\stat{f}_s\colon \J(P)\to\RR$ \dfn{refines} the statistic $\stat{f}\colon \J(P)\to\RR$ when $\stat{f}=\stat{f}_1+\cdots+\stat{f}_s$. For example, one can obtain refinements of the cardinality statistics by choosing a partition of $P$ into subsets $B_1,\ldots,B_s$ and considering the collection of statistics $\sum_{p\in B_i}\oii{p}$ or $\sum_{p\in B_i}\Tout{p}$.

\subsection{Homomesy} \label{subsec:homomesy}

Recall from~\cite{propp2015homomesy} that a function $\stat{f}$ from a finite set $X$ to $\RR$ is said to be \dfn{homomesic} under the action of an invertible operator $T\colon X \to X$  if there exists a constant $c$ such that the average of $\stat{f}$ on each $T$-orbit is $c$;  in this case, we say $\stat{f}$ is \dfn{$c$-mesic}. A trivial example of a $c$-mesic function is the constant function $\stat{f}(x)=c$; also, if $T$ acts transitively on $X$ so that there is only one orbit, every statistic is homomesic. It is easy to see that every linear combination of homomesic functions is homomesic and that every linear combination of $0$-mesic functions is $0$-mesic.

We will be interested in homomesies for rowmotion acting on $\J(P)$. The following important observation of Striker~\cite{striker2015toggle} will be the building block for our subsequent investigation of such homomesies.

\begin{lemma}[{\cite[Lemma 6.2]{striker2015toggle}}] \label{lem:striker}
For any $p \in P$, the statistic $\T{p}$ is $0$-mesic under rowmotion.
\end{lemma}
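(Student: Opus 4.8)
The plan is to exploit the fact, already noted in the excerpt, that rowmotion can be written as a composition of toggles in the order dictated by a linear extension, together with the simple bijective observation that "toggling $p$ in" from $I$ is the same event as "toggling $p$ out" from $\rowm(I)$. Concretely, the key identity I would prove first is
\[
\Tin{p}(I) = \Tout{p}(\rowm(I)) \qquad \text{for all } p \in P,\ I \in \J(P).
\]
To see this, fix a linear extension $p_1, \ldots, p_n$ with $p = p_k$, so that $\rowm = \tog{p_1} \circ \cdots \circ \tog{p_n}$. Let $J \coloneqq \tog{p_{k+1}} \circ \cdots \circ \tog{p_n}(I)$ be the order ideal obtained just before $\tog{p_k}$ is applied. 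Since $p_1, \ldots, p_{k-1}$ are all the elements that act \emph{after} $\tog{p_k}$ in the composition and none of $p_1, \ldots, p_{k-1}$ is less than $p_k$ (as $k$ is the position of $p$ in a linear extension), none of the subsequent toggles changes the membership status of $p_k$; hence $p \in \rowm(I)$ iff $p \in \tog{p_k}(J)$. The main point is then the local equivalence: $\Tin{p}(J) = 1$ (i.e.\ $p \notin J$ and $J \cup \{p\} \in \J(P)$) iff $\tog{p}(J) = J \cup \{p\}$ iff $p \in \tog{p}(J)$ and $p \in \max(\tog{p}(J))$, which is exactly $\Tout{p}(\tog{p}(J)) = 1$; and one checks similarly that $\Tin{p}$ is unchanged when passing from $I$ to $J$ (the toggles at $p_{k+1}, \ldots, p_n$ are at elements not above $p$ in the relevant sense, so they do not affect whether $p$ is a minimal element of the complement). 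Assembling these gives $\Tin{p}(I) = \Tout{p}(\rowm(I))$.

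Granting the identity, the conclusion is immediate. Fix a rowmotion orbit $\mathcal{O} = \{I, \rowm(I), \ldots, \rowm^{m-1}(I)\}$ of size $m$, so $\rowm^m(I) = I$. Then
\[
\sum_{j=0}^{m-1} \T{p}(\rowm^j(I)) = \sum_{j=0}^{m-1} \Tin{p}(\rowm^j(I)) - \sum_{j=0}^{m-1} \Tout{p}(\rowm^j(I)) = \sum_{j=0}^{m-1} \Tout{p}(\rowm^{j+1}(I)) - \sum_{j=0}^{m-1} \Tout{p}(\rowm^j(I)),
\]
and the last difference telescopes to $\Tout{p}(\rowm^m(I)) - \Tout{p}(I) = 0$ since $\rowm^m(I) = I$. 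Dividing by $m$ shows the average of $\T{p}$ along $\mathcal{O}$ is $0$, i.e.\ $\T{p}$ is $0$-mesic.

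I expect the only real obstacle to be the careful bookkeeping in the first paragraph: one must verify rigorously that the toggles applied \emph{after} $\tog{p}$ (those at $p_1, \ldots, p_{k-1}$) never flip the membership of $p$, and that the toggles applied \emph{before} $\tog{p}$ (those at $p_{k+1}, \ldots, p_n$) do not change the value of $\Tin{p}$. Both follow from the order-theoretic constraints on linear extensions — elements appearing later in the list are never below $p$, and toggling at an element incomparable to or above $p$ cannot affect whether $p \in \min(P \setminus \cdot)$ in the relevant way — but stating this cleanly (perhaps by invoking the Cameron–Fon-der-Flaass description of $\rowm$ via $\max(\rowm(I)) = \min(P \setminus I)$ directly, which sidesteps the linear-extension argument entirely) is where care is needed. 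In fact the cleanest route is: by definition of rowmotion, $\max(\rowm(I)) = \min(P \setminus I)$, so $p \in \max(\rowm(I))$ iff $p \in \min(P \setminus I)$, which is precisely the statement $\Tout{p}(\rowm(I)) = \Tin{p}(I)$; this one-line argument is what I would actually write, with the telescoping sum above as the finish.
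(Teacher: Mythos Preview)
Your proposal is correct and, in the end, takes exactly the paper's approach: the paper observes in one line that $\min(P\setminus I)=\max(\rowm(I))$ by the very definition of rowmotion, hence $\Tin{p}(I)=\Tout{p}(\rowm(I))$, and then reindexes the orbit sum to cancel the two halves of $\T{p}$. Your final paragraph recovers precisely this argument, and your telescoping presentation is equivalent to the paper's reindexing; the toggle-by-toggle analysis in your first paragraph is an unnecessary detour (and your phrase ``not above $p$ in the relevant sense'' is a bit garbled, since $p_{k+1},\ldots,p_n$ can in fact lie above $p$), but since you yourself flag the direct route as what you ``would actually write,'' there is no real discrepancy.
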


\begin{proof}[Proof of~\cref{lem:striker}]
It follows immediately from the definition of rowmotion that for all $I \in \J(P)$, $\min(P \setminus I) = \max(\rowm(I))$, 
implying $\Tin{p}(I) = \Tout{p}(\rowm(I))$. Hence, if $O \subseteq \J(P)$ is any $\rowm$-orbit, we have that
\begin{eqnarray*}
\sum_{I \in O } \T{p}(I) 
& = & \sum_{I \in O } \left( \Tin{p}(I) - \Tout{p}(I) \right) \\ 
& = & \sum_{I \in O } \Tin{p}(I) - \sum_{I \in O} \Tout{p}(I) \\ 
& = & \sum_{I \in O } \Tin{p}(I) - \sum_{I \in O} \Tout{p}(\rowm(I)) \\ 
& = & \sum_{I \in O } \left( \Tin{p}(I) - \Tout{p}(\rowm(I)) \right) \\
& = & \sum_{I \in O } \left( \Tin{p}(I) - \Tin{p}(I) \right) = 0,
\end{eqnarray*}
where for the third equality, we use the fact that we are summing over a $\rowm$-closed set to replace~$I$ by $\rowm(I)$ in the second sum.
\end{proof}

In fact, Striker's lemma extends to the rank-permuted versions of rowmotion.

\begin{lemma}[{\cite[Theorem 6.7]{striker2015toggle}\cite[Lemma 7.7]{hopkins2017cde}}] \label{lem:striker_permuted}
If $P$ is ranked, then for any permutation $\sigma$ of~$0,1,\ldots,\rk(P)$ and any $p \in P$, the statistic $\T{p}$ is $0$-mesic under~$\rowm_{\sigma}$.
\end{lemma}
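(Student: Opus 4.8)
The plan is to prove this directly, deriving it from three ``conservation laws'' obtained by running once around a $\rowm_{\sigma}$-orbit. A tempting shortcut is to invoke the Cameron--Fon-der-Flaass fact that every $\rowm_{\sigma}$ is toggle-conjugate to $\rowm$ and then quote \cref{lem:striker}; but this does not obviously work, since conjugating by a toggle at a cover or cocover of $p$ does not preserve $\T{p}$ (only toggles at elements incomparable to $p$, and—up to sign—the whole block $\rktog{\rk(p)}$, behave well). So I would set $i:=\rk(p)$ and first record the local facts I need. Because $\rktog{i}$ is the composition of $\tog{p}$ with toggles at the other rank-$i$ elements, all of which are incomparable to $p$, the same one-box analysis used in the proof of \cref{lem:striker} gives, for every $J\in\J(P)$,
\[
\Tin{p}(\rktog{i}(J)) = \Tout{p}(J), \qquad \Tout{p}(\rktog{i}(J)) = \Tin{p}(J), \qquad \oii{p}(\rktog{i}(J)) = \oii{p}(J) + \T{p}(J).
\]
Moreover, since $\Tin{p}$ depends only on the status of $p$ and of the elements covered by $p$ (which have rank $i-1$), and dually for $\Tout{p}$: the block $\rktog{j}$ fixes all of $\oii{p},\Tin{p},\Tout{p}$ whenever $j\notin\{i-1,i,i+1\}$; the block $\rktog{i-1}$ fixes $\oii{p}$ and $\Tout{p}$; and the block $\rktog{i+1}$ fixes $\oii{p}$ and $\Tin{p}$. (When $i=0$ there is no rank $i-1$, and when $i=\rk(P)$ there is no rank $i+1$; in those cases the corresponding statements below are vacuous, and the argument still runs.)

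Next I would fix a $\rowm_{\sigma}$-orbit $O=\{I_1,\dots,I_\ell\}$ with $\rowm_{\sigma}(I_j)=I_{j+1}$ (subscripts mod $\ell$), single out the rank-$i$ block of $\rowm_{\sigma}=\rktog{\sigma(0)}\circ\cdots\circ\rktog{\sigma(\rk(P))}$, and decompose $\rowm_{\sigma}=A\circ\rktog{i}\circ B$, where $B$ is the composition of the blocks applied before $\rktog{i}$ and $A$ the composition of those applied after. For each $j$ set $J_j:=B(I_j)$, the order ideal reached just before the rank-$i$ block fires during the computation of $\rowm_{\sigma}(I_j)$, and let $\delta^-_j$ and $\delta^+_j$ be the increments in $\Tin{p}$ and in $\Tout{p}$, respectively, produced by the (unique, within that computation) applications of $\rktog{i-1}$ and $\rktog{i+1}$. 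Finally let $\epsilon^-,\epsilon^+\in\{0,1\}$ record whether the rank-$(i-1)$ block, respectively the rank-$(i+1)$ block, precedes the rank-$i$ block in $\rowm_{\sigma}$; these depend only on $\sigma$, not on $j$.

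Now traverse the orbit once, that is, apply the toggle word of $\rowm_{\sigma}$ a total of $\ell$ times: each of $\oii{p},\Tin{p},\Tout{p}$ returns to its starting value. Tracking $\oii{p}$: it is moved only by the rank-$i$ blocks, and in the $j$-th application that change is $\T{p}(J_j)$, so $\sum_j\T{p}(J_j)=0$. Tracking $\Tin{p}$: it is moved only by the rank-$(i-1)$ blocks (by $\delta^-_j$) and the rank-$i$ blocks (by $\Tout{p}(J_j)-\Tin{p}(J_j)=-\T{p}(J_j)$), so $\sum_j\delta^-_j=\sum_j\T{p}(J_j)=0$; tracking $\Tout{p}$ similarly gives $\sum_j\delta^+_j=0$. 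Between $I_j$ and $J_j$ the statistics $\Tin{p}$ and $\Tout{p}$ are moved only by the rank-$(i-1)$ and rank-$(i+1)$ blocks, and only when those precede the rank-$i$ block, so $\Tin{p}(J_j)=\Tin{p}(I_j)+\epsilon^-\delta^-_j$ and $\Tout{p}(J_j)=\Tout{p}(I_j)+\epsilon^+\delta^+_j$; subtracting, $\T{p}(J_j)=\T{p}(I_j)+\epsilon^-\delta^-_j-\epsilon^+\delta^+_j$. Summing this over $j$ and inserting the three vanishing sums just obtained yields $0=\sum_j\T{p}(I_j)$, which is exactly the assertion that $\T{p}$ is $0$-mesic under $\rowm_{\sigma}$.

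I expect the only delicate part to be the bookkeeping in that last paragraph: carefully setting up the cyclic sequence of order ideals visited along a $\rowm_{\sigma}$-orbit, and verifying precisely which rank-blocks move each of $\oii{p},\Tin{p},\Tout{p}$ and by exactly how much. There is no deeper difficulty once the three conservation laws (one from $\oii{p}$, one from $\Tin{p}$, one from $\Tout{p}$) are isolated, and the boundary cases $i\in\{0,\rk(P)\}$ cause no trouble.
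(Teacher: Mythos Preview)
Your argument is correct. The three conservation laws you isolate (from tracking $\oii{p}$, $\Tin{p}$, and $\Tout{p}$ once around the orbit) combine exactly as you say, and the introduction of the indicator bits $\epsilon^-,\epsilon^+$ lets you handle all relative orderings of ranks $i-1,i,i+1$ uniformly.

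Your route is genuinely different from the paper's. The paper does not prove \cref{lem:striker_permuted} directly at all; instead it proves the birational version (\cref{lem:striker_permuted_b}) and obtains the combinatorial statement by tropicalization and specialization. That birational proof is a four-case analysis on the relative positions of $i-1$, $i$, and $i+1$ in $\sigma$: in two of the cases one shows $\TinB{p}(\pi)=\ToutB{p}(\rowmB_\sigma(\pi))$ (or the reverse), and in the other two one shows that $\ToutB{p}(\pi)/\TinB{p}(\pi)$ equals a ratio of successive values $\rowmB_\sigma(\pi)(p)/\pi(p)$, which telescopes. Your argument avoids cases entirely by packaging that same telescoping into the $\oii{p}$-conservation law and then correcting from $J_j$ back to $I_j$ via the $\delta^\pm$ sums. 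The paper's approach buys the PL and birational lifts for free (which is why the authors chose it); yours is more elementary and self-contained at the combinatorial level, and in fact your identity $\oii{p}(\rktog{i}(J))=\oii{p}(J)+\T{p}(J)$ is the combinatorial shadow of the key identity $\rowmB_\sigma(\pi)(p)/\pi(p)=1/\TB{p}(\pi)$ that drives two of the paper's four cases, so your method would also lift birationally with little change.
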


The proof of \cref{lem:striker_permuted} is slightly more involved than that of~\cref{lem:striker}; rather than include a proof of \cref{lem:striker_permuted}  here, we will prove in~\cref{sec:pl_birational} the birational and piecewise-linear extensions of this lemma (Lemmas~\ref{lem:striker_pl} and~\ref{lem:striker_b}) from which \cref{lem:striker_permuted} will follow as a corollary.

\bigskip

As we have already emphasized in \cref{sec:intro}, it follows trivially from \cref{lem:striker} that if a statistic $\stat{f} \colon \J(P)\to\RR$ has a representation $\stat{f}=c+\sum_{p\in P}c_p\T{p}$ for constants $c, c_p\in\RR$, then $\stat{f}$ is $c$-mesic under rowmotion (and \cref{lem:striker_permuted} says similarly for the rank-permuted versions of rowmotion).  In fact, this is how we will show various statistics are homomesic. 

To that end, we introduce the following notation that we will use repeatedly throughout the rest of the paper: if $\stat{f}, \stat{g}\colon \J(P)\to\RR$ are two statistics on $\J(P)$, we write $\stat{f} \tequiv \stat{g}$ to mean that there are constants $c_p \in \RR$ for which $\stat{f}-\stat{g} = \sum_{p\in P}c_p\T{p}$. Evidently, $\tequiv$ is an equivalence relation; moreover, it is a congruence in the sense that $\stat{f}_1 \tequiv \stat{g}_1$ and $\stat{f}_2 \tequiv \stat{g}_2$ imply $a_1\cdot \stat{f}_1+a_2\cdot\stat{f}_2\tequiv a_1\cdot\stat{g}_1+a_2\cdot\stat{g}_2$ for all $a_1,a_2\in\RR$. We will use this property of $\tequiv$ in what follows without further comment.

With this notation, we can state the following consequence of \cref{lem:striker,lem:striker_permuted}:

\begin{prop} \label{prop:homo}
If $\stat{f} \colon \J(P)\to\RR$ satisfies $\stat{f} \tequiv c$ for some $c \in \RR$, then $\stat{f}$ is $c$-mesic under rowmotion; if $P$ is ranked, it is $c$-mesic under $\rowm_{\sigma}$ for any $\sigma$ as well.
\end{prop}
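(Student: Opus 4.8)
The plan is to simply unwind the definition of $\tequiv$ and appeal to the linearity of the homomesy property together with Striker's lemma. First I would note that the hypothesis $\stat{f} \tequiv c$ means, by definition, that there exist constants $c_p \in \RR$ with $\stat{f} - c = \sum_{p \in P} c_p \T{p}$.

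Next I would invoke \cref{lem:striker}, which says that each $\T{p}$ is $0$-mesic under rowmotion. Since a linear combination of $0$-mesic statistics is again $0$-mesic (as recalled at the start of \cref{subsec:homomesy}), the statistic $\stat{f} - c = \sum_{p\in P} c_p \T{p}$ is $0$-mesic; that is, its average along every $\rowm$-orbit equals $0$. The constant statistic $c$ is trivially $c$-mesic, so adding these two statistics and using linearity of orbit-averages once more shows that $\stat{f} = (\stat{f}-c) + c$ is $c$-mesic under $\rowm$.

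For the rank-permuted case, the argument is verbatim the same, except that one cites \cref{lem:striker_permuted} in place of \cref{lem:striker} to conclude that each $\T{p}$ — and hence $\stat{f} - c$, and hence $\stat{f}$ — is $0$-mesic, respectively $c$-mesic, under $\rowm_{\sigma}$ for every permutation $\sigma$ of $0,1,\ldots,\rk(P)$.

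There is essentially no obstacle here: the entire content of the proposition is packaged inside \cref{lem:striker,lem:striker_permuted}, and the only thing that needs checking is the elementary fact that homomesy constants add under linear combinations, which is immediate from the definition of an orbit-average. The genuinely substantive work lies downstream, in \cref{sec:main}, where one must actually exhibit a representation $\stat{f} = c + \sum_{p\in P} c_p \T{p}$ for each statistic $\stat{f}$ of interest.
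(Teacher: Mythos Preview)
Your proposal is correct and is exactly the argument the paper has in mind: the paper does not even write out a proof, simply presenting \cref{prop:homo} as an immediate ``consequence of \cref{lem:striker,lem:striker_permuted},'' which is precisely what you have spelled out.
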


For a statistic $\stat{f}\colon \J(P)\to \RR$, let us introduce the shorthand $\stat{f}\tequiv \const$ to mean that there is some constant $c\in \RR$ for which $\stat{f}\tequiv c$. In light of \cref{prop:homo}, our focus in \cref{sec:main} will be showing that~$\stat{f} \tequiv \const$ for various statistics $\stat{f}$ on $\J(P)$. From now on, each time we show $\stat{f}\tequiv \const$, we will not separately state as a corollary that this implies $\stat{f}$ is homomesic under rowmotion, but that is of course the impetus for showing $\stat{f} \tequiv \const$.

\begin{remark} \label{rem:promo_2}
As mentioned in \cref{rem:promo_1}, the ``left to right'' composition of toggles known as promotion is another operator on $\J(P)$, related to rowmotion, that has also received significant attention. We note importantly that the signed toggleability statistics $\T{p}$ are \emph{not} in general homomesic under promotion; this can already be seen for $P=\rect{2}{2}$. Therefore, in general, our results do not apply to promotion. However, for statistics $\stat{f}\colon\J(P)\to\RR$ that are linear combinations of the order ideal indicator functions, i.e., for $\stat{f}\in \Span\{\oii{p}\colon p\in P\}$, the \emph{recombination} method of Einstein and Propp~\cite[\S6]{einstein2018combinatorial} can be used to deduce that $\stat{f}$ is homomesic under promotion if it is homomesic under rowmotion (and vice-versa). Hence, in fact our results here \emph{do} yield some homomesy results for promotion as well as rowmotion, although we will not explain all the details for these implications.
\end{remark}

\subsection{Linear independence} \label{subsec:independence}

Our results below assert that for various statistics $\stat{f} \colon \J(P)\to \RR$, we have $\stat{f} \tequiv \const$, i.e., that there are $c, c_p\in\RR$ for which $\stat{f} =c+\sum_{p\in P}c_p\T{p}$. When one tries expressing~$\stat{f}$ as a linear combination of $1$ and the signed toggleability statistics $\T{p}$ in this way, it is natural to ask if the resulting coefficients are unique. In other words, we would like to know if these statistics are linearly independent. Here, we prove that this is indeed the case. In fact, we prove a more general result concerning $q$-analogues of the signed toggleability statistics. For $q\in\RR$, let $\qT{p}\coloneqq \Tin{p}- q\Tout{p}$. Of course, $\T{p}^1$ is the same as $\T{p}$. 

The only time that we will actually need the following theorem is in \cref{sec:q}, where we will use it to avoid a technical issue concerning singularities of rational functions. Therefore, the casual reader can safely bypass this.

\begin{thm}\label{thm:linearindependence}
Fix a real number $q\geq 0$. The statistics $\qT{p}\colon\J(P)\to\RR$ are linearly independent over $\RR$, and they are also linearly independent from $1$.
\end{thm}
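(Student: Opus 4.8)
The plan is to test a hypothetical linear relation against one cleverly chosen maximal chain of order ideals. Fix a linear extension $p_1,\dots,p_n$ of $P$, and set $I_k\coloneqq\{p_1,\dots,p_k\}$ for $0\le k\le n$, so that $\emptyset=I_0\subsetneq I_1\subsetneq\cdots\subsetneq I_n=P$ is a maximal chain in $\J(P)$. Suppose $c\cdot 1+\sum_{p\in P}c_p\qT{p}=0$ as a function on $\J(P)$. Evaluating this relation at $I_0,\dots,I_n$ produces a square linear system $Mx=0$ in the $n+1$ unknowns $x=(c,c_{p_1},\dots,c_{p_n})$, where $M$ has one column of $1$'s (placed first, say) together with the columns $\bigl(\qT{p_j}(I_k)\bigr)_{0\le k\le n}$ for $j=1,\dots,n$. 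It then suffices to prove $\det M\neq 0$ for the fixed real $q\ge 0$: this forces $x=0$, which simultaneously shows the $\qT{p}$ are linearly independent and that they are independent from $1$.

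The first step is to compute the entries. Writing $d_j\coloneqq\max(\{i:p_i<p_j\}\cup\{0\})$ for the largest index of an element of $P$ below $p_j$, and $u_j\coloneqq\min(\{i:p_i>p_j\}\cup\{n+1\})$ for the smallest index of an element above $p_j$, one checks directly from the definitions --- using only that $p_1,\dots,p_n$ is a linear extension (so $I_k$ really is an order ideal, and $x<y$ implies the index of $x$ is less than that of $y$) --- that $\qT{p_j}(I_k)$ equals $1$ if $d_j\le k\le j-1$, equals $-q$ if $j\le k\le u_j-1$, and equals $0$ otherwise. Next I would apply the determinant-preserving row operations $R_k\mapsto R_k-R_{k-1}$ for $k=n,n-1,\dots,1$ (processed in that order, so each uses the original $R_{k-1}$). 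These annihilate the column of $1$'s except in row $0$, so expanding $\det M$ along that column gives $\det M=\det D$, where $D$ is the $n\times n$ matrix whose $(k,j)$ entry is $1$ if $k=d_j$, is $-(1+q)$ if $k=j$, is $q$ if $k=u_j$, and is $0$ otherwise (these cases are mutually exclusive since $d_j<j<u_j$). Equivalently $-D=(1+q)I-N_--qN_+$, where $N_-$ is the strictly upper-triangular matrix with $(N_-)_{k,j}=1$ iff $k=d_j$, and $N_+$ is the strictly lower-triangular matrix with $(N_+)_{k,j}=1$ iff $k=u_j$; crucially, every column of $N_-$ and of $N_+$ has at most one nonzero entry.

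The heart of the proof is to show $-D$ is nonsingular when $q\ge 0$. It is weakly diagonally dominant by columns: column $j$ has diagonal entry $1+q$ and off-diagonal absolute column sum $[d_j\ge 1]+q\,[u_j\le n]\le 1+q$. I would then decompose $\{1,\dots,n\}$ into the strong components of the digraph $G$ having an arc $i\to j$ whenever $i=d_j$ or $i=u_j$, which puts $-D$ in block upper-triangular form, so $\det(-D)$ is the product of the diagonal blocks. Each singleton block is the scalar $1+q\neq 0$. For a strong component $S$ with $|S|\ge 2$, I claim the column indexed by $j_0\coloneqq\min S$ is \emph{strictly} dominant within its block: the only off-diagonal nonzeros of column $j_0$ of $-D$ lie in rows $d_{j_0}$ and $u_{j_0}$, and $d_{j_0}<j_0=\min S$ forces $d_{j_0}\notin S$ (the case $d_{j_0}=0$ contributing nothing at all), so the within-block off-diagonal absolute sum is at most $q<1+q$. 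By Taussky's theorem on irreducibly diagonally dominant matrices (applied to $(-D)^{\mathsf{T}}$ so as to use the column form), each diagonal block is invertible, hence $\det(-D)\neq 0$, hence $\det M\neq 0$.

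I expect the routine-but-fiddly parts to be the explicit evaluation of $\qT{p_j}(I_k)$ and the bookkeeping in the row reduction. The conceptual crux --- and the only place where the hypothesis $q\ge 0$ is genuinely used --- is the diagonal-dominance argument establishing $\det(-D)\neq 0$; for $q=-1$ the determinant really does vanish for, e.g., a chain (where $\det M$ works out to $\pm(1+q+q^2+\cdots+q^n)$), so nonnegativity of $q$ cannot simply be dropped. One may avoid invoking Taussky's theorem by running the standard maximum-modulus argument on a putative kernel vector of $(-D)^{\mathsf{T}}$ directly, which is short and self-contained.
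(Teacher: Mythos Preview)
Your argument is correct and takes a genuinely different route from the paper's. The paper proves independence from $1$ via the weighted sum $\sum_I q^{\#(P\setminus I)}$, which annihilates each $\qT{p}$ but not the constant; then, for independence among the $\qT{p}$, it recasts a dependence as the functional equation $q\sum_{p\in A}\pi(p)=\sum_{p\in\pan(A)}\pi(p)$ for all antichains $A$, dispatching $q\neq 1$ via the periodicity of antichain rowmotion and $q=1$ by a somewhat delicate induction on $\#P$. You instead evaluate the putative relation only on a single maximal chain in $\J(P)$ and reduce to the nonsingularity of an explicit $(n+1)\times(n+1)$ matrix, established by column diagonal dominance. Your approach is more elementary (no dynamics), treats all $q\ge 0$ uniformly without a case split at $q=1$, and even yields explicit determinants (e.g.\ $\pm[n+1]_q$ for a chain); the paper's approach, in turn, is more in keeping with its dynamical theme and produces along the way a reformulation in terms of antichain rowmotion that is of independent interest.

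One small caveat on the Taussky route: you define $G$ with an arc $i\to j$ whenever $i=d_j$ or $i=u_j$, but at $q=0$ the $(u_j,j)$-entries of $-D$ vanish, so a strong component $S$ of $G$ with $|S|\ge 2$ need not give an \emph{irreducible} block of $-D$, and Taussky does not literally apply there. This is harmless --- at $q=0$ the matrix $-D=I-N_-$ is unit upper-triangular --- and your suggested maximum-modulus alternative handles all $q\ge 0$ uniformly: if $(-D)^{\mathsf T}v=0$ with $v\neq 0$ and $j_0$ is the minimum index among coordinates of maximum modulus, the column-$j_0$ equation forces $d_{j_0}\ge 1$ and $|v_{d_{j_0}}|$ to be maximal as well, contradicting $d_{j_0}<j_0$.
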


\begin{proof}
Let $\stat{g}_p(I)\coloneqq q^{\#(P\setminus I)}\qT{p}(I)$. It is clear that $\stat{g}_p(I)=-\stat{g}_p(\tog{p}(I))$ for all $p\in P$ and $I\in\J(P)$. Since the toggles $\tog{p}$ are involutions, this implies $\sum_{I\in\J(P)}q^{\#(P\setminus I)}\qT{p}(I)=\sum_{I\in\J(P)}\stat{g}_p(I)=0$ for all $p\in P$. If we could express the constant function $1$ as a linear combination $1=\sum_{p\in P}c_p\qT{p}$ for some coefficients $c_p\in\RR$, then we would have \[\sum_{I\in\J(P)}q^{\#(P\setminus I)}=\sum_{I\in\J(P)}q^{\#(P\setminus I)}\sum_{p\in P}c_p\qT{p}(I)=\sum_{p\in P}c_p\sum_{I\in\J(P)}q^{\#(P\setminus I)}\qT{p}(I)=0.\] However, this cannot happen because $\sum_{I\in\J(P)}q^{\#(P\setminus I)} = 1 + \sum_{I\in\J(P)\setminus\{P\}}q^{\#(P\setminus I)} > 0$ (since $q \geq 0$). This proves that the statistics $\qT{p}$ are linearly independent from $1$. So from now on, we work to show that the $\qT{p}$ are linearly independent from each other.

For this proof, we will find it convenient to work with \dfn{antichain rowmotion}, which is the operator $\pan\colon\A(P)\to\A(P)$ defined by $\pan(A) \coloneqq \min(P\setminus \{x \in P\colon x \leq y \textrm{ for some $y\in A$}\})$. Here, $\{x \in P\colon x \leq y \textrm{ for some $y\in A$}\}$  is the order ideal generated by $A$. In other words, we have $\pan(\max(I))=\max(\rowm(I))$ for all $I\in\J(P)$,
where the first instance of $\pan$ is antichain rowmotion and the second is order ideal rowmotion. 

Now suppose $c_p \in \RR$ for $p\in P$ are real coefficients such that $\sum_{p\in P}c_p\T{p}^q=0$. Let~$A\in\A(P)$, and let $I$ be the order ideal generated by $A$. Then we have
\[\qT{p}(I) = \left\{ \begin{array}{rl}
1 & \mbox{if $p \in \pan(A)$}, \\
-q & \mbox{if $p \in A$}, \\
0 & \mbox{otherwise};
\end{array}\right.\]
so the identity $\sum_{p\in P}c_p\T{p}^q(I)=0$ is equivalent to $q\sum_{p\in A}c_p=\sum_{p\in\pan(A)}c_p$. Therefore, the linear independence of the $\T{p}^q$ is equivalent to the following claim:

\medskip
\noindent {\bf Claim.} If a function $\pi\colon P\to\RR$ satisfies $q\sum_{p\in A}\pi(p)=\sum_{p\in\pan(A)}\pi(p)$ for all antichains $A\in\A(P)$, then $\pi(p)=0$ for all $p\in P$. 
\medskip

We proceed to prove this claim. Let $\pi\colon P\to\RR$ be such that $q\sum_{p\in A}\pi(p)=\sum_{p\in\pan(A)}\pi(p)$ for all $A\in\A(P)$. We will show that $\pi(p)=0$ for all~$p\in P$.

First suppose $q\neq 1$. Let $m$ be the order of antichain rowmotion on~$\A(P)$ so that $\pan^m(A) = A$ for all $A \in \A(P)$. Thus, we have 
\[q^m\sum_{p\in A}\pi(p)=\sum_{p\in\pan^m(A)}\pi(p)=\sum_{p\in A}\pi(p),\] 
for all $A \in \A(P)$. In other words, $(1-q^m)\sum_{p\in A}\pi(p)=0$ for all $A\in \A(P)$. Since~$q\neq 1$, this means $\sum_{p\in A}\pi(p)=0$ for all $A\in \A(P)$. By considering the antichain $A=\{p\}$, we see $\pi(p)=0$ for each $p\in P$.

So now suppose $q=1$. The argument in this case is more involved. Let $n$ be the number of elements of $P$. The claim is vacuous in the case $n=0$, so we may assume $n\geq 1$ and proceed by induction on $n$. We will consider different posets in the proof, so for any poset $Q$, we write $\pan_Q$ to denote antichain rowmotion on~$\A(Q)$. 

Let us first consider the case where there is some maximal element $x\in P$ such that $\pi(x)=0$. Consider the subposet $P'\coloneqq P\setminus\{x\}$. Let $A\in\A(P')$. We can view $A$ as an antichain in~$\A(P)$ as well. Note that $\pan_P(A)$ is equal to either $\pan_{P'}(A)$ or $\pan_{P'}(A)\cup\{x\}$ (we are using the maximality of $x$). In either case, the condition $\sum_{p\in A}\pi(p)=\sum_{p\in\pan_P(A)}\pi(p)$ is equivalent (because $\pi(x)=0$) to the condition $\sum_{p\in A}\pi(p)=\sum_{p\in\pan_{P'}(A)}\pi(p)$. As this is true for all $A\in\A(P')$, we can use our induction hypothesis to see that $\pi(p)=0$ for all $p\in P'$. Hence, $\pi(p)=0$ for all $p\in P$. 

The previous paragraph tells us that it suffices to prove that $\pi(x)=0$ for some maximal element~$x$ of $P$. Choose some maximal element~$x$, and let $M$ be the (possibly empty) set of maximal elements of $P$ that are not $x$. Let $\kappa \coloneqq \sum_{p\in M}\pi(p)$. Let $I$ be the order ideal generated by $M$, and let $P''\coloneqq P\setminus I$. Let $m$ be the order of antichain rowmotion on $\A(P'')$ so that $\pan_{P''}^m(A)=A$ for all $A\in\A(P'')$. For each $A\in\A(P'')$, the set $A\cup M$ is an antichain in $P$ such that $\pan_P(A\cup M)=\pan_{P''}(A)$. Therefore, 
\[\kappa+\sum_{p\in A}\pi(p)=\sum_{p\in A\cup M}\pi(p)=\sum_{p\in\pan_P(A\cup M)}\pi(p)=\sum_{p\in\pan_{P''}(A)}\pi(p).\] 
This shows that $\sum_{p\in\pan_{P''}(A)}\pi(p)-\sum_{p\in A}\pi(p)=\kappa$ for all $A\in\A(P'')$. If we replace~$A$ by $\pan_{P''}^k (A)$ and sum over all $0 \leq k \leq m-1$, we find (after telescoping) that $\sum_{p\in\pan^m_{P''}(A)}\pi(p)-\sum_{p\in A}\pi(p)=m\kappa$. But the left-hand side of this last equation equals 0 since $\pan^m_{P''}(A) = A$. So $\kappa=0$. Finally, $\pan_P(M\cup\{x\})=\emptyset$, so 
\[\pi(x)=\pi(x)+\kappa=\sum_{p\in M\cup\{x\}}\pi(p)=\sum_{p\in\pan_P(M\cup\{x\})}\pi(p)=0.\] 
This proves that $\pi(x)=0$, which completes the proof.
\end{proof}

\begin{remark}
\Cref{thm:linearindependence} is not true with $q=-1$. For instance, when $P$ is an antichain and $q=-1$, all the statistics $\T{p}^q$ for $p \in P$ coincide with the constant statistic $1$.
\end{remark}

\begin{remark}
When $P$ is a chain, we have $\#\J(P)=\#P+1$ and hence, by \cref{thm:linearindependence}, $1$ and $\T{p}$ for $p\in P$ span the entire space of functions $\J(P)\to\RR$. In any other case,  we have $\#\J(P)>\#P+1$, so $1$ and the $\T{p}$ span a strict subspace of the space of functions $\J(P)\to\RR$.
\end{remark}

\begin{remark}
As mentioned above, we are interested in statistics in $\Span(\{1\}\cup\{\T{p}\colon p\in P\})$ that also belong either to $\Span\{\oii{p}\colon p \in P\}$ or to $\Span\{\Tout{p}\colon p \in P\}$. Clearly, these latter two sets are also linearly independent, so we will also have unique representations of our statistics as sums of order ideal/antichain indicator functions.
\end{remark}

\section{The toggleability statistics technique} \label{sec:main}

Our focus now turns to showing $\stat{f}\tequiv \const$ for ``interesting'' statistics $\stat{f}\colon \J(P)\to\RR$ and certain families of posets $P$. (Recall from \cref{subsec:homomesy} that we write $\stat{f} \tequiv \stat{g}$ if $\stat{f}-\stat{g} = \sum_{p \in P} c_p \T{p}$ for some $c_p \in \RR$, and write $\stat{f}\tequiv \const$ if $\stat{f}\tequiv c$ for some $c \in \RR$.) The posets we consider are those for which rowmotion has been shown to exhibit good behavior. The most prominent among these is the \dfn{rectangle} poset $\rect{a}{b}$. The others are all \emph{minuscule posets} or \emph{root posets} (rectangles are examples of minuscule posets). While such posets have algebraic significance, our treatment is ultimately case-by-case and does not use any algebraic machinery.

\begin{remark}
Homomesy is surprising when we consider an infinite family of posets and the number of orbits for each poset is relatively large. Every poset $P$ that we will discuss in this section is ranked and has rowmotion order dividing $2(\rk(P)+2)$ (see~\cite{armstrong2013uniform,rush2013orbits}). This order is (usually) small compared to $\#\J(P)$, so there must in fact be many orbits of rowmotion for such $P$. Hence, the homomesy corollaries we obtain from showing $\stat{f}\tequiv \const$ are indeed \emph{a priori} surprising.
\end{remark}

\begin{remark}
Much (though certainly not all) of this section is a streamlining of results from~\cite{chan2017expected,hopkins2017cde}. We repeat some arguments from those papers for completeness and to aid with the narrative flow. We review the history of results as we go.
\end{remark}

As we have explained in \cref{subsec:posets}, all our main examples of posets will be realized as contiguous subsets of $\quadrant$, and we will draw these posets rectilinearly with $(1,1)$ at the upper left. In particular, the elements of our posets will be boxes~$(i,j)$. We will often write $(i,j)$ as $i,j$ for brevity. Another useful convention we will adopt throughout is that if a given $(i,j)$ does not belong to our poset, we consider all its associated statistics $\oii{i,j}$, $\Tin{i,j}$, $\Tout{i,j}$, $\T{i,j}$ to be zero.
 
\subsection{The rectangle} \label{subsec:rectangle}

In this subsection, we fix $P = \rect{a}{b}$.

\subsubsection{Rooks}

As mentioned in \cref{sec:intro}, certain clever combinations of toggleability statistics that we refer to as ``rooks'' are at the technical heart of our proofs that $\stat{f} \tequiv \const$ for various statistics~$\stat{f}$. We now introduce these rooks.

For $(i,j)\in P$, define the \dfn{rook} $\rook{i,j}\colon \J(P)\to \RR$ by
\begin{equation} \label{eqn:rect_rook}
\rook{i,j} \ \coloneqq \sum_{i' \leq i, \ j' \leq j} \Tin{i',j'} \ \ - \sum_{i' < i,\  j' < j} \Tout{i',j'} \ \ + \sum_{i' \geq i, \ j' \geq j} \Tout{i',j'} \ \ - \sum_{i' > i, \ j' > j} \Tin{i',j'}
\end{equation}
and the \dfn{reduced rook} $\rrook{i,j}\colon \J(P)\to \RR$ by
\begin{equation}  \label{eqn:rect_red_rook}
\rrook{i,j} \ \coloneqq \ \sum_{j'} \Tout{i,j'} \ + \ \sum_{i'} \Tout{i',j}\ .
\end{equation}
The sum in~\eqref{eqn:rect_rook} defining $\rook{i,j}$ has a nice pictorial representation, as shown on the left in \cref{fig:rectrook} for $P = \rect{5}{6}$ and $(i,j)=(3,3)$.

\begin{figure}
\begin{center}
\includegraphics[height=4.717cm]{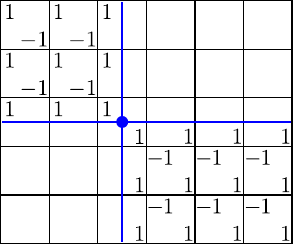}\qquad\qquad\qquad\qquad\includegraphics[height=4.717cm]{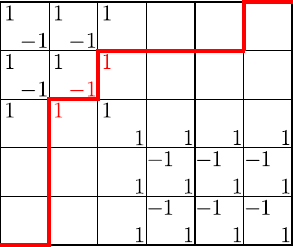}
\end{center} 
\caption{The left image illustrates the rook $R_{3,3}$ on the rectangle $[5]\times[6]$, while that on the right illustrates the proof of \cref{thm:rect_sumone}. A box with $d$ in its upper left and $e$ in its lower right at location $(i',j')$ denotes a contribution of $d \Tin{i',j'} + e \Tout{i',j'}$. There are $0$'s tacitly present at all locations where no number appears. In the image on the right, the boxes $(3,2)$, $(2,2)$, $(2,3)$ contribute $1$, $-1$, $1$, respectively, with the total sum along the red path being $1$.} \label{fig:rectrook}
\end{figure}

Observe that $\Tin{p} \tequiv \Tout{p}$ (since $\Tin{p} - \Tout{p} =\T{p}$). Thus, we have the following lemma relating $\rook{i,j}$ and $\rrook{i,j}$.

\begin{lemma} \label{lem:rect_rook_equiv}
For all $(i,j)\in \rect{a}{b}$, $\rook{i,j} \tequiv \rrook{i,j}$.
\end{lemma}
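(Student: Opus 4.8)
The plan is to exploit the remark made just before the lemma, namely that $\Tin{p} \tequiv \Tout{p}$ for every $p$ (because $\Tin{p} - \Tout{p} = \T{p}$), together with the fact that $\tequiv$ is a congruence. So the whole proof is really a bookkeeping exercise: replace every ``in'' toggleability statistic by the corresponding ``out'' one, and then collect coefficients.

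Concretely, first I would replace each $\Tin{i',j'}$ appearing in the defining expression~\eqref{eqn:rect_rook} for $\rook{i,j}$ by $\Tout{i',j'}$. Since each such replacement changes the statistic only by an element of $\Span\{\T{p}\colon p\in P\}$, and $\tequiv$ respects $\RR$-linear combinations, this yields
\[
\rook{i,j} \ \tequiv\ \sum_{i'\le i,\ j'\le j}\Tout{i',j'} \ -\ \sum_{i'<i,\ j'<j}\Tout{i',j'} \ +\ \sum_{i'\ge i,\ j'\ge j}\Tout{i',j'} \ -\ \sum_{i'>i,\ j'>j}\Tout{i',j'}.
\]
Here I would also invoke the paper's running convention that all statistics attached to boxes outside $P$ vanish, so that the index ranges may be taken over all integers without worrying about membership in the rectangle.

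Next I would compute the coefficient of a fixed $\Tout{i',j'}$ in the right-hand side. Splitting $\Tout{}$-coefficients into Iverson-style indicators and using $\mathbf 1[i'\le i] = \mathbf 1[i'<i] + \mathbf 1[i'=i]$ (and likewise in the column index) to expand each product, a routine cancellation gives
\[
\mathbf 1[i'\le i,\,j'\le j] - \mathbf 1[i'<i,\,j'<j] + \mathbf 1[i'\ge i,\,j'\ge j] - \mathbf 1[i'>i,\,j'>j] \ =\ \mathbf 1[i'=i] + \mathbf 1[j'=j];
\]
equivalently, a nine-case check on the signs of $i'-i$ and $j'-j$ shows this coefficient is $2$ when $(i',j')=(i,j)$, is $1$ when exactly one of $i'=i$, $j'=j$ holds, and is $0$ otherwise. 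Summing $\bigl(\mathbf 1[i'=i] + \mathbf 1[j'=j]\bigr)\Tout{i',j'}$ over all $(i',j')$ is precisely $\sum_{j'}\Tout{i,j'} + \sum_{i'}\Tout{i',j} = \rrook{i,j}$ by~\eqref{eqn:rect_red_rook}. Combining this with the displayed $\tequiv$ above finishes the proof.

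I do not anticipate any real obstacle: the content is entirely the indicator-function identity (or the equivalent case analysis), and the only subtlety is cosmetic — keeping the index ranges honest by appealing to the zero-outside-$P$ convention, and noting that the diagonal term $(i',j')=(i,j)$ correctly picks up coefficient $2$, matching the double count in $\rrook{i,j}$.
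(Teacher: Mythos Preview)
Your proposal is correct and takes essentially the same approach as the paper: the paper does not give a separate proof but simply observes (right before the lemma) that $\Tin{p}\tequiv\Tout{p}$ and declares the lemma to follow. Your write-up merely makes the implicit coefficient bookkeeping explicit, and the indicator identity you verify is exactly what is needed.
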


The name ``rook'' comes from the fact that $\rrook{i,j}$ consists of a sum of the functions $\Tout{p}$ over all boxes $p$ in the same row as $(i,j)$, plus a sum of the functions $\Tout{p}$ over all boxes $p$ in the same column as $(i,j)$. In this way, $\rrook{i,j}$ ``attacks'' each box in the same row/column as $(i,j)$, like a rook chess piece (although note that it attacks its own box $(i,j)$ twice). Since we often work modulo the span of the $\T{p}$, we also think of $\rook{i,j}$ attacking these boxes in this way. In the image on the left in \cref{fig:rectrook}, we have drawn blue lines in the row and column of~$(i,j)$ to visually represent which boxes this rook $\rook{i,j}$ attacks.

The following theorem, which is far from obvious, is what makes rooks important and useful.

\begin{thm} \label{thm:rect_sumone}
For $(i,j)\in \rect{a}{b}$, we have $\rook{i,j}(I) = 1$ for all $I\in\J(P)$. In other words, we have $\rook{i,j}=1$ as an equality of functions.
\end{thm}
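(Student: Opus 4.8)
The plan is to evaluate $\rook{i,j}(I)$ directly on an arbitrary $I\in\J(\rect{a}{b})$, taking advantage of the fact that $\rook{i,j}$ is a linear combination of the statistics $\Tin{i',j'}$ and $\Tout{i',j'}$, which vanish on $I$ except at the \emph{addable} cells (those $(i',j')$ with $\Tin{i',j'}(I)=1$) and \emph{removable} cells (those with $\Tout{i',j'}(I)=1$) of the Young diagram corresponding to $I$; these cells lie along its boundary staircase, and the idea is that summing the coefficients of $\rook{i,j}$ along that staircase telescopes to $1$. To make this precise I would encode $I$ by its column heights $\lambda_1\geq\cdots\geq\lambda_b$, where $\lambda_{j'}\coloneqq\#\{i'\colon(i',j')\in I\}$, together with the conventions $\lambda_0\coloneqq a$ and $\lambda_{b+1}\coloneqq 0$. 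Directly from the definitions one checks that $\Tin{i',j'}(I)=1$ if and only if $\lambda_{j'-1}>\lambda_{j'}=i'-1$, and $\Tout{i',j'}(I)=1$ if and only if $\lambda_{j'}=i'>\lambda_{j'+1}$. Next partition the columns $1,\dots,b$ into the maximal runs on which $\lambda$ is constant: say run $t$ (for $t=1,\dots,k$) is $\{p_t,\dots,q_t\}$ with common value $h_t$, so that $p_1=1$, $q_k=b$, $p_{t+1}=q_t+1$, and $a\geq h_1>h_2>\cdots>h_k\geq 0$. With this data, the addable cells of $I$ are exactly the cells $(h_t+1,p_t)$ over those $t$ with $h_t<a$, and the removable cells are exactly the cells $(h_t,q_t)$ over those $t$ with $h_t>0$. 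Writing $[\,\phi\,]$ for the quantity equal to $1$ if the statement $\phi$ holds and $0$ otherwise, the coefficient of $\Tin{i',j'}$ in~\eqref{eqn:rect_rook} is $\alpha(i',j')\coloneqq[\,i'\leq i\,][\,j'\leq j\,]-[\,i'>i\,][\,j'>j\,]$ and the coefficient of $\Tout{i',j'}$ is $\beta(i',j')\coloneqq[\,i'\geq i\,][\,j'\geq j\,]-[\,i'<i\,][\,j'<j\,]$, so that
\[
\rook{i,j}(I)=\sum_{\substack{1\le t\le k\\ h_t<a}}\alpha(h_t+1,p_t)\ +\sum_{\substack{1\le t\le k\\ h_t>0}}\beta(h_t,q_t).
\]

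The computation then proceeds as follows. First, the side conditions $h_t<a$ and $h_t>0$ are harmless: a run with $h_t=a$ must be run $1$ (since $\lambda$ is weakly decreasing with $\lambda_0=a$), and a run with $h_t=0$ must be run $k$ (since $\lambda_{b+1}=0$), and in either case the corresponding summand already vanishes because $1\le i\le a$ and $1\le j\le b$; hence both sums may be taken over all $t$. Now expand $\alpha(h_t+1,p_t)=[\,h_t\leq i-1\,][\,p_t\leq j\,]-[\,h_t\geq i\,][\,p_t>j\,]$ and use $[\,h_t\leq i-1\,]=1-[\,h_t\geq i\,]$ and $[\,p_t>j\,]=1-[\,p_t\leq j\,]$; the first sum collapses to $\sum_t\bigl([\,p_t\leq j\,]-[\,h_t\geq i\,]\bigr)$. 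Likewise expand $\beta(h_t,q_t)=[\,h_t\geq i\,][\,q_t\geq j\,]-[\,h_t\leq i-1\,][\,q_t\leq j-1\,]$ and use the same substitutions; the second sum collapses to $\sum_t\bigl([\,h_t\geq i\,]+[\,q_t\geq j\,]-1\bigr)$. Adding, the $[\,h_t\geq i\,]$ terms cancel, so that
\[
\rook{i,j}(I)=\sum_{t=1}^{k}\bigl([\,p_t\leq j\,]+[\,q_t\geq j\,]-1\bigr).
\]
Since $p_t\leq q_t$, no run satisfies both $p_t>j$ and $q_t<j$, so every summand lies in $\{0,1\}$ and equals $1$ precisely for the unique run containing column $j$. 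Therefore $\rook{i,j}(I)=1$, as desired.

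I expect the main obstacle to be entirely a matter of careful bookkeeping rather than anything conceptual: there is no slick structural reason for the telescoping, so one must correctly install the boundary conventions ($\lambda_0=a$, $\lambda_{b+1}=0$, and the standing convention that statistics attached to boxes outside $\rect{a}{b}$ are zero), verify the run-description of the addable and removable cells, and track signs when expanding $\alpha$ and $\beta$. The picture on the right of \cref{fig:rectrook} is in effect a worked instance of this telescoping: the only nonzero coefficients $\alpha$/$\beta$ encountered along the boundary staircase of that particular $I$ are $+1,-1,+1$, summing to $1$.
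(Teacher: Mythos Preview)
Your proof is correct. Both your argument and the paper's evaluate $\rook{i,j}(I)$ by summing its coefficients over the addable and removable cells along the boundary of $I$, but the cancellation is organized differently. The paper observes that as one traverses the boundary path, the nonzero coefficients encountered alternate $+1,-1,\ldots,+1$ (with a case split on whether $(i,j)\in I$), so the sum is $1$. You instead parametrize the boundary corners by the maximal constant-height runs of the column-height sequence, and your key simplification is that the $[h_t\geq i]$ terms coming from $\alpha$ and $\beta$ cancel \emph{within each run}, leaving an expression depending only on $j$ that localizes to the single run containing column $j$. Your version is more explicitly algebraic and avoids the case analysis on the position of $(i,j)$; the paper's version is more visual and makes the alternating-sign pattern along the path apparent. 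They are two presentations of the same telescoping, as you yourself note in your final paragraph.
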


\begin{proof}
Given an order ideal $I$ viewed as a set of boxes inside an $a\times b$ rectangle, define the \dfn{boundary} of $I$ as the lattice path that joins the lower left corner of the rectangle to the upper right corner of the rectangle and separates elements of $I$ from elements of $P \setminus I$. Evaluating $\rook{i,j}(I)$ amounts to summing the numbers from the pictorial representation of the rook $\rook{i,j}$ that border the boundary of $I$, as shown (in red) on the right in \cref{fig:rectrook}. For if box $p$ does not border the boundary path, then it cannot be toggled into or toggled out of $I$ and~$\Tin{p}(I) = \Tout{p}(I) = 0$; on the other hand, if box $p$ does border the boundary path, then it can be toggled in (resp.\ out) if and only if the path contains the left and top (resp.\ bottom and right) sides of $p$.

There are two cases to consider according to whether $(i,j) \not\in I$ (in which case the boundary of~$I$ passes above and to the left of box $(i,j)$) or $(i,j) \in I$ (in which case the boundary of $I$ passes below and to the right of box $(i,j)$). But in either case, the contributions to the path-sum alternate between $+1$ and $-1$ as we traverse the path, beginning and ending with $+1$, so the total is $1$.
\end{proof}

Rooks were introduced by Chan, Haddadan, Hopkins, and Moci~\cite{chan2017expected}, and \cref{lem:rect_rook_equiv} and~\cref{thm:rect_sumone} were first proved in that paper (see~\cite[Lemma~3.5, Theorem~3.4]{chan2017expected}). (In fact, rooks were defined in~\cite{chan2017expected} not just for the rectangle, but for any Young diagram shape, including skew shapes.)

\subsubsection{Antichains} \label{subsubsec:rectant}

We now focus on proving that various $\stat{f}$ that are linear combinations of the antichain indicator functions, i.e., $\stat{f}\in \Span\{\Tout{p}\colon p \in P\}$, satisfy $\stat{f}\tequiv \const$. In fact, thanks to \cref{lem:rect_rook_equiv} and~\cref{thm:rect_sumone}, we already know this is true of the reduced rooks, which satisfy $\rrook{i,j}\tequiv 1$. But now we want to show how the~$\rrook{i,j}$ can be combined to yield other, natural functions $\stat{f}$ in the span of the functions~$\Tout{p}$. We begin with the antichain cardinality statistic. 

\begin{thm} \label{thm:aboapb}
For $P=\rect{a}{b}$, we have $\sum_{p\in P}\Tout{p} \tequiv ab/(a+b)$.
\end{thm}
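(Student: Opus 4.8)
The plan is to exploit the fact, already established in \cref{lem:rect_rook_equiv} and \cref{thm:rect_sumone}, that $\rrook{i,j} \tequiv \rook{i,j} = 1$ for every $(i,j) \in \rect{a}{b}$, and to recover the antichain cardinality statistic as a scalar multiple of $\sum_{(i,j) \in P} \rrook{i,j}$ by a double-counting argument. Concretely, I would first observe that, since $\tequiv$ is a congruence, summing the relation $\rrook{i,j} \tequiv 1$ over all $ab$ boxes of $P$ yields
\[ \sum_{(i,j) \in P} \rrook{i,j} \ \tequiv \ ab. \]

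The key step is then to compute the left-hand side exactly as a function. Expanding the definition~\eqref{eqn:rect_red_rook}, namely $\rrook{i,j} = \sum_{j'} \Tout{i,j'} + \sum_{i'} \Tout{i',j}$, I would determine, for a fixed box $(k,\ell) \in P$, how many times the summand $\Tout{k,\ell}$ appears in $\sum_{(i,j)\in P} \rrook{i,j}$. It contributes once via the ``row'' term $\sum_{j'}\Tout{i,j'}$ for each of the $b$ choices of $j$ with $i = k$, and once via the ``column'' term $\sum_{i'}\Tout{i',j}$ for each of the $a$ choices of $i$ with $j = \ell$; hence it appears exactly $a+b$ times. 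Therefore we get the exact identity of functions
\[ \sum_{(i,j) \in P} \rrook{i,j} \ = \ (a+b) \sum_{p \in P} \Tout{p}. \]
Combining this with the displayed $\tequiv$-relation above gives $(a+b)\sum_{p\in P}\Tout{p} \tequiv ab$, and dividing by the nonzero scalar $a+b$ yields $\sum_{p\in P}\Tout{p} \tequiv ab/(a+b)$, as desired.

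There is no real obstacle remaining at this point: the genuinely nontrivial content is \cref{thm:rect_sumone} (that the rooks are identically $1$), which we are permitted to assume, and once that is in hand the present statement is a one-line bookkeeping computation. If anything deserves a word of care, it is the verification of the coefficient $a+b$ in the double count, and the observation that the argument works for all $a,b \geq 1$ since $a+b > 0$ so the division is legitimate; one might also remark that this recovers, via \cref{prop:homo}, the known homomesy of antichain cardinality under rowmotion on $\J(\rect{a}{b})$ with average $ab/(a+b)$.
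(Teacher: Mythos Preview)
Your proposal is correct and essentially identical to the paper's own proof: both sum the reduced rooks over all boxes, use \cref{lem:rect_rook_equiv} and \cref{thm:rect_sumone} to get $\sum_{(i,j)}\rrook{i,j}\tequiv ab$, and then observe via the same double-count (each box is attacked $a+b$ times) that $\sum_{(i,j)}\rrook{i,j}=(a+b)\sum_{p\in P}\Tout{p}$. The only cosmetic difference is that the paper phrases the double-count by swapping dummy indices in the sums, whereas you count the contributions to a fixed $\Tout{k,\ell}$ directly.
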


\begin{proof}
Note that 
\[\sum_{i,j}\rrook{i,j} = \sum_{i,j} \left( \sum_{j'} \Tout{i,j'} + \sum_{i'} \Tout{i',j} \right) = \sum_{i,j,j'} \Tout{i,j'} + \sum_{i,i',j} \Tout{i',j} = \sum_{i,j,j'} \Tout{i,j} + \sum_{i,i',j} \Tout{i,j} = \sum_{i,j} (a+b) \Tout{i,j}.\] 
The third equality holds because we can swap $j$ with $j'$ in the first sum and $i$ with $i'$ in the second sum, and the fourth equality holds because every box belongs to the same row as $a$ boxes and the same column as $b$ boxes. (An abbreviated way to say $\sum_{i,j}\rrook{i,j} = \sum_{i,j} (a+b) \Tout{i,j}$ is ``every box is attacked $a+b$ times.'') It follows from \cref{lem:rect_rook_equiv} and \cref{thm:rect_sumone} that 
\[ab=\sum_{i,j}\rook{i,j}\tequiv\sum_{i,j}\rrook{i,j}=\sum_{i,j}(a+b)\Tout{i,j}=(a+b)\sum_{p\in P}\Tout{p},\] 
so $\sum_{p\in P}\Tout{p} \tequiv ab/(a+b)$. 
\end{proof}

\begin{remark}
Although the proof of \cref{thm:aboapb} does not require an explicit representation of~$\sum_{p\in P}\Tout{p}$ in the form $ab/(a+b) + \sum_{p \in P} c_p \T{p}$, we mention that unwinding the argument above yields 
\[c_{i,j} = (ab - a(b+1-j) - b(a+1-i))/(a+b).\] 
These coefficients could be useful to future researchers for other purposes. However, from now on, we will not bother to work out these coefficients $c_p$ when we show $\stat{f}\tequiv \const$ for other statistics $\stat{f}$.
\end{remark}

A \dfn{positive fiber} of the rectangle $\rect{a}{b}$ is a subset $B \subseteq \rect{a}{b}$ of the form $\{i\} \times [b]$ for some~$i\in[a]$. A \dfn{negative fiber} of $\rect{a}{b}$ is a subset $B \subseteq \rect{a}{b}$ of the form $[a] \times \{j\}$ for some~$j\in[b]$. 

\begin{thm} \label{thm:rectfiber}
If $B=\{i\}\times[b] \subseteq \rect{a}{b}$ is a positive fiber, then $\sum_{p \in B} \Tout{p} \tequiv b/(a+b)$. If $B=[a] \times \{j\}$ is a negative fiber, then $\sum_{p \in B} \Tout{p} \tequiv a/(a+b)$.
\end{thm}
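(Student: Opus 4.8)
The plan is to mimic the proof of \cref{thm:aboapb}, but to sum the reduced rooks $\rrook{i,j}$ only over the boxes $(i,j)$ lying in the given fiber $B$, rather than over all of $\rect{a}{b}$. The point is that each $\rrook{i,j}$ is, by \cref{lem:rect_rook_equiv} and \cref{thm:rect_sumone}, $\tequiv$-equivalent to the constant $1$, so summing $b$ of them (for a positive fiber) or $a$ of them (for a negative fiber) gives $|B| \tequiv \sum_{(i,j)\in B}\rrook{i,j}$; the work is then purely bookkeeping on the right-hand side, to identify $\sum_{(i,j)\in B}\rrook{i,j}$ as a scalar multiple of $\sum_{p\in B}\Tout{p}$.

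Concretely, for a positive fiber $B=\{i\}\times[b]$, I would compute
\[
\sum_{j=1}^{b}\rrook{i,j} \;=\; \sum_{j=1}^{b}\Bigl(\sum_{j'}\Tout{i,j'} + \sum_{i'}\Tout{i',j}\Bigr)
\;=\; b\sum_{j'}\Tout{i,j'} \;+\; \sum_{i'}\sum_{j}\Tout{i',j}.
\]
The first term is $b\sum_{p\in B}\Tout{p}$. The second term is $\sum_{p\in P}\Tout{p}$ taken over the entire rectangle, which by \cref{thm:aboapb} satisfies $\sum_{p\in P}\Tout{p}\tequiv ab/(a+b)$. On the other hand $\sum_{j=1}^b\rrook{i,j}\tequiv b\cdot 1 = b$. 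Putting these together, $b\sum_{p\in B}\Tout{p} \tequiv b - ab/(a+b) = b^2/(a+b)$, and dividing by $b$ gives $\sum_{p\in B}\Tout{p}\tequiv b/(a+b)$. The negative-fiber case $B=[a]\times\{j\}$ is symmetric: summing $\rrook{i,j}$ over $i\in[a]$ yields $a\sum_{p\in B}\Tout{p} + \sum_{p\in P}\Tout{p}\tequiv a$, so $\sum_{p\in B}\Tout{p}\tequiv (a - ab/(a+b))/a = a/(a+b)$.

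There is really no hard obstacle here; the only thing to be careful about is the double-counting in $\rrook{i,j}$ (it attacks its own box twice) and making sure the index swaps in the two sums are handled correctly, exactly as in the proof of \cref{thm:aboapb}. One could alternatively prove this directly by choosing an appropriate linear combination of full rooks $\rook{i,j}$ summed over $B$, but routing through \cref{thm:aboapb} is cleaner since that result already packages the ``total'' contribution $\sum_{p\in P}\Tout{p}$ that appears as a byproduct. I would also remark, as the paper does after \cref{thm:aboapb}, that these two statistics refine the antichain cardinality statistic: the $a$ positive fibers partition $P$, so $\sum_{p\in P}\Tout{p} = \sum_{i=1}^a\bigl(\sum_{p\in\{i\}\times[b]}\Tout{p}\bigr) \tequiv a\cdot b/(a+b)$, consistent with \cref{thm:aboapb}, and likewise for the $b$ negative fibers.
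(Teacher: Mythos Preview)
Your proof is correct and takes a genuinely different route from the paper's. The paper argues in two steps: first it shows that all positive-fiber statistics are $\tequiv$-equivalent to one another, via the identity $\rrook{i,1}-\rrook{i+1,1}=\stat{f}_i-\stat{f}_{i+1}$ (equivalently, the direct observation that one can toggle out of row $i$ iff one can toggle into row $i+1$); then it divides the known total $\stat{f}_1+\cdots+\stat{f}_a\tequiv ab/(a+b)$ by $a$. You instead sum the reduced rooks over the \emph{single} fiber $B$ and recognize the result as $b\sum_{p\in B}\Tout{p}+\sum_{p\in P}\Tout{p}$, then subtract off the global antichain-cardinality piece using \cref{thm:aboapb}. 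Your approach is slightly more direct, since it never needs the intermediate fact that all fibers are equivalent; the paper's approach, on the other hand, makes that equivalence explicit (and even isolates it as a standalone identity), which is conceptually informative and is exactly the ingredient reused in the $q$-analogue proof of \cref{thm:q_rectfiber}.
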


\begin{proof}
By symmetry, it suffices to consider the case when $B=\{i\}\times[b] \subseteq \rect{a}{b}$ is a positive fiber. For each $1\leq i\leq a$, set $\stat{f}_i \coloneqq \sum_j \Tout{i,j}$. To prove that $\stat{f}_i \tequiv b/(a+b)$, we begin by noting that, thanks to \cref{lem:rect_rook_equiv} and \cref{thm:rect_sumone},
\[ 0 = \rook{i,1} - \rook{i+1,1} \tequiv \rrook{i,1} - \rrook{i+1,1} = \stat{f}_i - \stat{f}_{i+1}\]
for all $1\leq i < a$. (In fact, we can also see that $\stat{f}_i - \stat{f}_{i+1}\tequiv 0$ more directly from the observation that for any $I\in \J(P)$, we can toggle some element of the $i$-th positive fiber out of $I$ if and only if we can toggle some element of the $(i+1)$-st positive fiber \emph{into} $I$.)

Thus, we have $\stat{f}_1\tequiv \stat{f}_2 \tequiv \cdots \tequiv \stat{f}_a$. But also, we know from \cref{thm:aboapb} that
\[\stat{f}_1 + \stat{f}_2 + \cdots + \stat{f}_a = \sum_{p\in P}\Tout{p} \tequiv ab/(a+b).\]
So for any $1\leq i \leq a$, we have $a\cdot \stat{f}_i\tequiv ab/(a+b)$. We conclude that $\stat{f}_i\tequiv  b/(a+b)$, as claimed.
\end{proof}

That the antichain cardinality statistic, and its fiber refinements, are homomesic under rowmotion for the rectangle was first proved by Propp and Roby~\cite[Theorem~27]{propp2015homomesy}. The stronger results that these statistics are $\tequiv \const$ (i.e., \cref{thm:aboapb,thm:rectfiber}) were first proved by Chan, Haddadan, Hopkins, and Moci~\cite[pg.~23]{chan2017expected}, with the same proofs we have given above.

\subsubsection{Order ideals} \label{subsubsec:rectord}

We now show that various $\stat{f}$ that are linear combinations of the order ideal indicator functions, i.e., $\stat{f}\in \Span\{\oii{p}\colon p \in P\}$, satisfy $\stat{f}\tequiv \const$.

We first need a lemma relating the order ideal indicator functions to the toggleability statistics.

\begin{lemma} \label{lem:rect_ds1s}
For any $p=(i,j) \in \rect{a}{b}$, we have
\begin{equation}\label{eqn:rectfilea}
 \oii{p} \ = \sum_{i' \geq i, \ j' \geq j} \Tout{i',j'} \ \ - \sum_{i' > i, \ j' > j} \Tin{i',j'}.
\end{equation}
\end{lemma}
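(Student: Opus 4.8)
The identity~\eqref{eqn:rectfilea} asserts that a single order ideal indicator function can be written as a telescoping combination of toggleability statistics. The plan is to evaluate the right-hand side of~\eqref{eqn:rectfilea} on an arbitrary order ideal $I\in\J(\rect{a}{b})$ and check that it equals $\oii{p}(I)$, i.e.\ equals $1$ if $p=(i,j)\in I$ and $0$ otherwise. As in the proof of \cref{thm:rect_sumone}, I would think of $I$ via its boundary lattice path from the lower-left corner of the rectangle to the upper-right corner, and recall that a box $p'$ contributes to $\Tin{p'}(I)$ exactly when the boundary path contains the top and left edges of $p'$ (so $p'\in\min(P\setminus I)$), and contributes to $\Tout{p'}(I)$ exactly when the path contains the bottom and right edges of $p'$ (so $p'\in\max(I)$); all other boxes contribute $0$.

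The sums in~\eqref{eqn:rectfilea} range over the ``southeast rectangle'' $S \coloneqq \{(i',j')\colon i'\ge i,\ j'\ge j\}$ for the $\Tout{}$ terms and over its strict interior $S^\circ \coloneqq \{(i',j')\colon i'>i,\ j'>j\}$ for the $\Tin{}$ terms. The key observation is that the boundary path of $I$ enters and leaves this southeast region $S$ in a controlled way, and within $S$ the relevant contributions again alternate in sign along the path. Concretely, I would split into the two cases already used for \cref{thm:rect_sumone}: (a) if $(i,j)\notin I$, the boundary path passes weakly above and to the left of box $(i,j)$, so it intersects $S$ in an ``upper-right staircase'' whose alternating-sum contribution telescopes to $0$; (b) if $(i,j)\in I$, the path passes weakly below and to the right of $(i,j)$, the path's portion bordering $S$ now begins and ends with a $+1$ from a $\Tout{}$ term (with all intermediate terms cancelling), giving total $1$. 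This is essentially the same alternating-path bookkeeping as in \cref{thm:rect_sumone}, restricted to the sub-rectangle hanging off $(i,j)$; one should also check the degenerate boundary situations where $i=a$ or $j=b$ (so $S^\circ$ or part of $S$ falls outside $P$, and by the standing convention the missing statistics are $0$).

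Alternatively — and perhaps cleanly — I would derive~\eqref{eqn:rectfilea} by downward induction on $\rk(i,j)=i+j$, peeling off the corner: the difference $\oii{i,j} - \oii{i+1,j} - \oii{i,j+1} + \oii{i+1,j+1}$ is the indicator of ``$(i,j)\in I$ but none of the three boxes to its southeast are,'' which is exactly the indicator that $(i,j)\in\max(I)$ together with a correction, and this should match $\Tout{i,j} - (\text{the analogous }\Tin{}\text{ correction})$ after using the convention that out-of-range boxes have zero statistics; combined with the inductive hypothesis for the three shifted boxes this yields the claimed closed form. The main obstacle in either route is purely bookkeeping at the boundary of the rectangle: making sure the index ranges $i'\ge i$, $i'>i$ (and likewise for $j'$) interact correctly with the edge of $P$ and with the convention that $\Tin{i',j'}=\Tout{i',j'}=0$ when $(i',j')\notin P$, and handling the two cases $(i,j)\in I$ versus $(i,j)\notin I$ uniformly. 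I expect no genuine difficulty beyond this, since the alternating-path mechanism is already established in the proof of \cref{thm:rect_sumone}.
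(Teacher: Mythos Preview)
Your primary approach---the boundary-path alternating-sign argument---is exactly the paper's proof. One small simplification: in your case~(a), when $(i,j)\notin I$, no telescoping is needed; every summand on the right-hand side of~\eqref{eqn:rectfilea} is individually zero, since no box in $S=\{(i',j')\colon i'\ge i,\ j'\ge j\}$ lies in $I$ (hence none in $\max(I)$), and no box $(i',j')\in S^\circ$ can lie in $\min(P\setminus I)$ because its neighbors $(i'-1,j')$ and $(i',j'-1)$ are both $\ge(i,j)$ and hence also outside $I$. The paper makes precisely this observation rather than appealing to an alternating sum.

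Your alternative inductive route, via the second-difference identity $\oii{i,j}-\oii{i+1,j}-\oii{i,j+1}+\oii{i+1,j+1}=\Tout{i,j}-\Tin{i+1,j+1}$, is valid and does not appear in the paper; it trades the path picture for a purely local computation and a clean telescoping over the southeast sub-rectangle.
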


\begin{proof}
Consider an order ideal $I \in \J(P)$. The elements $(i',j')$ that contribute $+1$ to the right-hand side of \eqref{eqn:rectfilea} are those for which $i' \geq i$, $j' \geq j$, and $\Tout{i',j'}(I) = 1$; the elements $(i',j')$ that contribute~$-1$ to the right-hand side of \eqref{eqn:rectfilea} are those for which $i' > i$, $j' > j$, and $\Tin{i',j'}(I) = 1$. For example, \cref{fig:rect_ds1s} depicts $p=(2,3)\in P = \rect{4}{7}$ (the box marked with an ``X''), with the boundary of an order ideal $I\in \J(P)$ drawn in red.  In the lower right corners of those $(i',j')$ with $i' \geq i$, $j' \geq j$, we have written a $1$, and in the upper left corners of those $i' > i$, $j' > j$, we have written a $-1$. Observe how along the red path we encounter a $+1$ corner, then a $-1$ corner, then a $+1$ corner, and so on, ending with a~$+1$ corner. 

More generally, it is easy to see that if $p \in I$, then the contributions to the right-hand side of \eqref{eqn:rectfilea} alternate in sign, beginning and ending with a~$+1$, so that the net contribution is~$+1$.

On the other hand, if $p\notin I$ then there are no $i',j'$ satisfying either $i' \geq i$, $j' \geq j'$ and $\Tout{i',j'}(I)=1$, or $i' > i$, $j' > j'$ and $\Tin{i',j'}(I)=1$, so the contribution to the right-hand side of \eqref{eqn:rectfilea} is $0$. 
\end{proof}

\begin{figure}
\begin{center}
\includegraphics[height=3.789cm]{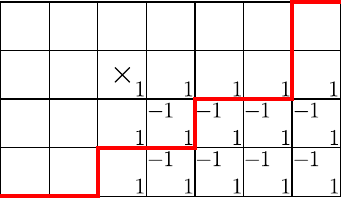}
\end{center}
\caption{An illustration of the proof of \cref{lem:rect_ds1s} with $a=4$, $b=7$, and $(i,j)=(2,3)$. }\label{fig:rect_ds1s} 
\end{figure}

\begin{remark}
The reader may notice considerable similarity between the proof of \cref{lem:rect_ds1s} we just gave and the earlier proof of \cref{thm:rect_sumone}: for instance, both involved noticing a pattern of~$+1,-1,\ldots,+1$ coefficients along a boundary path to obtain an overall sum of $1$. In fact, the ``$180^\circ$ rotation'' of \cref{lem:rect_ds1s} asserts that
\begin{equation} \label{eqn:rectfilea_alt}
1 - \oii{p} = \sum_{i' \leq i, \ j' \leq j} \Tin{i',j'} - \sum_{i' < i, \ j' < j} \Tout{i',j'}.
\end{equation}
Summing \eqref{eqn:rectfilea} and \eqref{eqn:rectfilea_alt} yields precisely the equation $1=\rook{i,j}$ (where we recall the definition~\eqref{eqn:rect_rook} of $\rook{i,j}$ as a sum of toggleability statistics). In this way, \cref{lem:rect_ds1s} actually implies \cref{thm:rect_sumone}. What is more, insofar as \eqref{eqn:rectfilea} and \eqref{eqn:rectfilea_alt} each contribute two of the four sums defining the rook $\rook{i,j}$, we see that the sum of toggleability statistics in \eqref{eqn:rectfilea} can be thought of as ``half a rook.''
\end{remark}

The $k$-th \dfn{file} in $[a]\times [b]$ is the set $\{(i,j)\in[a]\times[b]:j-i=k\}$. 

\begin{thm} \label{thm:rectfile}
Fix $k$ satisfying $1-a\leq k\leq b-1$. Let $B =\{(i,j) \colon \ j-i=k\}\subseteq \rect{a}{b}$ be the $k$-th file in $[a]\times[b]$.  Then $\sum_{p \in B} \oii{p} \tequiv c$  with $c = a(b-k)/(a+b)$ if $k \geq 0$ and $c = b(a+k)/(a+b)$ if $k < 0$.
\end{thm}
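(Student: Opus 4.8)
The plan is to mimic the structure of the proof of \cref{thm:rectfiber}, but using \cref{lem:rect_ds1s} in place of the reduced rooks. For each $k$ with $1-a \leq k \leq b-1$, let $\stat{f}_k \coloneqq \sum_{(i,j):\, j-i = k} \oii{i,j}$ be the file-sum statistic. I would first establish a ``shift'' relation between consecutive files, of the form $\stat{f}_k - \stat{f}_{k+1} \tequiv (\text{something simple})$. Summing the identity \eqref{eqn:rectfilea} from \cref{lem:rect_ds1s} over the boxes $(i,j)$ of the $k$-th file (intersected with $\rect{a}{b}$), and noting that for boxes just outside the poset the associated statistics vanish by our convention, should express $\stat{f}_k$ as a $\tequiv$-combination controlled by how many boxes of $P$ lie weakly southeast versus strictly southeast of the file. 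The key point is that the ``$\Tout$'' contributions and ``$\Tin$'' contributions from \eqref{eqn:rectfilea}, when summed over an entire file, telescope across adjacent files, leaving only boundary terms. I expect this to yield something like $\stat{f}_k - \stat{f}_{k+1} \tequiv \sum_{p \in B'} \Tout{p}$ for a suitable positive or negative fiber $B'$ (the file of rank-one-apart boxes picks out exactly one extremal row or column of the staircase region southeast of the file).

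Next I would invoke \cref{thm:rectfiber}, which tells us $\sum_{p \in B'} \Tout{p} \tequiv b/(a+b)$ or $a/(a+b)$ depending on whether $B'$ is a positive or negative fiber. This converts the recurrence $\stat{f}_k - \stat{f}_{k+1} \tequiv \text{const}$ into an explicit arithmetic recurrence for the constants $c_k$ with $\stat{f}_k \tequiv c_k$. To pin down the additive constant, I would use an anchor: either the extreme file ($k = b-1$ or $k = 1-a$), which is a single box and can be handled directly via \cref{lem:rect_ds1s} (the file-sum there is just $\oii{p}$ for a corner box $p$, whose $\tequiv$-constant is immediate since \eqref{eqn:rectfilea} for a corner box has very few terms), or alternatively use the global relation $\sum_k \stat{f}_k = \sum_{p \in P} \oii{p}$ together with an independent computation of $\sum_{p\in P}\oii{p} \tequiv ab/(a+b)$ (which itself follows by summing \eqref{eqn:rectfilea} over all $p \in P$ and then applying \cref{thm:aboapb}). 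Solving the recurrence with this boundary condition should produce $c_k = a(b-k)/(a+b)$ for $k \geq 0$ and $c_k = b(a+k)/(a+b)$ for $k < 0$; a quick sanity check at $k=0$ gives $ab/(a+b)$ from both formulas, as it must since the main diagonal and the ``antidiagonal'' meeting point agree, and summing $c_k$ over all $k$ recovers $ab/(a+b) \cdot$ nothing—rather, one checks $\sum_k c_k = \sum_{p} \oii{p}$'s constant $= $ the total, which is a useful consistency test.

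The main obstacle I anticipate is getting the bookkeeping of the telescoping exactly right, in particular handling the boundary of the rectangle correctly when the $k$-th file is near a corner and some of the boxes $(i',j')$ appearing in \eqref{eqn:rectfilea} fall outside $\rect{a}{b}$. The convention that out-of-poset statistics are zero makes the formulas uniform, but one must check that the telescoping sum between file $k$ and file $k+1$ really does collapse to a single honest fiber of the rectangle and not to some truncated or shifted set; the asymmetry between the $i' \geq i,\, j' \geq j$ region and the $i' > i,\, j' > j$ region in \eqref{eqn:rectfilea} is what makes this delicate. An alternative, possibly cleaner route that avoids this is to sum \eqref{eqn:rectfilea} directly over each file and recognize the result, modulo $\tequiv$, as a nonnegative combination of reduced rooks $\rrook{i,j}$ plus signed toggleability statistics—then \cref{lem:rect_rook_equiv} and \cref{thm:rect_sumone} finish it in one stroke, exactly as in the antichain case. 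I would try the direct telescoping first and fall back on the rook decomposition if the boundary terms become unwieldy.
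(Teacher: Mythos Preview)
Your primary approach is sound and would work. It differs from the paper's in how the constant is extracted once the file sum has been rewritten in terms of $\Tout$'s, but both routes rest on the same key intermediate identity: for $k \geq 0$,
\[
\stat{f}_k \ \tequiv \ \sum_{\substack{(i,j)\in P \\ j \geq k+1}} \Tout{i,j}.
\]
This is exactly equation~\eqref{eqn:rectfilec} in the paper, obtained by summing \cref{lem:rect_ds1s} over the boxes of the $k$-th file and tracking coefficients carefully (the paper organizes the bookkeeping with the counting function $f(i,j,k)$). Your ``telescoping'' intuition points here, but you would need to carry out this computation to make the step rigorous; it is not automatic that the sum collapses to a half-rectangle of $\Tout$'s. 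Once you have it, your difference $\stat{f}_k - \stat{f}_{k+1}$ is precisely the negative fiber $\sum_i \Tout{i,k+1}$, and then \cref{thm:rectfiber} together with the anchor $\stat{f}_{b-1}=\oii{1,b}=\sum_i \Tout{i,b}\tequiv a/(a+b)$ solves the recurrence to give $c_k = a(b-k)/(a+b)$. This is arguably tidier than the paper's endgame, which instead splits $P = P_L \cup P_R$ with $P_R=\{j>k\}$, sums all rooks over each half to get a $2\times 2$ linear system in $\sum_{P_L}\Tout$ and $\sum_{P_R}\Tout$, and solves.

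Two small corrections. First, the global order-ideal constant is $ab/2$, not $ab/(a+b)$ (see \cref{cor:rect_oi}); summing \eqref{eqn:rectfilea} over all $p\in P$ gives $\sum_{p}\oii{p}\tequiv \sum_{i,j}(i+j-1)\Tout{i,j}$, which is not a scalar multiple of $\sum_p \Tout{p}$, so \cref{thm:aboapb} alone does not yield your alternative anchor. Second, your fallback of ``recognizing the result as a nonnegative combination of reduced rooks $\rrook{i,j}$'' does not match the structure: $\sum_{P_R}\Tout$ is a half-rectangle, not a union of row-plus-column hooks, so it is not directly a combination of $\rrook{i,j}$'s. The paper's $2\times 2$ rook system is the correct analogue of that idea.
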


\begin{proof}
We first consider the case $k \geq 0$. 

Let $f(i,j,k)$ denote the number of elements in the sub-rectangle $\rect{i}{j}$ belonging to $B$. (If~$i \leq 0$ or $j\leq 0$, then $f(i,j,k)=0$.) For example, \cref{fig:fijk} depicts these numbers $f(i,j,k)$ when $P=\rect{4}{8}$ and $k=3$.

\begin{figure}
\begin{center}
\includegraphics[height=2.5cm]{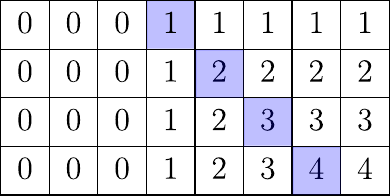}
\end{center} 
\caption{The numbers $f(i,j,k)$ from the proof of \cref{thm:rectfile} with $a=4$, $b=8$, and $k=3$. We have written $f(i,j,k)$ inside of box $(i,j)$ and highlighted the boxes of the $k$-th file in blue.}  \label{fig:fijk}
\end{figure}

With this notation, we have
\begin{eqnarray}
\notag \qquad \sum_{p \in B} \oii{p} \! & = & \sum_{(i,j) \in P} (f(i,j,k) \, \Tout{i,j} - f(i-1,j-1,k) \, \Tin{i,j}) \\
\notag & = & \sum_{(i,j) \in P} (f(i,j,k) - f(i-1,j-1,k)) \, \Tout{i,j}  - \sum_{(i,j) \in P} f(i-1,j-1,k) \, \T{i,j} \\
\notag & = & \!\! \sum_{\substack{ (i,j) \in P, \\ 1-i \leq k \leq j-1}} \!\!\!\! \Tout{i,j}\ - \ \sum_{(i,j) \in P} f(i-1,j-1,k) \, \T{i,j} \\
\label{eqn:rectfileb} & = & \!\! \sum_{\substack{1 \leq i \leq a, \\ \ k+1 \leq j \leq b}} \!\!\!\! \Tout{i,j}  \ - \ \sum_{(i,j) \in P} f(i-1,j-1,k) \, \T{i,j}.
\end{eqnarray}
The first equality above holds by applying \cref{lem:rect_ds1s} to all $p\in B$  (with $i',j'$ replaced by $i,j$). The second equality holds because $\Tin{i,j} = \T{i,j} + \Tout{i,j}$. The third equality holds because the summand $f(i,j,k) - f(i-1,j-1,k)$ is $1$ or $0$ according to whether there is or is not an element of $B$ contained in the \reflectbox{$\mathsf L$}-shape consisting of the elements $(1,j),(2,j),\dots,(i-1,j),(i,j),(i,j-1),\dots,(i,2),(i,1)$, and such an element exists when $1-i \leq k \leq j-1$ (consult \cref{fig:fijk}). The fourth equality follows from rewriting the inequalities governing the indices, using the assumption~$k\geq 0$.

An alternative way to write the right-hand side of \eqref{eqn:rectfileb} is as
\[ \sum_{\substack{(i,j) \in P_R}} \!\! \Tout{i,j} \ - \ \sum_{(i,j) \in P} f(i-1,j-1,k) \, \T{i,j},\]
where we have divided the rectangle $P$ into left and right halves $P_L\coloneqq \{(i,j)\in\rect{a}{b}\colon j \leq k\}$ and $P_R\coloneqq\{(i,j)\in\rect{a}{b}\colon j > k\}$.  We have so far shown that
\begin{equation} \label{eqn:rectfilec}
\sum_{p\in B} \oii{p} \tequiv \sum_{\substack{(i,j) \in P_R}} \!\! \Tout{i,j}.
\end{equation}

Applying \cref{lem:rect_rook_equiv}, \cref{thm:rect_sumone}, and \eqref{eqn:rect_red_rook} yields
\begin{equation}\label{eqn:rectfiled}
ak = \sum_{p \in P_L} \rook{p} \tequiv (a+k) \sum_{(i,j) \in P_L} \Tout{i,j} + k \sum_{(i,j) \in P_R} \Tout{i,j} \ .
\end{equation}
The coefficient of the sum over $P_L$ is $a+k$ because a rook in $P_L$ is attacked by $a+k$ rooks in $P_L$ ($a$ in its column and $k$ in its row); the coefficient of the sum over $P_R$ is only $k$ because a rook in~$P_R$ is attacked by only $k$ rooks in $P_L$.  Likewise, we have
\begin{equation}\label{eqn:rectfilee}
a(b-k) = \sum_{p \in P_R} \rook{p} \tequiv (b-k) \sum_{(i,j) \in P_L} \Tout{i,j} + (a+b-k) \sum_{(i,j) \in P_R} \Tout{i,j}.
\end{equation}
Multiplying \eqref{eqn:rectfilee} by $a+k$ and \eqref{eqn:rectfiled} by $b-k$ and subtracting, we get
\[(a+k)a(b-k) - ak(b-k) \tequiv ((a+k)(a+b-k)-k(b-k)) \sum_{(i,j) \in P_R} \Tout{i,j},\]
which simplifies to
\[a^2 (b-k) \tequiv a(a+b) \sum_{(i,j) \in P_R} \Tout{i,j}.\]
Returning to~\eqref{eqn:rectfilec}, we see from the above that
\[\sum_{p \in B} \oii{p} \tequiv \frac{a(b-k)}{a+b},\]
which proves the claimed result in the case $k \geq 0$.

To handle the case $k < 0$, exchange $a$ and $b$, and replace $k$ by $-k$.
\end{proof}

\begin{cor} \label{cor:rect_oi}
For $P=\rect{a}{b}$, we have $\sum_{p\in P}\oii{p} \tequiv \frac{ab}{2}$.
\end{cor}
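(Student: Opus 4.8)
The plan is to derive the corollary directly from \cref{thm:rectfile} by summing over all files. The rectangle $\rect{a}{b}$ is partitioned into its files $B_k = \{(i,j)\in\rect{a}{b}\colon j-i=k\}$ for $k$ ranging over $1-a \leq k \leq b-1$, and the order ideal cardinality statistic splits accordingly as $\sum_{p\in P}\oii{p} = \sum_{k=1-a}^{b-1}\sum_{p\in B_k}\oii{p}$. Since $\tequiv$ is a congruence (and in particular respects finite sums), it suffices to add up the file-level congruences supplied by \cref{thm:rectfile}.

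Concretely, I would write
\[
\sum_{p\in P}\oii{p} \;\tequiv\; \sum_{k=0}^{b-1}\frac{a(b-k)}{a+b} \;+\; \sum_{k=1-a}^{-1}\frac{b(a+k)}{a+b},
\]
where the first sum collects the files with $k\geq 0$ and the second collects those with $k<0$, using the two cases of the constant $c$ in \cref{thm:rectfile}. The first sum equals $\frac{a}{a+b}\sum_{k=0}^{b-1}(b-k) = \frac{a}{a+b}\cdot\frac{b(b+1)}{2}$, and, after the reindexing $m = a+k$, the second sum equals $\frac{b}{a+b}\sum_{m=1}^{a-1}m = \frac{b}{a+b}\cdot\frac{a(a-1)}{2}$. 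Adding these gives $\frac{ab}{2(a+b)}\big((b+1)+(a-1)\big) = \frac{ab}{2}$, which is exactly the claimed value.

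This is essentially a bookkeeping argument, so there is no serious obstacle: the only point requiring a little care is keeping the two ranges ($k\geq 0$ versus $k<0$) straight and evaluating the two arithmetic progressions correctly, which I would present in a single displayed computation. One could alternatively phrase the proof without invoking \cref{thm:rectfile} by summing the rook identities directly — every box of $P$ is attacked $a+b$ times by the reduced rooks, so $\sum_{(i,j)}\rrook{i,j} = (a+b)\sum_p\Tout{p}$ — but that route only recovers the antichain cardinality statement of \cref{thm:aboapb}, not the order ideal version, so aggregating the file congruences is the clean path here.
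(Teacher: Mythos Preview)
Your proof is correct and follows the same overall approach as the paper: both arguments sum the file congruences from \cref{thm:rectfile} to conclude $\sum_{p\in P}\oii{p}\tequiv c$ for some constant $c$. The only difference is in how $c$ is identified: you carry out the explicit arithmetic (which the paper notes is possible), whereas the paper instead uses a self-duality argument---the involution $I\mapsto I^*$ given by rotating the complement shows the average of $\#I$ over $\J(P)$ is $ab/2$, forcing $c=ab/2$. Your direct computation is perfectly fine and arguably more self-contained; the paper's symmetry shortcut has the mild advantage of generalizing verbatim to any self-dual poset (as they use later for other minuscule posets).
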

\begin{proof}
The statistics in \cref{thm:rectfile} are refinements of order ideal cardinality, i.e., they sum to~$\sum_{p \in P} \oii{p}$. Hence, by invoking \cref{thm:rectfile} and summing over all files, we see that $\sum_{p\in P}\oii{p} \tequiv c$ for some constant $c \in \RR$. One could find $c$ by explicitly evaluating and combining two finite geometric progressions, but a simpler way to find $c$ uses symmetry. Specifically, consider the complementary statistic $\sum_{p \in P} (1 - \oii{p})$. On the one hand, the sum of the statistics $\sum_{p\in P}\oii{p}$ and~$\sum_{p \in P} (1 - \oii{p})$ is the constant statistic $\sum_{p\in P} 1 = ab$, so $\sum_{p\in P}\oii{p} \tequiv ab-c$. On the other hand, the involution on order ideals $I$ that rotates the complement of $I$ by $180^\circ$ swaps the two statistics, implying $ab-c=c$. Hence, $c=ab/2$. (In the proof of \cref{thm:min_oi} below, we will explain that this same symmetry argument works for any self-dual poset.)
\end{proof}

That the order ideal cardinality statistic, as well as its file refinements, are homomesic under rowmotion for the rectangle was again first proved by Propp and Roby~\cite[Theorem~23]{propp2015homomesy}. The stronger results that these statistics are $\tequiv \const$ (i.e., \cref{thm:rectfile} and \cref{cor:rect_oi}) are new. One of our main tools in establishing these results was \cref{lem:rect_ds1s}, which is also new. 

\subsection{The shifted staircase}

The $n$-th \dfn{shifted staircase} poset, which we denote by~$\sstair{n}$, is the poset realized as the subset $\{(i,j): 1\leq i\leq j\leq n\}$ of~$\quadrant$. Our notation for this poset comes from the fact that it is also the quotient of $\rect{n}{n}$ by the involutive poset automorphism $(i,j)\mapsto (j,i)$.

We use $\iota\colon \J(\sstair{n})\to \J(\rect{n}{n})$ to denote the map sending each order ideal to its preimage under the quotient map; that is, $\iota(I) \coloneqq \{(i,j), (j,i)\colon (i,j)\in I\}$. Observe that $\iota(I)$ for $I\in \J(\sstair{n})$ is indeed always an order ideal in $\J(\rect{n}{n})$. Note importantly that all the statistics on $\J(P)$ we care about are invariant under $\iota$. In other words, 
\begin{align*}
\Tin{i,j}(I) &= \Tin{i,j}(\iota(I)) =  \Tin{j,i}(\iota(I)); \\
\Tout{i,j}(I) &= \Tout{i,j}(\iota(I)) =  \Tout{j,i}(\iota(I)); \\
\oii{i,j}(I) &= \oii{i,j}(\iota(I)) =  \oii{j,i}(\iota(I))
\end{align*}
for all $(i,j)\in\sstair{n}$ and $I\in \J(\sstair{n})$. In other words, if we take any identity satisfied by the $\Tin{i,j}$, $\Tout{i,j}$, and $\oii{i,j}$ for $\rect{n}{n}$, it will also hold as an identity for $\sstair{n}$, provided we equate $(i,j)$ and $(j,i)$. As we shall see, $\iota$ therefore allows us to easily transfer many results from $\rect{n}{n}$ to $\sstair{n}$.

From now on throughout this subsection, we fix $P=\sstair{n}$. 

\subsubsection{Rooks}

For $(i,j)\in P$, define the \dfn{rook} $\rook{i,j}\colon \J(P)\to\RR$ by
\begin{equation} \label{eqn:sstair_rook}
\rook{i,j} \ \coloneqq \sum_{i' \leq i, \ j' \leq j} \Tin{i',j'} \ \ - \sum_{\substack{i' < i,\  j' < j,\\ i' < j'}} \Tout{i',j'} \ \ + \sum_{i' \geq i, \ j' \geq j} \Tout{i',j'} \ \ - \sum_{\substack{i' > i, \ j' > j,\\ i'<j' }} \Tin{i',j'}
\end{equation}
and the \dfn{reduced rook} $\rrook{i,j}\colon \J(P)\to\RR$ by
\begin{equation} \label{eqn:sstair_red_rook}
\rrook{i,j} \ \coloneqq \ \sum_{j'} \Tout{i,j'} \ + \ \sum_{i'} \Tout{i',j} \ + \ \sum_{i' < i}\Tout{i',i'} \ + \ \sum_{j' > j}\Tout{j',j'}.
\end{equation}
(Observe how we use the same notation for rooks for different posets, allowing context to differentiate the different $\rook{i,j}$.) The sum in~\eqref{eqn:sstair_rook} defining $\rook{i,j}$ can be represented pictorially as in the left image in \cref{fig:sstair_rook}. 

\begin{figure}
\begin{center}
\includegraphics[height=5.595cm]{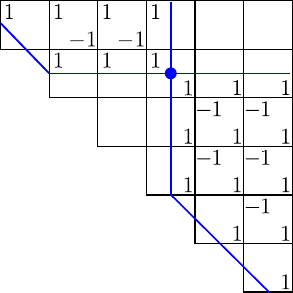}\qquad\qquad\qquad\qquad\includegraphics[height=5.595cm]{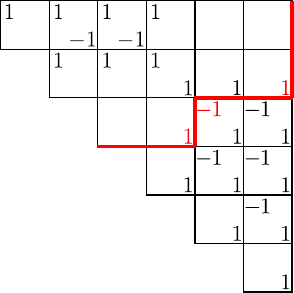}
\end{center}
\caption{On the left is an illustration of the rook $R_{2,4}$ on the shifted staircase $([6]\times[6])/\mathfrak{S}_2$. On the right is an illustration of the proof of \cref{thm:sstair_sumone}. The sum along the red boundary path is $+1-1+1=1$.} \label{fig:sstair_rook}
\end{figure}

It is again immediate from the definitions \eqref{eqn:sstair_rook} and \eqref{eqn:sstair_red_rook} of $\rook{i,j}$ and $\rrook{i,j}$ that we have the following relation between these statistics.

\begin{lemma} \label{lem:sstair_rook_equiv}
For all $(i,j)\in \sstair{n}$, $\rook{i,j} \tequiv \rrook{i,j}$.
\end{lemma}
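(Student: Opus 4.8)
The plan is to mimic exactly the structure of the rectangle case, where the analogous statement (\cref{lem:rect_rook_equiv}) followed immediately from the observation that $\Tin{p} - \Tout{p} = \T{p}$, so that replacing any $\Tin{p}$ appearing in the definition of $\rook{i,j}$ by $\Tout{p}$ (or vice versa) changes the statistic only by an element of $\Span\{\T{p} : p \in P\}$. The content of \cref{lem:sstair_rook_equiv} is thus purely a bookkeeping identity: it says that if we take the expression~\eqref{eqn:sstair_rook} for $\rook{i,j}$ and, modulo the $\T{p}$, convert all the $\Tin{i',j'}$ terms into $\Tout{i',j'}$ terms, then after cancellation we are left with exactly the four sums of $\Tout{}$'s appearing in~\eqref{eqn:sstair_red_rook}.

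Concretely, first I would note that in~\eqref{eqn:sstair_rook}, the $\Tin{i',j'}$ occur over the index set $\{i' \le i,\ j' \le j\}$ and (with a sign) over $\{i' > i,\ j' > j,\ i' < j'\}$; replacing each such $\Tin{i',j'}$ by $\Tout{i',j'}$ modulo $\Span\{\T{p}\}$ gives
\[
\rook{i,j} \ \tequiv \ \sum_{i' \le i,\ j' \le j} \Tout{i',j'}\ -\ \sum_{\substack{i' < i,\ j' < j,\\ i' < j'}} \Tout{i',j'}\ +\ \sum_{i' \ge i,\ j' \ge j} \Tout{i',j'}\ -\ \sum_{\substack{i' > i,\ j' > j,\\ i' < j'}} \Tout{i',j'}.
\]
Then I would carry out the cancellation of these four sums over subsets of $\sstair{n}$ (all understood as subsets of the shifted staircase, so $i' \le j'$ throughout). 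The boxes $(i',j')$ with $i' < i$ and $j' < j$ appear in the first sum with coefficient $+1$; those among them with $i' < j'$ are cancelled by the second sum, leaving only the diagonal boxes $(i',i')$ with $i' < i$. Symmetrically, the boxes with $i' > i$ and $j' > j$ appear in the third sum with $+1$ and are cancelled by the fourth except for the diagonal boxes $(j',j')$ with $j' > j$ (here using that on the diagonal $i' = j'$ the strict inequality $i' < j'$ fails, so diagonal boxes are never removed by the second or fourth sums). What survives is: the boxes with $i' = i$ (the row through $(i,j)$), the boxes with $j' = j$ (the column through $(i,j)$), the diagonal boxes $(i',i')$ with $i' < i$, and the diagonal boxes $(j',j')$ with $j' > j$ — which is precisely~\eqref{eqn:sstair_red_rook}. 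The one point to be a little careful about is the multiplicity of the box $(i,j)$ itself and of a diagonal box $(i',i')$ lying in row $i$ or column $j$, but since the reduced rook in~\eqref{eqn:sstair_red_rook} is explicitly allowed to count boxes with multiplicity (just as in the rectangle case, where $(i,j)$ is attacked twice), this causes no difficulty: one checks that the multiplicities on both sides agree, by casework on whether a given box lies strictly inside, on, or outside the row/column/diagonal regions.

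There is no real obstacle here; the main (minor) thing to get right is the careful inclusion-exclusion of the index sets, in particular tracking which diagonal boxes survive and with what multiplicity, and confirming that the surviving terms are exactly the four sums in~\eqref{eqn:sstair_red_rook} with the stated multiplicities. Since the only tool used is the congruence $\Tin{p} \tequiv \Tout{p}$ together with the fact that $\tequiv$ is compatible with $\RR$-linear combinations, the proof is a one-line citation of this congruence followed by the combinatorial verification of the set identity; I would present it in a couple of sentences, possibly with a figure like \cref{fig:sstair_rook} to make the cancellation visually transparent, exactly as was done for \cref{lem:rect_rook_equiv}.
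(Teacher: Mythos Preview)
Your proposal is correct and follows exactly the approach the paper intends: the paper states the lemma as ``immediate from the definitions~\eqref{eqn:sstair_rook} and~\eqref{eqn:sstair_red_rook},'' relying (as in the rectangle case, \cref{lem:rect_rook_equiv}) on the congruence $\Tin{p}\tequiv\Tout{p}$ and the resulting cancellation of index sets. Your detailed bookkeeping of which boxes survive is precisely the verification the paper leaves to the reader.
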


The blue lines in the left image in \cref{fig:sstair_rook} show the boxes ``attacked'' by $\rrook{i,j}$ and $\rook{i,j}$ (with the box~$(i,j)$ itself again being attacked twice). Because they are analogous to the rooks on the rectangle, we still call these statistics rooks even though they no longer attack the way a rook chess piece attacks.

Furthermore, we again have the same fundamental fact as with the rectangle that the $\rook{i,j}$ are identically equal to one.

\begin{thm} \label{thm:sstair_sumone}
For all $(i,j)\in \sstair{n}$, $\rook{i,j}=1$.
\end{thm}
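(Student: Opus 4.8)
The plan is to imitate the proof of \cref{thm:rect_sumone} almost verbatim, replacing the rectangle by the staircase-shaped region. Fix $(i,j) \in P = \sstair{n}$ and $I \in \J(P)$, draw $P$ as its region of boxes in the plane (as in \cref{fig:sstair_rook}), and let $\gamma$ be the lattice path that separates the boxes of $I$ from the boxes of $P \setminus I$. The first step is to observe, exactly as in the proof of \cref{thm:rect_sumone}, that a box $p$ contributes to $\rook{i,j}(I)$ only if it borders $\gamma$, and that when $p$ does border $\gamma$ we have $\Tin{p}(I) = 1$ precisely when $\gamma$ contains the top and left sides of $p$ (so the upper-left corner label of $p$ in the pictorial representation \eqref{eqn:sstair_rook} lies on $\gamma$) and $\Tout{p}(I) = 1$ precisely when $\gamma$ contains the bottom and right sides of $p$ (so the lower-right corner label of $p$ lies on $\gamma$). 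Hence $\rook{i,j}(I)$ equals the sum of those corner labels of \eqref{eqn:sstair_rook} that lie along $\gamma$, as illustrated in red on the right of \cref{fig:sstair_rook}.

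The heart of the argument is then to show that, reading these labels in the order in which $\gamma$ passes through them, they form the alternating sequence $+1, -1, +1, \dots, +1$, beginning and ending with $+1$, so that the total is $1$. As in the rectangle case I would split into the two cases $(i,j) \in I$ (so $\gamma$ passes below and to the right of box $(i,j)$) and $(i,j) \notin I$ (so $\gamma$ passes above and to the left of it). Away from the main diagonal the labels in \eqref{eqn:sstair_rook} are literally the same as those of the rectangle rook $\rook{i,j}$ on $\rect{n}{n}$, so the sign-alternation along the parts of $\gamma$ that stay off the diagonal is inherited from (the proof of) \cref{thm:rect_sumone}; what must be checked separately is the behaviour of $\gamma$ near the main diagonal and near the staircase portion of the boundary of $P$.

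I expect this last point to be the only genuine obstacle. Concretely, the diagonal boxes $(k,k)$ are omitted from the two subtracted sums in \eqref{eqn:sstair_rook}, so a diagonal box carries a $+1$ label and never a $-1$; one must verify that this correction is exactly what is needed so that the alternation of signs is not disrupted when $\gamma$ runs along the staircase edge of $P$ or crosses the main diagonal, and one must also check, according to whether the endpoints of $\gamma$ lie on the top edge, the right edge, the bottom edge, or the staircase edge of $P$, that the first and last nonzero labels encountered are both $+1$. This is elementary local bookkeeping with the picture in \cref{fig:sstair_rook}, and once it is in place the theorem follows at once. (One could instead try to deduce the result from \cref{thm:rect_sumone} by folding the $\iota$-symmetric boundary path of $\iota(I) \in \J(\rect{n}{n})$ across the main diagonal, but turning this into a proof of the exact identity $\rook{i,j} = 1$, rather than merely $\rook{i,j} \tequiv 1$, appears to require essentially the same diagonal bookkeeping, so the direct path argument is cleaner.)
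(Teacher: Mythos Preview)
Your proposal is correct and follows essentially the same approach as the paper: the paper simply asserts that the proof is analogous to \cref{thm:rect_sumone}, considering the southeast boundary path of $I$ and observing that the nonzero coefficients along it form the pattern $+1,-1,+1,\dots,+1$. Your more detailed sketch of the diagonal and staircase-boundary bookkeeping is exactly the kind of local verification the paper leaves implicit.
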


The proof of \cref{thm:sstair_sumone} is analogous to the proof of \cref{thm:rect_sumone}: we consider the southeast boundary of the boxes in any order ideal $I\in \J(P)$ and observe that the nonzero coefficients adjacent to this path are $1, -1, 1,\ldots, 1$, so always sum to $1$. The right image in \cref{fig:sstair_rook} shows an example of this.

Rooks for the shifted staircase were introduced by Hopkins~\cite{hopkins2017cde}, and \cref{lem:sstair_rook_equiv} and \cref{thm:sstair_sumone} were first proved there (see~\cite[Lemma~4.3 and Lemma~4.4]{hopkins2017cde}). (In fact, rooks were defined  in~\cite{hopkins2017cde} not just for the shifted staircase, but for any shifted Young diagram shape.)

\subsubsection{Antichains}

Now we consider statistics that are in $\Span\{\Tout{p}:p\in P\}$ and satisfy $\stat{f}\tequiv \const$. The reduced rooks $\rrook{i,j}$ are of this form; but we want to see if other, natural such $\stat{f}$ can be realized as linear combinations of the $\rrook{i,j}$.

\begin{lemma} \label{lem:sstair_diag}
We have $\sum_{i}\Tout{i,i} \tequiv \frac{1}{2}$ for $P=\sstair{n}$.
\end{lemma}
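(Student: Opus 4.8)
The plan is to isolate the diagonal sum by taking suitable $\tequiv$-combinations of the reduced rooks, exploiting that $\rrook{i,j}\tequiv 1$ for every $(i,j)\in\sstair{n}$ (combine \cref{lem:sstair_rook_equiv} with \cref{thm:sstair_sumone}). Write $D\coloneqq\sum_i\Tout{i,i}$ and, for $1\le i\le n$, let $\rho_i\coloneqq\sum_{j'}\Tout{i,j'}$ be the sum of the antichain indicator functions along the $i$-th row of $\sstair{n}$, so in particular $\rho_n=\Tout{n,n}$ (the box $(n,n)$ being the only one in its row); recall the convention that $\Tout{i,j'}=0$ when $(i,j')\notin P$. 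The whole argument is then just bookkeeping with the definition~\eqref{eqn:sstair_red_rook}, once one spots the right two combinations.

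The first ingredient is the single rook $\rrook{1,1}$. In~\eqref{eqn:sstair_red_rook} the column sum $\sum_{i'}\Tout{i',1}$ collapses to $\Tout{1,1}$, the term $\sum_{i'<1}\Tout{i',i'}$ is empty, and $\sum_{j'>1}\Tout{j',j'}=D-\Tout{1,1}$; hence $\rrook{1,1}=\rho_1+D$, which gives $\rho_1+D\tequiv 1$.

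The second ingredient is a telescoping relation among consecutive rooks. Fix $i$ with $1\le i<n$ and any $j$ with $i<j\le n$ (so both $(i,j)$ and $(i+1,j)$ lie in $\sstair{n}$). In the difference $\rrook{i,j}-\rrook{i+1,j}$, the column-sum term $\sum_{i'}\Tout{i',j}$ and the tail $\sum_{j'>j}\Tout{j',j'}$ are identical for the two rooks and cancel; the diagonal-head terms $\sum_{i'<i}\Tout{i',i'}$ and $\sum_{i'<i+1}\Tout{i',i'}$ differ by $-\Tout{i,i}$; and the row terms differ by $\rho_i-\rho_{i+1}$. So $\rrook{i,j}-\rrook{i+1,j}=\rho_i-\rho_{i+1}-\Tout{i,i}$, and since both rooks are $\tequiv 1$ we obtain $\rho_i-\rho_{i+1}\tequiv\Tout{i,i}$ for every $1\le i<n$. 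Summing over $i=1,\dots,n-1$ and telescoping the left side gives $\rho_1-\rho_n\tequiv\sum_{i=1}^{n-1}\Tout{i,i}=D-\Tout{n,n}$; since $\rho_n=\Tout{n,n}$ this simplifies to $\rho_1\tequiv D$.

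Combining $\rho_1+D\tequiv 1$ with $\rho_1\tequiv D$ yields $2D\tequiv 1$, i.e.\ $\sum_i\Tout{i,i}\tequiv\tfrac12$, as claimed (for $n=1$ the telescoping is vacuous and the conclusion is immediate from $\rrook{1,1}=2\Tout{1,1}\tequiv1$). There is no serious obstacle here: the hard part -- proving $\rrook{i,j}=1$ -- is already in hand, so the only thing to find is the pair of rook combinations above, and the verification is routine. As a sanity check / alternative, one could instead sum all the diagonal reduced rooks $\rrook{k,k}$ (obtaining $2S+(n-1)D\tequiv n$ with $S\coloneqq\sum_{p\in P}\Tout{p}$) and transfer the rectangle identity $\sum_{p\in\rect{n}{n}}\Tout{p}\tequiv n/2$ of \cref{thm:aboapb} through $\iota$ (obtaining $2S-D\tequiv n/2$); subtracting these gives $nD\tequiv n/2$, the same conclusion.
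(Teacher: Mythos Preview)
Your proof is correct and, once you take $j=i+1$ in the telescoping step, unwinds to exactly the paper's identity $2\sum_i\Tout{i,i}=\sum_{i=1}^n\rrook{i,i}-\sum_{i=1}^{n-1}\rrook{i,i+1}$; the paper simply writes this combination down and asks the reader to verify it, whereas you derive it via the intermediate quantity $\rho_1$. So the approach is essentially the same, just presented more constructively.
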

\begin{proof}
It is routine to verify that
\[2\cdot \!\!\sum_{1\leq i \leq n}\Tout{i,i} = \sum_{1\leq i \leq n}\rrook{i,i} \ -  \sum_{1\leq i \leq n-1}\rrook{i,i+1}\]
in a way similar to what we have already seen in other arguments, keeping track of how many times each box is attacked by various combinations of rooks. Since \cref{lem:sstair_rook_equiv} and \cref{thm:sstair_sumone} tell us that $\rrook{i,j}\tequiv 1$ for every $(i,j)\in P$, we conclude that~$\sum_{i}\Tout{i,i} \tequiv \frac{1}{2}$, as claimed.
\end{proof}

\begin{lemma} \label{lem:sstair_els}
Fix $1\leq i \leq n$. Then we have $\sum_{j \leq i} \Tout{j,i} + \sum_{i < j} \Tout{i,j} \tequiv \frac{1}{2}$ for $P=\sstair{n}$.
\end{lemma}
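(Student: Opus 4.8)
The plan is to deduce this lemma from the positive‑fiber case of \cref{thm:rectfiber} by transferring that identity from the rectangle $\rect{n}{n}$ to the shifted staircase $\sstair{n}$ via the map $\iota$, exactly in the spirit of the transfer principle recorded at the beginning of this subsection. The key geometric observation is that the ``hook'' $H_i \coloneqq \{(j,i)\colon j\le i\}\cup\{(i,j)\colon i<j\}$ whose antichain indicators we are summing is precisely the image, under the quotient map $\rect{n}{n}\to\sstair{n}$, of the $i$‑th positive fiber $\{i\}\times[n]$ of $\rect{n}{n}$: a box $(i,j)$ of that fiber with $j\ge i$ maps to itself, while a box $(i,j)$ with $j<i$ maps to its transpose $(j,i)$.

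First I would invoke \cref{thm:rectfiber} with $a=b=n$ for the positive fiber $B=\{i\}\times[n]$; since $n/(n+n)=\tfrac12$, this supplies real numbers $c_{i',j'}$ (for $(i',j')\in\rect{n}{n}$) with
\[\sum_{j=1}^{n}\Tout{i,j} \;=\; \tfrac12 \;+\; \sum_{(i',j')\in\rect{n}{n}} c_{i',j'}\,\T{i',j'}\]
as an equality of functions on $\J(\rect{n}{n})$. Next I would evaluate this identity at $\iota(I)$ for an arbitrary $I\in\J(\sstair{n})$ and apply the displayed $\iota$‑invariance of the toggleability statistics: for a rectangle box $(a',b')$ with $a'>b'$ one has $\Tout{a',b'}(\iota(I))=\Tout{b',a'}(I)$ and $\T{a',b'}(\iota(I))=\T{b',a'}(I)$, while boxes on or above the diagonal are unaffected. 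On the left‑hand side this folds $\sum_{j=1}^{n}\Tout{i,j}(\iota(I))$ into $\sum_{j\le i}\Tout{j,i}(I)+\sum_{i<j}\Tout{i,j}(I)$, which is exactly the statistic in the lemma; on the right‑hand side it rewrites $\sum c_{i',j'}\T{i',j'}$ as $\sum_{p\in\sstair{n}} d_p\,\T{p}$ for suitable coefficients $d_p$ (obtained by adding $c_{i',j'}$ and $c_{j',i'}$ for off‑diagonal boxes). This yields $\sum_{j\le i}\Tout{j,i}+\sum_{i<j}\Tout{i,j} \tequiv \tfrac12$ on $\J(\sstair{n})$, as claimed.

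There is essentially no hard step here: the only ``work'' is the bookkeeping of which coordinate gets transposed, and the only point requiring care is that $\iota$ is injective but not surjective, so the rectangle identity is being restricted to the symmetric order ideals $\iota(\J(\sstair{n}))$ — which is all we need, and the resulting equality of functions on $\J(\sstair{n})$ is genuine. As a cross‑check, and as an alternative proof in the spirit of \cref{lem:sstair_diag} that stays entirely within the staircase, one can instead verify directly that twice the hook statistic equals an explicit signed sum of reduced rooks $\rrook{i',j'}$ whose multiplicities add up to $1$, and then apply \cref{lem:sstair_rook_equiv} and \cref{thm:sstair_sumone}; this reduces to checking that every box is attacked the correct number of times, just as in the earlier rook computations.
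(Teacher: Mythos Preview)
Your argument is correct. It is not, however, the route the paper takes for this particular lemma: the paper stays inside the staircase, checking the identity
\[
\sum_{j\le i}\Tout{j,i}+\sum_{i<j}\Tout{i,j}=\rrook{i,i}-\sum_{j}\Tout{j,j}
\]
and then invoking $\rrook{i,i}\tequiv 1$ together with \cref{lem:sstair_diag}. Your transfer-via-$\iota$ proof is genuinely different --- and in fact it is exactly the argument the paper itself uses later, for the $q$-analogue of this statement (\cref{lem:q_sstair_fiber}), where the hook statistic is identified with the positive-fiber statistic on $\rect{n}{n}$ pulled back along $\iota$. Your approach is arguably cleaner here, since it bypasses the need for \cref{lem:sstair_diag} and the staircase rooks entirely; the paper's approach has the virtue of being self-contained within the shifted-staircase rook machinery and of making the rook identity $\rrook{i,i}=\text{hook}_i+\text{diagonal}$ explicit. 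Your closing remark about an alternative proof via a signed sum of reduced rooks is in the spirit of the paper's actual argument, though the paper needs only the single rook $\rrook{i,i}$ together with the diagonal lemma rather than a more elaborate combination.
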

\begin{proof}
It is routine to verify that
\[ \sum_{1 \leq j \leq i} \Tout{j,i} \ + \sum_{i < j \leq n} \Tout{i,j} = \rrook{i,i} \ - \sum_{1\leq j \leq n}\Tout{j,j}.\]
\Cref{lem:sstair_rook_equiv} and \cref{thm:sstair_sumone} tell us that~$\rrook{i,j}\tequiv 1$ for every $(i,j)\in P$, and \cref{lem:sstair_diag} tells us that~$\sum_{i}\Tout{i,i}\tequiv \frac{1}{2}$; hence, $\sum_{j \leq i} \Tout{j,i} + \sum_{i < j} \Tout{i,j} \tequiv \frac{1}{2}$.
\end{proof}

Observe that the statistics from \cref{lem:sstair_diag,lem:sstair_els} refine (twice) the antichain cardinality statistic. That is, we have
\[\sum_{1\leq i \leq n}\Tout{i,i} \ + \ \sum_{i=1}^{n} \left( \sum_{1\leq j \leq i} \Tout{j,i} \ + \sum_{i < j \leq n} \Tout{i,j} \right)= 2\cdot \sum_{p\in P} \Tout{p}.\]
Hence we deduce the following corollary.

\begin{thm} \label{thm:sstair_a}
For $P=\sstair{n}$, we have $\sum_{p\in P} \Tout{p} \tequiv \frac{n+1}{4}$.
\end{thm}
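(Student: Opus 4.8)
The plan is to deduce \cref{thm:sstair_a} purely by combining the two preceding lemmas through the refinement identity that was displayed immediately before the statement. First I would pin down that identity explicitly by a double-counting check: in the sum $\sum_{1\le i\le n}\Tout{i,i} + \sum_{i=1}^n\big(\sum_{1\le j\le i}\Tout{j,i} + \sum_{i<j\le n}\Tout{i,j}\big)$, a diagonal box $(i,i)$ is counted once in the first sum and once as the ``$j=i$'' piece of the term of the second sum indexed by $i$; an off-diagonal box $(i,j)$ with $i<j$ is counted once in the ``$i<j$'' piece of the term indexed by $i$ and once in the ``$j\le i'$'' piece of the term indexed by $i'=j$. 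So every box of $\sstair{n}$ is counted exactly twice, and the whole expression equals $2\sum_{p\in P}\Tout{p}$. This is the only bookkeeping the proof needs, and it is routine.

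Next I would invoke \cref{lem:sstair_diag} to get $\sum_{i}\Tout{i,i}\tequiv\tfrac12$, and \cref{lem:sstair_els} to get, for each fixed $1\le i\le n$, $\sum_{j\le i}\Tout{j,i}+\sum_{i<j}\Tout{i,j}\tequiv\tfrac12$. Summing the latter over $i=1,\dots,n$ and using that $\tequiv$ is a congruence yields $\sum_{i=1}^n\big(\sum_{j\le i}\Tout{j,i}+\sum_{i<j}\Tout{i,j}\big)\tequiv\tfrac n2$.

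Finally I would add these two $\tequiv$-relations. By the refinement identity the left-hand sides add to $2\sum_{p\in P}\Tout{p}$, while the right-hand sides add to $\tfrac12+\tfrac n2=\tfrac{n+1}{2}$; hence $2\sum_{p\in P}\Tout{p}\tequiv\tfrac{n+1}{2}$, that is, $\sum_{p\in P}\Tout{p}\tequiv\tfrac{n+1}{4}$, as claimed. I do not expect any genuine obstacle here: all the substantive work (the rook identities underlying \cref{lem:sstair_diag} and \cref{lem:sstair_els}) has already been carried out, and the argument is formally parallel to how \cref{cor:rect_oi} was obtained from \cref{thm:rectfile} — namely, by summing a family of statistics that are each $\tequiv\const$ and that refine the target statistic. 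The only point requiring care is the double-counting verification of the refinement identity above.
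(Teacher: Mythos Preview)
Your proposal is correct and is essentially identical to the paper's own argument: the paper displays exactly the same refinement identity $\sum_{i}\Tout{i,i}+\sum_{i=1}^n\bigl(\sum_{j\le i}\Tout{j,i}+\sum_{i<j}\Tout{i,j}\bigr)=2\sum_{p\in P}\Tout{p}$ immediately before the theorem and then simply says the result follows from \cref{lem:sstair_diag,lem:sstair_els}. Your explicit double-counting verification of that identity is a welcome addition but not a departure in method.
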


That the antichain cardinality statistic is homomesic under rowmotion for the shifted staircase was first proved by Rush and Wang~\cite[Theorem~1.4]{rush2015homomesy} (who, as we will mention below, in fact addressed all minuscule posets). The stronger result that it is $\tequiv \const$ (i.e., \cref{thm:sstair_a}) was first proved by Hopkins~\cite[Theorem~4.2]{hopkins2017cde}, with the same proof we have presented above.

\subsubsection{Order ideals}

Finally, we consider those statistics $\stat{f}$ that belong to $\Span\{\oii{p}:p\in P\}$ and satisfy $\stat{f}\tequiv \const$. Here, $\iota$ will take care of everything for us.

\begin{thm} \label{thm:sstair_file}
Fix $0\leq k \leq n-1$, and let $B\coloneqq \{(i,j)\colon j-i = k\}\subseteq \sstair{n}$ be the $k$-th file of the shifted staircase. Then $\sum_{p\in B}\oii{p}\tequiv (n-k)/2$.
\end{thm}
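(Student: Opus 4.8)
The plan is to deduce this from the file identities for the square $\rect{n}{n}$ established in $\cref{thm:rectfile}$, using the quotient map $\iota\colon\J(\sstair{n})\to\J(\rect{n}{n})$. The mechanism is the invariance recorded just before the statement: for every $(i,j)\in\sstair{n}$ and every $I\in\J(\sstair{n})$ one has $\oii{i,j}(I)=\oii{i,j}(\iota(I))=\oii{j,i}(\iota(I))$, and likewise $\T{i,j}(I)=\T{i,j}(\iota(I))=\T{j,i}(\iota(I))$. Consequently, if one evaluates at $\iota(I)$ any identity of the form $\stat{f}=c+\sum_{(i,j)\in\rect{n}{n}}c_{i,j}\T{i,j}$ that holds on all of $\J(\rect{n}{n})$, it becomes an identity on $\J(\sstair{n})$ in which every subscript $(i,j)$ with $i>j$ is replaced by $(j,i)$; in particular, $\tequiv$-identities for $\rect{n}{n}$ transfer to $\tequiv$-identities for $\sstair{n}$. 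The case $k=0$ is then immediate: the $0$-th file of $\sstair{n}$ is the diagonal $\{(i,i)\colon 1\le i\le n\}$, which is literally the $0$-th file of $\rect{n}{n}$, so $\cref{thm:rectfile}$ (with $a=b=n$) gives $\sum_i\oii{i,i}\tequiv n^2/(2n)=n/2$ on $\J(\rect{n}{n})$, and transferring along $\iota$ yields $\sum_{p\in B}\oii{p}\tequiv n/2=(n-0)/2$.

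Now fix $1\le k\le n-1$, so that $B=\{(i,i+k)\colon 1\le i\le n-k\}$. First I would observe that under the quotient $\rect{n}{n}\to\sstair{n}$, which identifies $(i,j)$ with $(j,i)$, the single box $(i,i+k)\in B$ is the common image of two distinct boxes of the square: the element $(i,i+k)$ of the $k$-th file of $\rect{n}{n}$, and the element $(i+k,i)$ of the $(-k)$-th file of $\rect{n}{n}$. Applying $\cref{thm:rectfile}$ to these two files of $\rect{n}{n}$ (using the $k\ge 0$ branch for the first and the $k<0$ branch, with file index $-k$, for the second), one gets $\sum_{i=1}^{n-k}\oii{i,i+k}\tequiv n(n-k)/(2n)=(n-k)/2$ and $\sum_{i=1}^{n-k}\oii{i+k,i}\tequiv n(n-k)/(2n)=(n-k)/2$ on $\J(\rect{n}{n})$ — note that the two constants coincide. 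Adding these and evaluating at $\iota(I)$: by the invariance above, $\oii{i,i+k}(\iota(I))=\oii{i+k,i}(\iota(I))=\oii{i,i+k}(I)$ for $I\in\J(\sstair{n})$, so the left side becomes $2\sum_{p\in B}\oii{p}$; the right side becomes $n-k$ plus a linear combination of the $\T{p}$ for $p\in\sstair{n}$. Thus $2\sum_{p\in B}\oii{p}\tequiv n-k$ on $\J(\sstair{n})$, i.e., $\sum_{p\in B}\oii{p}\tequiv(n-k)/2$.

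I do not expect any serious obstacle here: all the substantive work sits in $\cref{thm:rectfile}$, and once $\iota$ is in hand the remaining content is purely the bookkeeping of which files of the square cover the $k$-th file of $\sstair{n}$. The one point to get right is the factor of $2$ for $k\ge 1$, which comes from an off-diagonal file of the shifted staircase being doubly covered by a pair of mirror-image files of the square; this is automatically accounted for by the fact that the two applications of $\cref{thm:rectfile}$ (to file indices $k$ and $-k$) produce the same constant $(n-k)/2$.
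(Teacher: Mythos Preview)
Your proof is correct and follows the same idea as the paper: transfer the rectangle file identity from \cref{thm:rectfile} to the shifted staircase via the map $\iota$. The paper's proof is even shorter, though: since for $k\ge 0$ the $k$-th file of $\sstair{n}$ is \emph{literally} the $k$-th file of $\rect{n}{n}$ (same set of boxes $\{(i,i+k):1\le i\le n-k\}$), a single application of \cref{thm:rectfile} to file index $k$ already gives $\sum_{p\in B}\oii{p}\tequiv n(n-k)/(2n)=(n-k)/2$ after transferring along $\iota$, with no need to also invoke the $(-k)$-th file and divide by $2$.
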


\begin{proof}
We have $\sum_{p\in B}\oii{p}(I) = \sum_{p\in B}\oii{p}(\iota(I))$ for all $I\in \J(\sstair{n})$. So the claim follows directly from \cref{thm:rectfile}.
\end{proof}

Summing up these file refinements of order ideal cardinality, we obtain:

\begin{cor} \label{cor:sstair_oi}
For $P=\sstair{n}$, we have $\sum_{p\in P} \oii{p} \tequiv \frac{n(n+1)}{4}$.
\end{cor}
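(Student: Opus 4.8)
The plan is to obtain \cref{cor:sstair_oi} as an immediate consequence of \cref{thm:sstair_file}. First I would observe that the files $B_k \coloneqq \{(i,j) \in \sstair{n} \colon j - i = k\}$ for $0 \le k \le n-1$ partition $P = \sstair{n}$: every box $(i,j)$ with $1 \le i \le j \le n$ belongs to exactly one file, indexed by $k = j - i \in \{0,1,\ldots,n-1\}$. Consequently $\sum_{p \in P} \oii{p} = \sum_{k=0}^{n-1}\bigl(\sum_{p \in B_k}\oii{p}\bigr)$ as an honest equality of functions on $\J(P)$.

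Next I would invoke that $\tequiv$ is a congruence (hence compatible with finite sums, as recorded in \cref{subsec:homomesy}) together with \cref{thm:sstair_file}, which says $\sum_{p \in B_k}\oii{p} \tequiv (n-k)/2$. This yields $\sum_{p \in P}\oii{p} \tequiv \sum_{k=0}^{n-1}(n-k)/2$, and the remaining step is the elementary identity $\sum_{k=0}^{n-1}(n-k)/2 = \tfrac{1}{2}\sum_{m=1}^{n} m = n(n+1)/4$.

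I would also note a shortcut in the spirit of the proof of \cref{cor:rect_oi}: summing \cref{thm:sstair_file} already shows $\sum_{p\in P}\oii{p}\tequiv c$ for some constant $c$, and since $\sstair{n}$ is self-dual, the involution of $\J(P)$ that rotates the complement of an order ideal by $180^\circ$ exchanges $\sum_{p\in P}\oii{p}$ with $\#P - \sum_{p\in P}\oii{p}$, forcing $2c = \#P = \binom{n+1}{2}$ and hence $c = n(n+1)/4$ without summing any series. Either way, there is essentially no obstacle here: all the real work has already been done in \cref{thm:sstair_file} (which itself reduces to \cref{thm:rectfile} through the folding map $\iota$), so this corollary amounts to bookkeeping plus a one-line arithmetic computation.
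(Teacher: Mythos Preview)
Your proposal is correct and matches the paper's approach exactly: the paper simply states that summing the file refinements from \cref{thm:sstair_file} yields the corollary, which is precisely your first argument. Your additional self-duality shortcut is a valid alternative (mirroring the proof of \cref{cor:rect_oi}), though the paper does not bother with it here since the arithmetic sum $\sum_{k=0}^{n-1}(n-k)/2$ is already trivial.
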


That the order ideal cardinality statistic, and its file refinements, are homomesic under rowmotion for the shifted staircase was again first proved by Rush and Wang~\cite[Corollary~1.3]{rush2015homomesy} (and again they in fact dealt with all minuscule posets). The stronger results that they are $\tequiv \const$ (i.e., \cref{cor:sstair_oi} and \cref{thm:sstair_file}) are new.

\begin{remark}
Just as with the rectangle, there are ``half a rook'' identities for the shifted staircase, such as
\[\oii{p} = \sum_{i' \geq i, \ j' \geq j} \Tout{i',j'} \ \ - \sum_{\substack{i' > i, \ j' > j,\\ i'<j' }} \Tin{i',j'}\]
for any $p=(i,j)\in\sstair{n}$. But we will not need these.
\end{remark}

\subsection{Other minuscule posets}

The minuscule posets are a class of posets arising from the representation theory of simple Lie algebras, which exhibit remarkable combinatorial properties. Briefly, each minuscule poset arises from the choice of a Dynkin diagram $\Gamma$ and a \emph{minuscule node} $i$ of~$\Gamma$. See~\cite{proctor1984bruhat, stembridge1994minuscule} for details on minuscule posets and the algebraic context in which they arise. The minuscule posets have been classified, and for our purposes, it is enough to review this classification without explaining the formal definition of a minuscule poset.

We have already encountered the two most prominent examples of minuscule posets, which are the rectangles $\rect{a}{b}$ and the shifted staircases $\sstair{n}$. Beyond these, the other minuscule posets include one very simple infinite family $\dtd{n}$ for $n\geq 2$, the so-called \dfn{double-tailed diamonds}, and two exceptional posets $\esixmin$ and $\esevmin$. These are depicted in~\cref{fig:minuscule_posets}. (Note that $\dtd{2}\simeq \rect{2}{2}$ and $\dtd{3}\simeq \sstair{3}$.)

\begin{figure}
\begin{center}
 \includegraphics[height=3.9cm]{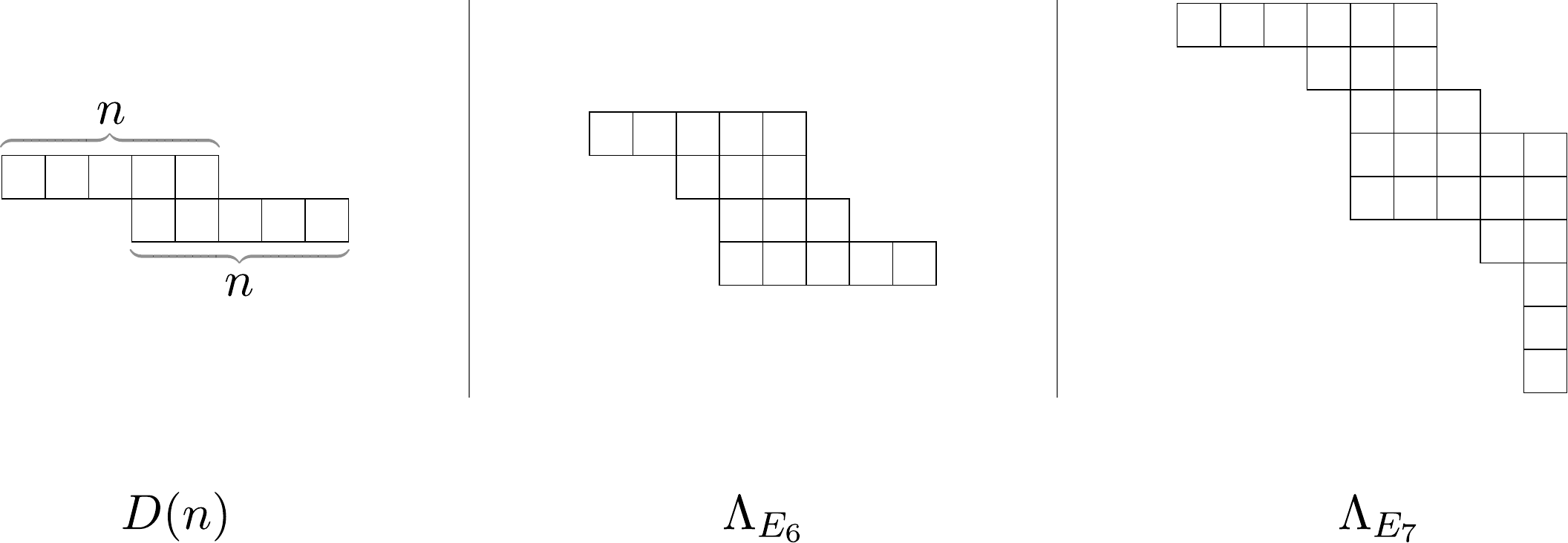}
\end{center} 
\caption{The other minuscule posets beyond the rectangle and shifted staircase.}
\label{fig:minuscule_posets}
\end{figure}

A poset $P$ is \dfn{graded} if all maximal chains in $P$ have the same length. If~$P$ is graded, then it is ranked, and in this case, $\rk(P)$ is the length of any maximal chain. Also, recall that the \dfn{dual poset} $P^*$ is the poset on the same elements as $P$ but with $x \leq_P y$ if and only if $y \leq_{P^*} x$. We say $P$ is \dfn{self-dual} if it is isomorphic to $P^*$. 

All minuscule posets are graded (hence, ranked), and all are also self-dual.

The minuscule posets are known to exhibit good behavior of rowmotion (see~\cite{rush2013orbits}). We will now show that the toggleability statistics technique also works well for the minuscule posets. In particular, we will show that both the antichain and order ideal cardinality statistics are $\tequiv \const$ for all minuscule posets. Since we have already shown this for the rectangle and shifted staircase, there is not much work left to do.

\begin{thm} \label{thm:min_a}
Let $P$ be a minuscule poset. Then $\sum_{p\in P}\Tout{p}\tequiv \frac{\#P}{\rk(P)+2}$.
\end{thm}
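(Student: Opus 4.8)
The statement has been proven already for the two infinite families of ``large'' minuscule posets (the rectangle in \cref{thm:aboapb} and the shifted staircase in \cref{thm:sstair_a}), and since $\dtd{2}\simeq\rect{2}{2}$ and $\dtd{3}\simeq\sstair{3}$, only finitely many posets remain: the double-tailed diamonds $\dtd{n}$ for $n\geq 4$, and the two exceptional posets $\esixmin$ and $\esevmin$. So the plan is to dispatch these remaining cases, preferably in a uniform way for the $\dtd{n}$ family and then by direct verification (or citation of a machine computation) for $\esixmin$ and $\esevmin$.

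For the double-tailed diamond $\dtd{n}$, I would first recall its structure: it has a single minimal element, a chain going up, then a ``diamond'' where the chain splits into two incomparable elements at some rank and rejoins, then another chain up to a single maximal element; concretely $\#\dtd{n}=2n$ and $\rk(\dtd{n})=2n-3$ (so the target constant is $\frac{2n}{2n-1}$). The key point is that the ``spine'' elements — those that lie on \emph{every} maximal chain — behave like a chain, so for a spine element $p$ we have $\Tin{p}\tequiv\Tout{p}$ and moreover one can telescope: if $p$ is covered only by $p'$ and covers only $p''$ with $p,p',p''$ all on the spine, then $\Tout{p}(I) = \Tin{p'}(I)$ and $\Tin{p}(I)=\Tout{p''}(I)$ as functions, giving relations among the $\Tout{}$ at consecutive spine elements modulo $\Span\{\T{q}\}$. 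For the two incomparable ``waist'' elements $x,y$ of the diamond, one uses that $\Tout{x}+\Tout{y}$ relates to the toggleability statistics of the spine element immediately below (into which one can toggle iff one can toggle out of at least one of $x,y$) — more precisely I expect an identity like $\Tin{x}+\Tin{y} + \Tout{\text{(element below)}} \cdot(\text{something}) = \T{x}+\T{y}+\cdots$ that, combined with the spine relations, pins down $\sum_p\Tout{p}$ up to a single scalar. Then the overall constant is forced either by the symmetry argument (self-duality: applying the $180^\circ$ rotation / poset-duality involution exchanges $\Tout{p}$-type data in a way that, combined with $\rook{}$-type constant identities, fixes the average) or simply by averaging over one known rowmotion orbit. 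Since $\dtd{n}$ has rowmotion order dividing $2(\rk+2)=2(2n-1)$ and everything in sight is a small explicit poset, the bookkeeping is light.

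For $\esixmin$ and $\esevmin$ there is no infinite family to generalize, so the honest approach is a finite linear-algebra check: set up the matrix whose columns are the vectors $(\Tin{p}(I))_I, (\Tout{p}(I))_I$ indexed by order ideals $I$, together with the constant vector, and verify that $\sum_p\Tout{p}$ lies in the span of $\{1\}\cup\{\T{p}:p\in P\}$ with the claimed intercept $\frac{\#P}{\rk(P)+2}$ (for $\esixmin$: $\#P=27$, $\rk=11$, constant $27/13$; for $\esevmin$: $\#P=56$, $\rk=17$, constant $56/19$). This is exactly the ``matter of linear algebra, easily checked by computer'' described in the introduction, and I would cite the posted Sage code. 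Alternatively, one can use self-duality plus the analogue of \cref{thm:rect_sumone}: if one can exhibit, for these two posets, families of ``rook'' statistics $\rook{p}$ that are identically $1$ and whose sum attacks every box the same number of times, the result follows as in the rectangle case — but constructing such rooks by hand for $E_6,E_7$ is the genuinely annoying part, so I would not attempt it and would lean on the computer verification instead.

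\textbf{Expected main obstacle.} The real difficulty is not the double-tailed diamonds (which are simple enough that the toggle relations essentially write themselves) but handling $\esixmin$ and $\esevmin$ without a conceptual rook construction: either one accepts a finite computer check, or one must invent bespoke combinations of toggleability statistics that sum to a constant on these two irregular posets. I would opt for the computer check to keep the proof short, noting that uniqueness of the representation (\cref{thm:linearindependence}) means there is nothing to guess — the coefficients are forced — so the only content is confirming existence, which the posted code does.
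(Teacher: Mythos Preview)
Your overall strategy matches the paper's exactly: cite \cref{thm:aboapb} and \cref{thm:sstair_a} for the two main families, handle $\dtd{n}$ directly, and relegate $\esixmin,\esevmin$ to a finite linear-algebra computer check. Two points are worth noting.

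First, you have an arithmetic slip for the double-tailed diamond: with the paper's convention (initial tail $x_1<\cdots<x_{n-1}$, middle $y_1,y_2$, final tail $z_{n-1}<\cdots<z_1$), a maximal chain has $2n-1$ elements, so $\rk(\dtd{n})=2n-2$, not $2n-3$, and the constant is $\frac{2n}{2n}=1$, not $\frac{2n}{2n-1}$. (Your $E_6,E_7$ ranks are likewise off by one: they are $10$ and $16$, giving constants $27/12$ and $56/18$; but since those cases are computer checks this is harmless.)

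Second, the paper's execution for $\dtd{n}$ is considerably simpler than your telescoping-through-the-spine sketch: it just writes down the explicit identity
\[
\sum_{p\in P}\Tout{p} \;=\; 1 \;-\;\sum_{i=1}^{n-1}\T{x_i}\;-\;\tfrac12\T{y_1}\;-\;\tfrac12\T{y_2}
\]
and verifies it by inspecting the few shapes an order ideal of $\dtd{n}$ can have. The paper also determines the constant more cleanly than by symmetry or orbit-averaging: it observes that for \emph{any} graded poset, if $\sum_p\Tout{p}\tequiv c$ then $c$ is forced to be $\frac{\#P}{\rk(P)+2}$ by evaluating on the distinguished rowmotion orbit $\{\{p:\rk(p)\le i\}:-1\le i\le\rk(P)\}$. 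So one only needs to prove $\sum_p\Tout{p}\tequiv\const$ and the value comes for free.
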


\begin{proof}
First, let us observe that if $P$ is any graded poset and $\sum_{p\in P}\Tout{p}\tequiv c$ for some $c\in\RR$, then we must have $c=\frac{\#P}{\rk(P)+2}$. Indeed, \cref{prop:homo} tells us that $c$ is the average antichain cardinality along any rowmotion orbit. And graded posets have one distinguished rowmotion orbit 
\[\{ \{p\in P\colon \rk(p)\leq i\}\colon i = -1,0,1,\ldots,\rk(P)\},\] 
which is easily seen to have average antichain cardinality equal to $\frac{\#P}{\rk(P)+2}$.

Now let $P$ be a minuscule poset. We will prove that there exists \emph{some} constant $c$ such that $\sum_{p\in P}\Tout{p}\tequiv c$; since $P$ is graded, it will then follow from the preceding paragraph that $c$ must actually be $\frac{\#P}{\rk(P)+2}$. We have already shown this for the rectangle and the shifted staircase (\cref{thm:aboapb} and \cref{thm:sstair_a}), so we need only address $\dtd{n}$, $\esixmin$, and $\esevmin$. 

First, consider $P=\dtd{n}$. This poset consists of an ``initial tail'' $x_1<\cdots<x_{n-1}$, two incomparable elements $y_1,y_2$ in the middle, and a ``final tail'' $z_{n-1}<\cdots<z_{1}$ such that $x_i<y_j<z_k$ for all $i,j,k$. We claim that
\[\sum_{p\in P} \Tout{p} = 1-\sum_{i=1}^{n-1}\T{x_i}-\frac{1}{2}\T{y_1}-\frac{1}{2}\T{y_2}.\]
Indeed, to verify this is just a matter of checking the few possible cases: the empty order ideal; an order ideal generated by an $x_i$; an order ideal generated by a $z_i$; an order ideal generated by a $y_i$; and the order ideal generated by $\{y_1,y_2\}$.

Now consider $P=\esixmin$ or $\esevmin$. In this case, we checked by computer that the relevant system of equations has a solution (and the Sage code worksheet referenced in the acknowledgments includes this verification).
\end{proof}

\begin{thm} \label{thm:min_oi}
Let $P$ be a minuscule poset. Then $\sum_{p\in P}\oii{p}\tequiv \frac{\#P}{2}$.
\end{thm}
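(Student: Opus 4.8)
The plan is to follow the structure of the proof of \cref{thm:min_a}. I would first reduce the claim to the weaker assertion that $\sum_{p\in P}\oii{p}\tequiv c$ for \emph{some} real constant $c$, to be verified separately for each minuscule poset, and then use self-duality to force $c=\frac{\#P}{2}$. The constant-identifying step runs as follows. If $\sum_{p\in P}\oii{p}\tequiv c$, then by \cref{prop:homo} the order ideal cardinality statistic is $c$-mesic under rowmotion, so its average over all of $\J(P)$ equals $c$ (the average over the whole set is the weighted average of the averages over the orbits, each of which is $c$). On the other hand, since $P$ is self-dual, any order-reversing bijection $\phi\colon P\to P$ induces a bijection $I\mapsto P\setminus\phi(I)$ of $\J(P)$ that carries an order ideal of size $k$ to one of size $\#P-k$; hence the distribution of $\#I$ over $\J(P)$ is symmetric about $\frac{\#P}{2}$, so its average is $\frac{\#P}{2}$. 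Comparing, $c=\frac{\#P}{2}$. This is precisely the symmetry argument used in \cref{cor:rect_oi}, now stated for an arbitrary self-dual poset; equivalently, one may argue directly with the complementation involution $I\mapsto P\setminus\phi(I)$ on $\J(P)$, which carries rowmotion orbits to rowmotion orbits.

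It then remains to show $\sum_{p\in P}\oii{p}\tequiv\const$ for each minuscule poset. For $P=\rect{a}{b}$ this is \cref{cor:rect_oi}, and for $P=\sstair{n}$ it is \cref{cor:sstair_oi}, so only the double-tailed diamonds $\dtd{n}$ and the two exceptional posets $\esixmin,\esevmin$ remain. Write $\dtd{n}$ as an initial tail $x_1<\cdots<x_{n-1}$, two incomparable middle elements $y_1,y_2$, and a final tail $z_{n-1}<\cdots<z_1$, with $x_i<y_j<z_k$ for all $i,j,k$; then the order ideals of $\dtd{n}$ are exactly $\emptyset$, the truncations $\{x_1,\dots,x_i\}$ for $1\le i\le n-1$, the three ideals $\{x_1,\dots,x_{n-1},y_1\}$, $\{x_1,\dots,x_{n-1},y_2\}$, $\{x_1,\dots,x_{n-1},y_1,y_2\}$, and the ideals $\{x_1,\dots,x_{n-1},y_1,y_2,z_{n-1},\dots,z_{n-j}\}$ for $1\le j\le n-1$ --- a total of $2n+2$. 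I would exhibit an explicit identity $\sum_{p\in P}\oii{p}=\frac{\#P}{2}+\sum_{p\in P}c_p\T{p}$ (with $\#P=2n$), just as the proof of \cref{thm:min_a} does for $\sum_{p\in P}\Tout{p}$; the coefficients $c_p$ can be produced by a telescoping argument running down each tail, with a coefficient $\frac12$ on each of $\T{y_1}$ and $\T{y_2}$, and then checking the identity against these $2n+2$ order ideals is a short finite computation, uniform in $n$. For $P=\esixmin$ and $P=\esevmin$, which are finite posets with explicitly known Hasse diagrams, the existence of coefficients $c_p$ with $\sum_{p\in P}\oii{p}=\frac{\#P}{2}+\sum_{p\in P}c_p\T{p}$ is a finite linear-algebra computation carried out by computer, recorded in the Sage worksheet cited in the acknowledgments.

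The only step involving genuine bookkeeping is producing the coefficients for the $\dtd{n}$ family uniformly in $n$, and even that is mild: $\dtd{n}$ is the ordinal sum of an $(n-1)$-element chain, a two-element antichain, and an $(n-1)$-element chain, so its lattice of order ideals is essentially a chain with a single ``bubble'' in the middle, and the telescoping pins down the $c_p$ with essentially no choices. I do not expect a conceptual obstacle anywhere: \cref{cor:rect_oi} and \cref{cor:sstair_oi} already handle the two infinite families of primary interest, the exceptional posets are a finite check, and identifying the constant is the self-duality argument of \cref{cor:rect_oi}.
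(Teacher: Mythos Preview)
Your plan is essentially identical to the paper's proof: reduce to showing $\sum_{p\in P}\oii{p}\tequiv\const$ case-by-case, invoke \cref{cor:rect_oi} and \cref{cor:sstair_oi} for the two main infinite families, handle $\dtd{n}$ by an explicit identity verified on the $2n+2$ order ideals, dispatch $\esixmin$ and $\esevmin$ by computer, and pin down the constant via self-duality exactly as in \cref{cor:rect_oi}.

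One correction in your $\dtd{n}$ sketch: the coefficients of $\T{y_1}$ and $\T{y_2}$ are not $\tfrac12$ (you may be thinking of the antichain-cardinality identity in \cref{thm:min_a}); by \cref{thm:linearindependence} the coefficients are unique, and the paper's explicit identity has
\[
c_{y_1}=c_{y_2}=-\tfrac{1}{2}\bigl(n+(n-1)+\cdots+1\bigr)=-\tfrac{n(n+1)}{4},\qquad c_{x_i}=c_{z_i}=-\bigl(n+(n-1)+\cdots+(n-i+1)\bigr),
\]
so the coefficients grow with $n$ rather than being constant. Your proposed verification against the $2n+2$ order ideals would of course detect this and lead you to the correct values.
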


\begin{proof}
First, let us observe that if $P$ is any self-dual poset and $\sum_{p\in P}\oii{p}\tequiv c$ for some $c\in\RR$, then we must have $c=\frac{\#P}{2}$. Indeed, $c$ must be the average order ideal cardinality over all $I\in \J(P)$, and we can see that this average has to be $\frac{\#P}{2}$ in the following way. For $I\in \J(P)$, set $I^* \coloneqq \omega(P\setminus I)\in\J(P)$, where $\omega\colon P\to P^*$ is the self-duality. Then $I\mapsto I^*$ is an involution, and $\#I + \#I^*=\#P$ for all $I$, which implies the average of $\#I$ is $\frac{\#P}{2}$.

Now let $P$ be a minuscule poset. Since $P$ is self-dual, we know by the preceding paragraph that $\frac{\#P}{2}$ is the right constant for $\sum_{p\in P}\oii{p}$ to be $\tequiv$ to. Thus, we only need to show that there is some $c \in \RR$ such that $\sum_{p\in P}\oii{p}\tequiv c$. We have already shown this for the rectangle and the shifted staircase (\cref{cor:rect_oi} and \cref{cor:sstair_oi}), so we need only address $\dtd{n}$, $\esixmin$, and $\esevmin$. 

First consider $P=\dtd{n}$. Let $x_1<\cdots<x_{n-1}$ be the elements in the initial tail, $y_1,y_2$ be the two incomparable elements in the middle, and $z_{n-1}<\cdots<z_{1}$ the elements in the final tail. Then we claim that
\begin{align*}
\sum_{p\in P}\oii{p} = \, &n-\sum_{i=1}^{n-1}(n+(n-1)+\cdots+(n-i+1))\T{x_i}-\frac{(n+(n-1)+\cdots+1)}{2}\T{y_1}\\
&-\frac{(n+(n-1)+\cdots+1)}{2}\T{y_2}-\sum_{i=1}^{n-1}(n+(n-1)+\cdots+(n-i+1))\T{z_i}.
\end{align*}
As we mentioned in the proof of \cref{thm:min_a}, the order ideals in $\J(\dtd{n})$ are divided into a very small number of cases, and we can verify this equation by checking these cases.

Now consider $P=\esixmin$ or $\esevmin$. In this case, we checked by computer that the relevant system of equations has a solution (and the Sage code worksheet referenced in the acknowledgments includes this verification).
\end{proof}

That the antichain cardinality statistic is homomesic under rowmotion for any minuscule poset~$P$ was first proved by Rush and Wang~\cite[Theorem~1.4]{rush2015homomesy}. The stronger result that it is $\tequiv \const$ (i.e., \cref{thm:min_a}) was first proved by Hopkins~\cite[Theorem~5.2]{hopkins2017cde}, in a case-by-case fashion exactly as we have done above. Rush~\cite[Theorem~1.5]{rush2016cde} subsequently gave a \emph{uniform} proof that $\sum_{p\in P}\Tout{p}\tequiv \const$ for all minuscule posets, building on his earlier work with coauthors~\cite{rush2013orbits, rush2015homomesy} studying toggling for minuscule posets. That the order ideal cardinality statistic is homomesic under rowmotion for any minuscule poset $P$ was again first proved by Rush and Wang~\cite[Corollary~1.3]{rush2015homomesy}. The stronger result that it is $\tequiv \const$ (i.e., \cref{thm:min_oi}) is new.

\begin{remark}
For the double-tailed diamond $P=\dtd{n}$, $\Span\{\T{p}\colon p \in P\}$ is the entire space of functions $\J(P)\to\RR$ that are $0$-mesic under rowmotion. This will follow from \Cref{thm:anti_span}. 
\end{remark}

\subsection{The Type A root poset}

The \dfn{root poset of Type $A_n$}, which we denote by~$\arootp{n}$, is typically defined as the set of positive roots in the Type $A_n$ root system under the natural partial order on these roots. We will review root posets in general below, but for the moment, we will simply realize $\arootp{n}$ as the subset $\{(i,j): 1\leq i,j\leq n\text{ and }i+j\geq n+1\}$ of $\quadrant$. Observe that $\arootp{n}\subseteq \rect{n}{n}$.

Throughout this subsection, we fix $P=\arootp{n}$.

\subsubsection{Rooks}

As with the rectangle and shifted staircase, we begin with rooks for the Type A root poset.  So for $1 \leq i \leq n$, define the \dfn{rook} $\rook{i}\colon \J(P)\to\RR$ by
\begin{equation} \label{eqn:a_rook}
\rook{i} \ \coloneqq \ \Tin{i,n+1-i} \ \ + \sum_{i' \geq i, \, j'\geq n+1-i} \Tout{i',j'} \ \ - \sum_{i' > i, \, j' > n+1-i} \Tin{i',j'}
\end{equation}
and the \dfn{reduced rook} $\rrook{i}\colon \J(P)\to\RR$ by
\begin{equation} \label{eqn:a_red_rook}
\rrook{i} \ \coloneqq \sum_{j'\geq n+1-i} \Tout{i,j'}\ +\ \sum_{i' \geq i} \Tout{i',n+1-i}.
\end{equation}
The sum~\eqref{eqn:a_rook} defining $\rook{i}$ can be represented pictorially as in~\cref{fig:a_rook}. Note that for $\arootp{n}$, unlike for the rectangle and the shifted staircase, we do not get a rook for every box, only for the boxes on the main anti-diagonal.

\begin{figure}
\begin{center}
\includegraphics[height=5.595cm]{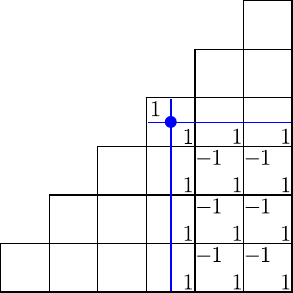}
\end{center}
\caption{A rook for $\arootp{n}$ with $n=6$ and $i=3$.} \label{fig:a_rook}
\end{figure}

As always, we have the following fundamental facts about the rooks, whose proofs are analogous to what we have seen above.

\begin{lemma} \label{lem:a_rook_equiv}
For $P=\arootp{n}$, $\rook{i} \tequiv \rrook{i}$ for all $i=1,\ldots,n$.
\end{lemma}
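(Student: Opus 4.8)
Looking at this, the claim is $\rook{i} \tequiv \rrook{i}$ for the Type $A$ root poset, which should follow from the definitions just as the analogous lemmas did for the rectangle and shifted staircase.

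\bigskip

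The plan is to prove this the same way \cref{lem:rect_rook_equiv} and \cref{lem:sstair_rook_equiv} were established: by comparing the defining expressions \eqref{eqn:a_rook} and \eqref{eqn:a_red_rook} term by term and checking that their difference is an integer combination of the signed toggleability statistics $\T{p} = \Tin{p} - \Tout{p}$. Recall that $\Tin{p} \tequiv \Tout{p}$ for every $p \in P$, so modulo $\tequiv$ we are free to replace any $\Tin{p}$ by $\Tout{p}$ (or vice versa). Thus the strategy is: starting from the right-hand side of \eqref{eqn:a_rook}, rewrite $\Tin{i,n+1-i}$ as $\Tout{i,n+1-i}$ and rewrite each $\Tin{i',j'}$ appearing in the last sum as $\Tout{i',j'}$, collect terms, and verify that the resulting antichain-indicator combination is exactly $\rrook{i}$ as given in \eqref{eqn:a_red_rook}.

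\bigskip

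Concretely, after the replacements above, $\rook{i}$ becomes, modulo $\tequiv$,
\[
\Tout{i,n+1-i} \;+\; \sum_{i' \geq i,\ j' \geq n+1-i} \Tout{i',j'} \;-\; \sum_{i' > i,\ j' > n+1-i} \Tout{i',j'}.
\]
In the big difference $\sum_{i' \geq i,\ j' \geq n+1-i} \Tout{i',j'} - \sum_{i' > i,\ j' > n+1-i} \Tout{i',j'}$, every box strictly southeast of $(i,n+1-i)$ cancels, leaving precisely the boxes in row $i$ with $j' \geq n+1-i$ together with the boxes in column $n+1-i$ with $i' \geq i$, but with the corner box $(i, n+1-i)$ counted only once in this difference. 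Adding back the standalone $\Tout{i,n+1-i}$ term makes the corner box count twice. That yields exactly $\sum_{j' \geq n+1-i} \Tout{i,j'} + \sum_{i' \geq i} \Tout{i',n+1-i} = \rrook{i}$ (with the convention that statistics attached to boxes outside $P$ are zero, so we need not worry about indices $(i',j')$ falling outside $\arootp{n}$). Hence $\rook{i} - \rrook{i} \in \Span\{\T{p} : p \in P\}$, i.e.\ $\rook{i} \tequiv \rrook{i}$.

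\bigskip

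I do not expect any real obstacle here: the identity is purely formal bookkeeping about which indices $(i',j')$ occur in which sums, exactly parallel to the rectangle and shifted-staircase cases, and the inclusion-exclusion on the hook-shaped region is elementary. The only point requiring a small amount of care is the double-counting of the corner box $(i,n+1-i)$ — which is precisely why the extra isolated $\Tin{i,n+1-i}$ term appears in \eqref{eqn:a_rook} and why $\rrook{i}$ counts the corner twice — and the harmless fact that boxes indexed outside $P$ contribute nothing.
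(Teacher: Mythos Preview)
Your proof is correct and follows exactly the approach the paper indicates (the paper simply says the proofs are ``analogous to what we have seen above,'' i.e., use $\Tin{p}\tequiv\Tout{p}$ and simplify). Your bookkeeping on the hook-shaped region, including the double-counting of the corner box $(i,n+1-i)$, is accurate.
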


\begin{thm} \label{thm:a_sumone}
For $P=\arootp{n}$, $\rook{i} = 1$ for all $i=1,\ldots,n$.
\end{thm}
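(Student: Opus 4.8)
The plan is to reduce \cref{thm:a_sumone} to \cref{thm:rect_sumone}. Fix $i$ and write $p_0 := (i,n+1-i)$, a minimal element of $P = \arootp{n}$, for the box on the main anti-diagonal at which $\rook{i}$ is ``centered.'' The first observation is structural: every box occurring with a nonzero coefficient in \eqref{eqn:a_rook} lies in $S_i := \{(i',j') : i'\ge i,\ j'\ge n+1-i\}$, and since $i'\ge i$ and $j'\ge n+1-i$ already force $i'+j'\ge n+1$, this $S_i$ is a \emph{solid} $(n+1-i)\times i$ sub-rectangle of $\arootp{n}$ whose minimum is $p_0$. Moreover, restricted to $S_i$ the coefficient picture of $\rook{i}$ (\cref{fig:a_rook}) is \emph{exactly} the rectangle rook picture of \eqref{eqn:rect_rook} based at the top-left box of $S_i$: a $\Tin{}+\Tout{}$ contribution at $p_0$, a $\Tout{}$ contribution along the rest of $S_i$, and a $-\Tin{}$ contribution on the part of $S_i$ strictly southeast of $p_0$. (The formula \eqref{eqn:a_rook} has three sums rather than the four of \eqref{eqn:rect_rook} precisely because, for the triangular shape $\arootp{n}$, the sums $\sum_{i'\le i,\,j'\le n+1-i}\Tin{i',j'}$ and $\sum_{i'<i,\,j'<n+1-i}\Tout{i',j'}$ collapse to $\Tin{p_0}$ and $0$.)

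Given this, I would fix $I\in\J(P)$ and split into two cases according to whether $p_0\in I$. If $p_0\notin I$, then, since $I$ is downward closed and every element of $S_i\setminus\{p_0\}$ lies above $p_0$, no box of $S_i$ other than $p_0$ can be toggled into or out of $I$; hence $\rook{i}(I)=\Tin{p_0}(I)=1$, the last equality because $p_0$ is minimal. If $p_0\in I$, then $\Tin{p_0}(I)=0$, and the remaining terms $\sum_{i'\ge i,\,j'\ge n+1-i}\Tout{i',j'}-\sum_{i'>i,\,j'>n+1-i}\Tin{i',j'}$ involve only boxes of $S_i$. One checks that $I\cap S_i$ is an order ideal of the rectangle $S_i$, and that for $p\in S_i$ the toggleability statistics $\Tout{p}(I)$ agree with those of $I\cap S_i$ computed inside $S_i$, as do $\Tin{p}(I)$ for $p$ in the part strictly southeast of $p_0$. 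Consequently the value in question equals $\rook{p_0}^{S_i}(I\cap S_i)-\Tin{p_0}^{S_i}(I\cap S_i)=1-0=1$ by \cref{thm:rect_sumone} applied to $S_i$, the subtracted term vanishing because $p_0$, being the minimum of $S_i$, lies in $I\cap S_i$. So $\rook{i}(I)=1$ in either case.

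Alternatively, and more in the spirit of \cref{thm:rect_sumone,thm:sstair_sumone}, one can phrase the $p_0\in I$ case directly via the boundary path of $I$: the nonzero coefficients of $\rook{i}$ adjacent to the portion of the path lying inside $S_i$ read off as $+1,-1,+1,\dots,+1$, so they total $1$. I expect the only real obstacle is bookkeeping — verifying that the sign pattern along the path genuinely begins and ends with $+1$, that no box outside $S_i$ contributes, and handling the interaction at the anti-diagonal, where the boundary path of $I$ may leave the poset near $p_0$. Both packagings ultimately bottom out in the already-established rectangle case, so no genuinely new idea is needed beyond recognizing $S_i$ as a solid sub-rectangle on which the two rook pictures coincide.
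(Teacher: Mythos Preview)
Your proof is correct and essentially equivalent to the paper's. The paper does not give a separate proof of \cref{thm:a_sumone}; it simply says the argument is ``analogous to what we have seen above,'' meaning the boundary-path argument of \cref{thm:rect_sumone,thm:sstair_sumone}, which is precisely your alternative phrasing.

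Your primary packaging---recognizing that $S_i$ is the principal order filter at $p_0$ and is an honest $(n+1-i)\times i$ sub-rectangle on which the $\arootp{n}$ rook \eqref{eqn:a_rook} coincides with the rectangle rook at its top-left corner---is a clean way to invoke \cref{thm:rect_sumone} as a black box rather than rerunning the path argument. This observation is implicit in the paper (cf.\ the footnote after \cref{lem:a_ds1s} about principal order filters isomorphic to rectangles), but you make it explicit. The only thing to be careful about, which you handle correctly, is that the toggleability statistics $\Tout{p}$ for $p\in S_i$ and $\Tin{p}$ for $p$ strictly southeast of $p_0$ agree whether computed in $P$ or in $S_i$; this works because $S_i$ is upward-closed in $P$ and the relevant covers stay inside $S_i$. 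No gaps.
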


As mentioned earlier, rooks for all (possibly skew) Young diagram shapes, including $\arootp{n}$, were introduced by Chan, Haddadan, Hopkins, and Moci~\cite{chan2017expected}. In particular, \cref{lem:a_rook_equiv} and \cref{thm:a_sumone} were first proved in that paper (see~\cite[Lemma~3.5, Theorem~3.4]{chan2017expected}).

\subsubsection{Antichains} 

Our goal is now to show that the antichain cardinality statistic is~$\tequiv \const$ for~$\arootp{n}$. This is quite easy with what we already know because the reduced rooks $\rrook{i}$ refine (twice) the antichain cardinality statistic. That is,
\[ \sum_{i=1}^{n} \rrook{i} = 2 \cdot \sum_{p \in P} \Tout{p}.\]
We know that $\rrook{i}\tequiv 1$ for all $i$ thanks to \cref{lem:a_rook_equiv} and \cref{thm:a_sumone}. Hence we conclude:

\begin{thm} \label{thm:a_ac}
For $P=\arootp{n}$, $\sum_{p \in P} \Tout{p}\tequiv \frac{n}{2}$.
\end{thm}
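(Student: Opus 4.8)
The plan is to mimic the structure used for the rectangle and shifted staircase: first establish that the reduced rooks $\rrook{i}$ are each $\tequiv 1$, then observe that a suitable sum of them recovers (a multiple of) the antichain cardinality statistic. The first part is already handed to us: \cref{lem:a_rook_equiv} gives $\rook{i}\tequiv\rrook{i}$, and \cref{thm:a_sumone} gives $\rook{i}=1$ as a function, so $\rrook{i}\tequiv 1$ for every $i=1,\dots,n$.

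The key combinatorial step is then the identity $\sum_{i=1}^{n}\rrook{i} = 2\sum_{p\in P}\Tout{p}$. To verify this I would expand the left-hand side using the definition \eqref{eqn:a_red_rook}: $\rrook{i}=\sum_{j'\geq n+1-i}\Tout{i,j'}+\sum_{i'\geq i}\Tout{i',n+1-i}$. Summing over $i$, I need to count, for each box $(a,b)\in\arootp{n}$, how many times $\Tout{a,b}$ appears. The box $(a,b)$ appears in the first sum of $\rrook{i}$ precisely when $i=a$ and $b\geq n+1-a$, i.e.\ exactly once (using $a+b\geq n+1$, which holds since $(a,b)\in P$). It appears in the second sum of $\rrook{i}$ precisely when $a\geq i$ and $n+1-i=b$, i.e.\ $i=n+1-b$ and $a\geq n+1-b$, i.e.\ again exactly once (same inequality). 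So every box is ``attacked'' exactly twice, giving the claimed identity. One should double-check the edge cases where $(a,b)$ lies on the anti-diagonal $a+b=n+1$, so that $i=a$ and $i=n+1-b$ coincide — then both contributions come from the single rook $\rrook{a}$, but that rook's definition has the two sums overlap only at $(a,n+1-a)$ itself, contributing the box twice within one rook, which is consistent.

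Combining, we get $2\sum_{p\in P}\Tout{p}=\sum_{i=1}^n\rrook{i}\tequiv\sum_{i=1}^n 1=n$, hence $\sum_{p\in P}\Tout{p}\tequiv n/2$, as desired. (Since $\tequiv$ is a congruence, dividing a scalar identity by $2$ is legitimate.)

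The main obstacle is really just the bookkeeping in the ``every box is attacked exactly twice'' count, and in particular being careful about whether the anti-diagonal boxes and the two different index ranges overlap in the way one expects; this is the same style of argument as in the proof of \cref{thm:aboapb}, so no genuinely new idea is needed. As a sanity check on the constant, one can note that $P=\arootp{n}$ is graded with $\#P=\binom{n+1}{2}$ and $\rk(P)=n-1$, so the distinguished rowmotion orbit argument of \cref{thm:min_a} would predict average antichain cardinality $\#P/(\rk(P)+2)=\binom{n+1}{2}/(n+1)=n/2$, matching the constant obtained above; this provides independent confirmation that $n/2$ is correct.
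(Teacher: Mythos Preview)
Your proof is correct and follows essentially the same approach as the paper: both argue that $\sum_{i=1}^{n}\rrook{i}=2\sum_{p\in P}\Tout{p}$ and then invoke $\rrook{i}\tequiv 1$ from \cref{lem:a_rook_equiv} and \cref{thm:a_sumone}. Your explicit ``every box is attacked exactly twice'' count (including the anti-diagonal check) and the sanity check via the distinguished orbit are nice additions but not present in the paper's more terse treatment.
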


That the antichain cardinality statistic is homomesic for rowmotion for the Type~A root poset was first proved by Armstrong, Stump, and Thomas~\cite[Theorem~1.2(iii)]{armstrong2013uniform} (and in fact, they showed this for \emph{all} root posets, as we will discuss in a moment). The homomesy of the $\rrook{i}$ statistics under rowmotion was observed recently by Hopkins and Joseph~\cite[Corollary~4.15]{hopkins2020}. The stronger result that these statistics are $\tequiv \const$ follows from the work of Chan, Haddadan, Hopkins, and Moci~\cite{chan2017expected}, in exactly the way we have presented here.

\subsubsection{Order ideals} 

Unlike for the prior posets, the order ideal cardinality statistic is \emph{not} $\tequiv \const$ for the Type A root poset. However, a variation of it is. For a ranked poset $P$, let us call $\sum_{p\in P}(-1)^{\rk(p)}\oii{p}$ the \dfn{rank-alternating order ideal cardinality}. We will show that the rank-alternating order ideal cardinality is $\tequiv \const$ for the Type~A root poset.

\begin{remark}
For both the rectangle and the shifted staircase, the rank-alternating order ideal cardinality statistic is a linear combination of the file refinements of order ideal cardinality. Therefore, in these cases, it is also $\tequiv \const$ by \cref{thm:rectfile} and \cref{thm:sstair_file}.
\end{remark}

To relate the $\oii{p}$ to the toggleability statistics, we need a version of \cref{lem:rect_ds1s} for $\arootp{n}$. In fact, \cref{lem:rect_ds1s} holds verbatim for $\arootp{n}$, and with the same proof.\footnote{The identity~\eqref{eqn:rectfilea} will hold for any poset $P$ and element $p\in P$ for which the principal \dfn{order filter} (i.e., upward-closed subset) generated by $p$ is isomorphic to a rectangle poset. Similarly, the dual identity~\eqref{eqn:rectfilea_alt} will hold whenever the principal order ideal generated by $p$ is isomorphic to a rectangle poset. In this way, one can obtain many identities relating the statistics $\oii{p}$, $\Tout{p}$, $\Tin{p}$, and $1$ for many posets, especially for Young diagram shape posets.} 

\begin{lemma} \label{lem:a_ds1s}
For any $p=(i,j) \in \arootp{n}$, we have
\[ \oii{p} \ = \sum_{i' \geq i, \ j' \geq j} \Tout{i',j'} \ - \sum_{i' > i, \ j' > j} \Tin{i',j'}.\]
\end{lemma}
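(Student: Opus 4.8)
The statement to prove is \cref{lem:a_ds1s}: for any $p=(i,j)\in\arootp{n}$,
\[
\oii{p} \ = \sum_{i'\geq i,\ j'\geq j}\Tout{i',j'} \ - \sum_{i'>i,\ j'>j}\Tin{i',j'}.
\]

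\textbf{Approach.} The plan is to copy the proof of \cref{lem:rect_ds1s} essentially verbatim, as the text itself advertises (``\cref{lem:rect_ds1s} holds verbatim for $\arootp{n}$, and with the same proof''). The key observation enabling this is the footnote's remark: the identity only depends on the structure of the principal order filter generated by $p$, and in $\arootp{n}=\{(i,j):1\leq i,j\leq n,\ i+j\geq n+1\}$, the set $\{(i',j')\in\arootp{n}: i'\geq i,\ j'\geq j\}$ is exactly the rectangle $[i]\times[?]$... more precisely it is the \emph{full} rectangle $\{(i',j'): i\leq i'\leq n,\ j\leq j'\leq n\}$, since any such $(i',j')$ automatically satisfies $i'+j'\geq i+j\geq n+1$ (using $(i,j)\in\arootp{n}$). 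So the order filter above $p$ inside $\arootp{n}$ coincides with the order filter above $p$ inside $\rect{n}{n}$, and both the index sets $\{i'\geq i,\ j'\geq j\}$ and $\{i'>i,\ j'>j\}$ appearing in the claimed identity lie entirely within $\arootp{n}$ (boxes outside $P$ contribute zero by our standing convention anyway).

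\textbf{Key steps.} First, fix $I\in\J(\arootp{n})$ and restrict attention to the sub-rectangle $Q\coloneqq\{(i',j'): i\leq i'\leq n,\ j\leq j'\leq n\}\subseteq\arootp{n}$; note $I\cap Q$ is an order ideal of $Q$ (viewed with its induced order, which is that of a rectangle $\rect{n+1-i}{n+1-j}$), and an element $(i',j')\in Q$ satisfies $\Tout{i',j'}(I)=1$ (resp.\ $\Tin{i',j'}(I)=1$) with respect to $P$ exactly when it does with respect to $Q$, because the relevant cover relations all stay inside $Q$. Second, split into the two cases $p\in I$ and $p\notin I$. If $p\notin I$, then no element of $Q$ can be toggled in or out of $I$ in the directions counted, so both sums on the right vanish and equal $\oii{p}(I)=0$. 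If $p\in I$, then as in \cref{lem:rect_ds1s} one traces the boundary lattice path of $I\cap Q$ inside $Q$ from the lower-left corner of $Q$ to its upper-right corner: the boxes $(i',j')$ with $\Tout{i',j'}(I)=1$ contribute $+1$ at their lower-right corners, the boxes with $\Tin{i',j'}(I)=1$ contribute $-1$ at their upper-left corners, and along the path these alternate $+1,-1,\ldots,+1$, beginning and ending with $+1$ (the $+1$'s at the two corners of $Q$ are present because $p=(i,j)$, the upper-left box of $Q$, lies in $I$, forcing the boundary to start below and to the right). Hence the net is $+1=\oii{p}(I)$. Third, conclude the identity holds for all $I\in\J(\arootp{n})$, hence as an equality of functions $\J(\arootp{n})\to\RR$.

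\textbf{Main obstacle.} There is essentially no obstacle beyond bookkeeping: the only point requiring (a single line of) care is verifying that the order filter generated by $p$ in $\arootp{n}$ really is a full rectangle, i.e.\ that $i'\geq i$ and $j'\geq j$ already imply $i'+j'\geq n+1$ given $i+j\geq n+1$ — which is immediate — so that no boxes are ``missing'' from $Q$ and the rectangle argument of \cref{lem:rect_ds1s} transfers without modification. The alternating-sign count along the boundary path is then word-for-word the same as before, so I would simply cite the proof of \cref{lem:rect_ds1s} for that part rather than repeat it.
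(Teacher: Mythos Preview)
Your proposal is correct and follows exactly the approach the paper intends: the paper gives no separate proof of \cref{lem:a_ds1s}, merely noting that ``\cref{lem:rect_ds1s} holds verbatim for $\arootp{n}$, and with the same proof,'' together with the footnote observation that the identity holds whenever the principal order filter generated by $p$ is isomorphic to a rectangle. You have made this explicit by verifying that the filter above $p=(i,j)$ in $\arootp{n}$ is the full rectangle $\{(i',j'):i\leq i'\leq n,\ j\leq j'\leq n\}$ and then invoking the boundary-path alternating-sign argument from the proof of \cref{lem:rect_ds1s}.
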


With \cref{lem:a_ds1s} in hand, we can prove that $\sum_{p\in P}(-1)^{\rk(p)}\oii{p}\tequiv \const$. As the reader may expect by now, we will first do this for some refinements of the rank-alternating order ideal cardinality statistic.

\begin{thm} \label{thm:a_oi_refined}
Fix $k\in \{n-1, n-3, \ldots,-(n-3),-(n-1)\}$. Then for $P=\arootp{n}$, we have
\[ 2\cdot\sum_{j-i=k} \oii{i,j} \ - \sum_{j-i=k-1} \oii{i,j} \ - \sum_{j-i=k+1} \oii{i,j} \ \tequiv \ 1.\]
\end{thm}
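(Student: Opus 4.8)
The plan is to reduce the left-hand side, modulo the span of the signed toggleability statistics, to a single reduced rook and then invoke \cref{lem:a_rook_equiv} and \cref{thm:a_sumone}. Write $S_k \coloneqq \sum_{j-i=k}\oii{i,j}$ for the $k$-th file sum, so that the goal is $2S_k - S_{k-1} - S_{k+1} \tequiv 1$. Since $\Tin{p}\tequiv\Tout{p}$, \cref{lem:a_ds1s} rewrites, for every box $p=(i,j)$,
\[ \oii{i,j} \ \tequiv\ Q(i,j) - Q(i+1,j+1), \qquad \text{where } Q(i,j) \coloneqq \sum_{i'\geq i,\ j'\geq j}\Tout{i',j'}, \]
the sum ranging over boxes of $\arootp{n}$, so that $Q$ vanishes as soon as its indices leave the poset to the right or below.

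The main step is a telescoping. For $k$ in the stated range, the file $\{j-i=k\}$ is the contiguous chain of boxes $(i,i+k)$ with $i$ running from $i_{\min}\coloneqq\lceil (n+1-k)/2\rceil$ up to $i_{\max}$ (which is $n-k$ if $k\geq 0$ and $n$ if $k<0$). Since $Q(i+1,(i+1)+k)$ is exactly the ``next'' term, summing the displayed congruence over the file telescopes to $S_k \tequiv Q(i_{\min},\, i_{\min}+k)$; the boundary term $Q(i_{\max}+1,\,i_{\max}+1+k)$ is $0$ because one of its coordinates has left $\arootp{n}$. The parity hypothesis $k\equiv n-1\pmod 2$ is precisely what makes $n+1-k$ even, so $i_{\min}=i\coloneqq(n+1-k)/2$ and $(i_{\min},i_{\min}+k)=(i,n+1-i)$ lands on the main anti-diagonal. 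Applying the same telescoping to $k\pm 1$ (where $n+1-k\mp 1$ is odd) gives $S_{k-1}\tequiv Q(i+1,n+1-i)$ and $S_{k+1}\tequiv Q(i,n+2-i)$; the $Q$-terms that fall outside $\arootp{n}$ simply evaluate to $0$, so the extreme values $i=1$ and $i=n$ need no separate treatment.

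The last step is bookkeeping. Subtracting,
\[ 2S_k - S_{k-1} - S_{k+1} \ \tequiv\ \bigl(Q(i,n+1-i)-Q(i+1,n+1-i)\bigr) + \bigl(Q(i,n+1-i)-Q(i,n+2-i)\bigr), \]
and the first difference collapses to the single row $i$ while the second collapses to the single column $n+1-i$, so the right-hand side equals $\sum_{j'\geq n+1-i}\Tout{i,j'} + \sum_{i'\geq i}\Tout{i',n+1-i}$, which is exactly $\rrook{i}$ as in \eqref{eqn:a_red_rook}. Then $\rrook{i}\tequiv\rook{i}$ by \cref{lem:a_rook_equiv} and $\rook{i}=1$ by \cref{thm:a_sumone}, giving $2S_k-S_{k-1}-S_{k+1}\tequiv 1$. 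I expect the only mildly delicate point to be keeping track of $i_{\min}$, $i_{\max}$ and the vanishing $Q$-terms for files near the boundary of $\arootp{n}$; everything else is a routine telescoping once \cref{lem:a_ds1s} is available.
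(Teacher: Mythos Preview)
Your proof is correct and follows essentially the same approach as the paper: both use \cref{lem:a_ds1s} to express the $\oii{i,j}$ in terms of toggleability statistics, combine along the file, and recognize a rook at the anti-diagonal box $(i,n+1-i)$ with $i=(n+1-k)/2$. The only organizational difference is that the paper first derives the local identity $2\oii{i,j}-\oii{i+1,j}-\oii{i,j+1}=\rook{}$-data and sums over the file to obtain the \emph{unreduced} rook $\rook{i}$ plus an explicit $\T{}$-correction, whereas you pass to $\tequiv$ immediately, telescope each file sum $S_{k},S_{k\pm1}$ to a single $Q$-value, and land on the \emph{reduced} rook $\rrook{i}$; after invoking \cref{lem:a_rook_equiv} and \cref{thm:a_sumone} the two routes coincide.
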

\begin{proof}
\Cref{lem:a_ds1s} says that for any box $(i,j)\in \arootp{n}$, we have
\begin{equation} \label{eqn:a_1ds_helper}
2\cdot \oii{i,j} - \oii{i+1,j} -\oii{i,j+1} = 2\cdot ( \Tout{i,j} - \Tin{i+1,j+1}) + \sum_{i' > i} (\Tout{i',j}-\Tin{i'+1,j+1}) + \sum_{j' > j} (\Tout{i,j'}-\Tin{i+1,j'+1}).
\end{equation}

Now fix $k$ as in the statement of the theorem, and set 
\[\stat{f}_k \coloneqq 2\cdot\sum_{j-i=k} \oii{i,j} \ - \sum_{j-i=k-1} \oii{i,j} \ - \sum_{j-i=k+1}\oii{i,j}.\] 
By summing \eqref{eqn:a_1ds_helper} over all $(i,j)\in\arootp{n}$ with $j-i=k$, we obtain
\[\stat{f}_k \ = \sum_{j-i=k}(2\cdot \oii{i,j} - \oii{i+1,j} -\oii{i,j+1}) = \rook{\frac{n+1-k}{2}} - \sum_{j-i=k} \T{i,j},\]
where we recall the definition~\eqref{eqn:a_rook} of $\rook{i}$ as a sum of toggleability statistics. By \cref{thm:a_sumone}, we know $\rook{i} = 1$ for all $i$, so indeed $\stat{f}_k \tequiv 1$.
\end{proof}

The statistics appearing in \cref{thm:a_oi_refined} refine (twice) the rank-alternating order ideal cardinality. In other words, if we write $\stat{f}_k$ for these statistics, then
\[\stat{f}_{n-1} +\stat{f}_{n-3} +\cdots + \stat{f}_{-(n-3)} + \stat{f}_{-(n-1)} = 2 \cdot \sum_{p\in P} (-1)^{\rk(p)} \oii{p}.\]
Hence, we obtain the following corollary.

\begin{cor} \label{cor:a_oi}
For $P=\arootp{n}$, we have $\sum_{p\in P} (-1)^{\rk(p)} \oii{p} \tequiv \frac{n}{2}$.
\end{cor}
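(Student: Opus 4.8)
The plan is to obtain \cref{cor:a_oi} as essentially a one-line consequence of \cref{thm:a_oi_refined}, once we verify that the statistics appearing there refine twice the rank-alternating order ideal cardinality. For $k$ in the index set $\{n-1,n-3,\ldots,-(n-3),-(n-1)\}$ of \cref{thm:a_oi_refined}, write $\stat{f}_k \coloneqq 2\sum_{j-i=k}\oii{i,j}-\sum_{j-i=k-1}\oii{i,j}-\sum_{j-i=k+1}\oii{i,j}$. First I would compute $\sum_k\stat{f}_k$ by collecting, for each file index $\ell$ with $-(n-1)\le\ell\le n-1$, the coefficient of $\sum_{j-i=\ell}\oii{i,j}$: it receives $+2$ from the $k=\ell$ term and $-1$ from each of the $k=\ell\pm1$ terms, subject to those values of $k$ actually belonging to the index set.

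The key point is a parity count. The indices $k$ appearing in \cref{thm:a_oi_refined} are precisely those with $k\equiv n-1\pmod 2$. So if $\ell\equiv n-1\pmod2$, only the ``$+2$'' contribution survives (the neighbors $\ell\pm1$ have the opposite parity), whereas if $\ell\equiv n\pmod2$, the ``$+2$'' contribution is absent but both ``$-1$'' contributions survive — and here one checks $\ell\pm1$ stays in $\{-(n-1),\ldots,n-1\}$ precisely because $\ell$ cannot equal the endpoints $\pm(n-1)$, which have parity $n-1$. Hence the coefficient of $\sum_{j-i=\ell}\oii{i,j}$ is $+2$ or $-2$ according to the parity of $\ell$. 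Since $\rk(i,j)=i+j-(n+1)$ on $\arootp{n}$, we have $(-1)^{\rk(i,j)}=(-1)^{(j-i)+(n+1)}=(-1)^{\ell}(-1)^{n+1}$ for a box in file $\ell$, and a short case check on the parity of $\ell$ confirms that the coefficient just computed equals $2(-1)^{\rk(i,j)}$ in both cases. Therefore $\sum_k\stat{f}_k=2\sum_{p\in P}(-1)^{\rk(p)}\oii{p}$, which is exactly the refinement identity stated just before the corollary.

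Given this, the conclusion is immediate: \cref{thm:a_oi_refined} gives $\stat{f}_k\tequiv1$ for each of the $n$ values of $k$ in the index set, so $2\sum_{p\in P}(-1)^{\rk(p)}\oii{p}=\sum_k\stat{f}_k\tequiv n$, and since $\tequiv$ is a congruence we may halve both sides to get $\sum_{p\in P}(-1)^{\rk(p)}\oii{p}\tequiv\frac n2$. I expect no genuine obstacle here; the only point requiring care is the boundary bookkeeping in the parity count (counting $\ell\pm1$ exactly when it lies in the index set), which is routine. As a sanity check on the constant, \cref{prop:homo} says $\tfrac n2$ must equal the average of $\sum_p(-1)^{\rk(p)}\oii{p}$ along the distinguished rowmotion orbit $\{\{p\in P\colon\rk(p)\le i\}\colon i=-1,0,\ldots,n-1\}$; since $\arootp n$ has $n-r$ elements of rank $r$, the values along this orbit are the partial sums $\sum_{r=0}^{i}(-1)^r(n-r)$, and one checks directly that these average to $\tfrac n2$.
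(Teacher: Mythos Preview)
Your proof is correct and follows exactly the same approach as the paper: both deduce the corollary from \cref{thm:a_oi_refined} via the refinement identity $\sum_k \stat{f}_k = 2\sum_{p\in P}(-1)^{\rk(p)}\oii{p}$, which the paper states just before the corollary without justification. You supply a careful verification of that identity (the parity bookkeeping on file indices) and add a sanity check on the constant, but the underlying argument is identical.
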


That the rank-alternating order ideal cardinality is homomesic under rowmotion for $\arootp{n}$ was first proved by Haddadan~\cite[Corollary~2.3]{haddadan2021homomesy}. The stronger result that it is $\tequiv \const$ (i.e., \cref{cor:a_oi}) is new. The refinements in \cref{thm:a_oi_refined} are also apparently new.

\begin{remark} \label{rem:bsv}
Recently, Bernstein, Striker, and Vorland~\cite[Conjecture~4.35]{bernstein2021pstrict} conjectured that the rank-alternating order ideal cardinality homomesy for $\arootp{n}$ should extend to the piecewise-linear realm. In \cref{sec:pl_birational}, we will explain how, whenever it works, the toggleability statistics technique automatically yields piecewise-linear homomesies as well. However, there is a catch: Bernstein, Striker, and Vorland's homomesy conjecture actually concerns piecewise-linear \emph{promotion}, not rowmotion. Nevertheless, as we mentioned in \cref{rem:promo_2}, the recombination method of Einstein--Propp~\cite[\S6]{einstein2018combinatorial} allows one to transfer homomesies from rowmotion to promotion, provided that the statistic in question is a linear combination of the $\oii{p}$'s (which the rank-alternating order ideal cardinality certainly is). Moreover, the recombination method works equally well at the PL level. Hence, we \emph{are} able to affirmatively resolve the conjecture of Bernstein--Striker--Vorland using toggleability statistics in the way we have just sketched. Giving all the details for this argument would require us to formally introduce promotion, recombination, etc., which unfortunately is beyond the scope of the present article. 
\end{remark}

\subsection{The Type B root poset}

The \dfn{root poset of Type $B_n$}, denoted~$\brootp{n}$, will for our purposes be realized as $\{(i,j): 1\leq i\leq j\leq 2n-1\textrm{ and }i+j\geq 2n\} \subseteq \quadrant$. Observe the inclusions $\brootp{n}\subseteq \arootp{2n-1} \subseteq \rect{2n-1}{2n-1}$. 

What is more, we have $\brootp{n}=\arootp{2n-1}/\twogrp$, i.e., $\brootp{n}$ is the quotient of $\arootp{2n-1}$ by the involutive poset automorphism $(i,j)\mapsto (j,i)$. As with the rectangle and shifted staircase, we use $\iota\colon \J(\brootp{n})\to\J(\arootp{2n-1})$ to denote the map sending each order ideal to its preimage under the quotient map; that is, $\iota(I)\coloneqq \{(i,j),(j,i)\colon (i,j) \in I\}$. Once again, because it respects the relevant statistics, the map $\iota$ will let us transfer many results from $\arootp{2n-1}$ to $\brootp{n}$.

Throughout this subsection, we fix $P=\brootp{n}$.

\subsubsection{Rooks} 

As always, we start by defining rooks for $\brootp{n}$.

For $1\leq i \leq n$, define the \dfn{rook} $\rook{i}\colon \J(P)\to\RR$ by
\begin{equation} \label{eqn:b_rook}
\rook{i} \ \coloneqq \ \Tin{i,2n-i} \ \ + \sum_{\substack{i' \geq i, \, j'\geq 2n-i,\\ j\geq i}} \Tout{i',j'} \ \ - \sum_{\substack{i' > i, \, j' > 2n-i,\\ j>i}} \Tin{i',j'}
\end{equation}
and the \dfn{reduced rook} $\rrook{i}\colon \J(P)\to\RR$ by
\begin{equation} \label{eqn:b_red_rook}
\rrook{i} \ \coloneqq \sum_{j'\geq 2n-i} \Tout{i,j'} \ + \ \sum_{i' \geq i} \Tout{i',2n-i}\ + \sum_{j'>2n-i}\Tout{j',j'}.
\end{equation}
The sum~\eqref{eqn:b_rook} defining $\rook{i}$ can be represented pictorially as in the left image in \cref{fig:b_rook}.

\begin{figure}
\begin{center}
\includegraphics[height=6.523cm]{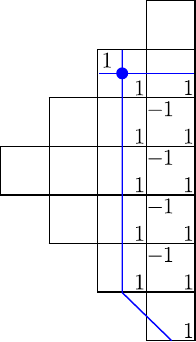} \qquad\qquad\qquad\qquad \includegraphics[height=6.523cm]{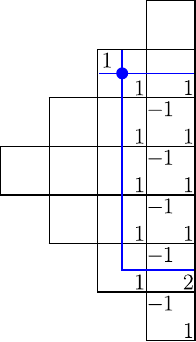}
\end{center}
\caption{A rook (left) and a variant rook (right) for $\brootp{n}$. Here, $n=4$ and $i=2$.} \label{fig:b_rook}
\end{figure}

As before, we have the following fundamental facts about the rooks.

\begin{lemma} \label{lem:b_rook_equiv}
For $P=\brootp{n}$, $\rook{i} \tequiv \rrook{i}$ for all $i=1,\ldots,n$.
\end{lemma}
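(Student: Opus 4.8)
The claim $\rook{i}\tequivB\rrook{i}$ (wait—the statement is at the combinatorial level: $\rook{i}\tequiv\rrook{i}$) should follow in exactly the same way as the analogous \cref{lem:rect_rook_equiv}, \cref{lem:sstair_rook_equiv}, and \cref{lem:a_rook_equiv}: one compares the two defining sums \eqref{eqn:b_rook} and \eqref{eqn:b_red_rook} term by term, using only the fact that $\Tin{p}\tequiv\Tout{p}$ for every $p$ (since $\Tin{p}-\Tout{p}=\T{p}$), together with the bookkeeping convention that any statistic indexed by a box outside $P$ is zero.

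\textbf{Plan.} The plan is to massage $\rook{i}$ modulo $\Span\{\T{p}\colon p\in P\}$ until it literally becomes $\rrook{i}$. Concretely, I would start from
\[
\rook{i} \;=\; \Tin{i,2n-i} \;+\; \sum_{\substack{i'\geq i,\,j'\geq 2n-i,\\ j'\geq i'}}\Tout{i',j'} \;-\; \sum_{\substack{i'> i,\,j'> 2n-i,\\ j'> i'}}\Tin{i',j'},
\]
(reading the inequalities $j\ge i$, $j>i$ in \eqref{eqn:b_rook} as $j'\ge i'$, $j'>i'$), and first replace the leading $\Tin{i,2n-i}$ by $\Tout{i,2n-i}$, which is legitimate since $\Tin{i,2n-i}-\Tout{i,2n-i}=\T{i,2n-i}$. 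Then I would replace every $\Tin{i',j'}$ in the third sum by $\Tout{i',j'}$, at the cost of subtracting $\sum\T{i',j'}$; modulo the span of the $\T{p}$ this is free. After these moves,
\[
\rook{i} \;\tequiv\; \sum_{\substack{i'\geq i,\,j'\geq 2n-i,\\ j'\geq i'}}\Tout{i',j'} \;-\; \sum_{\substack{i'> i,\,j'> 2n-i,\\ j'> i'}}\Tout{i',j'}\;,
\]
and the right-hand side telescopes: a box $(i',j')$ with $i'\ge i$, $j'\ge 2n-i$, $j'\ge i'$ survives with coefficient $+1$ exactly when it fails at least one of the strict inequalities $i'>i$, $j'>2n-i$, $j'>i'$ (and with net coefficient $0$ otherwise, bearing in mind the out-of-$P$ convention when a box leaves the poset). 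One checks that the surviving boxes are precisely those in row $i$ of $P$ (where $i'=i$), those in the anti-diagonal column $j'=2n-i$ of $P$ (where $j'=2n-i$), and the diagonal boxes $(j',j')$ with $j'>2n-i$ (where $j'=i'$ forces $2n-i\le j'$, i.e. the boundary case of $j'>2n-i$ meeting $j'>i'$)—which is exactly the list of boxes attacked by $\rrook{i}$ in \eqref{eqn:b_red_rook}. Comparing multiplicities (each surviving box is counted once) gives $\rook{i}\tequiv\rrook{i}$.

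\textbf{Main obstacle.} The only delicate point is the edge bookkeeping: near the diagonal $i'=j'$ and near the anti-diagonal boundary $i'+j'=2n$ of $\brootp{n}$, the three index sets in the telescoping sum can pick up or drop a box depending on whether a weak inequality is met with equality, and one must check that the convention ``statistics at boxes outside $P$ are zero'' makes the cancellation come out exactly to the diagonal term $\sum_{j'>2n-i}\Tout{j',j'}$ that appears in $\rrook{i}$ but not in $\rrook{i}$ for the Type A root poset. This is the same subtlety that distinguishes \eqref{eqn:b_red_rook} from \eqref{eqn:a_red_rook}, and since the excerpt says ``As before, we have the following fundamental facts,'' I expect the intended proof is simply to note that it goes through ``as before'' and leave the routine verification to the reader or to the pictorial representation in \cref{fig:b_rook}. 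So in the write-up I would state that the proof is entirely analogous to those of \cref{lem:rect_rook_equiv} and \cref{lem:a_rook_equiv} — comparing \eqref{eqn:b_rook} and \eqref{eqn:b_red_rook} term by term using $\Tin{p}\tequiv\Tout{p}$ — and perhaps display the one-line telescoping identity above, without belaboring the boundary case analysis.
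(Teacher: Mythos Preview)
Your proposal is correct and follows exactly the approach the paper intends: the paper does not give a separate proof of \cref{lem:b_rook_equiv} at all, merely saying ``As before, we have the following fundamental facts about the rooks'' and pointing to the earlier cases (\cref{lem:rect_rook_equiv}, \cref{lem:sstair_rook_equiv}, \cref{lem:a_rook_equiv}), all of which rest on the single observation $\Tin{p}\tequiv\Tout{p}$. Your telescoping argument is in fact more detailed than anything the paper writes down; the only thing to tidy is the multiplicity bookkeeping you flag yourself (the box $(i,2n-i)$ is counted once in the telescoped difference but twice in $\rrook{i}$, the extra copy coming from the leading $\Tin{i,2n-i}$ you replaced, and the diagonal box $(2n-i,2n-i)$ lies in both the column and the $j'=i'$ boundary, which is why the diagonal sum in \eqref{eqn:b_red_rook} has the strict inequality $j'>2n-i$).
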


\begin{thm} \label{thm:b_sumone}
For $P=\brootp{n}$, $\rook{i} = 1$ for all $i=1,\ldots,n$.
\end{thm}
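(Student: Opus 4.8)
The plan is to mimic the proofs of \cref{thm:rect_sumone} and \cref{thm:a_sumone}, exploiting the combinatorial ``boundary path'' interpretation of the toggleability statistics. First I would fix $(i,j)$ on the anti-diagonal of $P=\brootp{n}$ (so $j=2n-i$) and an order ideal $I\in\J(P)$, and recall that for any box $p$, one has $\Tin{p}(I)=1$ (resp.\ $\Tout{p}(I)=1$) exactly when the southeast boundary of $I$ (the lattice path separating $I$ from $P\setminus I$, run from the lower-left corner of $P$ to the upper-right corner) contains the top and left (resp.\ bottom and right) edges of $p$. Consequently, evaluating $\rook{i}(I)$ reduces to summing the integer labels attached to boxes along this boundary path: a box $(i',j')$ labeled $d$ in its upper-left and $e$ in its lower-right contributes $d\,\Tin{i',j'}(I)+e\,\Tout{i',j'}(I)$, and only boxes adjacent to the path contribute at all.

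Next I would inspect the pictorial form of $\rook{i}$ as drawn in the left image of \cref{fig:b_rook}: it places a $+1$ in the upper-left of the single anti-diagonal box $(i,2n-i)$, a $+1$ in the lower-right of every box weakly southeast of it that lies in $P$, and a $-1$ in the upper-left of every box strictly southeast of it in $P$. Because $P=\brootp{n}$ sits inside the rectangle $\rect{2n-1}{2n-1}$, the boundary path of $I$ enters the relevant region either above-and-left of $(i,2n-i)$ (if $(i,2n-i)\notin I$) or below-and-right of it (if $(i,2n-i)\in I$), and in the portion of the path adjacent to the labeled boxes, the labels encountered alternate $+1,-1,+1,\dots,+1$, beginning and ending with $+1$. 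This is exactly the same alternation phenomenon exploited in \cref{thm:rect_sumone} and \cref{lem:rect_ds1s}; the only new feature is that the ``staircase'' right edge of $\brootp{n}$ (the boxes $(j',j')$ on the main diagonal, which are the maximal elements of that shape) truncates the region, but the labels still alternate and still terminate in a $+1$ because the path must exit at the upper-right corner of the ambient rectangle through the anti-diagonal boundary $i'+j'=2n$. One checks the two cases ($(i,2n-i)$ in or out of $I$) and in each the total telescopes to $1$.

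The main obstacle is bookkeeping the boundary behavior along the diagonal edge $i'=j'$ of $\brootp{n}$: near that edge the boundary path of an order ideal in the \emph{quotient} shape can behave differently than in the full $\arootp{2n-1}$, so I would want to double-check that no label is skipped or doubled there and that the alternation is unbroken. A clean way to sidestep this entirely is to use the quotient map $\iota\colon\J(\brootp{n})\to\J(\arootp{2n-1})$: compare the definition \eqref{eqn:b_rook} of the Type B rook with the definition \eqref{eqn:a_rook} of the Type A rook $\rook{i}$ on $\arootp{2n-1}$, and observe that, after identifying $(i',j')$ with $(j',i')$ and using that the statistics $\Tin{i',j'},\Tout{i',j'}$ are $\iota$-invariant, the extra diagonal terms $\sum_{j'>2n-i}\Tout{j',j'}$ appearing in $\rrook{i}$ (and the corresponding folding of off-diagonal pairs in $\rook{i}$) are precisely what is needed to make $\rook{i}^{\brootp{n}}(I)=\rook{\,i\,}^{\arootp{2n-1}}(\iota(I))$. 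Then \cref{thm:a_sumone} gives $\rook{i}^{\arootp{2n-1}}(\iota(I))=1$, hence $\rook{i}(I)=1$ for all $I\in\J(\brootp{n})$. I expect the write-up to present whichever of these two routes is shorter, probably the direct boundary-path argument with a sentence noting the alternative via $\iota$.
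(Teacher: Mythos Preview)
Your first route---the direct boundary-path argument analogous to \cref{thm:rect_sumone} and \cref{thm:sstair_sumone}---is correct and is exactly how the paper handles it (the paper simply states the result holds ``as before'' and cites \cite{hopkins2017cde}). One detail worth flagging in your description: the $-\Tin{i',j'}$ terms in \eqref{eqn:b_rook} are restricted to strictly off-diagonal boxes, so diagonal boxes carry only a lower-right $+1$ label and no upper-left $-1$. This is what makes the alternation terminate correctly when the boundary path reaches the diagonal edge of the shifted shape, exactly as in \cref{thm:sstair_sumone}.

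Your second route via $\iota$, however, contains a genuine error. Folding the Type~A rook $\rook{i}$ on $\arootp{2n-1}$ through $\iota$ does \emph{not} produce the Type~B rook \eqref{eqn:b_rook}; it produces the \emph{variant} rook $\varrook{i}$ that the paper introduces immediately after \cref{thm:b_sumone}. Concretely, the below-diagonal boxes $(i',2n-i)$ with $i'>2n-i$ in $\arootp{2n-1}$ fold to $(2n-i,i')$ in $\brootp{n}$, yielding the row terms $\sum_{j'>2n-i}\Tout{2n-i,j'}$ of $\rvarrook{i}$ in \eqref{eqn:b_red_var_rook}, \emph{not} the diagonal terms $\sum_{j'>2n-i}\Tout{j',j'}$ of $\rrook{i}$ in \eqref{eqn:b_red_rook}. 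The paper explicitly remarks that the $\rvarrook{i}$ are not linear combinations of the $\rrook{i}$, so these are genuinely different expressions in the toggleability statistics (even though both happen to evaluate to the constant $1$). Your $\iota$ argument therefore proves \cref{thm:b_var_sumone}, not \cref{thm:b_sumone}, and cannot replace the direct boundary-path argument.
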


As mentioned earlier, rooks for all shifted Young diagram shapes, including $\brootp{n}$, were introduced by Hopkins~\cite{hopkins2017cde}. In particular, \cref{lem:b_rook_equiv} and \cref{thm:b_sumone} were first proved in there (see~\cite[Lemma~4.3 and Lemma~4.4]{hopkins2017cde}).

We will also make use of variant rooks for $\brootp{n}$, defined via $\iota$. That is, for $1\leq i \leq n$, we define $\varrook{i}, \rvarrook{i} \colon \J(P)\to\RR$ by setting
\begin{align*}
\varrook{i}(I) &\coloneqq \rook{i}(\iota(I)); \\
\rvarrook{i}(I) &\coloneqq \rrook{i}(\iota(I)),
\end{align*}
for all $I \in \J(P)$, where the $\rook{i}$ and $\rrook{i}$ on the right-hand sides above are defined on $\arootp{2n-1}$ via~\eqref{eqn:a_rook} and~\eqref{eqn:a_red_rook}. 

The right image in~\cref{fig:b_rook} depicts one example of a variant rook. It is certainly possible to write a definition of $\varrook{i}$ as an explicit sum of toggleability statistics, but it is a bit cumbersome to do so. However, we do find it convenient to record that
\begin{equation} \label{eqn:b_red_var_rook}
\rvarrook{i} \ = \sum_{j'\geq 2n-i} \Tout{i,j'}\ + \ \sum_{i' \geq i} \Tout{i',2n-i} \ +\sum_{j'>2n-i}\Tout{2n-i,j'}.
\end{equation}

For the variant rooks, we have the same fundamental facts, which now follow from~\cref{lem:a_rook_equiv} and~\cref{thm:a_sumone}.

\begin{lemma} \label{lem:b_var_rook_equiv}
For $P=\brootp{n}$, $\varrook{i} \tequiv \rvarrook{i}$ for all $i=1,\ldots,n$.
\end{lemma}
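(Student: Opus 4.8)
The plan is to deduce \cref{lem:b_var_rook_equiv} from the corresponding fact for $\arootp{2n-1}$ by pulling back along $\iota$, in exactly the spirit of the transfers from $\rect{n}{n}$ to $\sstair{n}$ and from $\arootp{2n-1}$ to $\brootp{n}$ carried out earlier in this section. First I would apply \cref{lem:a_rook_equiv}: working inside $\J(\arootp{2n-1})$, there exist constants $c_{a,b}\in\RR$, indexed by the boxes $(a,b)\in\arootp{2n-1}$, such that $\rook{i}-\rrook{i}=\sum_{(a,b)} c_{a,b}\,\T{a,b}$ as an equality of functions on $\J(\arootp{2n-1})$.

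Next I would invoke the fact, recorded in the excerpt for the quotient map $\iota\colon \J(\brootp{n})\to\J(\arootp{2n-1})$ just as for the shifted staircase, that $\iota$ respects all of the relevant statistics. Writing $\pi(a,b)\coloneqq(\min(a,b),\max(a,b))$ for the image of a box $(a,b)\in\arootp{2n-1}$ under the quotient map to $\brootp{n}$ (so off-diagonal boxes are identified in pairs while diagonal boxes are fixed), this says $\T{a,b}(\iota(I))=\T{\pi(a,b)}(I)$ for every box $(a,b)$ of $\arootp{2n-1}$ and every $I\in\J(\brootp{n})$. Composing the identity from the previous paragraph with $\iota$ and using the definitions $\varrook{i}=\rook{i}\circ\iota$ and $\rvarrook{i}=\rrook{i}\circ\iota$ then gives $\varrook{i}-\rvarrook{i}=\sum_{(a,b)\in\arootp{2n-1}} c_{a,b}\,\T{\pi(a,b)}$ as functions on $\J(\brootp{n})$.

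Finally I would regroup this sum along the fibers of $\pi$: setting $d_{a,b}\coloneqq c_{a,b}+c_{b,a}$ for each box $(a,b)\in\brootp{n}$ with $a<b$ and $d_{a,a}\coloneqq c_{a,a}$, the last display becomes $\varrook{i}-\rvarrook{i}=\sum_{p\in\brootp{n}} d_p\,\T{p}$, which is precisely the assertion $\varrook{i}\tequiv\rvarrook{i}$.

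I do not expect a genuine obstacle here: once \cref{lem:a_rook_equiv} and the statistic-compatibility of $\iota$ are in hand, the argument is purely formal. The only point needing a little care is the index bookkeeping for $\pi$ (pairing off-diagonal boxes, fixing diagonal ones); and if one wishes to reconcile this with the explicit formula~\eqref{eqn:b_red_var_rook}, one should separately check that $\rrook{i}\circ\iota$ equals the right-hand side of~\eqref{eqn:b_red_var_rook}, which is a routine unwinding of~\eqref{eqn:a_red_rook} together with $\Tout{a,b}\circ\iota=\Tout{\pi(a,b)}$ — but this check is not needed for the lemma as stated.
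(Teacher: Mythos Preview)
Your proposal is correct and takes essentially the same approach as the paper: the paper simply states that \cref{lem:b_var_rook_equiv} and \cref{thm:b_var_sumone} ``follow from~\cref{lem:a_rook_equiv} and~\cref{thm:a_sumone}'' via the map $\iota$, and your argument is precisely the unwinding of that sentence. The bookkeeping you supply (pulling back the $\arootp{2n-1}$ identity along $\iota$, using $\T{a,b}\circ\iota=\T{\pi(a,b)}$, and regrouping over the fibers of the quotient) is exactly what the paper leaves implicit.
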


\begin{thm} \label{thm:b_var_sumone}
For $P=\brootp{n}$, $\varrook{i} = 1$ for all $i=1,\ldots,n$.
\end{thm}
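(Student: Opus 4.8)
The plan is to get this theorem for free from its Type~$A$ counterpart \cref{thm:a_sumone} via the map $\iota$, in exactly the way that the shifted-staircase results were obtained from the rectangle. By the very definition of the variant rook, $\varrook{i}(I) = \rook{i}(\iota(I))$ for every $I \in \J(\brootp{n})$, where $\rook{i}$ on the right-hand side is the rook attached to $\arootp{2n-1}$ through~\eqref{eqn:a_rook}. Now \cref{thm:a_sumone} says precisely that $\rook{i} = 1$ as a function on all of $\J(\arootp{2n-1})$. So, provided $\iota(I)$ really is an element of $\J(\arootp{2n-1})$, we may evaluate $\rook{i}$ at it and conclude $\varrook{i}(I) = \rook{i}(\iota(I)) = 1$ for every $I$, which is the assertion.

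The single point that needs to be invoked, then, is that $\iota(I) = \{(i,j),(j,i): (i,j)\in I\}$ is a genuine order ideal of $\arootp{2n-1}$ whenever $I \in \J(\brootp{n})$. This was already recorded in the set-up of this subsection (``because it respects the relevant statistics''), and it is the exact analogue of the corresponding fact for the shifted staircase: since $\brootp{n} = \arootp{2n-1}/\twogrp$ and the quotient map $\arootp{2n-1} \to \brootp{n}$ is order-preserving, the preimage of any downward-closed set is downward-closed. One should also note the trivial compatibility of index ranges --- $\varrook{i}$ is defined for $1 \leq i \leq n$, and since $n \leq 2n-1$ we are entitled to apply \cref{thm:a_sumone} for each such $i$. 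With these two remarks the proof is complete.

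I do not expect any real obstacle here: all of the combinatorial work --- the alternation of coefficients $+1, -1, \ldots, +1$ along the southeast boundary of an order ideal --- was already done in the proof of \cref{thm:a_sumone} (itself modeled on \cref{thm:rect_sumone}), and the present statement is nothing more than that computation read off on the symmetric order ideals $\iota(I)$. If one wanted a self-contained argument instead, one could imitate the proof of \cref{thm:rect_sumone} directly on $\brootp{n}$, summing the pictorial labels of $\varrook{i}$ displayed on the right of \cref{fig:b_rook} along the boundary of $I$ and checking the same alternating pattern; but routing through $\iota$ and \cref{thm:a_sumone} is shorter and re-derives nothing.
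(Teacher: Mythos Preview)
Your proof is correct and is exactly the argument the paper intends: the sentence immediately preceding \cref{lem:b_var_rook_equiv} and \cref{thm:b_var_sumone} states that these facts ``now follow from~\cref{lem:a_rook_equiv} and~\cref{thm:a_sumone},'' and your write-up simply unpacks this, noting that $\varrook{i}(I)=\rook{i}(\iota(I))$ by definition and that $\iota(I)\in\J(\arootp{2n-1})$.
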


Unlike with the shifted staircase, it is no longer the case that the $\rvarrook{i}$ are linear combinations of the~$\rrook{i}$, which is why it is useful to also consider them. (But note that $\rvarrook{1}=\rrook{1}$.)

\subsubsection{Antichains} 

Our goal is now to show that the antichain cardinality statistic is $\tequiv \const$ for~$\brootp{n}$ by taking linear combinations of the $\rrook{i}$ and $\rvarrook{i}$. 

\begin{lemma} \label{lem:b_ac}
We have $\sum_{i}\Tout{i,i} \tequiv \frac{1}{2}$ for $P=\brootp{n}$.
\end{lemma}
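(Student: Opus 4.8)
The plan is to realize $\sum_i \Tout{i,i}$ \emph{exactly} as the combination $\rrook{n}-\tfrac12\rvarrook{n}$ of the single reduced rook and single variant reduced rook anchored at the box $(n,n)$, and then pass to $\tequiv$ using $\rrook{n}\tequiv\rook{n}=1$ and $\rvarrook{n}\tequiv\varrook{n}=1$. This is the Type $B$ analogue of the shifted-staircase computation in \cref{lem:sstair_diag}, with the wrinkle that here we also need a variant rook. As a first step I would pin down the diagonal of $P=\brootp{n}$: a box $(i,i)$ lies in $\brootp{n}$ iff $2i\geq 2n$ and $i\leq 2n-1$, so the diagonal boxes are exactly $(n,n),(n+1,n+1),\dots,(2n-1,2n-1)$, and the statistic in question is $\sum_{i=n}^{2n-1}\Tout{i,i}$.

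Next comes the main computation. Take the index $i=n$, so that $2n-i=n$ and the anchor $(i,2n-i)=(n,n)$ is itself a diagonal box; note that its ``column'' in $P$ is just $\{(n,n)\}$, since the constraints $i'\geq n$ and $i'\leq n$ force $i'=n$. Unwinding the definition \eqref{eqn:b_red_rook} of $\rrook{n}$, and remembering that the box $(n,n)$ gets counted twice, one obtains
\[\rrook{n}=2\Tout{n,n}+\sum_{j=n+1}^{2n-1}\Tout{n,j}+\sum_{j=n+1}^{2n-1}\Tout{j,j},\]
while unwinding \eqref{eqn:b_red_var_rook} similarly gives
\[\rvarrook{n}=2\Tout{n,n}+2\sum_{j=n+1}^{2n-1}\Tout{n,j};\]
the only difference is that the ``third term'' of $\rrook{n}$ runs down the diagonal whereas that of $\rvarrook{n}$ runs along row $n$. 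Subtracting, $\rrook{n}-\tfrac12\rvarrook{n}=\Tout{n,n}+\sum_{j=n+1}^{2n-1}\Tout{j,j}=\sum_{i=n}^{2n-1}\Tout{i,i}$, an identity of functions on $\J(P)$. Then \cref{lem:b_rook_equiv,thm:b_sumone} give $\rrook{n}\tequiv\rook{n}=1$, and \cref{lem:b_var_rook_equiv,thm:b_var_sumone} give $\rvarrook{n}\tequiv\varrook{n}=1$, so since $\tequiv$ is a congruence we conclude $\sum_i\Tout{i,i}\tequiv 1-\tfrac12=\tfrac12$.

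The only real obstacle is the bookkeeping in the two rook evaluations: one must track which of the boxes $(i,j)$, $(i',2n-i)$, $(j',j')$ actually belong to $\brootp{n}$ (i.e., respect the poset boundary $i\leq j$, $i+j\geq 2n$), and remember that the anchor box $(n,n)$ is attacked twice by each reduced rook. This is the same ``count how many times each box is attacked'' argument used repeatedly in this section (cf.\ \cref{lem:sstair_diag,lem:sstair_els}), and once the two displayed formulas are in hand everything else is formal.
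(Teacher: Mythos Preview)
Your proof is correct and is essentially identical to the paper's own argument: the paper verifies $2\rrook{n}-\rvarrook{n}=2\sum_i\Tout{i,i}$ (your identity multiplied through by $2$) and then invokes \cref{lem:b_rook_equiv,thm:b_sumone,lem:b_var_rook_equiv,thm:b_var_sumone} just as you do. Your explicit expansion of $\rrook{n}$ and $\rvarrook{n}$ and your tracking of which diagonal boxes lie in $\brootp{n}$ are accurate and more detailed than the paper's ``routine to verify'' remark.
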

\begin{proof}
It is routine to verify
\[ 2\cdot \rrook{n} - \rvarrook{n} = 2\cdot \sum_{i}\Tout{i,i}.\]
We know $\rrook{n}\tequiv 1$ thanks to \cref{lem:b_rook_equiv} and~\cref{thm:b_sumone}, and we know $\rvarrook{n}\tequiv 1$ thanks to \cref{lem:b_var_rook_equiv} and~\cref{thm:b_var_sumone}, so we conclude $\sum_{i}\Tout{i,i} \tequiv \frac{1}{2}$.
\end{proof}

\begin{thm} \label{thm:b_ac}
We have $\sum_{p\in P}\Tout{p} \tequiv \frac{n}{2}$ for $P=\brootp{n}$.
\end{thm}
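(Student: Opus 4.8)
The plan is to mimic the strategy used for the Type A root poset (\cref{thm:a_ac}) and the shifted staircase (\cref{thm:sstair_a}): express a small positive multiple of $\sum_{p\in P}\Tout{p}$ as a linear combination of rooks and variant rooks, each of which is $\tequiv \const$. The natural refining objects here are the reduced rooks $\rrook{i}$ defined in \eqref{eqn:b_red_rook}; summing them counts, for each box of $\brootp{n}$, how many times it is attacked. A box $(i,j)$ with $i<j$ lying on the anti-diagonal of the $i$-th reduced rook gets attacked once by $\rrook{i}$ (via the row sum) and once via the column sum of the rook whose column index is $j$ (i.e., $\rrook{2n-j}$), so it is attacked twice; a diagonal box $(i,i)$ gets attacked by $\rrook{i}$ in both its row part and its column part, but such boxes also pick up extra contributions from the $\sum_{j'>2n-i}\Tout{j',j'}$ terms. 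So $\sum_{i=1}^n \rrook{i}$ will equal $2\sum_{p\in P}\Tout{p}$ plus some extra multiple of $\sum_i \Tout{i,i}$, and this discrepancy is exactly what \cref{lem:b_ac} is designed to absorb.

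Concretely, I would first carry out the bookkeeping to write
\[
\sum_{i=1}^{n}\rrook{i} \;=\; 2\sum_{p\in P}\Tout{p} \;+\; \lambda\sum_{i}\Tout{i,i}
\]
for the appropriate integer $\lambda$ (determined by counting how often a diagonal box $(i,i)$ appears in the $\sum_{j'>2n-i'}\Tout{j',j'}$ terms across all $i'$, relative to an off-diagonal box). This is the one genuinely computational step, and it is the step I expect to be the main obstacle — not because it is deep, but because one must be careful with the geometry of $\brootp{n}\subseteq\arootp{2n-1}$ and with the asymmetry between diagonal and off-diagonal boxes; getting $\lambda$ wrong derails the whole argument. (Should $\lambda$ turn out to involve more than just the diagonal statistic, one can instead bring in the variant rooks $\rvarrook{i}$ and \cref{thm:b_var_sumone} to mop up any residual terms, exactly as in \cref{lem:b_ac}.)

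Once the identity above is established, the proof concludes quickly: by \cref{lem:b_rook_equiv} and \cref{thm:b_sumone} we have $\rook{i}=1$ and hence $\rrook{i}\tequiv 1$ for each $i=1,\dots,n$, so the left-hand side is $\tequiv n$. By \cref{lem:b_ac}, $\sum_i\Tout{i,i}\tequiv \tfrac12$, so the right-hand side is $\tequiv 2\sum_{p\in P}\Tout{p} + \lambda/2$. Equating and solving gives $\sum_{p\in P}\Tout{p}\tequiv (n-\lambda/2)/2$, and I would then check that this equals $n/2$ (so that $\lambda=0$, or whatever small value makes the arithmetic work out). As a sanity check independent of the rook computation, note that $\brootp{n}$ is graded with $\#P = n^2$ and $\rk(P)+2 = 2n$, so the constant forced by the distinguished rowmotion orbit argument from the proof of \cref{thm:min_a} is $n^2/(2n) = n/2$; this both confirms the target constant and tells me that the bookkeeping must produce $\lambda=0$, i.e., $\sum_{i=1}^n\rrook{i}$ should come out to exactly $2\sum_{p\in P}\Tout{p}$ on the nose, with the diagonal over-counting cancelling against something — a useful consistency condition to verify while doing the count.
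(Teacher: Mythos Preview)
Your overall strategy is right, but the specific decomposition you aim for with the (non-variant) reduced rooks $\rrook{i}$ does not come out the way you hope. If you actually carry out the bookkeeping for $\sum_{i=1}^{n}\rrook{i}$ using \eqref{eqn:b_red_rook}, the first two pieces give (rows $1$ through $n$) plus (all columns) $=$ (all boxes) $+$ (rows $1,\ldots,n$); that already undercounts rows $n{+}1,\ldots,2n{-}1$ relative to $2\sum_p\Tout{p}$. And the third piece $\sum_{i=1}^{n}\sum_{j'>2n-i}\Tout{j',j'}$ contributes $\Tout{n+k,n+k}$ with coefficient $k$, not a constant coefficient. So $\sum_{i=1}^n\rrook{i}$ is not of the form $2\sum_p\Tout{p}+\lambda\sum_i\Tout{i,i}$ for any scalar $\lambda$; in particular your ``sanity check'' conclusion that $\lambda=0$ cannot be realized this way.

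Your hedge is the correct move: the paper uses the \emph{variant} reduced rooks $\rvarrook{i}$. The key identity (whose verification is the routine step) is
\[
\sum_{i}\Tout{i,i}\;+\;\frac{1}{2}\,\rvarrook{n}\;+\;\sum_{i=1}^{n-1}\rvarrook{i}\;=\;2\sum_{p\in P}\Tout{p}.
\]
The reason the variant rooks succeed where the $\rrook{i}$ fail is that the third summand in \eqref{eqn:b_red_var_rook} is a \emph{row} $\sum_{j'>2n-i}\Tout{2n-i,j'}$ rather than a diagonal sum; summing over $i$ produces exactly rows $n,\ldots,2n-1$ (each minus its diagonal box), which supplies the missing rows and leaves a uniform diagonal correction. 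Then $\rvarrook{i}\tequiv 1$ by \cref{lem:b_var_rook_equiv} and \cref{thm:b_var_sumone}, and $\sum_i\Tout{i,i}\tequiv\tfrac12$ by \cref{lem:b_ac}, so the left side is $\tequiv \tfrac12+\tfrac12+(n-1)=n$, giving $\sum_{p\in P}\Tout{p}\tequiv n/2$.
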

\begin{proof}
It is routine to verify
\[ \sum_{i}\Tout{i,i} + \frac{1}{2}\rvarrook{n} + \sum_{i=1}^{n-1} \rvarrook{i} = 2\cdot\sum_{p\in P}\Tout{p}.\]
Thanks to \cref{lem:b_ac}, we know $\sum_{i}\Tout{i,i} \tequiv \frac{1}{2}$, and thanks to \cref{lem:b_var_rook_equiv} and~\cref{thm:b_var_sumone}, we know $\rvarrook{i}\tequiv 1$ for all $i$. We conclude that $\sum_{p\in P}\Tout{p} \tequiv \frac{n}{2}$.
\end{proof}

That the antichain cardinality statistic is homomesic for rowmotion for the Type~B root poset was first proved by Armstrong, Stump, and Thomas~\cite[Theorem~1.2(iii)]{armstrong2013uniform}. The stronger result that it is $\tequiv \const$ (i.e., \cref{thm:b_ac}) was first proved by Hopkins~\cite[Theorem~4.2]{hopkins2017cde}, in the way we have presented above.

\subsubsection{Order ideals} 

Unfortunately, for $\brootp{n}$, neither the order ideal cardinality statistic, nor even the rank-alternating order ideal cardinality statistic, are $\tequiv \const$. (Indeed, they are not homomesic under rowmotion for $\brootp{n}$.) So the best we can do is transfer the statistics in \cref{thm:a_oi_refined} from $\arootp{2n-1}$ to $\brootp{n}$ via $\iota$.

\begin{thm} \label{thm:b_oi}
Suppose $P=\brootp{n}$. Then for $k=2,4,\ldots,2n$ we have
\[ 2\cdot\sum_{j-i=k} \oii{i,j} \ - \sum_{j-i=k-1} \oii{i,j} \ - \sum_{j-i=k+1} \oii{i,j} \ \tequiv \ 1.\]
Furthermore, we also have
\[ 2\cdot\sum_{i} \oii{i,i} - 2\cdot \sum_{i} \oii{i,i+1} \tequiv 1 \]
\end{thm}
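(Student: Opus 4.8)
The plan is to deduce both identities from the refined Type~$A$ identities of \cref{thm:a_oi_refined} by pushing them down through the quotient map $\iota$, in exactly the way \cref{thm:sstair_file} was obtained from \cref{thm:rectfile} (and the $\brootp{n}$ antichain results were obtained above via $\iota$ and $\varrook{i}$). Recall that $\brootp{n} = \{(i,j)\in\arootp{2n-1}\colon i\leq j\}$, that $\iota\colon\J(\brootp{n})\to\J(\arootp{2n-1})$ sends an order ideal to its symmetric preimage, and that the building-block statistics respect $\iota$: for $(i,j)\in\brootp{n}$ and $I\in\J(\brootp{n})$ one has $\oii{i,j}(I)=\oii{i,j}(\iota(I))=\oii{j,i}(\iota(I))$, and similarly for $\Tin{i,j}$, $\Tout{i,j}$, hence for $\T{i,j}$. (This is the $\brootp{n}$-analogue of the invariance relations established for $\sstair{n}$; together with our convention that statistics attached to boxes outside a poset vanish, it determines $\stat{s}\circ\iota$ for every building-block statistic $\stat{s}$ on $\arootp{2n-1}$.)

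First I would isolate the transfer principle. The pullback map $\stat{s}\mapsto\stat{s}\circ\iota$ from statistics on $\arootp{2n-1}$ to statistics on $\brootp{n}$ is linear; it fixes the constant $1$, it sends each $\oii{i,j}$ to $\oii{i,j}$ or $\oii{j,i}$ according to whether $i\leq j$ or $i>j$, and it sends each $\T{i,j}$ to $\T{i,j}$ or $\T{j,i}$ likewise. In particular it carries $\Span(\{1\}\cup\{\T{p}\colon p\in\arootp{2n-1}\})$ into $\Span(\{1\}\cup\{\T{p}\colon p\in\brootp{n}\})$, so any relation $\stat{f}\tequiv 1$ on $\arootp{2n-1}$ yields $\stat{f}\circ\iota\tequiv 1$ on $\brootp{n}$. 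It then remains only to identify the pullbacks of the refinements $\stat{f}_k := 2\sum_{j-i=k}\oii{i,j}-\sum_{j-i=k-1}\oii{i,j}-\sum_{j-i=k+1}\oii{i,j}$ of \cref{thm:a_oi_refined}, applied with $n$ replaced by $2n-1$. For each even $k\geq 2$ in the relevant range, the three files $j-i\in\{k-1,k,k+1\}$ of $\arootp{2n-1}$ all lie strictly above the main diagonal and hence coincide, box for box, with the same files of $\brootp{n}$; thus $\stat{f}_k\circ\iota$ is literally $2\sum_{j-i=k}\oii{i,j}-\sum_{j-i=k-1}\oii{i,j}-\sum_{j-i=k+1}\oii{i,j}$ on $\brootp{n}$, giving the first displayed identity. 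For $k=0$, $\stat{f}_0$ is $2\sum_i\oii{i,i}-\sum_{j-i=-1}\oii{i,j}-\sum_{j-i=1}\oii{i,j}$; under $\iota$ the two mirror-image files $j-i=-1$ and $j-i=1$ are identified, and each pulls back to $\sum_i\oii{i,i+1}$ on $\brootp{n}$ (using $\oii{i,i-1}(\iota(I))=\oii{i-1,i}(I)$ and reindexing $i\mapsto i+1$), so $\stat{f}_0\circ\iota = 2\sum_i\oii{i,i}-2\sum_i\oii{i,i+1}$ --- precisely the ``furthermore'' identity.

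So the steps, in order, are: (i) record the $\iota$-invariance of $\oii{}$, $\Tin{}$, $\Tout{}$ for $\brootp{n}\subseteq\arootp{2n-1}$; (ii) formulate the transfer principle for $\tequiv$ under pullback by $\iota$; (iii) compute the pullbacks of the Type~$A$ refinements $\stat{f}_k$ for even $k\geq 2$ and for $k=0$; (iv) invoke \cref{thm:a_oi_refined}. I expect the only genuine obstacle to be the diagonal bookkeeping in step (iii): one must track carefully which boxes of a file of $\arootp{2n-1}$ survive in $\brootp{n}$ and which get folded onto their reflections, and in particular observe the collapse of the pair of files $j-i=\pm1$ onto a single file --- this collapse is exactly what forces the $k=0$ case to produce the separate second identity rather than another instance of the first. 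Everything else is routine reindexing of sums, requiring no ideas beyond those already used for $\sstair{n}$ and for the antichain results for $\brootp{n}$.
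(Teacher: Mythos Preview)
Your proposal is correct and follows exactly the paper's approach: the paper's proof is the single sentence ``These are exactly the statistics we get by translating the statistics in \cref{thm:a_oi_refined} to $\brootp{n}$ from $\arootp{2n-1}$ via $\iota$,'' and your write-up simply unpacks that sentence, making explicit the transfer principle and the fold of the $k=0$ case that produces the second displayed identity.
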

\begin{proof}
These are exactly the statistics we get by translating the statistics in \cref{thm:a_oi_refined} to $\brootp{n}$ from $\arootp{2n-1}$ via $\iota$.
\end{proof}

\begin{remark}
It is also possible to observe that $\sum_{i} \oii{i,i} - \sum_{i} \oii{i,i+1} = \sum_{i}\Tout{i,i}$, and hence deduce that it is $\tequiv \frac{1}{2}$ from \cref{lem:b_ac}.
\end{remark}

\subsection{Non-examples} \label{subsec:nonexamples}

Having demonstrated the power of the toggleability statistics technique in re-proving known rowmotion homomesy results and proving new results for the four important families of posets treated above, we now briefly discuss some non-examples. By ``non-examples,'' we mean instances where a rowmotion homomesy result is known or conjectured, but the toggleability statistics technique fails to prove it.

\subsubsection{Reciprocity and symmetry homomesies}

Grinberg and Roby~\cite[Theorem~32]{grinberg2015birational2} proved a remarkable \emph{reciprocity} theorem for rowmotion of the rectangle, which asserts that for any box $(i,j)\in \rect{a}{b}$, we have
\[ (i,j) \in \rowm^{i+j-1}(I) \Longleftrightarrow (a+1-i,b+1-j) \notin I \]
for all $I\in \J(\rect{a}{b})$. (In fact, they proved this reciprocity result at the birational level; we discuss piecewise-linear and birational extensions in \cref{sec:pl_birational} below.) As was observed by Einstein and Propp~\cite[Theorem~8.1]{einstein2018combinatorial}, this reciprocity result immediately implies that $\oii{i,j} + \oii{a+1-i,b+1-j}$ is homomesic under rowmotion of $\rect{a}{b}$ for any $(i,j) \in \rect{a}{b}$. Unfortunately, these statistics are not $\tequiv \const$, except for some very special cases like $\oii{a,1} + \oii{1,b}$, which happen to be linear combinations of the file refinements of order ideal cardinality.

Similarly, Armstrong, Stump, and Thomas~\cite[Theorem~1.2(ii)]{armstrong2013uniform} proved that $\rowm^{n+1}(I)=I'$ for any $I\in \J(\arootp{n})$, where $I'\coloneqq \{(j,i)\colon (i,j)\in I\}$.  In other words, applying rowmotion $n+1$ times transposes the order ideal across the main diagonal axis of symmetry of the poset $\arootp{n}$. It is immediate from their result that the statistics $\oii{i,j} - \oii{j,i}$ and $\Tout{i,j}-\Tout{j,i}$ are 0-mesic under rowmotion of~$\arootp{n}$ for any $(i,j) \in \arootp{n}$. But again, these statistics are not $\tequiv \const$, except in some very special cases.

\subsubsection{Other root posets}

Let $\Phi$ be a crystallographic root system. Suppose we have chosen a set~$\Phi^+$ of positive roots of $\Phi$, and hence also a set of simple roots. There is a natural partial order on $\Phi^+$ whereby $\alpha\leq \beta$ if and only if $\beta-\alpha$ is a nonnegative linear combination of simple roots. We denote this poset by $\Phi^+$ or by $\Phi^+(X)$, where $X$ is the Cartan--Killing type of $\Phi$ (so this is consistent with our earlier notation $\arootp{n}$ and $\brootp{n}$). We call $\Phi^+$ the \dfn{root poset} of $\Phi$.

Panyushev~\cite{panyushev2009orbits} initiated the study of rowmotion acting on $\J(\Phi^+)$. In particular, he conjectured that the antichain cardinality statistic should be homomesic under antichain rowmotion.\footnote{Panyushev did not use the terminology of homomesy. Actually, Panyushev's conjecture was a major motivating example for the introduction of the notion of homomesy in~\cite{propp2015homomesy}.} This conjecture was subsequently proved, in all crystallographic types, by Armstrong, Stump, and Thomas~\cite[Theorem~1.2(iii)]{armstrong2013uniform}.

We have already seen that for $\arootp{n}$ and $\brootp{n}$, the antichain cardinality statistic is $\tequiv \const$. However, for the other root posets, it is not the case that the antichain cardinality statistic is~$\tequiv \const$. Indeed, since $\brootp{n} \simeq \Phi^+(C_n)$, the smallest example of a root poset that is not of Type~A or~B is~$\Phi^+(D_4)$. And for $\Phi^+(D_4)$, one can check directly that $\sum_{p \in P} \Tout{p} \not \tequiv \const$.

One difference between $\Phi^+(D_4)$ and the posets we studied above is that for $\Phi^+(D_4)$ there is an element that covers $3$ other elements. This means that $\Phi^+(D_4)$, unlike the posets we have been discussing, does not embed as a contiguous subset of $\quadrant$. Another difference is that piecewise-linear and birational rowmotion do not behave well for $\Phi^+(D_4)$ (again, we discuss these in~\cref{sec:pl_birational} below).

\begin{remark}
Armstrong~\cite[\S5.4.1]{armstrong2009generalized} (see also~\cite{cuntz2015root}) came up with \emph{ad hoc} constructions of root posets for the non-crystallographic types $\Phi=H_3$ and $\Phi=I_2(m)$ that enjoy several desirable properties. The root posets $\arootp{n}$, $\brootp{n}\simeq \Phi^+(C_n)$, $\Phi^+(H_3)$ and $\Phi^+(I_2(m))$ are called the \dfn{root posets of coincidental type} and in many ways are better behaved than arbitrary root posets; see, e.g.,~\cite[\S8]{hamaker2020doppelgangers}. In particular, Hopkins~\cite[Theorem~3.8]{hopkins2021minuscule} showed that for all the coincidental type root posets, the antichain cardinality statistic is $\tequiv \const$. So in addition to $A_n$ and $B_n$, which we have seen above, he treated the \emph{ad hoc} root posets for $H_3$ and $I_2(m)$.
\end{remark}

\subsubsection{The trapezoid and the ``chain of \texorpdfstring{$V$}{V}'s''}

For $a \leq b$, the \dfn{trapezoid} poset $T(a,b)$ is the poset realized as the subset $\{(i,j) \colon 1\leq i \leq a+b-1, b\leq j \leq a+b-1, i+j \geq a+b, \textrm{ and } i \leq j \}$ of $\quadrant$. Note that $T(n,n)=\brootp{n}$.

The trapezoid $T(a,b)$ is a \emph{doppelg\"{a}nger} of the rectangle $\rect{a}{b}$. What this means is that the two posets have many similarities such as their number of elements, number of order ideals, number of linear extensions, et cetera. See~\cite{hamaker2020doppelgangers} for more on doppelg\"{a}ngers. In particular, one property that~$T(a,b)$ should have in common with $\rect{a}{b}$ is good behavior of rowmotion. Indeed, it has recently been shown that $T(a,b)$ and $\rect{a}{b}$ have the same orbit structure of rowmotion~\cite[Main Theorem]{dao2019trapezoid}, and it is conjectured that this should extend to the piecewise-linear and birational realms~\cite[Conjectures~4.35 and~4.46]{hopkins2021minuscule}.

It has also been conjectured (see~\cite[Conjecture~4.9]{hopkins2021minuscule}) that $T(a,b)$ should exhibit homomesy of antichain cardinality under rowmotion. However, the toggleability statistic technique fails here: $\sum_{p \in P} \Tout{p} \not \tequiv \const$ for $T(a,b)$, except for the special cases $a=1$ and $a=b$. (Though note that $T(n,n+1)=\arootp{2n}/\twogrp$, so we can at least extract \emph{some} rowmotion homomesies in this case.)

\medskip

The \dfn{chain of $V$'s} is the poset $V(n) \coloneqq \raisebox{-0.1cm}{\begin{tikzpicture}[scale=0.3] \node[shape=circle,fill=black,inner sep=1.5] (B) at (-1,0) {}; \node[shape=circle,fill=black,inner sep=1.5] (C) at (1,0) {}; \node[shape=circle,fill=black,inner sep=1.5] (A) at (0,-1) {}; \draw (B)--(A); \draw (C)--(A); \end{tikzpicture}} \times [n]$, which is the Cartesian product of the $3$-element ``V-shaped'' poset and the $n$-element chain. It has been conjectured that $V(n)$ should exhibit good behavior of rowmotion~\cite{hopkins2020order}. In particular, a brute force analysis of all the rowmotion orbits of $V(n)$ reveals that it exhibits the antichain cardinality homomesy. However, $\sum_{p \in P} \Tout{p} \not \tequiv \const$ for $V(n)$ (as long as~$n \geq 2$). Note that $V(n)$ \emph{cannot} be realized as a contiguous subsets of $\quadrant$, because it has some elements that are covered by $3$ elements. Nevertheless, unlike the non-coincidental type root posets, $V(n)$ conjecturally has finite order of piecewise-linear rowmotion.

\medskip

There is an extension of our technique (using ``antichain toggleability statistics'') that could possibly prove some of the homomesy results mentioned in this subsection, which we are otherwise unable to address. We discuss this extension at the end of the paper, in \cref{subsec:antichain_extension}.

\section{Piecewise-linear and birational lifts} \label{sec:pl_birational}

Over the past 25 or so years, various classical combinatorial constructions have been extended to piecewise-linear and birational maps (see~\cite{kirillov2001introduction,kirillov1995groups} for some prominent examples). In 2013, Einstein and Propp~\cite{einstein2018combinatorial} introduced piecewise-linear and birational extensions of rowmotion, which have subsequently been the focus of a significant amount of research. Many of the families of posets that exhibit good behavior of rowmotion at the combinatorial level (e.g.: small, predictable order; regular orbit structure; interesting homomesies) continue to exhibit good behavior of piecewise-linear and birational rowmotion. It is well known that results that hold at the birational level must hold at the piecewise-linear level as well, and similarly that results that hold at the piecewise-linear level hold at the combinatorial level.  But there is no general process for going in the other direction, and indeed, it is not true necessarily that results on the combinatorial level must hold at the piecewise-linear or birational levels. (For example, all posets trivially have finite order of combinatorial rowmotion, but most do not have finite order of piecewise-linear or birational rowmotion.) So the fact that these special posets continue to exhibit good rowmotion behavior at these higher levels is remarkable, and as of yet, we do not have a completely satisfactory explanation for why this happens.

In this section, we start to offer some kind of general explanation for the persistence of good rowmotion behavior by showing how we sometimes \emph{can} automatically ``lift'' results (specifically, homomesy results) from the combinatorial to the higher levels of rowmotion using the toggleability statistics technique. Much of the material we present in this section was essentially already presented in~\cite{hopkins2021minuscule}, but because these results allow us to obtain such significant corollaries from the work we did in \cref{sec:main}, we reiterate all the arguments here.

We offer only a very brief review of the definitions of piecewise-linear and birational rowmotion; for more background, consult~\cite{einstein2018combinatorial, grinberg2016birational1, grinberg2015birational2, musiker2018paths, okada2020birational}. As above, $P$ will be a fixed poset. 

First, we describe piecewise-linear rowmotion. We let $\RR^P$ denote the set of functions $\pi\colon P\to \RR$. We use $\widehat{P}$ to denote the poset obtained from $P$ by adding a minimal element $\hatz$ and a maximal element $\hato$. We fix parameters $\aPL, \oPL\in\RR$. We view any $\pi\in \RR^P$ as also a function $\pi\colon \widehat{P} \to \RR$ via the convention that $\pi(\hatz)=\aPL$ and $\pi(\hato)=\oPL$. For $p\in P$, we define the \dfn{piecewise-linear toggle} $\togPL{p}\colon \RR^P\to\RR^P$ by
\[ \togPL{p}(\pi) (p') \coloneqq \begin{cases} \pi(p') &\textrm{if $p \neq p'$}; \\ \min\{\pi(r)\colon p \lessdot r\in \widehat{P}\}+\max\{\pi(r)\colon p \gtrdot r \in \widehat{P}\}-\pi(p) &\textrm{if $p=p'$}.\end{cases}\]
Following the Cameron--Fon-der-Flaass definition of (combinatorial) rowmotion as a composition of toggles, we define \dfn{piecewise-linear rowmotion} $\rowmPL\colon \RR^P\to\RR^P$ by setting
\[\rowmPL\coloneqq \togPL{p_1}\circ \cdots \circ \togPL{p_n},\]
where $p_1,\ldots,p_n$ is any linear extension of $P$.

To each order ideal $I \in \J(P)$, we associate the function $\pi_I \in \RR^P$, which is defined to be the indicator function of the complement of $I$. With the parameters $\aPL=0$ and $\oPL=1$, (piecewise-linear) toggling of the $\pi_I$ exactly corresponds to (combinatorial) toggling of the $I$. Hence, results that are true for PL rowmotion are true for combinatorial rowmotion as well; this process of going from piecewise-linear to combinatorial is called \dfn{specialization}.

Now we describe birational rowmotion. The key is to ``detropicalize'' the PL expressions (i.e., replace $+$ by $\times$ and $\max$ by $+$ ). We use $\RRgt^P$ to denote the set of functions $\pi\colon P\to \RRgt$.\footnote{Here, we restrict to $\pi$ taking values in the positive real orthant to avoid problems that arise when the denominator in the definition of $\togB{p}$ vanishes. In the context of algebraic dynamics, it is more natural to allow the values of $\pi$ to be zero, negative, or even complex; to make this work, one must deal honestly with \emph{rational} maps, i.e., maps defined only on an open dense subset of $\pi$. We have opted for the technically least demanding avenue. Homomesy results that hold in the positive orthant can be extended to the aforementioned broader settings through standard arguments; see~\cite[\S3]{grinberg2016birational1}.}  We fix parameters $\aB, \oB\in\RRgt$. We view any $\pi\in \RRgt^P$ as also a function $\pi\colon \widehat{P} \to \RRgt$ via the convention that $\pi(\hatz)=\aB$ and $\pi(\hato)=\oB$. For~$p\in P$, we define the \dfn{birational toggle} $\togB{p}\colon \RRgt^P\to\RRgt^P$ by
\[ \togB{p}(\pi) (p') \coloneqq \begin{cases} \pi(p') &\textrm{if $p \neq p'$}; \\ \displaystyle \frac{\displaystyle \sum_{p \gtrdot r \in\widehat{P}}\pi(r)}{\pi(p) \cdot \displaystyle \sum_{p\lessdot r \in \widehat{P}}\pi(r)^{-1}} &\textrm{if $p=p'$}.\end{cases}\]
We define \dfn{birational rowmotion} $\rowmB\colon \RRgt^P\to\RRgt^P$ by setting
\[\rowmB\coloneqq \togB{p_1}\circ \cdots \circ \togB{p_n},\]
where $p_1,\ldots,p_n$ is any linear extension of $P$.

It is always possible to translate equalities of expressions at the birational level to the piecewise-linear level by replacing $+$ by $\max$ and $\times$ by $+$ everywhere in expressions; this process of going from birational to piecewise-linear is called \dfn{tropicalization}. For a precise statement about what tropicalization entails in the context of rowmotion, see, e.g.,~\cite[\S7]{einstein2018combinatorial}.

\begin{example}
\Cref{fig:b_row} depicts an orbit of $\rowmB$ for a generic point $\pi \in \RRgt^P$, where $P=[2]\times[2]$. Observe that $(\rowmB)^4$ is the identity.
\end{example}

\begin{figure}
\begin{center}
\includegraphics[height=3.9cm]{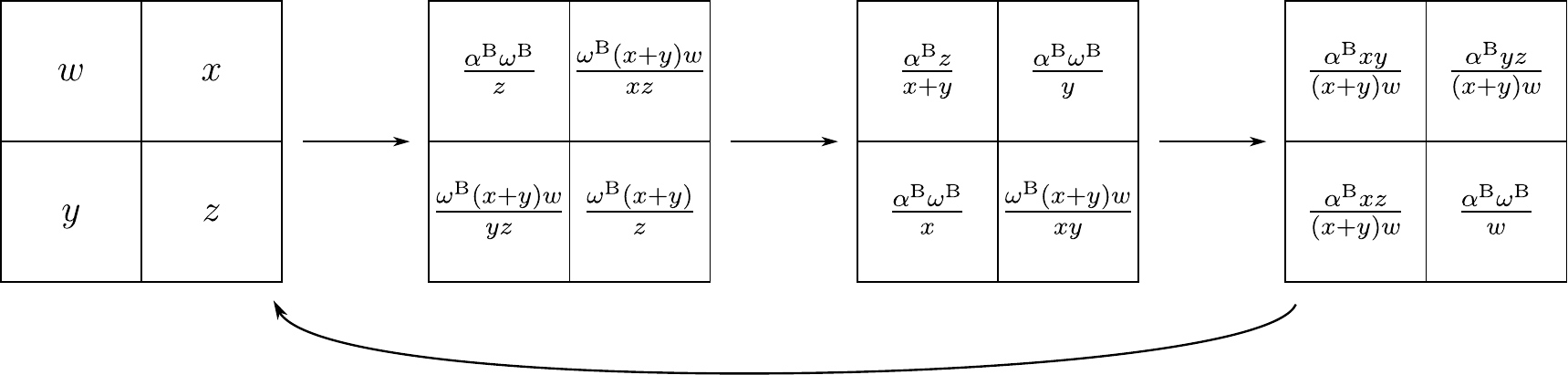}
\end{center}
\caption{An orbit of $\rowmB$ for a generic point $\pi \in \RRgt^P$.} \label{fig:b_row}
\end{figure}

We want to show that Striker's observation persists at the piecewise-linear and birational levels. Hence, we need to introduce PL and birational extensions of the toggleability statistics. So, for $p\in P$, define $\TinPL{p}, \ToutPL{p}, \TPL{p}\colon \RR^P\to\RR$ by
\begin{align*}
\TinPL{p}(\pi) &\coloneqq \pi(p) - \max\{\pi(r)\colon p \gtrdot r\in \widehat{P}\}; \\
\ToutPL{p}(\pi) &\coloneqq \min\{\pi(r)\colon p \lessdot r\in \widehat{P}\}-\pi(p); \\
\TPL{p}(\pi) &\coloneqq \TinPL{p}(\pi)-\ToutPL{p}(\pi),
\end{align*}
and define $\TinB{p}, \ToutB{p}, \TB{p}\colon \RRgt^P\to\RRgt$ by
\begin{align*}
\TinB{p}(\pi) &\coloneqq \frac{\pi(p) }{\sum_{ p \gtrdot r\in \widehat{P}} \pi(r)}; \\
\ToutB{p}(\pi) &\coloneqq \frac{1}{ \pi(p)\cdot \sum_{ p \lessdot r\in \widehat{P}} \pi(r)^{-1}}; \\
\TB{p}(\pi) &\coloneqq \TinB{p}(\pi)/\ToutB{p}(\pi)
\end{align*}
so that $\TinB{p},\ToutB{p},\TB{p}$ tropicalize to $\TinPL{p},\ToutPL{p},\TPL{p}$ which in turn specialize to $\Tin{p},\Tout{p},\T{p}$. We then have the following extensions of \cref{lem:striker} (which appeared previously as \cite[Lemma~4.31, Lemma~4.42]{hopkins2021minuscule}):

\begin{lemma} \label{lem:striker_pl}
Let $O  \subseteq \RR^P$ be a finite $\rowmPL$-orbit. Then for all $p\in P$, we have $\sum_{\pi \in O }\TPL{p}(\pi)=0$.
\end{lemma}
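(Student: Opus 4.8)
The plan is to mimic the proof of \cref{lem:striker} at the piecewise-linear level. The only non-formal ingredient in that proof was the identity $\min(P\setminus I)=\max(\rowm(I))$, so what we must establish is its piecewise-linear analogue:
\[ \TinPL{p}(\pi)=\ToutPL{p}\bigl(\rowmPL(\pi)\bigr)\qquad\text{for all }\pi\in\RR^P\text{ and }p\in P. \]
Granting this, the lemma follows at once: since $O$ is a finite $\rowmPL$-orbit, $\rowmPL$ restricts to a bijection of $O$, so
\[ \sum_{\pi\in O}\TPL{p}(\pi)=\sum_{\pi\in O}\TinPL{p}(\pi)-\sum_{\pi\in O}\ToutPL{p}(\pi)=\sum_{\pi\in O}\TinPL{p}(\pi)-\sum_{\pi\in O}\ToutPL{p}\bigl(\rowmPL(\pi)\bigr)=0, \]
where we reindexed the second sum by $\pi\mapsto\rowmPL(\pi)$ and then applied the displayed identity termwise.

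To prove that identity, I would fix a linear extension $p_1,\dots,p_n$ of $P$, so that $\rowmPL=\togPL{p_1}\circ\cdots\circ\togPL{p_n}$, and set $\pi'\coloneqq\rowmPL(\pi)$. Given $p=p_k$, consider the partially-toggled function $\sigma\coloneqq(\togPL{p_{k+1}}\circ\cdots\circ\togPL{p_n})(\pi)$, the state of the computation just before $\togPL{p_k}$ is applied. Because $p_1,\dots,p_n$ is a linear extension: every $r\in\widehat P$ with $r\gtrdot p$ has index greater than $k$, hence has already been toggled, and — since a toggle changes only the value at the toggled element — is untouched thereafter, so $\sigma(r)=\pi'(r)$; every $r\in\widehat P$ with $r\lessdot p$ has index smaller than $k$, hence has not yet been toggled, so $\sigma(r)=\pi(r)$; and $\sigma(p)=\pi(p)$ since $p$ has not yet been toggled. (If $\hatz$ or $\hato$ is a cover of $p$, it retains its constant value and fits whichever case applies.) Plugging $\sigma$ into the toggle formula gives
\[ \pi'(p)=\togPL{p}(\sigma)(p)=\min\{\sigma(r)\colon p\lessdot r\in\widehat P\}+\max\{\sigma(r)\colon p\gtrdot r\in\widehat P\}-\sigma(p)=\min\{\pi'(r)\colon p\lessdot r\in\widehat P\}+\max\{\pi(r)\colon p\gtrdot r\in\widehat P\}-\pi(p), \]
and hence
\[ \ToutPL{p}(\pi')=\min\{\pi'(r)\colon p\lessdot r\in\widehat P\}-\pi'(p)=\pi(p)-\max\{\pi(r)\colon p\gtrdot r\in\widehat P\}=\TinPL{p}(\pi), \]
which is the desired identity.

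I expect the only delicate point to be the bookkeeping in the second paragraph: correctly identifying, at the instant $p$ is toggled, which coordinates of $\sigma$ still carry the old values $\pi(\cdot)$ and which already carry the new values $\pi'(\cdot)$. This is immediate from the defining property of a linear extension (everything strictly above $p$ is toggled before $p$, everything strictly below after), but it is the one spot requiring care. The birational statement \cref{lem:striker_b} is proved by the identical argument after detropicalizing — replace $+$ by $\times$ and $\max$ by $+$ throughout, and replace the $\min$-expression defining $\ToutPL{p}$ by the reciprocal-sum expression defining $\ToutB{p}$, the partially-toggled-configuration analysis being unchanged — and \cref{lem:striker} is recovered from the piecewise-linear lemma by specializing with $\aPL=0$ and $\oPL=1$.
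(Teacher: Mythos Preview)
Your proposal is correct and takes essentially the same approach as the paper. The only cosmetic difference is that the paper proves the birational statement first and then obtains the piecewise-linear one by tropicalization, whereas you prove the piecewise-linear identity $\TinPL{p}(\pi)=\ToutPL{p}(\rowmPL(\pi))$ directly (and then note the birational version follows by detropicalizing); the underlying argument---tracking the partially-toggled configuration along a linear extension---is identical in both.
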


\begin{lemma} \label{lem:striker_b}
Let $O  \subseteq \RRgt^P$ be a finite $\rowmB$-orbit. Then for all $p\in P$, we have $\prod_{\pi \in O }\TB{p}(\pi)=1$.
\end{lemma}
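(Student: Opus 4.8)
\textbf{Proof plan for \cref{lem:striker_b}.} The plan is to lift the proof of \cref{lem:striker} essentially verbatim; the one substantive point is to establish the birational avatar of the identity $\Tin{p}(I)=\Tout{p}(\rowm(I))$, namely
\[ \TinB{p}(\pi)=\ToutB{p}\bigl(\rowmB(\pi)\bigr)\qquad\text{for all }\pi\in\RRgt^P,\ p\in P. \]
Granting this, the lemma follows by a telescoping argument identical to the one in \cref{lem:striker}: writing a finite orbit as $O=\{\pi,\rowmB(\pi),\dots,(\rowmB)^{m-1}(\pi)\}$ with $(\rowmB)^m(\pi)=\pi$ and using that all quantities in sight are positive reals,
\[ \prod_{\sigma\in O}\TB{p}(\sigma)=\frac{\prod_{\sigma\in O}\TinB{p}(\sigma)}{\prod_{\sigma\in O}\ToutB{p}(\sigma)}=\frac{\prod_{\sigma\in O}\ToutB{p}\bigl(\rowmB(\sigma)\bigr)}{\prod_{\sigma\in O}\ToutB{p}(\sigma)}=1, \]
the last equality because $\rowmB$ permutes the finite set $O$. (The same scheme proves \cref{lem:striker_pl} after tropicalizing --- replace $\times,\div$ by $+,-$ and $+$ by $\max$ --- so that one shows $\TinPL{p}(\pi)=\ToutPL{p}(\rowmPL(\pi))$ and then a telescoping \emph{sum} collapses to $0$.)

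To prove the displayed identity I would isolate two elementary ingredients. The first is a \emph{swap relation} at a single toggle: a short computation directly from the definitions of $\togB{p}$, $\TinB{p}$, $\ToutB{p}$ gives
\[ \ToutB{p}\circ\togB{p}=\TinB{p}\qquad\text{and}\qquad\TinB{p}\circ\togB{p}=\ToutB{p}, \]
so that toggling at $p$ interchanges the ``in'' and ``out'' statistics there (this incidentally re-derives $\TB{p}\circ\togB{p}=(\TB{p})^{-1}$). The second is \emph{locality}: $\TinB{p}(\pi)$ is a function of $\pi(p)$ and of the values $\pi(r)$ over $r$ with $p\gtrdot r$ in $\widehat{P}$ only, $\ToutB{p}(\pi)$ is a function of $\pi(p)$ and of the values $\pi(r)$ over $r$ with $p\lessdot r$ in $\widehat{P}$ only, and each $\togB{q}$ changes nothing except the single value at $q$ --- never the fixed boundary values $\pi(\hatz)=\aB$, $\pi(\hato)=\oB$.

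Now fix a linear extension $p_1,\dots,p_n$ of $P$ and put $p=p_k$, so that $\rowmB=\togB{p_1}\circ\cdots\circ\togB{p_n}$, applied right to left. Let $\sigma$ be the configuration reached from $\pi$ after performing $\togB{p_n},\dots,\togB{p_{k+1}}$ --- the state ``just before $p$ gets toggled'' --- and let $\sigma'=\togB{p}(\sigma)$ --- ``just after.'' Because $p_1,\dots,p_n$ is a linear extension, any element covered by $p$ has index $<k$ and any element covering $p$ has index $>k$. Hence each toggle among $\togB{p_n},\dots,\togB{p_{k+1}}$ acts at some $p_j$ with $j>k$, which is neither $p$ nor any element covered by $p$; by locality such toggles leave $\TinB{p}$ unchanged, so $\TinB{p}(\pi)=\TinB{p}(\sigma)$. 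Symmetrically, each toggle among $\togB{p_{k-1}},\dots,\togB{p_1}$ acts at some $p_i$ with $i<k$, which is neither $p$ nor any element covering $p$; by locality these leave $\ToutB{p}$ unchanged, so $\ToutB{p}(\sigma')=\ToutB{p}(\rowmB(\pi))$. Chaining these with the swap relation yields
\[ \TinB{p}(\pi)=\TinB{p}(\sigma)=\ToutB{p}\bigl(\togB{p}(\sigma)\bigr)=\ToutB{p}(\sigma')=\ToutB{p}\bigl(\rowmB(\pi)\bigr), \]
which is the desired identity.

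The crux is the bookkeeping of the previous paragraph: one must align the linear-extension condition with the locality of the two statistics so that it is transparent that the coordinates feeding $\TinB{p}$ stay frozen until $p$ is toggled while those feeding $\ToutB{p}$ stay frozen afterwards. The adjoined elements $\hatz$, $\hato$ are the only mildly delicate point, but they are harmless: they sit below every minimal and above every maximal element of $P$ and are never toggled, so they act as constants throughout the argument. Everything else --- the swap relation, the telescoping, and the tropicalization giving \cref{lem:striker_pl} --- is routine. Finally, since $\togB{p}$ preserves the positive orthant $\RRgt^P$, every denominator appearing above is a positive real, so all the manipulations are legitimate without any recourse to the machinery of rational maps.
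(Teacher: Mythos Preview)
Your proposal is correct and follows essentially the same approach as the paper: establish the birational identity $\TinB{p}(\pi)=\ToutB{p}(\rowmB(\pi))$ and then telescope. The paper is terser, merely saying this identity holds ``by induction on the position of $p$ in the linear extension''; your swap-relation-plus-locality argument is exactly what that induction unpacks to, with the added benefit of making the mechanism explicit.
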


\begin{remark}
Notice the restriction to finite orbits in \cref{lem:striker_pl,lem:striker_b}. We could deal with infinite orbits by taking limits in the appropriate way. However, in practice, this will not matter because for all the posets of interest for us (i.e., those discussed in \cref{sec:main}), both PL and birational rowmotion have finite order (see~\cite{grinberg2015birational2, okada2020birational}), hence all orbits are finite. Also, modulo this issue of finite orbits, we could state these two lemmas in terms of orbit averages (arithmetic means and geometric means, respectively).
\end{remark}

\begin{proof}[Proof of \cref{lem:striker_pl,lem:striker_b}]
It suffices to prove \cref{lem:striker_b}, thanks to tropicalization. The proof is exactly the same as that of \cref{lem:striker}. It is not hard to see, by induction on the position of~$p$ in the linear extensions $p_1,\ldots,p_n$ defining rowmotion, that we have $\TinB{p}(\pi) = \ToutB{p}(\rowmB(\pi))$ for all $\pi \in \RRgt^{P}$. Thus, the numerator and denominator of $\prod_{\pi \in O }\TB{p}(\pi)$ cancel, so this product equals~$1$.
\end{proof}

In fact, the version of Striker's lemma for the rank-permuted variants of rowmotion also holds at the PL and birational levels, as we now explain. 

Suppose now that~$P$ is ranked. For $i=0,1,\ldots,\rk(P)$, define $\rktogPL{i}\coloneqq \prod_{p\in P, \:\rk(p)=i}\togPL{p}$ and $\rktogB{i}\coloneqq \prod_{p\in P, \:\rk(p)=i}\togB{p}$. For any permutation $\sigma$ of $0,1,\ldots,\rk(P)$, define
\begin{align*}
\rowmPL_{\sigma} &\coloneqq \rktogPL{\sigma(0)}\circ \rktogPL{\sigma(1)} \circ \cdots \circ \rktogPL{\sigma(\rk(P))} \\
\rowmB_{\sigma} &\coloneqq \rktogB{\sigma(0)}\circ \rktogB{\sigma(1)} \circ \cdots \circ \rktogB{\sigma(\rk(P))}
\end{align*} 
so that ordinary rowmotion corresponds to the identity permutation. We have the following extensions of \cref{lem:striker_permuted}.

\begin{lemma} \label{lem:striker_permuted_pl}
Suppose $P$ is ranked. Let $O  \subseteq \RR^P$ be a finite $\rowmPL_{\sigma}$-orbit, for any choice of $\sigma$. Then for all $p\in P$, we have $\sum_{\pi \in O }\TPL{p}(\pi)=0$.
\end{lemma}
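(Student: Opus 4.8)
The plan is to prove the stated piecewise-linear lemma together with its birational analog --- the analog of \cref{lem:striker_b} for the rank-permuted operators --- and then obtain \cref{lem:striker_permuted_pl} by tropicalization and the combinatorial \cref{lem:striker_permuted} by specialization, exactly as we did for \cref{lem:striker_pl,lem:striker_b}. Thus it suffices to show: if $O\subseteq\RRgt^P$ is a finite $\rowmB_{\sigma}$-orbit, then $\prod_{\pi\in O}\TB{p}(\pi)=1$ for each $p\in P$. The one purely local ingredient is the same as in the proof of \cref{lem:striker_b}: a direct computation gives $\ToutB{p}(\togB{p}(\pi))=\TinB{p}(\pi)$, and since the other toggles of rank $\rk(p)$ touch neither the inputs of $\TinB{p}$ nor those of $\ToutB{p}$ (which involve only $p$ and its covers and coverees, of rank $\rk(p)\pm 1$), this upgrades to $\ToutB{p}(\rktogB{\rk(p)}(\pi))=\TinB{p}(\pi)$.

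Now fix $\pi\in O$, and write one application of $\rowmB_{\sigma}$ as a chain of rank-toggles $\pi=\xi_0\to\xi_1\to\cdots\to\xi_{\rk(P)+1}=\rowmB_{\sigma}(\pi)$, where $\xi_{k+1}=\rktogB{j_k}(\xi_k)$ and $j_0,\dots,j_{\rk(P)}$ is the order in which $\rowmB_{\sigma}$ processes the ranks. Say rank $\rk(p)$ occurs at position $a$; write $\xi_a^{(\pi)}:=\xi_a$ and $\xi_{a+1}^{(\pi)}:=\xi_{a+1}$ to record their dependence on $\pi$, so that $\ToutB{p}\big(\xi_{a+1}^{(\pi)}\big)=\TinB{p}\big(\xi_a^{(\pi)}\big)$. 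If we could always conclude $\TinB{p}(\pi)=\TinB{p}\big(\xi_a^{(\pi)}\big)$ and $\ToutB{p}\big(\xi_{a+1}^{(\pi)}\big)=\ToutB{p}(\rowmB_{\sigma}(\pi))$, the proof would finish by the same pointwise telescoping around $O$ as for \cref{lem:striker_b}. This is what happens for ordinary rowmotion, where ranks are processed from the top down; but for a general $\sigma$ it can fail, since a rank-$(\rk(p)-1)$ toggle occurring before position $a$, or a rank-$(\rk(p)+1)$ toggle occurring after position $a$, will in general alter $\TinB{p}$ or $\ToutB{p}$ in between. Handling this discrepancy is the main obstacle.

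The resolution is to telescope at the level of orbit products rather than pointwise, using the fact that $\TinB{p}$ changes value only under toggles of rank $\rk(p)$ or $\rk(p)-1$ and $\ToutB{p}$ only under toggles of rank $\rk(p)$ or $\rk(p)+1$. Tracking $\TinB{p}$ along $\xi_0\to\cdots\to\xi_a$ shows that, depending on whether rank $\rk(p)-1$ is processed before or after rank $\rk(p)$, either $\TinB{p}(\pi)=\TinB{p}\big(\xi_a^{(\pi)}\big)$ outright, or their ratio equals $\big(\sum_{p\gtrdot r}(\rowmB_{\sigma}\pi)(r)\big)\big/\big(\sum_{p\gtrdot r}\pi(r)\big)$, a ratio of sums of values at coverees of $p$ taken at consecutive points of the orbit; since $\rowmB_{\sigma}$ permutes $O$, the product of these ratios over $\pi\in O$ is $1$, whence $\prod_{\pi\in O}\TinB{p}(\pi)=\prod_{\pi\in O}\TinB{p}\big(\xi_a^{(\pi)}\big)$ in all cases. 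The dual bookkeeping for $\ToutB{p}$ along $\xi_{a+1}\to\cdots\to\xi_{\rk(P)+1}$ --- now with correction factors $(\rowmB_{\sigma}\pi)(p)/\pi(p)$ that again multiply to $1$ around $O$ --- gives $\prod_{\pi\in O}\ToutB{p}(\pi)=\prod_{\pi\in O}\ToutB{p}\big(\xi_{a+1}^{(\pi)}\big)$. Combining these two identities with $\ToutB{p}\big(\xi_{a+1}^{(\pi)}\big)=\TinB{p}\big(\xi_a^{(\pi)}\big)$ for each $\pi$ yields $\prod_{\pi\in O}\ToutB{p}(\pi)=\prod_{\pi\in O}\TinB{p}(\pi)$, i.e.\ $\prod_{\pi\in O}\TB{p}(\pi)=1$. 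Tropicalizing gives \cref{lem:striker_permuted_pl} and specializing gives \cref{lem:striker_permuted}. (If one prefers, the same argument runs directly on the piecewise-linear side, with $\min$, $\max$, $+$ in place of $+$, $\times$ and orbit sums in place of orbit products; passing through the birational level just lets us reuse the $\togB{\cdot}$ computation from \cref{lem:striker_b} verbatim.)
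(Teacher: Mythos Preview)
Your argument is correct and follows essentially the same route as the paper: prove the birational statement (\cref{lem:striker_permuted_b}) and tropicalize, using the local identity $\ToutB{p}(\rktogB{\rk(p)}(\pi))=\TinB{p}(\pi)$ together with an analysis of what the rank-$(\rk(p)\pm 1)$ toggles do to $\TinB{p}$ and $\ToutB{p}$. The paper organizes this as an explicit four-case analysis on the relative order of $\rk(p)-1$, $\rk(p)$, $\rk(p)+1$ in $\sigma$, whereas you package the same dichotomies uniformly as ``telescoping correction factors''; these are two presentations of the same computation.

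One small slip: in the dual bookkeeping for $\ToutB{p}$ along $\xi_{a+1}\to\cdots\to\rowmB_{\sigma}(\pi)$, the value $\pi(p)$ has already been updated at step $a$ and does not change thereafter, so the correction factor when rank $\rk(p)+1$ is processed after rank $\rk(p)$ is
\[
\frac{\displaystyle\sum_{p\lessdot r}\pi(r)^{-1}}{\displaystyle\sum_{p\lessdot r}(\rowmB_{\sigma}\pi)(r)^{-1}},
\]
not $(\rowmB_{\sigma}\pi)(p)/\pi(p)$. This does not affect the argument, since the correct factor is still a ratio of an orbit-shifted quantity and therefore multiplies to $1$ over $O$ just as you claim.
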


\begin{lemma} \label{lem:striker_permuted_b}
Suppose $P$ is ranked. Let $O  \subseteq \RRgt^P$ be a finite $\rowmB_{\sigma}$-orbit, for any choice of $\sigma$. Then for all $p\in P$, we have $\prod_{\pi \in O }\TB{p}(\pi)=1$.
\end{lemma}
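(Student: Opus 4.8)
The plan is to prove the birational statement, Lemma~\ref{lem:striker_permuted_b}, and deduce Lemma~\ref{lem:striker_permuted_pl} from it by tropicalization and \cref{lem:striker_permuted} by specialization, just as \cref{lem:striker_pl} was obtained from \cref{lem:striker_b}. Fix a permutation $\sigma$ of $0,1,\dots,\rk(P)$, write $R\coloneqq\rowmB_\sigma=\rktogB{\sigma(0)}\circ\cdots\circ\rktogB{\sigma(\rk P)}$, and fix $p\in P$ with $k\coloneqq\rk(p)$. One tempting shortcut---using that all the $\rowmB_\sigma$ are conjugate via birational toggles (the birational analogue of Cameron--Fon-der-Flaass's Lemma~2)---does not obviously help, since toggle conjugation need not preserve the orbit product $\prod_{\pi\in O}\TB{p}(\pi)$; so I would argue directly, mimicking the proof of \cref{lem:striker_b}.

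The first step is to replace the identity $\TinB{p}(\pi)=\ToutB{p}(\rowmB(\pi))$ (which drove \cref{lem:striker_b}) by a \emph{local} identity: for every $\chi\in\RRgt^P$,
\[ \TinB{p}(\chi)=\ToutB{p}\bigl(\rktogB{k}(\chi)\bigr). \]
This is a short calculation. The toggles composing $\rktogB{k}$ pairwise commute and change only the rank-$k$ coordinates, so $\rktogB{k}(\chi)$ agrees with $\chi$ at every upper cover of $p$ (these have rank $k+1$, or equal $\hato$) and equals $\togB{p}(\chi)(p)$ at $p$; plugging $\togB{p}(\chi)(p)=\bigl(\sum_{p\gtrdot r\in\widehat P}\chi(r)\bigr)\big/\bigl(\chi(p)\sum_{p\lessdot r\in\widehat P}\chi(r)^{-1}\bigr)$ into the definition of $\ToutB{p}$ collapses everything to $\chi(p)\big/\sum_{p\gtrdot r\in\widehat P}\chi(r)=\TinB{p}(\chi)$.

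Now fix a finite $R$-orbit $O=\{\pi_0,\dots,\pi_{m-1}\}$, indices taken mod $m$. In the application of $R$ that carries $\pi_s$ to $\pi_{s+1}$, let $\alpha_s$ be the state just before the block $\rktogB{k}$ is applied and let $\beta_s\coloneqq\rktogB{k}(\alpha_s)$ be the state just after. Since $\rktogB{k}$ is the only block of $R$ touching rank $k$, we have $\alpha_s(p)=\pi_s(p)$ and $\beta_s(p)=\pi_{s+1}(p)$. The part that is genuinely more delicate than \cref{lem:striker_b} is tracking the rank-$(k-1)$ coordinates of $\alpha_s$ and the rank-$(k+1)$ coordinates of $\beta_s$: whether $\sigma$ schedules rank $k-1$ before or after rank $k$ is the same in every copy of $R$, and accordingly the rank-$(k-1)$ coordinates of $\alpha_s$ equal either those of $\pi_{s+1}$ (if rank $k-1$ comes first) or those of $\pi_s$ (if it comes second); if $k=0$ this coordinate is the constant $\aB$ and the issue disappears. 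Since $\alpha_s(p)=\pi_s(p)$ in all cases, taking the product over $s$ and reindexing cyclically (this is where finiteness of $O$ is used) gives $\prod_{s}\TinB{p}(\alpha_s)=\prod_{s}\TinB{p}(\pi_s)$. The mirror-image analysis of $\beta_s$ at the upper covers of $p$ (distinguishing whether rank $k+1$ is scheduled before or after rank $k$, with $k=\rk P$ trivial) yields $\prod_{s}\ToutB{p}(\beta_s)=\prod_{s}\ToutB{p}(\pi_s)$.

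Finally, the local identity applied to $\chi=\alpha_s$ gives $\TinB{p}(\alpha_s)=\ToutB{p}(\beta_s)$ for each $s$, so
\[ \prod_{\pi\in O}\TinB{p}(\pi)=\prod_{s}\TinB{p}(\alpha_s)=\prod_{s}\ToutB{p}(\beta_s)=\prod_{\pi\in O}\ToutB{p}(\pi), \]
which is exactly $\prod_{\pi\in O}\TB{p}(\pi)=1$. I expect the only real obstacle to be the bookkeeping in the third paragraph---making precise which intermediate states $\alpha_s$ and $\beta_s$ coincide with at ranks $k\pm1$, confirming the cyclic reindexing, and disposing of the boundary ranks $k=0$ and $k=\rk(P)$; the rest is a faithful transcription of the proof of \cref{lem:striker}/\cref{lem:striker_b}, and the piecewise-linear and combinatorial versions then come for free by tropicalization and specialization.
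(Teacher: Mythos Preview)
Your argument is correct. The local identity $\TinB{p}(\chi)=\ToutB{p}(\rktogB{k}(\chi))$ is verified exactly as you say, and the bookkeeping in the third paragraph goes through: in each of the two sub-cases for rank $k-1$, the quantity $\TinB{p}(\alpha_s)$ equals $\pi_s(p)$ divided by a sum over lower covers evaluated at either $\pi_s$ or $\pi_{s+1}$ (uniformly in $s$), so after taking the product over the orbit the numerator and denominator factors can be shifted independently; the mirror argument for $\ToutB{p}(\beta_s)$ is identical. The boundary ranks are indeed harmless since the relevant neighbor is the constant $\aB$ or $\oB$.

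The paper's proof is organized differently. It splits into four cases according to the relative positions of $i-1$, $i$, $i+1$ in the sequence $\sigma(0),\ldots,\sigma(\rk P)$, and the arguments in these cases are not uniform: in the two ``monotone'' cases one gets a direct identity $\TinB{p}(\pi)=\ToutB{p}(\rowmB_\sigma(\pi))$ (or its reverse), while in the two ``extremal'' cases one instead shows $\ToutB{p}(\pi)/\TinB{p}(\pi)=\rowmB_\sigma(\pi)(p)/\pi(p)$ and telescopes the values $\pi(p)$ around the orbit. Your approach replaces this heterogeneous case analysis with a single local identity applied at intermediate states, pushing all the case dependence into a uniform cyclic reindexing of numerator and denominator products. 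This is a genuinely cleaner packaging: you never have to invoke the separate telescoping of $\pi(p)$, and the four cases collapse into a $2\times 2$ choice that is handled by one sentence. The paper's version, on the other hand, avoids introducing the auxiliary states $\alpha_s,\beta_s$ and makes each individual case more transparent.
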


Note that, via specialization, \cref{lem:striker_permuted_pl} implies \cref{lem:striker_permuted}, whose proof we omitted before.

\begin{proof}[Proofs of~\cref{lem:striker_permuted_pl,lem:striker_permuted_b}]
We prove~\cref{lem:striker_permuted_b}. Via tropicalization, this will also prove \cref{lem:striker_permuted_pl}. Let $i\coloneqq \rk(p)$. For simplicity, assume that $i\neq 0,\rk(P)$. The proof depends on the relative order that the numbers $i-1$, $i$, and $i+1$ appear in the sequence $\sigma(0),\sigma(1),\ldots,\sigma(\rk(P))$. 

If $\sigma^{-1}(i-1) < \sigma^{-1}(i) <\sigma^{-1}(i+1)$, then, just as with ordinary rowmotion, we will have $\TinB{p}(\pi) = \ToutB{p}(\rowmB_{\sigma}(\pi))$ for all $\pi \in \RRgt^{P}$, and so, as before, all the terms in the numerator and denominator of $\prod_{\pi \in O }\TB{p}(\pi)$ cancel.

If $\sigma^{-1}(i+1) < \sigma^{-1}(i) <\sigma^{-1}(i-1)$, then, just as with the inverse of ordinary rowmotion,  we will have $\ToutB{p}(\pi) = \TinB{p}(\rowmB_{\sigma}(\pi))$ for all $\pi \in \RRgt^{P}$, and so again, all the terms in $\prod_{\pi \in O }\TB{p}(\pi)$ cancel.

If $\sigma^{-1}(i-1) < \sigma^{-1}(i)$ and $\sigma^{-1}(i+1)<\sigma^{-1}(i)$, then the situation is slightly different. In this case, when applying the toggles that constitute $\rowmB$, $\togB{p}$ will be applied before $\togB{p'}$ for any $p'$ that either covers or is covered by $p$. This means that $\rowmB_{\sigma}(\pi)(p) = \togB{p}(\pi)(p)$. So, looking at the definitions of the toggle $\togB{p}$ and of the toggleability statistics $\ToutB{p}$ and $\TinB{p}$, we can therefore see that $\ToutB{p}(\pi)/\TinB{p}(\pi)=\rowmB_{\sigma}(\pi)(p)/\pi(p)$ will hold for all~$\pi \in \RRgt^{P}$. But this identity still suffices to show that $\prod_{\pi \in O }\TB{p}(\pi)=1$.

Finally, if $\sigma^{-1}(i) < \sigma^{-1}(i-1)$ and $\sigma^{-1}(i)<\sigma^{-1}(i+1)$, then this is the inverse of the previous case. So (replacing $\rowmB_{\sigma}$ by its inverse and then replacing $\pi$ by $\rowmB_{\sigma}(\pi)$ in the identity we obtained above) we have $\ToutB{p}(\rowmB_{\sigma}(\pi))/\TinB{p}(\rowmB_{\sigma}(\pi))=\pi(p)/\rowmB_{\sigma}(\pi)(p)$ for all $\pi \in \RRgt^{P}$, and this again implies that $\prod_{\pi \in O }\TB{p}(\pi)=1$.

If $i=0$ or $i=\rk(P)$, the details are slightly different, but the analysis is essentially similar. 
\end{proof}

In light of these extensions of \cref{lem:striker}, it makes sense to introduce PL and birational extensions of the $\tequiv$ notation. Thus, for two statistics $\stat{f},\stat{g}\colon\RR^{P}\to \RR$, we write $\stat{f}\tequivPL \stat{g}$ if there exist $c_p \in \RR$ such that $\stat{f}-\stat{g}=\sum_{p\in P}c_p\TPL{p}$. Similarly, for two statistics $\stat{f},\stat{g}\colon\RRgt^{P}\to \RRgt$, we write $\stat{f}\tequivB \stat{g}$ if there exist $c_p \in \RR$ such that $\stat{f}/\stat{g}=\prod_{p\in P}(\TB{p})^{c_p}$. As before, we also write $c$ for the function that is constantly equal to $c$ (on any domain). With this notation, we have the following corollaries of \cref{lem:striker_pl,lem:striker_b,lem:striker_permuted_pl,lem:striker_permuted_b}.

\begin{prop} \label{prop:homo_pl}
Suppose $\stat{f}\colon\RR^{P}\to \RR$ satisfies $\stat{f} \tequivPL c(\oPL-\aPL)$ for some $c\in \RR$. Then for any finite $\rowmPL$-orbit $O\subseteq \RR^{P}$, we have $\sum_{\pi \in O} \stat{f}(\pi)= \#O\cdot c(\oPL-\aPL)$. If $P$ is ranked, then the equality holds for all finite $\rowmPL_{\sigma}$ orbits.
\end{prop}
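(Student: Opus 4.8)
The plan is to deduce this directly from the hypothesis $\stat{f} \tequivPL c(\oPL-\aPL)$ together with the PL analogue of Striker's lemma, mimicking exactly the passage from \cref{lem:striker} to \cref{prop:homo} at the combinatorial level. First I would unpack the definition of $\tequivPL$: by hypothesis there are constants $c_p \in \RR$ for which $\stat{f} - c(\oPL-\aPL) = \sum_{p \in P} c_p \TPL{p}$ as an equality of functions $\RR^P \to \RR$. The one point to record at the outset is that, since $\aPL$ and $\oPL$ are \emph{fixed} real parameters, $c(\oPL-\aPL)$ is genuinely a constant function on $\RR^P$.

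Next, fix a finite $\rowmPL$-orbit $O \subseteq \RR^P$ and sum the displayed identity over all $\pi \in O$. The left-hand side contributes $\sum_{\pi \in O} \stat{f}(\pi) - \#O\cdot c(\oPL-\aPL)$, because a constant function summed over a set of size $\#O$ gives $\#O$ times that constant. The right-hand side contributes $\sum_{p \in P} c_p \sum_{\pi \in O} \TPL{p}(\pi)$, and each inner sum $\sum_{\pi \in O} \TPL{p}(\pi)$ vanishes by \cref{lem:striker_pl}. Rearranging gives $\sum_{\pi \in O} \stat{f}(\pi) = \#O\cdot c(\oPL-\aPL)$, as claimed. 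For the rank-permuted statement the argument is word-for-word the same, except that one invokes \cref{lem:striker_permuted_pl} in place of \cref{lem:striker_pl} to conclude $\sum_{\pi \in O} \TPL{p}(\pi) = 0$ for every finite $\rowmPL_{\sigma}$-orbit.

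There is no genuine obstacle here: all the substantive work has already been done in establishing \cref{lem:striker_pl,lem:striker_permuted_pl}, and this proposition is a purely formal corollary of them, exactly parallel to how \cref{prop:homo} follows from \cref{lem:striker,lem:striker_permuted}. The only thing to be mildly careful about is the bookkeeping of the factor $\oPL - \aPL$: one should note it does not depend on $\pi$, so it passes through the orbit sum cleanly, and that under the specialization $\aPL = 0$, $\oPL = 1$ the statement recovers the combinatorial conclusion $\sum_{\pi \in O} \stat{f}(\pi) = \#O\cdot c$ of \cref{prop:homo}.
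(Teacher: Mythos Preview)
Your proposal is correct and matches the paper's approach exactly: the paper states this proposition as an immediate corollary of \cref{lem:striker_pl,lem:striker_permuted_pl} without giving a separate proof, and your argument spells out precisely the implied deduction.
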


\begin{prop} \label{prop:homo_b}
Suppose $\stat{f}\colon\RRgt^{P}\to \RRgt$ satisfies $\stat{f} \tequivB (\oB/\aB)^{c}$ for some $c\in \RRgt$. Then for any finite $\rowmB$-orbit $O\subseteq \RR^{P}$, we have $\prod_{\pi \in O} \stat{f}(\pi)= (\oB/\aB)^{\#O\cdot c}$. If $P$ is ranked, then we have similarly for all finite $\rowmB_{\sigma}$ orbits.
\end{prop}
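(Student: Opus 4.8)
The plan is to observe that \cref{prop:homo_b} is an immediate formal consequence of \cref{lem:striker_b} (and of \cref{lem:striker_permuted_b} in the ranked case), obtained simply by unwinding the definition of $\tequivB$. The only points to keep in mind are that all quantities involved lie in $\RRgt$ — so that raising to real exponents and interchanging finite products are legitimate operations — and that the orbit $O$ is assumed finite, so that $\prod_{\pi\in O}$ makes sense.

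First I would spell out the hypothesis. By definition of $\tequivB$, the assumption $\stat{f}\tequivB (\oB/\aB)^{c}$ means there exist real constants $c_p$, for $p\in P$, such that
\[
\stat{f}(\pi)=(\oB/\aB)^{c}\cdot\prod_{p\in P}\bigl(\TB{p}(\pi)\bigr)^{c_p}
\qquad\text{for all }\pi\in\RRgt^{P}.
\]
Next, let $O\subseteq\RRgt^{P}$ be a finite $\rowmB$-orbit. Taking the product of the displayed identity over all $\pi\in O$ and using that every factor lies in $\RRgt$, we may rearrange the (finite) products and pull out exponents to obtain
\[
\prod_{\pi\in O}\stat{f}(\pi)
=(\oB/\aB)^{c\cdot\#O}\cdot\prod_{p\in P}\Bigl(\prod_{\pi\in O}\TB{p}(\pi)\Bigr)^{c_p}.
\]
By \cref{lem:striker_b}, the inner product $\prod_{\pi\in O}\TB{p}(\pi)$ equals $1$ for every $p\in P$, so the entire second factor is $1$, and hence $\prod_{\pi\in O}\stat{f}(\pi)=(\oB/\aB)^{c\cdot\#O}$, as claimed.

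Finally, for the ranked case one repeats the argument verbatim, replacing the appeal to \cref{lem:striker_b} by an appeal to \cref{lem:striker_permuted_b}, which guarantees $\prod_{\pi\in O}\TB{p}(\pi)=1$ on any finite $\rowmB_{\sigma}$-orbit $O$. (One could alternatively note that \cref{prop:homo_pl} is recovered from \cref{prop:homo_b} by tropicalization, so that a single argument at the birational level suffices; but since the two proofs are identical I would just present both.) I do not expect any genuine obstacle: all the real content sits in \cref{lem:striker_b,lem:striker_permuted_b}, and \cref{prop:homo_b} is exactly the bookkeeping needed to package those lemmas as a statement about orbit products.
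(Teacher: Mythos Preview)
Your proposal is correct and matches the paper's approach: the paper does not give a separate proof of \cref{prop:homo_b} but simply states it as a corollary of \cref{lem:striker_b,lem:striker_permuted_b}, which is exactly the unwinding you carry out.
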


So \cref{prop:homo_pl,prop:homo_b} say that we can obtain piecewise-linear and birational rowmotion homomesies if we can prove the piecewise-linear and birational equivalents of $\stat{f}\tequiv \const$. But so far, we have only discussed how to show $\stat{f}\tequiv \const$ at the combinatorial level. However, we will now explain how it is possible to \emph{automatically} lift the fact that $\stat{f}\tequiv \const$ to the PL and birational levels for statistics $\stat{f}$ of the forms we have been studying and for certain families of posets~$P$, including all of the main examples we discussed in \cref{sec:main}.
 
Because we want to work with the same kinds of statistics from \cref{sec:main}, we will need extensions of the $\oii{p}$. So for $p\in P$, define $\oiiPL{p}\colon \RR^{P}\to\RR$ by
\[\oiiPL{p}(\pi) \coloneqq \oPL-\pi(p),\]
and similarly define $\oiiB{p}\colon \RRgt^{P}\to\RRgt$ by
\[\oiiB{p}(\pi) \coloneqq \oB / \pi(p).\]
(The $\oPL-$ part may look a little strange, but, as always, this definition is chosen so that when $\aPL=0$ and $\oPL=1$, the function agrees with its combinatorial counterpart under specialization, i.e., $\oiiPL{p}(\pi_I)=\oii{p}(I)$ for all $I\in\J(P)$.)

\begin{lemma} \label{lem:lifting}
Suppose that $P$ is such that each element $p\in P$ covers at most two elements and is covered by at most two elements.
Let $\stat{f}\colon \J(P)\to\RR$ be of the form 
\[\sum_{p\in P} (a_p \Tin{p} + a'_p \Tout{p} + a''_p \oii{p})\] for $a_p,a_p',a_p''\in\RR$. Set
\[\statPL{f} \coloneqq \sum_{p\in P} (a_p \TinPL{p} + a'_p \ToutPL{p} + a''_p \oiiPL{p})\] 
and 
\[\statB{f} \coloneqq \prod_{p\in P} (\ToutB{p})^{a_p} \cdot (\ToutPL{p})^{a_p} \cdot (\oiiB{p})^{a''_p}.\] 
If $\stat{f} = c$ for some $c\in \RR$, then we have $\statPL{f} = c(\oPL-\aPL)$ and $\statB{f} = (\oB/\aB)^{c}$.
\end{lemma}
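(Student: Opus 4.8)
The plan is to prove the piecewise-linear identity first, by a ``layer-cake'' (level-set) computation over the cone of order-preserving functions, and then to exploit the hypothesis on covers and cocovers twice more: once to extend the identity from that cone to all of $\RR^{P}$, and once to detropicalize it to the birational statement. It is convenient throughout to treat $\aPL=\pi(\hatz)$ and $\oPL=\pi(\hato)$ as extra coordinates, i.e.\ to work with functions on $\RR^{\widehat P}$, the claim for fixed $\aPL,\oPL$ then following by restriction to a coordinate slice. Set $\ell(\pi):=c(\pi(\hato)-\pi(\hatz))$, an affine function of $\pi\in\RR^{\widehat P}$, and let $\mathcal{OP}\subseteq\RR^{\widehat P}$ be the cone of functions that are order-preserving on $\widehat P$; it is full-dimensional (it contains any linear extension of $\widehat P$ viewed as a function). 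For $\pi\in\RR^{\widehat P}$ and $t\in\RR$ put $J_t(\pi):=\{r\in\widehat P:\pi(r)\le t\}$. When $\pi\in\mathcal{OP}$, each $J_t(\pi)$ is an order ideal of $\widehat P$: it is $\emptyset$ for $t<\pi(\hatz)$, it is $\{\hatz\}\cup I$ with $I\in\J(P)$ for $\pi(\hatz)\le t<\pi(\hato)$, and it is $\widehat P$ for $t\ge\pi(\hato)$. Define, for $\widehat J\subseteq\widehat P$, the combinatorial quantities $\widehat{\mathcal T}^{+}_p(\widehat J):=[\,p\notin\widehat J\,]\prod_{r\lessdot p}[\,r\in\widehat J\,]$, $\widehat{\mathcal T}^{-}_p(\widehat J):=[\,p\in\widehat J\,]\prod_{p\lessdot s}[\,s\notin\widehat J\,]$ and $\widehat{\mathds{1}}_p(\widehat J):=[\,p\in\widehat J\,]-[\,\hato\in\widehat J\,]$, with the products over cover relations of $\widehat P$. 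A direct check shows that for $\pi\in\mathcal{OP}$ one has $\TinPL{p}(\pi)=\int_{\RR}\widehat{\mathcal T}^{+}_p(J_t(\pi))\,dt$, and similarly $\ToutPL{p}(\pi)=\int_{\RR}\widehat{\mathcal T}^{-}_p(J_t(\pi))\,dt$ and $\oiiPL{p}(\pi)=\int_{\RR}\widehat{\mathds{1}}_p(J_t(\pi))\,dt$: order-preservation is exactly what makes the integrands nonnegative (so that the positive parts occurring for general $\pi$ disappear) and turns the $\max/\min$ in the definitions into the stated level-set conditions. Summing with the coefficients $a_p,a'_p,a''_p$ gives $\statPL{f}(\pi)=\int_{\RR}\widehat{\stat f}(J_t(\pi))\,dt$, where $\widehat{\stat f}:=\sum_p(a_p\widehat{\mathcal T}^{+}_p+a'_p\widehat{\mathcal T}^{-}_p+a''_p\widehat{\mathds{1}}_p)$ vanishes on $\emptyset$ and on $\widehat P$ and satisfies $\widehat{\stat f}(\{\hatz\}\cup I)=\stat f(I)=c$ for every $I\in\J(P)$. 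Hence the integrand equals $c$ on $[\pi(\hatz),\pi(\hato))$ and $0$ elsewhere, so $\statPL{f}=\ell$ on $\mathcal{OP}$.

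I would then bootstrap from $\mathcal{OP}$ to all of $\RR^{\widehat P}$. Because each element of $P$ has at most two covers and at most two cocovers, every $\max$ or $\min$ in the definition of $\statPL{f}$ has at most two arguments; consequently $\statPL{f}$ is linear on each connected component of the complement of the union $\mathcal H$ of the hyperplanes $\{\pi(y)=\pi(y')\}$, taken over all pairs $\{y,y'\}$ that occur as the two cocovers of some element or as the two covers of some element, and the gradient of $\statPL{f}$ jumps across $\{\pi(y)=\pi(y')\}$ by a nonzero multiple of $\big(\sum_{p:\,\mathrm{cocov}(p)=\{y,y'\}}a_p+\sum_{p:\,\mathrm{cov}(p)=\{y,y'\}}a'_p\big)$. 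Each such pair $\{y,y'\}$ is an antichain of $\widehat P$, so the hyperplane $\{\pi(y)=\pi(y')\}$ meets the interior of the full-dimensional cone $\mathcal{OP}$; but there $\statPL{f}=\ell$ is affine, so every one of these gradient jumps must vanish. Therefore $\statPL{f}$ is globally affine, and, agreeing with the affine function $\ell$ on the full-dimensional set $\mathcal{OP}$, it equals $\ell$ identically: $\statPL{f}=c(\oPL-\aPL)$.

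Finally I would deduce the birational identity from the same vanishing. Expanding $\statB{f}$ as a product of powers $(\TinB{p})^{a_p}(\ToutB{p})^{a'_p}(\oiiB{p})^{a''_p}$ and using $\TinB{p}(\pi)=\pi(p)/\sum_{r\lessdot p}\pi(r)$, $\ToutB{p}(\pi)=(\pi(p)\sum_{s\gtrdot p}\pi(s)^{-1})^{-1}$, $\oiiB{p}(\pi)=\oB/\pi(p)$, the only factors that are not Laurent monomials in the values $\pi(r)$ are the binomials $\pi(y)+\pi(y')$, which arise from $\sum_{r\lessdot p}\pi(r)$ when $p$ has cocovers $\{y,y'\}$ and from $\sum_{s\gtrdot p}\pi(s)^{-1}$ when $p$ has covers $\{y,y'\}$; the total exponent of $\pi(y)+\pi(y')$ in $\statB{f}$ is $-\big(\sum_{p:\,\mathrm{cocov}(p)=\{y,y'\}}a_p+\sum_{p:\,\mathrm{cov}(p)=\{y,y'\}}a'_p\big)$, the quantity just shown to be $0$. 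So all of these binomials cancel and $\statB{f}$ is a Laurent monomial in the $\pi(r)$, $r\in\widehat P$. Its tropicalization is the (now genuinely linear) function $\statPL{f}=c\,\pi(\hato)-c\,\pi(\hatz)$, which forces the monomial to be $\pi(\hato)^{c}\pi(\hatz)^{-c}$; that is, $\statB{f}=(\oB/\aB)^{c}$.

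I expect the main obstacle to be the bootstrapping step: the reason the combinatorial hypothesis $\stat f=c$ propagates to \emph{all} of $\RR^{P}$, and not merely to the order-preserving cone where the layer-cake lives, is precisely that the at-most-two-covers-and-cocovers assumption confines the nonlinearities of $\statPL{f}$ to hyperplanes cut out by equalities between incomparable elements, every one of which slices through the interior of $\mathcal{OP}$. The layer-cake computation itself is routine once the auxiliary functions $\widehat{\mathcal T}^{\pm}_p,\widehat{\mathds{1}}_p$ are chosen to vanish at $\emptyset$ and $\widehat P$ and to restrict to $\Tin{p},\Tout{p},\oii{p}$ on the ideals $\{\hatz\}\cup I$, and the birational case is then short bookkeeping matching the binomial exponents in $\statB{f}$ against the gradient jumps of $\statPL{f}$.
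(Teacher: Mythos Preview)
Your argument is correct and follows the paper's overall strategy---establish the PL identity on order-preserving labelings, use the at-most-two-covers hypothesis to rewrite $\statPL{f}$ with only two-variable $\max$'s (equivalently, to see that its nonlinearities are confined to antichain hyperplanes) and kill those nonlinearities using the identity on the order-preserving region, then observe that the birational expansion carries exactly the same coefficients---but the ingredients you supply for each step are genuinely different. Where the paper proves $\statPL{f}=c$ on the order polytope by invoking Stanley's canonical triangulation (the function is affine on each simplex and equals $c$ at every vertex $\pi_I$), you use a clean layer-cake identity $\statPL{f}(\pi)=\int_{\RR}\widehat{\stat f}(J_t(\pi))\,dt$ on the cone of order-preserving maps, which is more direct and has the pleasant side effect of handling arbitrary $\aPL,\oPL$ at once by treating them as coordinates on $\RR^{\widehat P}$. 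Your bootstrapping is also phrased more geometrically: rather than isolating specific faces of the order polytope to pin down each coefficient $b_p,b_{p,q}$ as the paper does, you note that every kink hyperplane $\{\pi(y)=\pi(y')\}$ (with $\{y,y'\}$ an antichain) slices through the interior of $\mathcal{OP}$, forcing the jump to vanish. For the birational step you match the binomial exponents in $\statB{f}$ against these very jump coefficients; the paper instead repeats the $\max+\min=\mathrm{sum}$ rewriting in detropicalized form and observes the resulting coefficients are literally the same $b$'s. Both routes reach the identical conclusion; yours is arguably more conceptual, while the paper's is more explicitly computational. One small remark: your final appeal to ``tropicalization'' to read off the monomial exponents is really the statement that the same formal rewriting produces the same coefficients in both realms (which is exactly what the paper says)---phrasing it as tropicalizing a \emph{function} is slightly loose, though it can be made precise via the tropical-limit characterization for subtraction-free expressions.
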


\begin{proof}
The proof closely follows~\cite[Proofs of Lemma 4.30 and Lemma 4.41]{hopkins2021minuscule}. Hence we will try to be brief and omit some details.

We first prove the PL statement. For the moment, suppose $\aPL=0$ and $\oPL=1$. Consider the \dfn{order polytope} $\mathcal{O}(P)\subseteq \RR^P$ of points $\pi \in \RR^P$ satisfying $0\leq \pi(p)\leq 1$ for all $p\in P$ and satisfying $\pi(p)\leq \pi(q)$ whenever $p\leq q$ in $P$. Stanley~\cite{stanley1986twoposet} proved that the vertices of $\mathcal{O}(P)$ are the $\pi_I$ for~$I\in\J(P)$. He also observed that $\mathcal{O}(P)$ has a canonical triangulation whose maximal simplices are indexed by the linear extensions of $P$, with the linear extension $p_1,\ldots,p_n$ corresponding to the simplex consisting of all $\pi$ satisfying $0\leq \pi(p_1) \leq \pi(p_2)\leq \cdots \leq \pi(p_n)\leq 1$. Observe that $\statPL{f}$ is an affine linear function on each maximal simplex in this triangulation; moreover, it is, by supposition, equal to~$c$ at each vertex of each such simplex. Hence, on each simplex, it is in fact constantly equal to $c$. Consequently, $\statPL{f}(\pi)=c$ for all $\pi\in\mathcal{O}(P)$. 

Now let $\aPL, \oPL\in \RR$ be arbitrary. So far, we have not used the hypothesis that each element of~$P$ covers, and is covered by, at most two elements. Now we will use that supposition. Specifically, we repeatedly use the identity $\max(x,y)+\min(x,y)=x+y$ to rewrite $\statPL{f}$ as
\[ \statPL{f}(\pi) = b_{\alpha} \aPL + b_{\omega} \oPL + \sum_{p} b_p \pi(p) + \sum_{p \parallel q}  b_{p,q} \max(\pi(p),\pi(q))\]
for certain constants $ b_{\alpha}, b_{\omega}, b_p, b_{p,q} \in \RR$. (We use $p \parallel q$ to denote that $p$ and $q$ are incomparable.)

First of all, we can easily check that $b_{\alpha} = -c$ by evaluating $\stat{f}$ at the order ideal~$\varnothing$, and similarly that $b_{\omega} = c$ by evaluating $\stat{f}$ at the order ideal $P$. 

Thus, it remains to show $b_p=0$ and $b_{p,q}=0$. We again consider the case $\aPL=0$ and $\oPL=1$.  Recall that in this case, we have shown $\statPL{f}(\pi)=c$ for all $\pi\in\mathcal{O}(P)$. First, one can show that all~$b_{p,q}$ are $0$ by focusing on the set of $\pi\in\mathcal{O}(P)$ with $\pi(x)=0$ if $x < p$ or $x < q$, and $\pi(y)=1$ if~$y\neq,p,q$ and $y$ is not less than either $p$ or $q$. Then one can show that all $b_p$ are $0$ by focusing on the subset of $\pi\in\mathcal{O}(P)$ with $\pi(x)=0$ if $x < p$ and $\pi(y)=1$ if $y\not\leq p$. So we have shown that indeed $\statPL{f} = c(\oPL-\aPL)$.

Now we consider the birational statement. Analogous to the PL setting, we repeatedly use the identity $(x+y) \cdot (x^{-1}+y^{-1})^{-1}=xy$ to rewrite $\statB{f}$ as
\[ \statB{f}(\pi) = (\aB)^{ b_{\alpha}} \cdot (\oB)^{b_{\omega}} \cdot \prod_{p} (\pi(p))^{b_p} \cdot \prod_{p \parallel q} (\pi(p)+\pi(q))^{b_{p,q}}\]
for certain constants $b_{\alpha}, b_{\omega}, b_p, b_{p,q} \in \RR$. But here is the key observation: these $b_{\alpha}, b_{\omega}, b_p, b_{p,q}$ are exactly the same as the coefficients in the PL case, because they are obtained in exactly the same way (i.e., via the de-tropicalization of the $\max(x,y)+\min(x,y)=x+y$ identity). So we indeed have $\statB{f} = (\oB/\aB)^{c}$, and we are done.
\end{proof}

\begin{remark} \label{rem:pl_proofs}
\Cref{lem:lifting} says that (for a poset satisfying its hypothesis) if we take any identity relating the statistics $\oii{p}$, $\Tout{p}$, $\Tin{p}$, and $1$ that holds at the combinatorial level, and we replace all the terms by their PL extensions, then the identity will still be true. What is more, we can prove it is true by repeatedly using the identity $\max(x,y)+\min(x,y)=x+y$ and cancelling all terms. In fact, this perspective actually leads to more algebraic proofs of the identities from \cref{sec:main}.  For instance, consider the PL extension of~\eqref{eqn:rectfilea}:
\begin{equation} \label{eqn:rectfilea_pl}
 \oiiPL{p} \ = \sum_{i' \geq i, \ j' \geq j} \ToutPL{i',j'} \ \ - \sum_{i' > i, \ j' > j} \TinPL{i',j'}
\end{equation}
for any $p=(i,j)\in \rect{a}{b}$. Let $s=(i',j')$ be a generic box with $i'\geq i$ and $j' \geq j$. Consider the contribution of terms depending on $s$ to the right-hand side of \eqref{eqn:rectfilea_pl}. Locally near $s$, the picture of this sum of toggleability statistics looks like~\cref{fig:pl_proofs}. If we write all the terms on the right-hand side of \eqref{eqn:rectfilea_pl} in terms of $\max$'s and $\min$'s of $\pi(p')$'s, we see that the only parts involving $\pi(s)$ will be
\[-2\cdot \pi(s) + \max(\pi(s),\pi(t))+\min(\pi(s),\pi(t)) + \max(\pi(s),\pi(u))+\min(\pi(s),\pi(u)),\]
and via the identity $\max(x,y)+\min(x,y)=x+y$, this becomes
\[ -2\cdot \pi(s) + \pi(s)+\pi(t) +\pi(s)+\pi(u)=\pi(t)+\pi(u), \]
which actually does not depend on $\pi(s)$ at all. In this way, we could see that the right-hand side of \eqref{eqn:rectfilea_pl} has no terms depending on any of the $\pi(p')$'s, except $\pi(p)$, and that it in fact simplifies to~$\oPL-\pi(p)= \oiiPL{p}$. To do this, we would need to also check some other boundary cases that look slightly different from \cref{fig:pl_proofs}. But this cancellation is the essential idea in \emph{every} identity treated in~\cref{sec:main}.
\end{remark}

\begin{figure}
\begin{center}
\includegraphics[height=2.786cm]{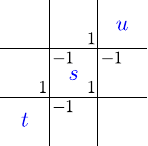}
\end{center}
\caption{For \cref{rem:pl_proofs}: how terms of an identity cancel locally. } \label{fig:pl_proofs}
\end{figure}

If $P$ satisfies the hypothesis of \cref{lem:lifting}, then this lemma says that for any combinatorial statistic $\stat{f}\in\Span\{\Tin{p},\Tout{p},\oii{p}\colon p\in P\}$, the PL counterpart $\statPL{f}\colon \RR^P\to \RR$ and birational counterpart $\statB{f}\colon \RRgt^P\to \RRgt$, as defined in that lemma, are uniquely defined; i.e., they do not depend on how we represent $\stat{f}$ as a linear combination of the $\Tin{p}$, $\Tout{p}$, $\oii{p}$. Moreover, we also clearly have the following consequence of \cref{lem:lifting}:

\begin{thm} \label{thm:lifting}
Suppose that $P$ satisfies the hypothesis of \cref{lem:lifting}, and consider any statistic $\stat{f}\in\Span\{\Tin{p},\Tout{p},\oii{p}\colon p\in P\}$. Suppose we have $\stat{f}\tequiv c$ for some $c\in \RR$. Then $\statPL{f} \tequivPL c(\oPL-\aPL)$ and $\statB{f} \tequivB (\oB/\aB)^{c}$.
\end{thm}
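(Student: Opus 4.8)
The plan is to deduce \cref{thm:lifting} as an essentially formal consequence of \cref{lem:lifting} together with the well-definedness of the PL and birational counterparts recorded just above it. The key observation is that the hypothesis $\stat{f}\tequiv c$ is \emph{itself} an identity among the statistics $\Tin{p}$, $\Tout{p}$, $\oii{p}$, and $1$ that holds at the combinatorial level, so \cref{lem:lifting} applies to it directly; there is no new dynamics to analyze.

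First I would unwind the hypothesis: by definition of $\tequiv$ there are constants $c_p\in\RR$ with $\stat{f}-c=\sum_{p\in P}c_p\T{p}=\sum_{p\in P}c_p(\Tin{p}-\Tout{p})$. Set $\stat{g}\coloneqq \stat{f}-\sum_{p\in P}c_p(\Tin{p}-\Tout{p})$. Since $\stat{f}\in\Span\{\Tin{p},\Tout{p},\oii{p}\colon p\in P\}$ and each $\T{p}=\Tin{p}-\Tout{p}$ lies in that span, $\stat{g}$ is again a linear combination of the form $\sum_{p\in P}(a_p\Tin{p}+a'_p\Tout{p}+a''_p\oii{p})$ required by \cref{lem:lifting} (and the poset hypothesis there is a condition on $P$ alone, so it is unaffected). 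By construction $\stat{g}=c$ as a function on $\J(P)$, so \cref{lem:lifting} yields $\statPL{g}=c(\oPL-\aPL)$ and $\statB{g}=(\oB/\aB)^c$.

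Next I would compare the lifted statistics. The PL counterpart recipe of \cref{lem:lifting} is $\RR$-linear in the coefficient vector $(a_p,a'_p,a''_p)$, and the PL counterpart of $\Tin{p}-\Tout{p}$ is exactly $\TinPL{p}-\ToutPL{p}=\TPL{p}$; likewise the birational counterpart recipe is multiplicative in the coefficient vector, and the birational counterpart of $c_p\Tin{p}-c_p\Tout{p}$ is $(\TinB{p}/\ToutB{p})^{c_p}=(\TB{p})^{c_p}$. (Here we use the well-definedness of $\statPL{\,\cdot\,}$ and $\statB{\,\cdot\,}$ recorded just before the theorem, so that these counterparts do not depend on the chosen representation of $\stat{g}$.) Consequently $\statPL{g}=\statPL{f}-\sum_{p\in P}c_p\TPL{p}$ and $\statB{g}=\statB{f}\cdot\prod_{p\in P}(\TB{p})^{-c_p}$. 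Combining with the previous paragraph gives $\statPL{f}-\sum_{p\in P}c_p\TPL{p}=c(\oPL-\aPL)$ and $\statB{f}\cdot\prod_{p\in P}(\TB{p})^{-c_p}=(\oB/\aB)^c$, which are precisely the assertions $\statPL{f}\tequivPL c(\oPL-\aPL)$ and $\statB{f}\tequivB(\oB/\aB)^c$.

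Since \cref{lem:lifting} is already proved, the only remaining content is the bookkeeping in the last paragraph, namely checking that the lifting operation respects the $\RR$-linear (respectively, multiplicative) structure and sends $\T{p}$ to $\TPL{p}$ (respectively, $\TB{p}$). I expect no real obstacle here: both facts are immediate from the displayed formulas defining $\statPL{f}$ and $\statB{f}$ in \cref{lem:lifting}. If anything warrants care, it is simply confirming that $\stat{g}$ still lies in $\Span\{\Tin{p},\Tout{p},\oii{p}\colon p\in P\}$ so that \cref{lem:lifting} genuinely applies to it, which it does, as noted above.
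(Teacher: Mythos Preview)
Your proposal is correct and follows precisely the route the paper intends: the paper does not give a separate proof of \cref{thm:lifting} but simply says ``we also clearly have the following consequence of \cref{lem:lifting},'' and your argument spells out that consequence by applying \cref{lem:lifting} to $\stat{g}=\stat{f}-\sum_{p}c_p\T{p}$ and using linearity (resp.\ multiplicativity) of the lifting recipe together with the well-definedness of $\statPL{f}$ and $\statB{f}$ noted just before the theorem.
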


Observe that all the posets $P$ we studied in \cref{sec:main} satisfy the hypothesis of \cref{lem:lifting}. Thus, by combining \cref{thm:lifting} and \cref{prop:homo_pl,prop:homo_b}, we see how we can obtain PL and birational analogues of the rowmotion homomesies for all the $\stat{f}$ we showed satisfy $\stat{f}\tequiv \const$ in \cref{sec:main}. In this way, we recover many homomesy results from~\cite{einstein2018combinatorial, musiker2018paths, okada2020birational}, and we obtain some new ones as well. (As we suggested above, this way of lifting the toggleability statistics technique to the PL and birational setting was previously developed in~\cite{hopkins2021minuscule}, but there it was only used for the antichain cardinality statistic.) We will not go individually over every PL/birational rowmotion homomesy obtained in this way, but let us end with one example.

\begin{example}
Suppose $P=\rect{a}{b}$ is the rectangle. \Cref{cor:rect_oi} says that the order ideal cardinality statistic $\sum_{p\in P}\oii{p}$ satisfies $\sum_{p\in P}\oii{p} \tequiv ab/2$. Thus, \cref{thm:lifting} tells us that $\prod_{p\in P} \frac{\oB}{\pi(p)}\tequivB (\oB/\aB)^{ab/2}$. Together with~\cref{prop:homo_b}, this means that for any finite birational rowmotion orbit $O\subseteq \RRgt^{P}$, we have that $\prod_{\pi \in O} \prod_{p\in P} \frac{\oB}{\pi(p)} = (\oB/\aB)^{\#O\cdot(ab/2)}$. Grinberg and Roby~\cite[Theorem~30]{grinberg2015birational2} proved that for $P=[a]\times[b]$, the order of birational rowmotion is $a+b$. Hence, we can say that $\prod_{k=0}^{a+b-1} \prod_{p\in P} \frac{\oB}{(\rowmB)^{k}(\pi)(p)} = (\oB/\aB)^{(a+b)\cdot (ab/2)}$ for any $\pi \in \RRgt^{P}$. Rewriting this, we get
\[ \prod_{k=0}^{a+b-1} \prod_{p\in P} (\rowmB)^{k}(\pi)(p) = (\oB)^{(a+b)\cdot (ab/2)} \cdot (\aB)^{(a+b) \cdot (ab/2)}\]
for any $\pi \in \RRgt^{P}$, a result which was previously proved by Einstein and Propp~\cite[Theorem~5.3 and Theorem~6.6]{einstein2018combinatorial} (see also~\cite{musiker2018paths}). For example, the reader is encouraged to verify that if we multiply together all the values appearing in \cref{fig:b_row}, we obtain $(\oB)^8\cdot(\aB)^8$. We also obtain PL/birational lifts of the file refinements of order ideal cardinality in this way as well.
\end{example}

\section{\texorpdfstring{$q$}{q}-analogues}
\label{sec:q}

\subsection{\texorpdfstring{$q$}{q}-rowmotion}

We begin this section by defining a $q$-analogue of rowmotion. For this purpose, we assume $q$ is a positive rational number, which we have written as $q=r/s$ for some positive integers~$r$ and $s$. Note that we do not assume $r$ and $s$ are relatively prime. Note also that $q$-rowmotion will actually depend on $r$ and $s$, and not just~$q$, but we suppress this dependence in the name to emphasize that it is indeed a $q$-analogue of rowmotion.

Let $\Fz$ and $\Fo$ be disjoint sets of cardinality $s$ and $r$, respectively, and fix a cyclic permutation $\theta\colon \Fz\cup \Fo\to \Fz\cup \Fo$. We think of $\Fz$ as a set consisting of $s$ different ``flavors'' of the number~$0$, say $0_1,\ldots,0_s$. Similarly, we think of the elements of $\Fo$ as $r$ different flavors of the  number $1$, say~$1_1,\ldots,1_r$. The cyclic permutation $\theta$ can be chosen arbitrarily. 

As in the classical setting, we have a fixed finite poset $P$. Define $\J_{r,s}(P)$ to be the set of labelings $L\colon P\to\Fz\cup\Fo$ such that $L^{-1}(\Fz)$ is an order ideal of $P$. For $L\in\J_{r,s}(P)$, we say an element~$x\in P$ is \dfn{active} in $L$ if $x$ is either a maximal element of $L^{-1}(\Fz)$ or is a minimal element of $L^{-1}(\Fo) = P\setminus L^{-1}(\Fz)$. For each $p\in P$, define the \dfn{toggle} $\tog{p}\colon \J_{r,s}(P)\to\J_{r,s}(P)$ by
 \[\tog{p}(L)(x)\coloneqq\begin{cases} \theta(L(x)) &\textrm{if $x=p\in \max(L^{-1}(\Fz))\cup\min(L^{-1}(\Fo))$}, \\ 
L(x) &\textrm{otherwise}. \end{cases}\]
In other words, $\tog{p}$ applies the cyclic permutation $\theta$ to the label of $p$ (and fixes all other labels) if $p$ is active in $L$, and $\tog{p}$ does nothing if $p$ is not active in $L$. 
Although $\tog{p}$ is generally not an involution, it is invertible. Indeed, if $p$ is active in $L$, then it is also active in $\tog{p}(L)$, $\tog{p}^2(L)$, etc., so $\tog{p}^{r+s}(L) =L$. On the other hand, if $p$ is not active in $L$, then $\tog{p}(L)=L$.

Finally, define the \dfn{$q$-rowmotion} operator $\rowm\colon \J_{r,s}(P)\to\J_{r,s}(P)$ by $\rowm=\tog{p_1}\circ\cdots\circ\tog{p_n}$, where $p_1,\ldots,p_n$ is an arbitrary linear extension of $P$. This operator is well-defined (i.e., does not depend on the choice of the linear extension) because the toggles $\tog{p_i}$ and~$\tog{p_j}$ commute whenever $p_i$ and $p_j$ are incomparable in $P$. Note that the $q$-rowmotion operator depends on the choice of $\theta$.

\begin{example}\label{ex:Exam1}
Suppose $q=r=s=1$, and let $\Fz=\{0\}$ and $\Fo=\{1\}$. In this case, there is a natural correspondence between the labelings in $\J_{1,1}(P)$ and the order ideals in $\J(P)$; each labeling~$L$ simply corresponds to $L^{-1}(0)$. The cyclic permutation $\theta\colon \{0,1\}\to\{0,1\}$ must send $0$ to~$1$ and send $1$ to $0$. The $q$-rowmotion operator on $\J_{1,1}(P)$ coincides with the classical rowmotion operator on~$\J(P)$. 
\end{example}

\begin{figure}
\begin{center}
\includegraphics[height=7.444cm]{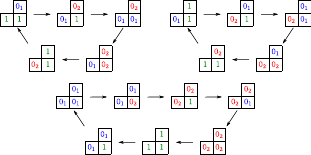}
\end{center}
\caption{The orbits of the $q$-rowmotion operator on $\J_{1,2}(\Phi^+(A_2))$.}\label{fig:q-orbits}
\end{figure}

\begin{example}\label{ex:Exam2}
Suppose $r=1$ and $s=2$ (so $q=1/2$), and let $\Fz=\{0_1,0_2\}$ and $\Fo=\{1\}$. Define the cyclic permutation $\theta$ by $\theta(0_1)=0_2$, $\theta(0_2)=1$, $\theta(1)=0_1$. Suppose $P=\arootp{2}$ is the root poset of Type $A_2$. Then $\J_{1,2}(P)$ contains $17$ labelings. The action of $q$-rowmotion on these labelings is depicted in \cref{fig:q-orbits}.
\end{example}

\begin{remark}
It is easy to see that 
\[\#\J_{r,s}(P) = \sum_{I\in \J(P)} r^{\#(P\setminus I)} s^{\#I} = s^{\# P} \cdot \sum_{I \in \J(P)} q^{\#(P\setminus I)}.\]
For example, consider $P=\rect{a}{b}$. In this case, it is well known that $\sum_{I \in \J(P)} q^{\#(P \setminus I)} = \qbinom{a+b}{b}_q$, so $\#\J_{r,s}(P) = s^{ab} \cdot \qbinom{a+b}{b}_q$. Here, we use the common notation for the \dfn{$q$-number} $[n]_q \coloneqq (1-q^n)/(1-q) = (1+q+\cdots+q^{n-1})$, \dfn{$q$-factorial} $[n]_q! \coloneqq [n]_q\cdot [n-1]_q \cdots [1]_q$, and \dfn{q-binomial} $\qbinom{n}{k}_q \coloneqq \frac{[n]_q!}{[k]_q![n-k]_q!}$. 
\end{remark}

Every statistic $\stat{f}\colon \J(P)\to\RR$ can naturally be viewed as a statistic $\stat{f}\colon \J_{r,s}(P)\to \RR$ by setting $\stat{f}(L) \coloneqq \stat{f}(L^{-1}(\Fz))$ for all $L\in \J_{r,s}(P)$. We slightly abuse notation by using the same symbol to refer to both the statistic on~$\J(P)$ and the associated statistic on~$\J_{r,s}(P)$; in what follows, we do not distinguish between these statistics. Our goal is to prove that various statistics $\stat{f}\colon\J_{r,s}(P)\to\RR$ are homomesic for $q$-rowmotion. All of the statistics that we will consider are actually obtained from statistics on $\J(P)$, so they are defined for all choices of positive integers $r$ and $s$. One might hope to find such statistics that are homomesic for $q$-rowmotion for all choices of $r$ and $s$ and all choices of the cyclic permutation $\theta$. We will find, somewhat surprisingly, that there are several natural statistics with these properties. 

In order to prove instances of homomesy that hold for all $q$, it is convenient to shift our perspective and view $q$ as a formal indeterminate instead of a fixed rational number. Hence, we will consider statistics $\stat{f}\colon\J(P)\to\RR(q)$, where $\RR(q)$ is the field of rational functions in $q$ with coefficients in $\RR$. 
Suppose we are given $\stat{f}\colon\J(P)\to\RR(q)$ and a real number $z \in \RR$. If none of the rational functions $\stat{f}(I)$ for $I\in \J(P)$ have singularities at $z$, then we obtain the real-valued statistic $\stat{f}\vert_{z}\colon\J(P)\to\RR$ by specializing $q$ to equal $z$. Thus, $\stat{f}\vert_{z}(I)$ is the real number obtained by evaluating $\stat{f}(I)$ at $q=z$. For example, in the context of $q$-rowmotion, we will want to evaluate $\stat{f}$ at $r/s$, where $r$ and $s$ are positive integers; as mentioned, we will also view this $\stat{f}\vert_{r/s}$ as a real-valued statistic on $\J_{r,s}(P)$.

Note that any statistic $\stat{g}\colon \J(P)\to\RR$ can be viewed as a statistic $\stat{g}\colon \J(P)\to\RR(q)$ via the natural inclusion $\RR\subseteq \RR(q)$; in this case, $\stat{g}\vert_{z}$ is the same as $\stat{g}$ for any $z\in\RR$.  

We will need $q$-analogues of the toggleability statistics introduced earlier. The statistics $\Tin{p}$ and~$\Tout{p}$ defined in \cref{sec:intro} are real-valued, so they can also be viewed as statistics on $\J(P)$ with values in $\RR(q)$. We then define $\qT{p}\colon\J(P)\to\RR(q)$ by 
\[\T{p}^q\coloneqq\Tin{p}-q\Tout{p}.\] 
For every $I\in \J(P)$, the rational function $\qT{p}(I)$ is a polynomial in $q$, so it has no real singularities. Hence, given positive integers $r$ and $s$, we obtain the statistic $\qT{p}\vert_{r/s}\colon\J_{r,s}(P)\to\RR$ by specializing to $q \coloneqq r/s$ and then viewing the resulting statistic on $\J(P)$ as one defined on $\J_{r,s}(P)$. The utility of this definition comes from the following $q$-version of Striker's lemma. 

\begin{lemma}\label{lem:qStriker}
Fix positive integers $r$ and $s$ and a cyclic permutation $\theta\colon\Fz\cup\Fo\to\Fz\cup\Fo$. For any poset $P$ and any $p\in P$, the statistic $\qT{p}\vert_{r/s}$ is $0$-mesic under the $q$-rowmotion operator $\rowm\colon \J_{r,s}(P)\to\J_{r,s}(P)$. 
\end{lemma}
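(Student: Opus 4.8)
The plan is to mimic the proof of \cref{lem:striker} as closely as possible, replacing the involution argument with a bijective/summing argument adapted to the fact that the toggles $\tog{p}$ are no longer involutions but still invertible with finite order on each orbit. The key structural fact I would establish first is the $q$-analogue of the identity $\Tin{p}(I) = \Tout{p}(\rowm(I))$. Concretely, for $L \in \J_{r,s}(P)$, write $I = L^{-1}(\Fz)$ and $I' = \rowm(L)^{-1}(\Fz)$. Because $q$-rowmotion is built by composing the $\tog{p}$ along a linear extension from top to bottom (just as classical rowmotion), applying exactly the argument from the proof of \cref{lem:striker_b} (induction on the position of $p$ in the linear extension $p_1,\dots,p_n$) shows that the underlying order ideal $I'$ is precisely $\rowm_{\mathrm{cl}}(I)$, where $\rowm_{\mathrm{cl}}$ denotes classical rowmotion on $\J(P)$. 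Hence $\Tin{p}(I) = \Tout{p}(I')$, i.e. $\Tin{p}(L) = \Tout{p}(\rowm(L))$ as real-valued statistics on $\J_{r,s}(P)$.

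Once that identity is in hand, the proof of $0$-mesy is essentially the same telescoping computation as in \cref{lem:striker}. Let $O \subseteq \J_{r,s}(P)$ be a $\rowm$-orbit. Then
\[
\sum_{L \in O} \qT{p}\vert_{r/s}(L) = \sum_{L \in O} \left( \Tin{p}(L) - \tfrac{r}{s}\,\Tout{p}(L) \right) = \sum_{L \in O} \Tin{p}(L) - \tfrac{r}{s}\sum_{L \in O} \Tout{p}(L),
\]
and since $O$ is $\rowm$-closed we may replace $L$ by $\rowm(L)$ in the second sum to get $\sum_{L\in O}\Tout{p}(\rowm(L)) = \sum_{L\in O}\Tin{p}(L)$. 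Therefore the orbit sum equals $(1 - r/s)\sum_{L\in O}\Tin{p}(L)$.

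Here is where the argument genuinely diverges from the classical case, and this is the step I expect to be the main obstacle: unlike for ordinary rowmotion, the orbit sum of $\qT{p}\vert_{r/s}$ need not already be zero term-by-term, so I must show that $\sum_{L\in O}\Tin{p}(L)$ vanishes. The point is that $\Tin{p}(L) = 1$ exactly when $p \in \min(P\setminus L^{-1}(\Fz))$, which only depends on the underlying order ideal $L^{-1}(\Fz)$, not on the flavor. As $L$ ranges over an orbit $O$, how many times does a given underlying order ideal $I$ occur with $p$ a minimal element of its complement? I would argue: within the orbit, consider the sub-sequence of $L$'s for which $p$ is active (equivalently, $p$ is minimal in the complement or maximal in the ideal), and use that once $p$ becomes active it stays active (as noted in the definition of $q$-rowmotion, $\tog p$ cycles the flavor of $p$ through all $r+s$ values before returning), so such $L$'s come in blocks of length $r+s$ on which the label of $p$ runs through all of $\Fz \cup \Fo$ exactly once, with $p \in \min(P\setminus L^{-1}(\Fz))$ (i.e. $L(p)\in\Fo$ and $p$ minimal there) for exactly $r$ of those $r+s$ steps and $p\in\max(L^{-1}(\Fz))$ for exactly $s$ of them. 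Hence $\sum_{L\in O}\Tin{p}(L) = r\cdot(\text{number of such blocks})$ and $\sum_{L\in O}\Tout{p}(L) = s\cdot(\text{number of such blocks})$, which gives $\sum_{L\in O}\qT{p}\vert_{r/s}(L) = r\cdot N - \tfrac{r}{s}\cdot s\cdot N = 0$, where $N$ is the number of blocks. Making the ``blocks of length $r+s$'' claim precise --- in particular verifying that within such a block the underlying order ideal changes only in whether $p$ is in it, and that $\Tin p$ is triggered exactly $r$ times and $\Tout p$ exactly $s$ times --- is the crux, and I would prove it by a careful local analysis of how $\rowm$ acts on the flavor of $p$ relative to its covers and co-covers (the same bookkeeping that underlies \cref{lem:striker_permuted_b}). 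With this, the orbit sum is $0$ and the lemma follows.
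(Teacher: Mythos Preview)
Both of your two main steps contain genuine gaps.

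\textbf{The identity $\Tin{p}(L) = \Tout{p}(\rowm(L))$ fails for $q$-rowmotion.} The underlying order ideal of $\rowm(L)$ is \emph{not} classical rowmotion applied to $L^{-1}(\Fz)$: a toggle can change the flavor of an active label without moving it between $\Fz$ and $\Fo$. Take $P=\{p_1<p_2\}$, $r=1$, $s=2$, $\theta\colon 0_1\mapsto 0_2\mapsto 1\mapsto 0_1$, and $L$ with $(L(p_1),L(p_2))=(0_1,0_1)$. Then $\rowm(L)=(0_1,0_2)$, so $\Tin{p_2}(L)=0$ while $\Tout{p_2}(\rowm(L))=1$. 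Hence your telescoping conclusion $\sum_{L\in O}\Tin{p}(L)=\sum_{L\in O}\Tout{p}(L)$ is false in general (these sums are $\alpha r$ and $\alpha s$ for some integer $\alpha\ge 0$), and so is the subgoal ``$\sum_{L\in O}\Tin{p}(L)=0$'' that you then set yourself.

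\textbf{``Once $p$ becomes active it stays active'' conflates $\tog{p}$-iteration with $\rowm$-iteration.} The remark that $\tog{p}^{r+s}$ fixes an active $L$ is about repeatedly applying $\tog{p}$ alone. Under $\rowm$, the toggles at elements covering $p$ fire first and can destroy $p$'s activity: in the same $2$-chain, $p_1$ is active in $(0_1,1)$ but not in $\rowm(0_1,1)=(0_1,0_1)$. So the active $L_k$'s do not come in contiguous blocks of length $r+s$, and the underlying order ideal certainly does not change ``only in whether $p$ is in it'' along such a stretch.

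Your final counts $\sum\Tin{p}=rN$ and $\sum\Tout{p}=sN$ are correct and match what the paper proves (with $N=\alpha$), but by a different mechanism. Set $L_k=\rowm^k(L)$ and $K=\{k:L_k(p)\ne L_{k+1}(p)\}$. Every change is an application of $\theta$, so one trip around the orbit forces $\#K=\alpha(r+s)$, and the values $L_k(p)$ for $k\in K$ run $\alpha$ full times through the $\theta$-cycle. The substantive step you are missing is the pointwise characterization
\[
\Tin{p}(L_k)=1 \iff \bigl(k\in K\text{ and }L_k(p)\in\Fo\bigr), \qquad \Tout{p}(L_k)=1 \iff \bigl(k-1\in K\text{ and }L_k(p)\in\Fz\bigr),
\]
which holds because at the moment $\tog{p}$ fires inside $\rowm$, the elements covered by $p$ still carry their $L_k$-labels while the elements covering $p$ already carry their $L_{k+1}$-labels. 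These equivalences give $\sum_k\Tin{p}(L_k)=\alpha r$ and $\sum_k\Tout{p}(L_k)=\alpha s$, whence the orbit sum is $\alpha r-(r/s)\,\alpha s=0$.
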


\begin{proof}
Let $L\colon P\to\Fz\cup\Fo$ be a labeling in $\J_{r,s}(P)$. To ease notation, let us write $L_k=\rowm^k(L)$. Let $N$ be the smallest positive integer such that $L_N=L$ (i.e., the size of the $q$-rowmotion orbit containing $L$). Let $K$ be the set of integers $k\in\{0,\ldots,N-1\}$ such that $L_k(p)\neq L_{k+1}(p)$. For each $k\in K$, we have $L_{k+1}(p)=\theta(L_k(p))$. Thus,  $L(p)=L_N(p)=\theta^{\#K}(L(p))$. Since $\theta$ is a cyclic permutation of a set of size $r+s$, we must have $\#K=\alpha(r+s)$ for some nonnegative integer $\alpha$. The number of integers $i\in\{0,\ldots,N-1\}$ such that $L_i(p)\in\Fo$ and $i\in K$ is $\alpha r$, and the number of integers $j\in\{1,\ldots,N\}$ such that $L_{j}(p)\in\Fz$ and $j-1\in K$ is $\alpha s$. 

Recall that the statistics $\Tin{p}$ and $\Tout{p}$ only take values $0$ and $1$. For $0\leq i\leq N-1$, we have~$\Tout{p}(L_i)=1$ if and only if $p\in\min(L_i^{-1}(\Fo))$, and it follows from the definition of $q$-rowmotion that this occurs if and only if $i\in K$ and $L_i(p)\in\Fo$. Appealing to the previous paragraph, we find that $\sum_{i=0}^{N-1}\Tout{p}(L_i)=\alpha r$. Similarly, for $1\leq j\leq N$, we have $\Tin{p}(L_j)=1$ if and only if~$p\in\max(L_j^{-1}(\Fz))$, and it follows from the definition of $q$-rowmotion that this occurs if and only if $j-1\in K$ and $L_{j}(p)\in\Fz$. Appealing to the previous paragraph, we find that $\sum_{j=1}^{N}\Tin{p}(L_i)=\alpha s$. Consequently, the sum of $\qT{p}$ along the $q$-rowmotion orbit containing $L$ is 
\begin{align*}
\sum_{i=0}^{N-1}\qT{p}(L_i) &= \sum_{i=0}^{N-1}(\Tin{p}(L_i)-q\Tout{p}(L_i)) \\
&= \sum_{i=0}^{N-1}\Tin{p}(L_i)-q\sum_{j=1}^N\Tout{p}(L_j) \\
&= \alpha r-q\alpha s.
\end{align*}
Evaluating at $q=r/s$, we find that the sum of $\qT{p}\vert_{r/s}$ along the $q$-rowmotion orbit containing $L$ is~$\alpha r-(r/s)\alpha s=0$.
\end{proof}

\begin{remark}
Let ${\sf f}: \J_{r,s}(P) \rightarrow \RR$ be given by $\stat{f}(L) = \#L^{-1}(\Fz)$, the cardinality of the order ideal associated with $L$. We point out that the rescaled signed toggleability statistics $-\frac{1}{r+s}\T{p}^q$ (which could equally well serve as building blocks for 0-mesies) satisfy 
\[-\,\frac{1}{r+s}\T{p}^q =  {\sf f} - \frac{{\sf f} + {\sf f} \circ \tog{p} + \ \dots + \ {\sf f} \circ \tog{p}^{r+s-1}}{r+s},\] 
where the right-hand side is the difference between the value of ${\sf f}$ and the average value of ${\sf f}$ within the $\tog{p}$ orbit of $L$. Relatedly, the statistic $\qT{p}\vert_{r/s}$ is 0-mesic under the action of $\tog{p}$.
\end{remark}

Because of \cref{lem:qStriker}, we will want to be able to specialize $\RR(q)$-linear combinations of the $\qT{p}$ at positive rational values of $q$. The next lemma is a technical result saying that our rational functions never have singularities at nonnegative real numbers in the situations we care about.

\begin{lemma} \label{lem:singularities}
Consider a real-valued statistic $\stat{f}\colon \J(P)\to\RR$. Suppose that, viewing $\stat{f}$ also as a statistic $\stat{f}\colon \J(P)\to\RR(q)$, we have $\stat{f} = c(q) + \sum_{p\in P}c_p(q) \qT{p}$ for $c(q),c_p(q)\in \RR(q)$. Then none of the $c(q),c_p(q)$ have singularities at any nonnegative real number. Hence, for any $z \in \RR$ with $z\geq 0$, we have $\stat{f} = c(z) + \sum_{p\in P}c_p(z) \qT{p}\vert_{z}$.
\end{lemma}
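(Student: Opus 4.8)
The plan is to reformulate the hypothesis as a single linear system over $\RR(q)$ and then control the denominators occurring in its solution via Cramer's rule, using \cref{thm:linearindependence} to guarantee the relevant determinant does not vanish at the point in question.

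First I would set up notation. Let $M(q)$ be the $\#\J(P) \times (\#P+1)$ matrix with entries in $\RR[q]$ whose rows are indexed by the order ideals $I\in\J(P)$, with one distinguished column of all $1$'s and one column for each $p\in P$ recording the polynomial $\qT{p}(I)=\Tin{p}(I)-q\,\Tout{p}(I)$. Writing $\mathbf{x}(q)\coloneqq\big(c(q),(c_p(q))_{p\in P}\big)$ and $\mathbf{f}\coloneqq\big(\stat{f}(I)\big)_{I\in\J(P)}$, the vector $\mathbf{f}$ has all entries in $\RR$ (because $\stat{f}$ is real-valued), and the hypothesis $\stat{f}=c(q)+\sum_{p\in P}c_p(q)\qT{p}$ is precisely the identity $M(q)\,\mathbf{x}(q)=\mathbf{f}$ of vectors over $\RR(q)$.

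The core of the argument fixes an arbitrary real number $z\geq 0$ and shows that no entry of $\mathbf{x}(q)$ has a pole at $z$. Here \cref{thm:linearindependence}, specialized to $q=z$, says exactly that the columns of the real matrix $M(z)$ are linearly independent over $\RR$, i.e.\ $M(z)$ has full column rank $\#P+1$. Hence I can choose a set $S$ of $\#P+1$ order ideals for which the square submatrix $M_S(z)$ is invertible; equivalently $\det M_S(q)\in\RR[q]$ is a polynomial that does not vanish at $z$, so $M_S(q)$ is invertible over $\RR(q)$. Restricting $M(q)\,\mathbf{x}(q)=\mathbf{f}$ to the rows in $S$ yields $M_S(q)\,\mathbf{x}(q)=\mathbf{f}_S$, hence $\mathbf{x}(q)=M_S(q)^{-1}\mathbf{f}_S$; by Cramer's rule every coordinate of the right-hand side is a ratio of polynomials in $q$ with denominator $\det M_S(q)$, which does not vanish at $z$. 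Thus $c(q)$ and the $c_p(q)$ are regular at $z$, and since $z\geq 0$ was arbitrary, they are regular at every nonnegative real. The final identity $\stat{f}=c(z)+\sum_{p\in P}c_p(z)\,\qT{p}\vert_{z}$ then follows by evaluating the rational-function identity $\stat{f}(I)=c(q)+\sum_{p\in P}c_p(q)\,\qT{p}(I)$ at $q=z$, which is legitimate because every term is regular there.

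I do not expect a serious obstacle: the one piece of genuine content is \cref{thm:linearindependence}, and with it in hand the remainder is routine linear algebra. The only subtlety worth flagging is that the invertible submatrix $M_S$ must be allowed to depend on $z$ (there need not be a single $S$ that works simultaneously for all $q\geq 0$), but since we only need regularity at each individual point this causes no difficulty.
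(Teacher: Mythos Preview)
Your proposal is correct and takes essentially the same approach as the paper: both proofs reduce the question to \cref{thm:linearindependence} at $q=z$ and then use it to show a polynomial denominator does not vanish there. The only cosmetic difference is packaging---the paper clears denominators and argues that $h(z)=0$ would force all polynomial coefficients to vanish at $z$ (contradicting the no-common-factor assumption), whereas you select an invertible $(\#P+1)\times(\#P+1)$ submatrix and invoke Cramer's rule; these are equivalent ways of extracting the same consequence of full column rank.
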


\begin{proof}
Let $\stat{f}$ be as in the statement of the lemma, and let $z$ be a nonnegative real number. By clearing denominators, we can rewrite the expression for $\stat{f}$ as
\[h(q)\stat{f}=d(q)+\sum_{p\in P}d_p(q)\qT{p},\]
where $h(q)$, $d(q)$, and the $d_p(q)$ are polynomials in $q$. Explicitly, we have $d(q)=c(q)h(q)$ and $d_p(q)=c_p(q)h(q)$. We may assume the polynomials $h(q)$, $d(q)$, and~$d_p(q)$ for all $p\in P$ have no common factor; in particular, these polynomials do not all vanish at $q=z$. Let us specialize the above equation at $q=z$ to obtain 
\[h(z)\stat{f}=d(z)+\sum_{p\in P}d_p(z)\qT{p}\vert_{z}\] 
(note that $\stat{f}\vert_{z}$ is the same as $\stat{f}$ since $\stat{f}$ is a real-valued statistic). \Cref{thm:linearindependence} tells us that the statistics $\qT{p}\vert_{z}$ are linearly independent over $\RR$ and that they are linearly independent from $1$. (We used different notation in \cref{thm:linearindependence}; what we call $\qT{p}\vert_{z}$ here is what we called $\T{p}^{z}$ in that theorem.) This implies that $h(z)$ cannot be $0$. It follows that the rational functions $c(q)=d(q)/h(q)$ and $c_p(q)=d_p(q)/h(q)$ do not have singularities at $q=z$, which completes the proof.
\end{proof}

If $\stat{f}, \stat{g}\colon \J(P)\to\RR(q)$ are two $\RR(q)$-valued statistics on $\J(P)$, we write $\stat{f} \qtequiv \stat{g}$ to mean that there exist $c_p(q) \in \RR(q)$ for $p\in P$ such that $\stat{f}-\stat{g} = \sum_{p\in P}c_p(q)\T{p}^q$. The equivalence relation~$\qtequiv$ is a congruence on $\RR(q)$ in the sense that $\stat{f}_1 \qtequiv \stat{g}_1$ and $\stat{f}_2 \qtequiv \stat{g}_2$ imply $a_1(q)\cdot \stat{f}_1+a_2(q)\cdot\stat{f}_2\qtequiv a_1(q)\cdot\stat{g}_1+a_2(q)\cdot\stat{g}_2$ for all $a_1(q),a_2(q)\in\RR(q)$.

\begin{prop} \label{prop:q_homo}
Consider a real-valued statistic $\stat{f}\colon \J(P)\to\RR$. Suppose that $\stat{f}\qtequiv c(q)$ for some $c(q)\in\RR(q)$. Then for any positive integers $r$ and $s$ and any choice of cyclic permutation $\theta\colon\Fz\cup\Fo\to \Fz\cup\Fo$, the statistic $\stat{f}\colon \J_{r,s}(P)\to\RR$ is $c(r/s)$-mesic for the $q$-rowmotion operator $\rowm\colon \J_{r,s}(P)\to \J_{r,s}(P)$.   
\end{prop}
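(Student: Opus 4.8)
The plan is to assemble \cref{lem:qStriker}, \cref{lem:singularities}, and the elementary fact (recalled in \cref{subsec:homomesy}) that $\RR$-linear combinations of $0$-mesic statistics are $0$-mesic. First I would unpack the hypothesis $\stat{f}\qtequiv c(q)$: by definition it supplies rational functions $c(q), c_p(q)\in\RR(q)$ with $\stat{f} = c(q) + \sum_{p\in P}c_p(q)\qT{p}$ as an identity of $\RR(q)$-valued statistics on $\J(P)$. The immediate worry is whether we may substitute $q = r/s$, since \emph{a priori} one of the $c_p(q)$ could have a pole there. This is exactly what \cref{lem:singularities} rules out: it guarantees (via the linear independence result \cref{thm:linearindependence}) that none of $c(q), c_p(q)$ has a singularity at any nonnegative real number, and in particular not at the positive rational $r/s$. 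So specialization is legitimate, and I obtain the identity of real-valued statistics
\[\stat{f} = c(r/s) + \sum_{p\in P}c_p(r/s)\,\qT{p}\vert_{r/s}\]
on $\J(P)$, which then also holds when all of these statistics are regarded as functions on $\J_{r,s}(P)$ in the manner described just before the statement.

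Next I would invoke \cref{lem:qStriker}: for the given $r$, $s$, and cyclic permutation $\theta$, each $\qT{p}\vert_{r/s}\colon\J_{r,s}(P)\to\RR$ is $0$-mesic under the $q$-rowmotion operator $\rowm\colon\J_{r,s}(P)\to\J_{r,s}(P)$. Since the coefficients $c_p(r/s)$ are now honest real numbers, the sum $\sum_{p\in P}c_p(r/s)\,\qT{p}\vert_{r/s}$ is a real linear combination of $0$-mesic statistics and hence is itself $0$-mesic. Adding the constant statistic $c(r/s)$, which is trivially $c(r/s)$-mesic, shows that $\stat{f}$ is $c(r/s)$-mesic for $q$-rowmotion, as claimed.

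I do not expect a genuine obstacle here: the substance of the proposition has already been isolated into the two lemmas, and this argument is merely their combination together with closure of $0$-mesies under linear combinations. The only step demanding any care is confirming the specialization at $q=r/s$, which is handled verbatim by \cref{lem:singularities} (note $r/s>0$, so the nonnegativity hypothesis is met). In the write-up I would make explicit that this is precisely why \cref{thm:linearindependence} is needed — without it, a representation $\stat{f} = c(q) + \sum_{p\in P}c_p(q)\qT{p}$ with a pole at $r/s$ could in principle exist and the argument would collapse — and I would note that the statement requires no regularity of $c(q)$ beyond $q=r/s$, since \cref{lem:singularities} in fact delivers regularity on all of $[0,\infty)$ for free.
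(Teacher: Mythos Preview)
Your proposal is correct and follows essentially the same approach as the paper's own proof: unpack the hypothesis, invoke \cref{lem:singularities} to justify specializing at $q=r/s$, and then apply \cref{lem:qStriker}. Your write-up merely makes explicit the routine step that real linear combinations of $0$-mesic statistics plus a constant are homomesic, which the paper leaves implicit.
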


\begin{proof} 
By hypothesis, there are rational functions $c_p(q)\in\RR(q)$ such that 
\[\stat{f}=c(q)+\sum_{p\in P}c_p(q)\qT{p}.\]
\Cref{lem:singularities} tells us that we may specialize all of these rational functions to $q \coloneqq r/s$ and that
\[\stat{f}=c(r/s)+\sum_{p\in P}c_p(r/s)\qT{p}\vert_{r/s}.\]
Then \cref{lem:qStriker} yields the desired homomesy result.
\end{proof}

\begin{remark}
When defining the toggles $\tog{p}$ and, therefore, the $q$-rowmotion operator $\rowm$, we fix a single cyclic permutation $\theta$ of the set $\Fz\cup\Fo$. However, one could consider a more general setup in which each $\tog{p}$ is defined via a ``local'' cyclic permutation $\theta_p\colon\Fz\cup\Fo\to \Fz\cup\Fo$, with $\theta_p$ not necessarily equal to $\theta_{p'}$ when $p\neq p'$. The proofs of \cref{lem:qStriker} and \cref{prop:q_homo} apply in this more general setup. All of our results in this section rest on \cref{lem:qStriker} and \cref{prop:q_homo}, so they all hold in this more general setup as well. We have restricted to the setting in which all the~$\theta_p$ are equal for the sake of simplicity. 
\end{remark}

\begin{example} \label{ex:Exam2_cont}
As in \cref{ex:Exam2}, suppose $P=\arootp{2}$. Let $\stat{f}\coloneqq \Tout{1,2}-\Tout{2,1}$. Then one can check
\[ \stat{f}=0 + \left(-\frac{1}{1+q}\qT{1,2} + \frac{1}{1+q}\qT{2,1}\right).\]
(In fact, we will give a generalization $\stat{f}\colon \J(\arootp{n})\to \RR$ of this statistic for any Type~A root poset and prove that there is always some $c(q)\in \RR(q)$ with $\stat{f}\qtequiv c(q)$ in \cref{thm:a_q_stat} below.) We can look at \cref{fig:q-orbits} and see that the statistic $\stat{f}$ is indeed $0$-mesic under $(1/2)$-rowmotion, in agreement with \cref{prop:q_homo}.
\end{example}

Let us use the notation $\stat{f}\qtequiv \const(q)$ as shorthand for ``there exists $c(q)\in\RR(q)$ such that $\stat{f}\qtequiv c(q)$.'' In light of \cref{prop:q_homo}, our attention now turns to showing that various statistics $\stat{f}\colon \J(P)\to \RR$ satisfy $\stat{f}\qtequiv \const(q)$ for the posets we studied in detail in \cref{sec:main} above. We will not separately state as a corollary every time that this proves the statistic is homomesic under $q$-rowmotion, but that is of course our motivation.

\begin{remark}
Unlike the situation with the PL and birational extensions discussed in~\cref{sec:pl_birational}, we have no procedure for automatically lifting a proof that $\stat{f} \tequiv \const$ to a proof that $\stat{f} \qtequiv \const(q)$. (\Cref{lem:singularities} shows that we can always go in the opposite direction, that is, specialize $q \coloneqq 1$.) However, we will see that many of the $\stat{f}$ studied in \cref{sec:main} do satisfy $\stat{f} \qtequiv \const(q)$. Moreover, the tools employed in \cref{sec:main} (i.e., rooks) remain extremely useful in the $q$-ified world.
\end{remark}

\subsection{The rectangle}

In this subsection, we fix $P=\rect{a}{b}$. Recall the rooks $\rook{i,j}$ for the rectangle defined in~\eqref{eqn:rect_rook}. 

Our goal is to show that the antichain cardinality statistic is $\qtequiv \const(q)$.

\begin{thm} \label{thm:aboapbq} 
For $P=\rect{a}{b}$, we have $\sum_{p\in P} \Tout{p} \qtequiv \frac{[a]_q[b]_q}{[a+b]_q}$.
\end{thm}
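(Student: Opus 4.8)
The plan is to run a $q$-deformation of the rook proof of \cref{thm:aboapb}. Write $S\coloneqq\sum_{p\in P}\Tout{p}$ for the antichain cardinality statistic. The only structural change in passing to the $q$-world is that $\Tin{p}\tequiv\Tout{p}$ is replaced by the exact relation $\Tin{p}=\qT{p}+q\,\Tout{p}$, so that $\Tin{p}\qtequiv q\,\Tout{p}$: wherever the classical argument trades a $\Tin{p}$ for a $\Tout{p}$, we now pick up a factor of $q$ (plus a harmless $\qT{p}$ term), and most of the work is bookkeeping these factors. First I would telescope over fibers, reusing the combinatorial observation from the proof of \cref{thm:rectfiber}: for $1\le i<a$ one has the function identity $\sum_{j}\Tout{i,j}=\sum_{j}\Tin{i+1,j}$ on $\J(\rect{a}{b})$, since for any order ideal a box of the $i$-th positive fiber can be toggled out precisely when a box of the $(i+1)$-st positive fiber can be toggled in, and in each case at most one box qualifies. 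Setting $\stat{f}_i\coloneqq\sum_j\Tout{i,j}$ and substituting $\Tin{i+1,j}=\qT{i+1,j}+q\,\Tout{i+1,j}$ gives $\stat{f}_i\qtequiv q\,\stat{f}_{i+1}$, hence $\stat{f}_i\qtequiv q^{\,a-i}\stat{f}_a$, and summing over $i$ yields $S=\sum_{i=1}^{a}\stat{f}_i\qtequiv[a]_q\,\stat{f}_a$. The identical argument on columns (equivalently, via the poset isomorphism $\rect{a}{b}\cong\rect{b}{a}$, $(i,j)\mapsto(j,i)$) gives $S\qtequiv[b]_q\,\stat{g}_b$ with $\stat{g}_b\coloneqq\sum_i\Tout{i,b}$; so $\stat{f}_a\qtequiv S/[a]_q$ and $\stat{g}_b\qtequiv S/[b]_q$ as identities in $\RR(q)$.

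Next I would bring in the single corner rook. By \cref{thm:rect_sumone}, $\rook{a,b}=1$, and inside $\rect{a}{b}$ the four sums in \eqref{eqn:rect_rook} collapse, giving
\[
1=\rook{a,b}=\sum_{p\in P}\Tin{p}\;-\sum_{p\in\rect{a-1}{b-1}}\!\!\!\Tout{p}\;+\;\Tout{a,b}.
\]
Replacing each $\Tin{p}$ by $\qT{p}+q\,\Tout{p}$ turns $\sum_{p\in P}\Tin{p}$ into $qS$ modulo $\qtequiv$, and since the complement of $\rect{a-1}{b-1}$ in $P$ is the union of row $a$ and column $b$, which overlap in the box $(a,b)$, we have $\sum_{p\in\rect{a-1}{b-1}}\Tout{p}=S-(\stat{f}_a+\stat{g}_b)+\Tout{a,b}$. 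Substituting yields
\[
\stat{f}_a+\stat{g}_b\;\qtequiv\;1+(1-q)\,S.
\]
I expect this to be the crux, together with the decision to use the corner rook alone: the classical count ``every box is attacked $a+b$ times'' (i.e.\ $\sum_{i,j}\rrook{i,j}=(a+b)\sum_{p}\Tout{p}$) no longer produces a $q$-uniform coefficient once the $\Tin{p}$'s carry factors of $q$, so one must isolate the right linear combination of rooks, and the corner rook plus the fiber relations turns out to suffice.

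Finally I would combine the pieces: substituting $\stat{f}_a\qtequiv S/[a]_q$ and $\stat{g}_b\qtequiv S/[b]_q$ into the last display gives
\[
S\cdot\frac{[a]_q+[b]_q-(1-q)[a]_q[b]_q}{[a]_q[b]_q}\;\qtequiv\;1 .
\]
A routine computation with $[n]_q=(1-q^n)/(1-q)$ shows $[a]_q+[b]_q-(1-q)[a]_q[b]_q=[a+b]_q$, so $S\cdot\frac{[a+b]_q}{[a]_q[b]_q}\qtequiv 1$; multiplying both sides by $\frac{[a]_q[b]_q}{[a+b]_q}\in\RR(q)$ gives $S\qtequiv\frac{[a]_q[b]_q}{[a+b]_q}$, which is the claim. (Via \cref{prop:q_homo}, this also yields the homomesy of antichain cardinality under $q$-rowmotion, with average $[a]_q[b]_q/[a+b]_q$ evaluated at $q=r/s$.)
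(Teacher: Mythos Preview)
Your argument is correct, and it takes a genuinely different route from the paper's. The paper attacks the theorem head-on by forming the full $q$-weighted sum $\sum_{i,j} q^{(i-1)+(j-1)} R_{i,j}=[a]_q[b]_q$ and then, for each box $(i,j)$, expanding the contribution in terms of $\Tin{i,j}$ and $\Tout{i,j}$; after the substitutions $[i]_q[j]_q=q[i-1]_q[j-1]_q+[i+j-1]_q$ and its companion, every box contributes exactly $[a+b]_q\,\Tout{i,j}$ modulo $\qT{i,j}$, yielding $[a]_q[b]_q\qtequiv[a+b]_q\,S$ in one stroke. Your proof instead first extracts the fiber recurrence $\stat{f}_i\qtequiv q\,\stat{f}_{i+1}$ from the bare combinatorial identity $\sum_j\Tout{i,j}=\sum_j\Tin{i+1,j}$ (so $S\qtequiv[a]_q\stat{f}_a$ and $S\qtequiv[b]_q\stat{g}_b$), and then uses a \emph{single} rook, the corner rook $R_{a,b}$, to obtain the linear relation $\stat{f}_a+\stat{g}_b\qtequiv 1+(1-q)S$; the identity $[a]_q+[b]_q-(1-q)[a]_q[b]_q=[a+b]_q$ finishes it. In effect you reverse the paper's logical order: the paper proves \cref{thm:aboapbq} first and then deduces the fiber statements (\cref{thm:q_rectfiber}) from it, whereas you establish the relative fiber statements first and use them to pin down the constant. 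Your approach buys economy (one rook, one $q$-identity) and yields most of \cref{thm:q_rectfiber} as a by-product; the paper's approach buys uniformity and produces explicit coefficients $c_{i,j}(q)$ for each $\qT{i,j}$.
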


\begin{proof}
The proof is based on a careful analysis of a certain $\RR(q)$-linear combination of rooks. From \cref{thm:rect_sumone}, it is clear that
\begin{equation} \label{eqn:q_rook_sum}
\sum_{\substack{1 \leq i \leq a, \\ 1\leq j \leq b}} q^{(i-1)+(j-1)}R_{i,j} = [a]_q [b]_q.
\end{equation}
Indeed, \cref{thm:rect_sumone} says that the left-hand side of \eqref{eqn:q_rook_sum} is $\sum_{i,j}q^{(i-1)+(j-1)}$, which is the same as the product $(1+q+\cdots+q^{a-1})\cdot(1+q+\cdots+q^{b-1}) = [a]_q  [b]_q$. Now we will try to expand the left-hand side of \eqref{eqn:q_rook_sum} in terms of the toggleability statistics $\Tin{i,j}$ and $\Tout{i,j}$.

Fix some box $(i,j) \in [a]\times [b]$. Looking at the definition~\eqref{eqn:rect_rook} of the rooks, we see that a rook~$R_{i',j'}$ contributes a term of $+\Tin{i,j}$ if and only if $i' \geq i, j' \geq j$; contributes a term of $-\Tin{i,j}$ if and only if $i' \leq i-1, j' \leq j-1$; contributes a term of $+\Tout{i,j}$ if and only if~$i' \leq i, j' \leq j$; and contributes a term of $-\Tout{i,j}$ if and only if $i' \geq i+1, j' \geq j+1$. Hence, all the terms corresponding to $(i,j)$ in the left-hand side of \eqref{eqn:q_rook_sum} are
\[ \left( \sum_{\substack{i\leq i'\leq a, \\ j \leq j'\leq b}} q^{i'+j'-2} \ \ - \sum_{\substack{1\leq i' \leq i-1,\\1\leq j'\leq j-1}} q^{i'+j'-2}\right) \Tin{i,j} + \left( \sum_{\substack{1\leq i'\leq i, \\ 1\leq j'\leq j}} q^{i'+j'-2} \ \ - \sum_{\substack{i+1\leq i'\leq a,\\ j+1\leq j' \leq b}} q^{i'+j'-2} \right) \Tout{i,j}.\]
We can easily rewrite this expression in terms of $q$-numbers as
\begin{equation} \label{eqn:q_rook_sum_ij}
 ( q^{i+j-2}[a+1-i]_q[b+1-j]_q - [i-1]_q[j-1]_q) \Tin{i,j} + ( [i]_q[j]_q - q^{i+j}[a-i]_q[b-j]_q ) \Tout{i,j}.
\end{equation}
Some straightforward algebra shows that
\[ [i]_q[j]_q = q [i-1]_q[j-1]_q + [i+j-1]_q\]
and, similarly,
\[ -q^{i+j}[a-i]_q[b-j]_q = -q^{i+j-1}[a+1-i]_q[b+1-j]_q + q^{i+j-1}[a+b-(i+j-1)]_q.\]
Furthermore, $[i+j-1]_q+q^{i+j-1}[a+b-(i+j-1)]_q=[a+b]_q$. 
Applying these substitutions and using the definition of $\qT{i,j}$, we can show (after some work) that the expression in \eqref{eqn:q_rook_sum_ij} is also equal to
\[ ( q^{i+j-2}[a+1-i]_q[b+1-j]_q - [i-1]_q[j-1]_q) \qT{i,j} + [a+b]_q\Tout{i,j}.\]

From the previous paragraph, we conclude that
\begin{equation} \label{eqn:q_rook_sum_redux}
 \sum_{\substack{1 \leq i \leq a, \\1\leq j \leq b}} q^{(i-1)+(j-1)}R_{i,j} = \sum_{i,j} [a+b]_q\Tout{i,j} + \sum_{i,j}( q^{i+j-2}[a+1-i]_q[b+1-j]_q - [i-1]_q[j-1]_q) \qT{i,j}.
\end{equation}
Putting \eqref{eqn:q_rook_sum} and \eqref{eqn:q_rook_sum_redux} together, we see that
\[ [a]_q[b]_q \qtequiv [a+b]_q \cdot \sum_{i,j} \Tout{i,j}. \]
Multiplying both sides by $1/[a+b]_q \in \RR(q)$, we get $\sum_{p\in P} \Tout{p} \qtequiv \frac{[a]_q[b]_q}{[a+b]_q}$, as claimed.
\end{proof}

In fact, we can show that the fiber refinements of antichain cardinality are also~$\qtequiv \const(q)$.

\begin{thm} \label{thm:q_rectfiber}
If $B=\{i\}\times[b] \subseteq \rect{a}{b}$ is a positive fiber, then $\sum_{p \in B} \Tout{p} \qtequiv q^{a-i}\frac{[b]_q}{[a+b]_q}$. If $B=[a] \times \{j\}$ is a negative fiber, then $\sum_{p \in B} \Tout{p} \tequiv q^{b-j}\frac{[a]_q}{[a+b]_q}$.
\end{thm}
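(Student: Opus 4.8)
The plan is to mimic the proof of \cref{thm:rectfiber}, replacing the identity $\stat{f}_i - \stat{f}_{i+1} \tequiv 0$ by its $q$-deformed version. By the transpose symmetry $(i,j)\mapsto(j,i)$, which exchanges the roles of $a$ and $b$ and turns positive fibers into negative ones, it suffices to treat positive fibers. So fix $1\le i\le a$ and set $\stat{f}_i \coloneqq \sum_j \Tout{i,j}$; I would aim to show $\stat{f}_i \qtequiv q^{a-i}\frac{[b]_q}{[a+b]_q}$, which yields the negative-fiber case by applying the result to $\rect{b}{a}$.

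First I would establish the key local relation. As observed in the proof of \cref{thm:rectfiber}, for any $I\in\J(P)$ one can toggle some element out of the $i$-th positive fiber if and only if one can toggle some element into the $(i+1)$-st positive fiber; since each row of $\rect{a}{b}$ is a chain, each fiber contains at most one togglable element, so in fact $\sum_j \Tout{i,j}(I) = \sum_j \Tin{i+1,j}(I)$ for every $I$. Using $\Tin{i+1,j} = \qT{i+1,j} + q\,\Tout{i+1,j}$, this gives the exact identity of $\RR(q)$-valued statistics $\stat{f}_i = q\,\stat{f}_{i+1} + \sum_j \qT{i+1,j}$, hence $\stat{f}_i \qtequiv q\,\stat{f}_{i+1}$ for all $1\le i\le a-1$.

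Iterating gives $\stat{f}_i \qtequiv q^{a-i}\,\stat{f}_a$ for each $i$. Summing over $i$ and using $\sum_{i=1}^{a} q^{a-i} = [a]_q$, together with $\sum_{i=1}^{a}\stat{f}_i = \sum_{p\in P}\Tout{p} \qtequiv \frac{[a]_q[b]_q}{[a+b]_q}$ from \cref{thm:aboapbq}, I obtain $[a]_q\,\stat{f}_a \qtequiv \frac{[a]_q[b]_q}{[a+b]_q}$. Dividing by $[a]_q$ — legitimate since we work over the field $\RR(q)$ and $[a]_q\neq 0$ — yields $\stat{f}_a \qtequiv \frac{[b]_q}{[a+b]_q}$, and therefore $\stat{f}_i \qtequiv q^{a-i}\frac{[b]_q}{[a+b]_q}$ for every positive fiber. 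Finally, for a negative fiber $[a]\times\{j\}$, the transpose symmetry identifies it with the positive fiber $\{j\}\times[a]$ of $\rect{b}{a}$, giving $\sum_{p\in B}\Tout{p}\qtequiv q^{b-j}\frac{[a]_q}{[a+b]_q}$.

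The only genuinely new input beyond \cref{thm:aboapbq} is the local relation in the second step, and that is just the $q$-analogue of an observation already recorded in the proof of \cref{thm:rectfiber}; the remaining work is the bookkeeping of $q$-powers in the telescoping, where the point to watch is that $\sum_{i=1}^{a} q^{a-i}$ equals exactly the $[a]_q$ that cancels against the denominator supplied by \cref{thm:aboapbq}. So I expect no real obstacle here — unlike the antichain cardinality case (\cref{thm:aboapbq}), whose proof required the delicate $q$-weighted rook expansion, this refinement follows formally once that theorem and the local relation are in hand.
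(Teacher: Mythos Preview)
Your proof is correct and follows essentially the same approach as the paper's: both establish the local relation $\stat{f}_i \qtequiv q\,\stat{f}_{i+1}$ from the identity $\sum_j \Tout{i,j} = \sum_j \Tin{i+1,j}$, then combine with \cref{thm:aboapbq} and divide by $[a]_q$. The only cosmetic difference is that the paper pivots on $\stat{f}_1$ (writing $\stat{f}_i \qtequiv q^{1-i}\stat{f}_1$ and summing) while you pivot on $\stat{f}_a$; the arithmetic is the same.
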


\begin{proof}
By symmetry, we may assume that $B=\{i\}\times[b] \subseteq \rect{a}{b}$ is a positive fiber. Set $\stat{f}_i\coloneqq \sum_{j} \Tout{i,j}$ for $1\leq i \leq a$. For any $1\leq i \leq a-1$, we have
\[0=\rook{i,1}-\rook{i+1,1} = \sum_{j}\Tout{i,j} - \sum_{j} \Tin{i+1,j},\]
where the first equality follows from \cref{thm:rect_sumone} and the second is a matter of straightforward algebra. (Alternatively, we can see that $\sum_{j}\Tout{i,j} = \sum_{j} \Tin{i+1,j}$ without the use of rooks: the equation is just asserting that we can toggle a box out of the $i$-th positive fiber if and only if we can toggle a box into the $(i+1)$-st positive fiber, which we also mentioned in the proof of \cref{thm:rectfiber}.) Adding~$\sum_{j} \qT{i+1,j}$ to the above equation yields
\[ \sum_{j} \qT{i+1,j} = \sum_{j}\Tout{i,j} - q\cdot \sum_{j}\Tout{i+1,j}; \]
in other words, $\stat{f}_i \qtequiv q\cdot \stat{f}_{i+1}$. Hence, $\stat{f}_1 \qtequiv q^{i-1} \cdot \stat{f}_i$ and $\stat{f}_i \qtequiv q^{1-i} \cdot \stat{f}_1$ for every $1\leq i \leq a$. 

Now, we know
from \cref{thm:aboapbq} that \[\stat{f}_1+\cdots+\stat{f}_a=\sum_{p\in P} \Tout{p} \qtequiv \frac{[a]_q[b]_q}{[a+b]_q},\]
which, using $\stat{f}_i \qtequiv q^{1-i} \cdot \stat{f}_1$, we can rewrite as
\[q^{0}\stat{f}_1+q^{-1}\stat{f}_1+\cdots+q^{-(a-1)}\stat{f}_1 \qtequiv \frac{[a]_q[b]_q}{[a+b]_q}.\]
Dividing both sides by $q^{-(a-1)}[a]_q$ gives $\stat{f}_1 \qtequiv  q^{a-1} \cdot \frac{[b]_q}{[a+b]_q}$. Finally, using $\stat{f}_1 \qtequiv q^{i-1} \cdot \stat{f}_i$ we obtain  $\stat{f}_i \qtequiv q^{a-i} \frac{[b]_q}{[a+b]_q}$, as desired.
\end{proof}

Observe how \cref{thm:aboapbq,thm:q_rectfiber} are direct $q$-analogues of \cref{thm:aboapb,thm:rectfiber}. Based on what we did in \cref{sec:main}, it is also reasonable to ask if $\stat{f} \qtequiv \const(q)$ for some $\stat{f}\in \Span\{\oii{p}\}$. However, we have not been able to find any such statistics $\stat{f}$, beyond those that happen to also have expressions as sums of the $\Tout{p}$ such as
\[\oii{a,1} = \sum_{j} \Tout{a,j}\quad\text{and}\quad \oii{1,b} = \sum_{i} \Tout{i,b}.\]
Note that these are certainly $\qtequiv\const(q)$ thanks to \cref{thm:q_rectfiber}.

\subsection{The shifted staircase}

In this subsection, we fix $P=\sstair{n}$, and we recall the rooks~$\rook{i,j}$ for the shifted staircase defined in~\eqref{eqn:sstair_rook}. We also recall $\iota\colon\J(\sstair{n})\to\J(\rect{n}{n})$.

Our goal once again is to show that the antichain cardinality statistic is $\qtequiv \const(q)$.

\begin{lemma} \label{lem:q_sstair_fiber}
For $1\leq i \leq n$, we have $\sum_{j\leq i}\Tout{j,i} + \sum_{i < j}\Tout{i,j} \qtequiv q^{n-i}\frac{[n]_q}{[2n]_q}$ for $P=\sstair{n}$.
\end{lemma}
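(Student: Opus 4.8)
The plan is to deduce the lemma from the negative-fiber case of \cref{thm:q_rectfiber} by pulling back along the quotient map $\iota\colon\J(\sstair{n})\to\J(\rect{n}{n})$, exactly as was done (at the $\tequiv$ level) for \cref{thm:sstair_file}.

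First I would identify the statistic $\stat{g}_i\coloneqq\sum_{j\leq i}\Tout{j,i}+\sum_{i<j}\Tout{i,j}$ on $\J(\sstair{n})$ with the pullback along $\iota$ of a single fiber statistic on $\rect{n}{n}$. Let $\stat{n}_i\coloneqq\sum_{k=1}^{n}\Tout{k,i}$ be the $i$-th negative fiber statistic on $\rect{n}{n}$ (so $\stat{n}_i=\sum_{p\in[n]\times\{i\}}\Tout{p}$). I would verify that $\stat{g}_i(I)=\stat{n}_i(\iota(I))$ for every $I\in\J(\sstair{n})$, by splitting $\stat{n}_i(\iota(I))=\sum_{k=1}^{i}\Tout{k,i}(\iota(I))+\sum_{k=i+1}^{n}\Tout{k,i}(\iota(I))$: for $k\leq i$ the box $(k,i)$ lies in $\sstair{n}$, so $\Tout{k,i}(\iota(I))=\Tout{k,i}(I)$; for $k>i$ the box $(i,k)$ lies in $\sstair{n}$, and the coordinate-swap symmetry of $\iota(I)$ gives $\Tout{k,i}(\iota(I))=\Tout{i,k}(\iota(I))=\Tout{i,k}(I)$. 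Hence $\stat{n}_i(\iota(I))=\sum_{k\leq i}\Tout{k,i}(I)+\sum_{k>i}\Tout{i,k}(I)=\stat{g}_i(I)$. (Pulling back the $i$-th positive fiber statistic $\sum_k\Tout{i,k}$ instead gives the same thing.) The one point requiring care here is matching the two index ranges with the two half-diagonals of the staircase, together with the standing convention that $\Tout{p}$ vanishes on boxes outside the poset — this is really the only content of the proof.

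Next I would apply the negative-fiber case of \cref{thm:q_rectfiber} with $a=b=n$ and fiber index $i$, which gives $\stat{n}_i\qtequiv q^{n-i}\frac{[n]_q}{[2n]_q}$ on $\rect{n}{n}$; that is, $\stat{n}_i-q^{n-i}\frac{[n]_q}{[2n]_q}=\sum_{(k,l)\in\rect{n}{n}}c_{(k,l)}(q)\,\qT{(k,l)}$ for some $c_{(k,l)}(q)\in\RR(q)$. Evaluating both sides at $\iota(I)$ and using $\stat{g}_i=\stat{n}_i\circ\iota$, it remains to note that each $\qT{(k,l)}(\iota(I))$ is again a signed toggleability statistic of $\sstair{n}$ evaluated at $I$: if $k\leq l$ then $(k,l)\in\sstair{n}$ and $\qT{(k,l)}(\iota(I))=\Tin{k,l}(I)-q\Tout{k,l}(I)=\qT{(k,l)}(I)$, while if $k>l$ then $(l,k)\in\sstair{n}$ and, by the symmetries $\Tin{k,l}(\iota(I))=\Tin{l,k}(\iota(I))$ and $\Tout{k,l}(\iota(I))=\Tout{l,k}(\iota(I))$, we get $\qT{(k,l)}(\iota(I))=\Tin{l,k}(I)-q\Tout{l,k}(I)=\qT{(l,k)}(I)$.

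Collecting coefficients, the difference $\stat{g}_i-q^{n-i}\frac{[n]_q}{[2n]_q}$ is an $\RR(q)$-linear combination of the $\qT{p}$ with $p\in\sstair{n}$, i.e. $\stat{g}_i\qtequiv q^{n-i}\frac{[n]_q}{[2n]_q}$, which is the assertion of the lemma. The main obstacle, such as it is, is the bookkeeping in the first step; the transfer along $\iota$ in the last two steps is routine.
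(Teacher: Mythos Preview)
Your proof is correct and follows essentially the same approach as the paper's: identify the staircase statistic with the pullback along $\iota$ of a single fiber statistic on $\rect{n}{n}$, then invoke \cref{thm:q_rectfiber}. The only cosmetic difference is that you pull back the negative fiber $[n]\times\{i\}$ while the paper pulls back the positive fiber $\{i\}\times[n]$; as you yourself note parenthetically, these coincide after applying the $(i,j)\leftrightarrow(j,i)$ symmetry of $\iota(I)$, and both yield the same constant $q^{n-i}[n]_q/[2n]_q$ since $a=b=n$.
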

\begin{proof}
Computing the statistic $\sum_{j\leq i}\Tout{j,i} + \sum_{i < j}\Tout{i,j}$ on an order ideal $I\in\J(P)$ is the same as computing the statistic $\sum_{j} \Tout{i,j}\colon\J(\rect{n}{n})\to\RR$ on $\iota(I)$. Hence, the lemma follows from \cref{thm:q_rectfiber}.
\end{proof}

\begin{lemma} \label{lem:q_sstair_diag}
We have $\sum_{i}\Tout{i,i}\qtequiv \frac{1}{1+q}$ for $P=\sstair{n}$.
\end{lemma}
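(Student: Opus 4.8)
The plan is to mimic the proof of \cref{lem:sstair_diag}, but, because the relation $\rook{i,j}\qtequiv\rrook{i,j}$ is unavailable in the $q$-world, to work directly with the rooks $\rook{i,j}$ of \eqref{eqn:sstair_rook} while carefully tracking the substitution $\Tin{p}=\qT{p}+q\Tout{p}$. Set $D\coloneqq\sum_{i}\Tout{i,i}$ (the diagonal antichain sum) and $E\coloneqq\sum_{i<j}\Tout{i,j}$ (the strictly-above-diagonal antichain sum). I would derive two $\RR(q)$-linear relations between $D$ and $E$ modulo the span of the $\qT{p}$, and then solve for $D$.

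For the first relation, consider the weighted combination $\sum_{i=1}^{n}q^{i-1}\rook{i,i}$. By \cref{thm:sstair_sumone} we have $\rook{i,i}=1$, so this combination equals the constant function $\sum_{i=1}^{n}q^{i-1}=[n]_q$. On the other hand, expanding each $\rook{i,i}$ via \eqref{eqn:sstair_rook} and replacing every $\Tin{i',j'}$ by $\qT{i',j'}+q\Tout{i',j'}$, one finds that in $\rook{i,i}$ the coefficient of $\Tout{k,k}$ (for a diagonal box) is $q[k\le i]+[k\ge i]$, while the coefficient of $\Tout{k,\ell}$ (for an off-diagonal box, $k<\ell$) is $\bigl(q[\ell\le i]-[\ell<i]\bigr)+\bigl([k\ge i]-q[k>i]\bigr)$. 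Summing against the weights $q^{i-1}$, the $\ell$-part collapses to the constant $q^n$ (using $[m+1]_q-[m]_q=q^m$), the $k$-part collapses to the constant $1$ (using $[k]_q-q[k-1]_q=1$), and the diagonal coefficient collapses to $[n+1]_q$ (using $q^{k}[n+1-k]_q+[k]_q=[n+1]_q$). Hence $\sum_{i=1}^{n}q^{i-1}\rook{i,i}\qtequiv[n+1]_qD+(q^{n}+1)E$, and comparing with the function identity above gives
\[[n]_q\ \qtequiv\ [n+1]_q\,D+(q^{n}+1)\,E.\]

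For the second relation, sum the already-proven \cref{lem:q_sstair_fiber} over $i=1,\dots,n$: the left-hand sides add up to $D+2E$ (each off-diagonal box is counted once as a row element and once as a column element, and each diagonal box exactly once), while the right-hand sides sum to $\frac{[n]_q}{[2n]_q}\sum_{i=1}^{n}q^{n-i}=\frac{[n]_q^2}{[2n]_q}$. Thus $D+2E\qtequiv\frac{[n]_q^2}{[2n]_q}$. Eliminating $E$ between this and the first relation, and simplifying with the standard identities $[2n]_q=(1+q^{n})[n]_q$ and $2[n+1]_q-q^{n}-1=(1+q)[n]_q$, reduces the equation to $\frac{(1+q)[n]_q}{2}\,D\qtequiv\frac{[n]_q}{2}$; dividing by $\frac{(1+q)[n]_q}{2}\in\RR(q)$ yields $D\qtequiv\frac{1}{1+q}$, which is the claim.

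The one step that requires genuine care is the expansion in the second paragraph: one must verify that after the substitution $\Tin{p}=\qT{p}+q\Tout{p}$ the coefficients of all the $\Tout{k,\ell}$ really do collapse to the uniform constants $q^{n}+1$ (off-diagonal) and $[n+1]_q$ (diagonal). This is a $q$-deformation of the ``count how many rooks attack each box'' bookkeeping used repeatedly in \cref{sec:main}; the case $n=2$, where $\rook{1,1}+q\rook{2,2}\qtequiv(q^2+1)\Tout{1,2}+[3]_q\bigl(\Tout{1,1}+\Tout{2,2}\bigr)$ and $\rook{1,1}+q\rook{2,2}=[2]_q$, is a convenient sanity check. (Alternatively, one can fold the rooks $\rook{i,i+1}$ into the combination, as in the proof of \cref{lem:sstair_diag}, and obtain $D\qtequiv\frac{1}{1+q}$ directly without invoking \cref{lem:q_sstair_fiber}, at the cost of a longer computation.)
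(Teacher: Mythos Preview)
Your argument is correct, but it is considerably more elaborate than what the paper does. The paper's proof of \cref{lem:q_sstair_diag} does not use rooks at all: it simply observes that for any $I\in\J(\sstair{n})$ there is exactly one diagonal box that can be toggled (in or out), so $\sum_i(\Tin{i,i}+\Tout{i,i})=1$ as an equality of functions. Substituting $\Tin{i,i}=\qT{i,i}+q\Tout{i,i}$ and rearranging gives $(1+q)\sum_i\Tout{i,i}=1-\sum_i\qT{i,i}$, and dividing by $1+q$ finishes the proof in two lines.

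Your route---the weighted combination $\sum_i q^{i-1}\rook{i,i}$ together with the summed form of \cref{lem:q_sstair_fiber}, followed by solving a $2\times 2$ linear system in $D$ and $E$---does work (I checked your coefficient computations and the telescoping identities $2[n+1]_q-q^n-1=(1+q)[n]_q$ and $[2n]_q=(1+q^n)[n]_q$), and it has the virtue of staying entirely within the rook framework of \cref{sec:main}. But it buys that uniformity at the cost of a page of bookkeeping that the paper avoids with a single combinatorial observation. The key insight you are missing is that the boundary path of any order ideal in the shifted staircase touches the main diagonal in exactly one corner, which is precisely the statement $\sum_i(\Tin{i,i}+\Tout{i,i})=1$.
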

\begin{proof}
We have
\[\sum_{i}(\Tout{i,i}+\Tin{i,i})=1.\]
Indeed, this is just asserting that for any order ideal $I\in \J(\sstair{n})$, there is exactly one box on the main diagonal that we can either toggle in or toggle out of $I$. Subtracting $\sum_{i} \qT{i,i}$ from each side of this equation gives
\[1-\sum_{i} \qT{i,i} = (1+q)\sum_{i}\Tout{i,i}.\]
Dividing by $(1+q)$ exactly yields $\sum_{i}\Tout{i,i}\qtequiv \frac{1}{1+q}$, as desired.
\end{proof}

\begin{thm} \label{thm:q_sstair_a}
For $P=\sstair{n}$, we have $\sum_{p\in P}\Tout{p} = \frac{\qbinom{n+1}{2}_q}{[2n]_q}$.
\end{thm}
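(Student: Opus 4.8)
The plan is to mimic the proof of \cref{thm:sstair_a}, replacing each combinatorial ingredient by its $q$-analogue. First I would record the ``refining'' identity
\[\sum_{1\leq i\leq n}\Tout{i,i} \ + \ \sum_{i=1}^{n}\left(\sum_{1\leq j\leq i}\Tout{j,i} \ + \ \sum_{i<j\leq n}\Tout{i,j}\right) \ = \ 2\cdot\sum_{p\in P}\Tout{p},\]
which holds on the nose (not just up to $\qtequiv$) because on the left-hand side every box of $\sstair{n}$ is counted exactly twice. Next I would apply \cref{lem:q_sstair_diag} to the first term and \cref{lem:q_sstair_fiber} to each of the $n$ summands in the second term, which gives
\[2\cdot\sum_{p\in P}\Tout{p} \ \qtequiv \ \frac{1}{1+q} \ + \ \sum_{i=1}^{n} q^{n-i}\,\frac{[n]_q}{[2n]_q}.\]
Since $\sum_{i=1}^{n}q^{n-i} = 1+q+\cdots+q^{n-1} = [n]_q$, the right-hand side simplifies to $\dfrac{1}{1+q} + \dfrac{[n]_q^2}{[2n]_q}$.

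It then remains to identify this expression with $2\qbinom{n+1}{2}_q/[2n]_q$. Using $\qbinom{n+1}{2}_q = [n+1]_q[n]_q/[2]_q = [n+1]_q[n]_q/(1+q)$ and clearing the common denominator $(1+q)[2n]_q$, the desired equality becomes the polynomial identity
\[[2n]_q \ + \ (1+q)\,[n]_q^2 \ = \ 2\,[n+1]_q\,[n]_q .\]
This is a routine check: writing each $q$-number as $(1-q^m)/(1-q)$ and multiplying through by $(1-q)^2$, both sides collapse to $2 - 2q^n - 2q^{n+1} + 2q^{2n+1}$. Dividing the displayed equivalence by $2$ then yields $\sum_{p\in P}\Tout{p} \qtequiv \qbinom{n+1}{2}_q/[2n]_q$, as claimed.

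There is no real obstacle here beyond the bookkeeping of $q$-numbers; the only mild subtleties are indexing the geometric sum $\sum_{i=1}^n q^{n-i}$ correctly so that it collapses to $[n]_q$, and verifying the final rational-function identity honestly (e.g.\ by clearing denominators as above). As a sanity check one may specialize $q=1$: the formula becomes $\binom{n+1}{2}/(2n) = (n+1)/4$, in agreement with \cref{thm:sstair_a}. It is also worth noting that, exactly as in \cref{sec:main}, one obtains for free the fiber-refined statements: \cref{lem:q_sstair_fiber} and \cref{lem:q_sstair_diag} already exhibit the individual refinements of antichain cardinality as $\qtequiv\const(q)$.
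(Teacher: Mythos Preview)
Your proof is correct and follows essentially the same approach as the paper: both use the refining identity together with \cref{lem:q_sstair_fiber} and \cref{lem:q_sstair_diag}, then identify the resulting rational function with $\qbinom{n+1}{2}_q/[2n]_q$. The paper additionally remarks that the constant can alternatively be read off from the general graded-poset argument in the proof of \cref{thm:q_min_a}, but your direct algebraic verification is exactly the ``little bit of algebraic manipulation'' the paper alludes to.
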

\begin{proof}
As we have already mentioned before \cref{thm:sstair_a},
\[ 2\cdot \sum_{p\in P}\Tout{p} = \sum_{1\leq i \leq n}\left(\sum_{j\leq i}\Tout{j,i} + \sum_{i < j}\Tout{i,j}\right) + \sum_{i}\Tout{i,i}.\]
Hence, thanks to \cref{lem:q_sstair_fiber,lem:q_sstair_diag}, there is some $c(q)\in\RR(q)$ such that $\sum_{p\in P}\Tout{p} \qtequiv c(q)$. With a little bit of algebraic manipulation, one can show
\[\sum_{1\leq i \leq n}q^{n-i}\frac{[n]_q}{[2n]_q} + \frac{1}{1+q} =2\cdot  \frac{\qbinom{n+1}{2}_q}{[2n]_q}.\]
(It helps to multiply both sides of the equation by $[2n]_q(1+q)$.) However, as we will explain in the proof of \cref{thm:q_min_a} below, we can say in general that for any graded poset $P$, if we have $\sum_{p\in P}\Tout{p} \qtequiv c(q)$, then necessarily $c(q) = \frac{\sum_{p\in P}q^{\rk(P)-\rk(p)}}{[\rk(P)+2]_q}$. For the shifted staircase we have $\sum_{p\in P}q^{\rk(P)-\rk(p)}=\qbinom{n+1}{2}_q$ and $[\rk(P)+2]_q=[2n]_q$.
\end{proof}

Observe that \cref{thm:q_sstair_a} is a direct $q$-analogue of \cref{thm:sstair_a}. As with the rectangle, the only $\stat{f}$ that are sums of order ideal indicator functions and that we can show are $\qtequiv \const(q)$ are ones that happen also to have expressions as sums of antichain indicator functions; namely,
\[\oii{1,n} = \sum_{i} \Tout{i,n}\quad\text{and}\quad \sum_{1\leq i \leq n} \oii{i,i} - \sum_{1\leq i \leq n-1} \oii{i,i+1} = \sum_{i}\Tout{i,i},\]
which are $\qtequiv \const(q)$ thanks to \cref{lem:q_sstair_fiber,lem:q_sstair_diag}.

\subsection{Other minuscule posets} 

We now prove a $q$-analogue of \cref{thm:min_a}, asserting that if $P$ is any minuscule poset, then the antichain cardinality statistic is~$\qtequiv \const(q)$.

\begin{thm} \label{thm:q_min_a}
Let $P$ be a minuscule poset. Then $\sum_{p\in P}\Tout{p}\qtequiv \frac{\sum_{p\in P}q^{\rk(p)}}{[\rk(P)+2]_q}$.
\end{thm}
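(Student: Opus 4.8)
The plan is to combine a ``forced constant'' computation valid for every graded poset with a case-by-case verification that \emph{some} constant exists for each minuscule poset, most of which is already in hand.

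\textbf{Step 1 (pinning down the constant).} I would first establish, for an arbitrary graded poset $P$, that if $\sum_{p\in P}\Tout{p}\qtequiv c(q)$ then necessarily $c(q)=\frac{\sum_{p\in P}q^{\rk(P)-\rk(p)}}{[\rk(P)+2]_q}$; this is the claim deferred from the proof of \cref{thm:q_sstair_a}. Write the hypothesis out as an identity of $\RR(q)$-valued statistics on $\J(P)$, say $\sum_{p\in P}\Tout{p}=c(q)+\sum_{p\in P}c_p(q)\,\qT{p}$, and consider the order ideals $I_j\coloneqq\{p\in P\colon \rk(p)\le j\}$ for $j=-1,0,\ldots,\rk(P)$ (the distinguished rowmotion orbit of $P$). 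Because $P$ is graded, $\max(I_j)$ is exactly the set of rank-$j$ elements and $\min(P\setminus I_j)$ the set of rank-$(j+1)$ elements, so $\qT{p}(I_j)$ equals $1$ if $\rk(p)=j+1$, equals $-q$ if $\rk(p)=j$, and equals $0$ otherwise. Hence for each $p$ we get $\sum_{j=-1}^{\rk(P)}q^{\rk(P)-j}\,\qT{p}(I_j)=q^{\rk(P)-\rk(p)+1}-q\cdot q^{\rk(P)-\rk(p)}=0$. Therefore, evaluating the displayed identity at each $I_j$, weighting by $q^{\rk(P)-j}$, and summing over $j$ annihilates every $c_p(q)$ term: the left side becomes $\sum_{j=-1}^{\rk(P)}q^{\rk(P)-j}\,\#\max(I_j)=\sum_{p\in P}q^{\rk(P)-\rk(p)}$, and the right side becomes $c(q)\sum_{j=-1}^{\rk(P)}q^{\rk(P)-j}=c(q)\,[\rk(P)+2]_q$, giving the formula after dividing by $[\rk(P)+2]_q$.

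\textbf{Step 2 (existence, and self-duality).} Every minuscule poset $P$ is self-dual, so $\rk(P)-\rk(\cdot)$ is a permutation of the ranks and $\sum_{p\in P}q^{\rk(P)-\rk(p)}=\sum_{p\in P}q^{\rk(p)}$; thus Step~1 reduces the theorem to showing that for each minuscule poset there is \emph{some} $c(q)\in\RR(q)$ with $\sum_{p\in P}\Tout{p}\qtequiv c(q)$. For the rectangle this is \cref{thm:aboapbq} and for the shifted staircase it is \cref{thm:q_sstair_a}, so only $\dtd{n}$, $\esixmin$, and $\esevmin$ remain. For $\dtd{n}$ I would exhibit an explicit expression $\sum_{p\in P}\Tout{p}=c(q)+\sum_{p\in P}c_p(q)\,\qT{p}$ — the natural $q$-deformation of the one in the proof of \cref{thm:min_a} — and verify it by checking it on each of the $2n+2$ order ideals of $\dtd{n}$; by \cref{thm:linearindependence} the coefficients are uniquely determined, so this is routine. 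For $\esixmin$ and $\esevmin$ I would check by computer that the corresponding linear system over $\RR(q)$ has a solution, exactly as at the combinatorial level in the proof of \cref{thm:min_a}.

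The rectangle/staircase inputs and the $\dtd{n}$ bookkeeping are routine. The one genuinely new idea — and the step I expect to be the crux — is the weighting by $q^{\rk(P)-j}$ in Step~1: it is precisely the $q$-deformation of the averaging over the distinguished orbit that gives average antichain cardinality $\#P/(\rk(P)+2)$ when $q=1$, and the whole argument hinges on choosing those exponents so that the telescoping $q^{\rk(P)-\rk(p)+1}-q\cdot q^{\rk(P)-\rk(p)}=0$ occurs for every $p$. One should also confirm that the edge indices $j=-1$ and $j=\rk(P)$ behave as claimed, but these cause no trouble for graded $P$.
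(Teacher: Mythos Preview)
Your proposal is correct and follows essentially the same approach as the paper's own proof: the paper also first proves the forced-constant formula for arbitrary graded posets using exactly your weighted sum $\sum_{j=-1}^{\rk(P)} q^{\rk(P)-j}(\cdot)(I_j)$ over the distinguished orbit (with the same telescoping cancellation of the $\qT{p}$ terms), then invokes self-duality to rewrite the constant, and finally handles existence case-by-case via \cref{thm:aboapbq}, \cref{thm:q_sstair_a}, an explicit identity for $\dtd{n}$, and a computer check for $\esixmin$ and $\esevmin$.
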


\begin{proof}
First, let us observe that if $P$ is any graded poset and $\sum_{p\in P}\Tout{p}\qtequiv c(q)$ for some $c(q)\in\RR$, then we must have $c(q)= \frac{\sum_{p\in P}q^{\rk(P)-\rk(p)}}{[\rk(P)+2]_q}$. Indeed, suppose that $\sum_{p\in P}\Tout{p} = c(q) + \sum_{p\in P} c_p(q) \qT{p}$, and set $I_i \coloneqq  \{p\in P\colon \rk(p)\leq i\}$ for $i=-1,0,1\ldots,\rk(P)$. It is straightforward to verify that we have $\sum_{i=-1}^{\rk(P)} q^{\rk(P)-i} \cdot \qT{p}(I_i) = 0$ for all $p \in P$, because the only nonzero terms will be when $i=\rk(p)-1$ and $i=\rk(p)$, and these terms will cancel. Hence,
\begin{align*}
\sum_{p\in P}q^{\rk(P)-\rk(p)} &= \sum_{i=-1}^{\rk(P)} q^{\rk(P)-i} \cdot \#\max(I_i) \\
&= \sum_{i=-1}^{\rk(P)} q^{\rk(P)-i} \cdot \sum_{p\in P}\Tout{p}(I_i) \\
&= \sum_{i=-1}^{\rk(P)} q^{\rk(P)-i} \cdot \left( c(q) + \sum_{p\in P} c_p(q) \qT{p}(I_i) \right) \\
&=  \sum_{i=-1}^{\rk(P)} q^{\rk(P)-i} \cdot c(q) \\
&= c(q) \cdot [\rk(P)+2]_q.
\end{align*} 
So dividing by $[\rk(P)+2]_q$, we see that $c(q)= \frac{\sum_{p\in P}q^{\rk(P)-\rk(p)}}{[\rk(P)+2]_q}$.

Now let $P$ be a minuscule poset. Since $P$ is graded, we know by the preceding paragraph that~$\frac{\sum_{p\in P}q^{\rk(P)-\rk(p)}}{[\rk(P)+2]_q}$ is the right element of $\RR(q)$ for $\sum_{p\in P}\Tout{p}$ to be $\qtequiv$ to. Since $P$ is self-dual, this is in fact the same as $\frac{\sum_{p\in P}q^{\rk(p)}}{[\rk(P)+2]_q}$. Thus, we only need to show that there is some $c(q) \in \RR(q)$ such that $\sum_{p\in P}\Tout{p}\qtequiv c(q)$. We have already shown this for the rectangle and the shifted staircase (\cref{thm:aboapbq} and \cref{thm:q_sstair_a}), so we need only address $\dtd{n}$, $\esixmin$, and $\esevmin$. 

First, consider $P=\dtd{n}$. Let $x_1<\cdots<x_{n-1}$ be the elements in the initial tail, $y_1,y_2$ be the two incomparable elements in the middle, and $z_{n-1}<\cdots<z_{1}$ the elements in the final tail. Then we claim that
\[(q^n+1)\sum_{p\in P} \Tout{p} = (q^{n-1}+1)-\sum_{i=1}^{n-1}(q^{n-1}+q^{i-1})\qT{x_i} - q^{n-1}\qT{y_1} - q^{n-1}\qT{y_2} -\sum_{i=1}^{n-1}(q^{n-1}-q^{n-i-1})\qT{z_i}.\]
As we mentioned in the proof of \cref{thm:min_a}, the order ideals in $\J(\dtd{n})$ are divided into a very small number of cases, and we can verify this equation by checking these cases.

Now consider $P=\esixmin$ or $\esevmin$. In this case, we checked by computer that the relevant system of equations has a solution (and the Sage code worksheet referenced in the acknowledgments includes this verification).
\end{proof}

\subsection{Root posets}

To conclude this section, let us consider the root posets $\arootp{n}$ and $\brootp{n}$. For these, there are apparently far fewer statistics $\stat{f}\colon \J(P)\to \RR$ with $\stat{f}\qtequiv \const(q)$, compared to $\rect{a}{b}$ and $\sstair{n}$.

First consider $\arootp{n}$, and recall the rooks $\rook{i}$ and reduced rooks $\rrook{i}$ for the Type~A root poset defined in~\eqref{eqn:a_rook} and~\eqref{eqn:a_red_rook}. For $\arootp{n}$, there is only one interesting function $\stat{f}\colon \J(P)\to \RR$ we can show is $\qtequiv \const(q)$, namely
\begin{equation} \label{eqn:a_q_stat_red}
\frac{1}{2}\left( \rrook{1}-\rrook{2}+\rrook{3}-\cdots\pm \rrook{n} \right) = \sum_{\substack{(i,j)\in\arootp{n}, \\ i+j\equiv n+1 \!\!\!\mod 2}} (-1)^{i-1}\, \Tout{i,j}.
\end{equation}
The pattern of antichain indicator functions on the right-hand side of~\eqref{eqn:a_q_stat_red} is depicted in \cref{fig:a_q_stat}. To verify that~\eqref{eqn:a_q_stat_red} holds is simply a matter of keeping track of how many times each box is attacked.

\begin{figure}
\begin{center}
\includegraphics[height=4.717cm]{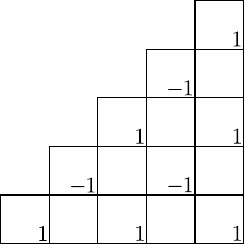}
\end{center}
\caption{The statistic $\frac{1}{2}\sum_{i=1}^{n}(-1)^{i-1}\, \rrook{i}$ on $\arootp{n}$. Here, $n=5$.} \label{fig:a_q_stat}
\end{figure}

\begin{thm}\label{thm:a_q_stat}
For $P=\arootp{n}$, we have
\[ \frac{1}{2}\left( \rrook{1}-\rrook{2}+\rrook{3}-\cdots\pm \rrook{n} \right) \qtequiv \begin{cases}0 &\textrm{if $n$ is even}; \\ \frac{1}{q+1} &\textrm{if $n$ is odd}.  \end{cases} \]
\end{thm}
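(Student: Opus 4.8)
The plan is to imitate the proof of \cref{thm:aboapbq}: form an explicit $\RR(q)$-linear combination of the rooks $\rook{i}$, evaluate it via \cref{thm:a_sumone}, and then re-expand it as a linear combination of the $\qT{p}$ plus a multiple of the target statistic. The pleasant surprise is that, unlike the rectangle case where one needs the $q$-weights $q^{(i-1)+(j-1)}$, here the plain signs $(-1)^{i-1}$ already do the job.

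Concretely, I would first expand $\sum_{i=1}^{n}(-1)^{i-1}\rook{i}$ in terms of the $\Tin{p}$ and $\Tout{p}$. Since each $\rook{i}$ is a fixed $\mathbb{Z}$-linear combination of toggleability statistics (see \eqref{eqn:a_rook}), this yields $\sum_{i=1}^{n}(-1)^{i-1}\rook{i} = \sum_{p\in P}\alpha_p\Tin{p} + \sum_{p\in P}\beta_p\Tout{p}$ with integer coefficients $\alpha_p,\beta_p$ that one reads off directly: for a box $p=(a,b)$, inspecting \eqref{eqn:a_rook} shows that $\rook{i}$ contributes $+\Tin{p}$ exactly when $a+b=n+1$ and $i=a$, contributes $-\Tin{p}$ exactly when $n+2-b\le i\le a-1$, and contributes $+\Tout{p}$ exactly when $n+1-b\le i\le a$; summing the weights $(-1)^{i-1}$ over these ranges gives closed forms for $\alpha_p,\beta_p$ by an easy alternating-sum evaluation, with the antidiagonal $a+b=n+1$ a mild special case (there both $\alpha_p$ and $\beta_p$ equal $(-1)^{a-1}$).

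Next, the substitution $\Tin{p}=\qT{p}+q\Tout{p}$ turns this into
\[\sum_{i=1}^{n}(-1)^{i-1}\rook{i} \ = \ \sum_{p\in P}\alpha_p\,\qT{p} \ + \ \sum_{p\in P}(q\alpha_p+\beta_p)\,\Tout{p}.\]
By \cref{thm:a_sumone}, the left-hand side equals the constant $\sum_{i=1}^{n}(-1)^{i-1}=\varepsilon_n$, where $\varepsilon_n=1$ if $n$ is odd and $\varepsilon_n=0$ if $n$ is even. Hence $\sum_{p\in P}(q\alpha_p+\beta_p)\Tout{p}\qtequiv\varepsilon_n$. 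The final ingredient is the combinatorial identity $q\alpha_p+\beta_p=(1+q)\sigma_p$ for every $p$, where $\sigma_{(a,b)}$ is the coefficient of $\Tout{a,b}$ on the right of \eqref{eqn:a_q_stat_red}, namely $(-1)^{a-1}$ when $a+b\equiv n+1\pmod 2$ and $0$ otherwise. This is once again just "counting how many times each box is attacked,'' combining the alternating-sum evaluations above with parity bookkeeping; granting it, $\sum_{p\in P}(q\alpha_p+\beta_p)\Tout{p}=(1+q)\cdot\tfrac12\sum_{i=1}^{n}(-1)^{i-1}\rrook{i}$, and dividing the congruence by $1+q\in\RR(q)$ gives $\tfrac12\sum_{i=1}^{n}(-1)^{i-1}\rrook{i}\qtequiv\varepsilon_n/(1+q)$, which is exactly the claim.

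The main obstacle is the bookkeeping in verifying $q\alpha_p+\beta_p=(1+q)\sigma_p$: one handles separately the boxes on the antidiagonal $a+b=n+1$ (where the $\Tin{}$ and $\Tout{}$ contributions coincide and give $(1+q)(-1)^{a-1}$ directly) and the boxes strictly below it (where, writing $\ell=n+1-b<a$ and $w_i=(-1)^{i-1}$, the coefficient is $w_\ell+w_a+(1-q)\sum_{\ell<i<a}w_i$, which collapses to $(1+q)(-1)^{a-1}$ when $a-\ell$ is even and to $0$ when $a-\ell$ is odd — matching the parity of $a+b$ against $n+1$, since $a+b\equiv n+1\pmod 2$ iff $a\equiv\ell\pmod2$). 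None of this is deep, but it is the one place requiring care; everything else is a transcription of the $q=1$ reasoning behind \cref{lem:a_rook_equiv} and \cref{thm:a_ac}. As sanity checks I would verify $n=1$ (giving $\Tout{1,1}\qtequiv\tfrac1{q+1}$) and $n=2$ (giving $\Tout{1,2}-\Tout{2,1}\qtequiv0$, recovering \cref{ex:Exam2_cont}).
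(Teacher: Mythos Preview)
Your proposal is correct and follows essentially the same approach as the paper. The paper organizes the computation by first establishing the intermediate identity $\sum_{i}(-1)^{i-1}\rook{i}=\sum_{\sigma_p\neq 0}\sigma_p(\Tout{p}+\Tin{p})$ (i.e., showing directly that $\alpha_p=\beta_p=\sigma_p$), from which the factor of $1+q$ falls out immediately after subtracting $\sum_p\sigma_p\qT{p}$; your route of computing $q\alpha_p+\beta_p$ and verifying it equals $(1+q)\sigma_p$ is the same calculation packaged slightly differently.
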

\begin{proof}
We claim that
\begin{equation} \label{eqn:a_q_stat}
 \rook{1}-\rook{2}+\rook{3}-\cdots\pm \rook{n} =  \sum_{\substack{(i,j)\in\arootp{n}, \\ i+j\equiv n+1 \!\!\!\mod 2}} (-1)^{i-1}(\Tout{i,j} + \Tin{i,j}).
\end{equation}
The pattern of toggleability statistics on the right-hand side of~\eqref{eqn:a_q_stat} is the same as what is depicted in \cref{fig:a_q_stat}, except that everywhere we have a $\pm 1$ in the lower right corner of a box, we will also have a $\pm 1$ in the upper left corner of that box. 

To see why~\eqref{eqn:a_q_stat} holds, let $(i,j)\in \arootp{n}$. By inspecting the definition~\eqref{eqn:a_rook} of the rooks, we see that the rooks with a nonzero coefficient of $\Tout{i,j}$ are exactly $\rook{k}$ for $k=(n+1-j),\ldots,i$, and~$\rook{k}$ contributes a coefficient of $(-1)^{k-1}$ to the left-hand side of~\eqref{eqn:a_q_stat}. Hence, the overall coefficient of~$\Tout{i,j}$ in the left-hand side of~\eqref{eqn:a_q_stat} is
\[ \sum_{k=n+1-j}^{i} (-1)^{k-1} = \begin{cases} 0 &\textrm{ if $i+j \not\equiv n+1 \!\!\!\mod 2$}; \\ (-1)^{i-1} &\textrm{ if $i+j \equiv n+1 \!\!\!\mod 2$}. \end{cases}\]
Similarly, if $i+j > n +1$, then we see that the rooks with a nonzero coefficient of $\Tin{i,j}$ are exactly~$\rook{k}$ for $k=(n+1-j)+1,\ldots,i-1$, and $\rook{k}$ contributes a coefficient of $(-1)^{k}$ to the left-hand side of~\eqref{eqn:a_q_stat}. Hence, the overall coefficient of $\Tin{i,j}$ in the left-hand side of~\eqref{eqn:a_q_stat} is
\[ \sum_{k=(n+1-j)+1}^{i-1} (-1)^k = \begin{cases} 0 &\textrm{ if $i+j \not\equiv n+1 \!\!\!\mod 2$}; \\ (-1)^{i-1} &\textrm{ if $i+j \equiv n+1 \!\!\!\mod 2$}. \end{cases}\]
If $i+j=n+1$, then $R_i$ is the only rook that contributes a coefficient of $\Tin{i,j}$, and it contributes a coefficient of $(-1)^{i}$ to the left-hand side of~\eqref{eqn:a_q_stat}. Either way, the overall term corresponding to~$(i,j)$ is $(-1)^{i-1}(\Tin{i,j}+\Tout{i,j})$ if $i+j \equiv n+1 \!\!\!\mod 2$ and is~$0$ otherwise. So indeed~\eqref{eqn:a_q_stat} holds.

Now, \cref{thm:a_sumone} says that
\[ \rook{1}-\rook{2}+\rook{3}-\cdots\pm \rook{n} =\begin{cases} 0 &\textrm{if $n$ is even}; \\ 1 &\textrm{if $n$ is odd}. \end{cases} \]
Hence, we conclude from~\eqref{eqn:a_q_stat} that
\[ \sum_{\substack{(i,j)\in\arootp{n}, \\ i+j\equiv n+1 \mod 2}} (-1)^{i-1}(\Tout{i,j} + \Tin{i,j}) = \begin{cases} 0 &\textrm{if $n$ is even}; \\ 1 &\textrm{if $n$ is odd}. \end{cases}\]
Adding $\sum_{i+j\equiv n+1\!\!\!\mod 2} (-1)^{i} \, \qT{i,j}$ to this equation yields
\[  (1+q) \sum_{\substack{(i,j)\in\arootp{n}, \\ i+j\equiv n+1 \!\!\!\mod 2}} (-1)^{i-1} \, \Tout{i,j} =\sum_{i+j\equiv n+1\!\!\!\mod 2} (-1)^{i} \, \qT{i,j}+\begin{cases} 0 &\textrm{if $n$ is even}; \\ 1 &\textrm{if $n$ is odd}. \end{cases}\]
Finally, dividing by $(1+q)$ proves the theorem.
\end{proof}

Now consider $\brootp{n}$, and recall the various rooks and reduced rooks $\rook{i}$, $\rrook{i}$, $\varrook{i}$, $\rvarrook{i}$ defined on the Type~B root poset in~\eqref{eqn:b_rook}--\eqref{eqn:b_red_var_rook}. Also recall $\iota\colon \J(\brootp{n})\to\J(\arootp{2n-1})$. 

We get one statistic that is $\qtequiv \const(q)$ for $\brootp{n}$ via $\iota$.

\begin{thm} \label{thm:b_q_stat}
For $P=\brootp{n}$, we have $\sum_{i=1}^{n-1}(-1)^{i-1}\, \rvarrook{i} + \frac{(-1)^{n-1}}{2}\rvarrook{n}\qtequiv \frac{1}{q+1}$.
\end{thm}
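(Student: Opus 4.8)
The plan is to deduce \cref{thm:b_q_stat} by transferring \cref{thm:a_q_stat} (applied to $\arootp{2n-1}$) along $\iota\colon\J(\brootp{n})\to\J(\arootp{2n-1})$, in exactly the way that \cref{lem:b_var_rook_equiv} and \cref{thm:b_var_sumone} were obtained from their Type~A counterparts. Since $2n-1$ is odd, \cref{thm:a_q_stat} applied to $\arootp{2n-1}$ states that $\tfrac12\bigl(\rrook{1}-\rrook{2}+\cdots+\rrook{2n-1}\bigr)\qtequiv\tfrac1{q+1}$ as statistics on $\J(\arootp{2n-1})$, where the $\rrook{k}$ are the reduced rooks of \eqref{eqn:a_red_rook}.

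First I would identify the left-hand side above, after precomposition with $\iota$, with the statistic on the left of \cref{thm:b_q_stat}. The transposition $(i,j)\mapsto(j,i)$ is a poset automorphism of $\arootp{2n-1}$, and inspecting \eqref{eqn:a_red_rook} (together with the convention that toggleability statistics at boxes outside the poset are $0$) shows that it carries $\rrook{k}$ to $\rrook{2n-k}$, since the anchor $(k,2n-k)$ is sent to $(2n-k,k)$. As $\iota(I)$ is transposition-symmetric, this gives $\rrook{k}(\iota(I))=\rrook{2n-k}(\iota(I))$ for all $k$ and all $I\in\J(\brootp{n})$. Pairing $k$ with $2n-k$ in the alternating sum (with $k=n$ fixed) and using $(-1)^{(2n-k)-1}=(-1)^{k-1}$, I obtain
\[ \tfrac12\sum_{k=1}^{2n-1}(-1)^{k-1}\rrook{k}(\iota(I))\;=\;\sum_{i=1}^{n-1}(-1)^{i-1}\rrook{i}(\iota(I))\;+\;\tfrac{(-1)^{n-1}}{2}\rrook{n}(\iota(I)), \]
and since $\rvarrook{i}(I)=\rrook{i}(\iota(I))$ by definition, the right-hand side is precisely $\sum_{i=1}^{n-1}(-1)^{i-1}\rvarrook{i}(I)+\tfrac{(-1)^{n-1}}{2}\rvarrook{n}(I)$.

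Next I would transfer the $\qtequiv$-relation itself. Writing $\tfrac12\sum_{k}(-1)^{k-1}\rrook{k}=\tfrac1{q+1}+\sum_{p\in\arootp{2n-1}}d_p(q)\,\qT{p}$ for suitable $d_p(q)\in\RR(q)$ and precomposing with $\iota$, the only thing to check is that $\qT{p}\circ\iota$ lies in the $\RR(q)$-span of $\{\qT{p'}\colon p'\in\brootp{n}\}$. This is immediate from the $\iota$-invariance of $\Tin{i,j}$ and $\Tout{i,j}$: one has $\qT{i,j}\circ\iota=\qT{i,j}$ when $i\le j$, and $\qT{i,j}\circ\iota=\qT{j,i}$ when $i>j$. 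Collecting coefficients then turns the precomposed identity into $\sum_{i=1}^{n-1}(-1)^{i-1}\rvarrook{i}+\tfrac{(-1)^{n-1}}{2}\rvarrook{n}-\tfrac1{q+1}=\sum_{p\in\brootp{n}}c_p(q)\,\qT{p}$ for suitable $c_p(q)\in\RR(q)$, which is exactly the assertion $\qtequiv\tfrac1{q+1}$.

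I do not expect a genuine obstacle here, as all the real content lies in \cref{thm:a_q_stat}; the one point that requires a little care is that it is a $\qtequiv$-relation, and not an exact identity, that is being transferred, so one does need the last observation that $\qT{p}\circ\iota$ remains in the appropriate span. (If one wished to avoid even this, one could instead start from the exact identity extracted inside the proof of \cref{thm:a_q_stat} --- namely \eqref{eqn:a_q_stat} combined with $\sum_k(-1)^{k-1}\rook{k}=1$ from \cref{thm:a_sumone} --- and then apply $\iota$ using the $\iota$-invariance of the toggleability statistics directly; but the abstract transfer above is cleaner and matches the paper's style.)
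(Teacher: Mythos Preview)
Your proposal is correct and takes essentially the same approach as the paper: the paper's proof is the single sentence ``This is directly what we get by translating the statistic in \cref{thm:a_q_stat} from $\arootp{2n-1}$ to $\brootp{n}$ via $\iota$,'' and you have faithfully unpacked that sentence, including the folding of the alternating sum via the transposition symmetry $\rrook{k}(\iota(I))=\rrook{2n-k}(\iota(I))$ and the check that $\qT{p}\circ\iota$ remains in the $\RR(q)$-span of the $\qT{p'}$ for $p'\in\brootp{n}$.
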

\begin{proof}
This is directly what we get by translating the statistic in \cref{thm:a_q_stat} from $\arootp{2n-1}$ to $\brootp{n}$ via $\iota$.
\end{proof}

The statistic in \cref{thm:b_q_stat} is also equal to a checkerboard pattern of antichain indicator functions, similar to what is depicted in \cref{fig:a_q_stat}, although some of the coefficients will be $\pm 2$ instead of $\pm 1$.

There is one additional statistic we can show is $\qtequiv \const(q)$ for $\brootp{n}$.

\begin{prop} \label{prop:b_q_diag}
For $P=\brootp{n}$, we have $\sum_{i} \Tout{i,i} \qtequiv \frac{1}{q+1}$.
\end{prop}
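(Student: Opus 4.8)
The plan is to mirror the proof of \cref{lem:q_sstair_diag}. First I would identify the main diagonal of $\brootp{n}$: a box $(i,i)$ lies in $\brootp{n}$ exactly when $2i\geq 2n$ and $i\leq 2n-1$, so the diagonal consists of the $n$ boxes $(n,n),(n+1,n+1),\dots,(2n-1,2n-1)$. I would then observe that these boxes have the same local structure as the main diagonal of the shifted staircase: they form a chain $(n,n)<(n+1,n+1)<\cdots<(2n-1,2n-1)$ in which each $(i,i)$ with $n\leq i\leq 2n-2$ is covered only by $(i,i+1)$ — the box $(i+1,i)$ is not in $\brootp{n}$ because the poset only contains boxes with $i\leq j$ — and each $(i,i)$ with $n+1\leq i\leq 2n-1$ covers only $(i-1,i)$, while $(n,n)$ is a minimal element and $(2n-1,2n-1)$ is the top element.

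The crux is the combinatorial identity $\sum_i(\Tout{i,i}+\Tin{i,i})=1$, i.e., that for every $I\in\J(\brootp{n})$ exactly one diagonal box is active. Since the diagonal is a chain, $I$ meets it in an initial segment $\{(n,n),\dots,(k,k)\}$ (possibly empty). A short case analysis then finishes it: depending on whether the box $(k,k+1)$ belongs to $I$, exactly one of $(k,k)$ and $(k+1,k+1)$ is active; any diagonal box strictly below is interior to $I$ (since $(i,i+1)<(i+1,i+1)\in I$ forces $(i,i+1)\in I$, so $(i,i)\notin\max(I)$), and any diagonal box strictly above lies strictly above $I$ (since $(i-1,i)>(i-1,i-1)\notin I$ forces $(i-1,i)\notin I$, so $(i,i)\notin\min(P\setminus I)$). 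This is the identical verification carried out for $\sstair{n}$, and it transfers verbatim because the diagonal's covering structure is the same.

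Given the identity, the rest is the same algebra as in \cref{lem:q_sstair_diag}: subtracting $\sum_i\qT{i,i}=\sum_i(\Tin{i,i}-q\Tout{i,i})$ from both sides of $\sum_i(\Tout{i,i}+\Tin{i,i})=1$ yields $1-\sum_i\qT{i,i}=(1+q)\sum_i\Tout{i,i}$, and dividing by $1+q$ gives $\sum_i\Tout{i,i}\qtequiv\frac{1}{q+1}$. I expect the only even mildly delicate point to be the bookkeeping in the "exactly one active diagonal box" claim; note in particular that $(n,n)$ is not the unique minimal element of $\brootp{n}$ (the rank-$0$ anti-diagonal has $n$ elements), but this does not affect the argument, since only diagonal boxes are relevant and the unique active one when $I=\varnothing$ is $(n,n)$. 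One could alternatively transport the identity from $\arootp{2n-1}$ through $\iota$, using the $\twogrp$-symmetry of $\iota(I)$ to see that $(k,k+1)\in\iota(I)$ iff $(k+1,k)\in\iota(I)$, but the direct argument on $\brootp{n}$ is cleaner.
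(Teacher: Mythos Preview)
Your proposal is correct and follows essentially the same approach as the paper: establish the identity $\sum_i(\Tout{i,i}+\Tin{i,i})=1$ (the paper says ``using rooks or otherwise'' and refers back to the proof of \cref{lem:q_sstair_diag}), then subtract $\sum_i\qT{i,i}$ and divide by $1+q$. You simply supply more detail for the ``exactly one active diagonal box'' claim than the paper does.
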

\begin{proof}
As with the shifted staircase in the proof of~\cref{lem:q_sstair_diag}, it is easy to see for $\brootp{n}$ that
\[1 = \sum_{i}(\Tout{i,i} + \Tin{i,i}),\]
using rooks or otherwise. Then we can apply the same argument as in the proof of~\cref{lem:q_sstair_diag}.
\end{proof}

Finally, we note that for $\arootp{n}$, there are apparently no combinations of order ideal indicator functions that are $\qtequiv \const(q)$; while for $\brootp{n}$, we have
\[\sum_{i} \oii{i,i} - \sum_{i}\oii{i,i+1} = \sum_{i}\Tout{i,i},\]
as with the shifted staircase, so we get one such statistic from \cref{prop:b_q_diag}.

\section{Final remarks and future directions} \label{sec:final}

We end with some final thoughts and possible directions for future research.

\subsection{Comparison with other ways to exhibit homomesies}

Prior to the toggleability statistics technique, the main way that one would prove a rowmotion homomesy result is by finding a good \emph{model} for rowmotion. By ``model,'' we mean a bijection to some other combinatorial action, which is often some kind of ``rotation.'' For example, the \emph{Stanley-Thomas word} bijection~\cite[3.3.2]{propp2015homomesy} transports rowmotion of $\rect{a}{b}$ to rotation of binary words. And in~\cite[Main Theorem]{armstrong2013uniform}, the authors produce a bijection that transports rowmotion of $\arootp{n}$ to rotation of noncrossing matchings on~$[2n]$. See~\cite{striker2012promotion} for more examples of models of rowmotion.

We have seen that the toggleability statistics technique is broadly applicable, continues to work in modified contexts, and is relatively mindless to apply. However, it certainly does not render the approach of finding good models for rowmotion obsolete. This is because models can be used for much more than just homomesy: for instance, they can also tell us the order and orbit structure of the action. Some homomesy results may on their own imply some limited information about orbit sizes, because if an integer-valued statistic is $c$-mesic for some action, where $c=r/s$ in lowest terms, then every orbit must have size a multiple of $s$. But to obtain a really precise understanding of rowmotion on a given poset, one needs a good model.

We also note, however, that proving $\stat{f}\tequiv \const$ for some combinatorially significant function $\stat{f}\colon \J(P)\to\RR$ can have consequences quite separate from rowmotion and homomesy. Let us say briefly why this is. In~\cite{chan2017expected}, a probability distribution~$\mu$ on $\J(P)$ is called \dfn{toggle-symmetric} if $\mathbb{E}_{\mu}(\T{p})=0$ for all~$p\in P$.\footnote{For a probability distribution $\mu$ on $\J(P)$ and a statistic $\stat{f}\colon \J(P)\to \RR$, we use $\mathbb{E}_{\mu}(\stat{f})$ to denote the expectation of the random variable $\stat{f}(I)$ when $I\in\J(P)$ is distributed according to $\mu$.} For instance, \cref{lem:striker} implies that any distribution that is constant on rowmotion orbits is toggle-symmetric. But in~\cite[\S2]{chan2017expected}, it is shown that many other distributions on~$\J(P)$, especially distributions defined in terms of tableaux and related combinatorial objects, are also toggle-symmetric. If~$\stat{f}\tequiv c$ for some $c\in \RR$, then $\mathbb{E}_{\mu}(\stat{f})=c$ for any toggle-symmetric distribution~$\mu$ on~$\J(P)$. As a result, showing $\stat{f}\tequiv \const$ for certain $\stat{f}$ can lead to enumerative results about tableaux: see~\cite{reiner2018poset}. Moreover, showing $\stat{f}\qtequiv \const(q)$ can similarly yield results about the $q$-enumeration of tableaux: see the very recent paper~\cite{hopkins2021qenumeration}.
In that paper, a probability distribution~$\mu$ on $\J(P)$ is called \dfn{$q$-toggle-symmetric} if $\mathbb{E}_{\mu}(\T{p}^q)=0$ for all~$p\in P$.
Equivalently, a probability distribution on order ideals $I$ is $q$-toggle-symmetric if and only if for every poset element $p$, the conditional probability that $p$ is in $I$ given that $p$ is active in $I$ is $q/(1+q)$.
The paper gives numerous examples of $q$-toggle-symmetric probability distributions.

\subsection{Other posets where the technique works}

The posets we have focused on are by no means the only ones where it is possible to show that $\stat{f}\tequiv \const$ for combinatorially significant $\stat{f}$. For instance, consider the antichain cardinality statistic. In~\cite{chan2017expected}, it is shown that $\sum_{p\in P}\Tout p \tequiv \const$ for all so-called \emph{balanced} Young diagram shape posets~$P$. Here ``balancedness'' is a certain condition on the corners of the Young diagram. (It is basically the condition one needs for it to be possible to place rooks on the boxes of the diagram in such a way that every box is attacked the same number of times.) There are many balanced shapes beyond the rectangle. Similarly, in~\cite{hopkins2017cde}, it is shown that $\sum_{p\in P}\Tout p \tequiv \const$ for all \emph{shifted balanced} $P$. There are many shifted balanced shapes beyond the shifted staircase.

We have focused exclusively on the handful of posets that have really good rowmotion behavior, in the sense of finite order of piecewise-linear and birational rowmotion. Essentially the only known, interesting such examples are minuscule posets and root posets of coincidental type; see~\cite[\S13]{grinberg2015birational2}.

We should also note that in the $q$-ified world, there are many fewer examples. Apparently, the only Young diagram shapes with $\sum_{p\in P}\Tout p \qtequiv \const(q)$ are rectangles, and the only shifted shapes with $\sum_{p\in P}\Tout p \qtequiv \const(q)$ are shifted staircases. We have checked this by computer for shapes with at most $20$ boxes.

\subsection{Description of the toggleability spaces}

As we have explained several times now, our interest has been in statistics $\stat{f}\in\Span(\{1\}\cup\{\T{p}\colon p\in P\})$ that also belong either to $\Span\{\Tout{p}\colon p \in P\}$ or to $\Span\{\oii{p}\colon p \in P\}$. Let us use the shorter notations $\{f\tequiv \const\} \coloneqq \Span(\{1\}\cup\{\T{p}\colon p\in P\})$ and $V_A \coloneqq \Span\{\Tout{p}\colon p \in P\}$ and $V_I \coloneqq \Span\{\oii{p}\colon p \in P\}$. We informally refer to $\{\stat{f}\tequiv \const\} \cap V_A$ and $\{\stat{f}\tequiv \const\} \cap V_I$ as \dfn{toggleability spaces}.

In \cref{tab:vect_spaces}, we see conjectural descriptions of the toggleability spaces for the four main posets we have considered. We also see conjectural descriptions of the $q$-analogue spaces $\{\stat{f}\qtequiv \const(q)\} \cap V_A$ and $\{\stat{f}\qtequiv \const(q)\} \cap V_I$, where $\{\stat{f}\qtequiv \const(q)\}\coloneqq \{\stat{f}\colon \J(P)\to \RR\colon \stat{f}\qtequiv c(q) \textrm{ for some } c(q)\in\RR(q)\}$.

\begin{table}
\begin{adjustwidth}{-0.99in}{-0.99in}
\centering
\renewcommand{\arraystretch}{1.5}
\begin{tabular}{c | c | c | c | c}
Poset $P$ & $\{\stat{f}\tequiv \const\} \cap V_A$ & $\{\stat{f}\tequiv \const\} \cap V_I$ & $\{\stat{f}\qtequiv \const(q)\}  \cap V_A$ &  $\{\stat{f}\qtequiv \const(q)\}  \cap V_I$ \\ \hline
$\rect{a}{b}$ & \parbox{1.35in}{ \vspace{.5\baselineskip} {\bf dimension:} $a+b-1$ \\ {\bf basis:} \\$\{\sum_{j} \Tout{i,j}\colon i\in [a]\}$\\$\cup \{\sum_{i} \Tout{i,j}\colon j \in [b]\}$\\ minus any element } & \parbox{1.5in}{ {\bf dimension}: $a+b-1$ \\ {\bf basis}: \\ $\{\sum_{j-i=k} \oii{i,j}\colon$ \\$k=-a+1, \ldots, b-1\}$ } & \parbox{1.35in}{  \vspace{.5\baselineskip} {\bf dimension}: $a+b-1$ \\ {\bf basis}: \\$\{\sum_{j} \Tout{i,j}\colon i\in [a]\}$ \\ $\cup \{\sum_{i} \Tout{i,j}\colon j \in [b]\}$ \\ minus any element } &  \parbox{1.25in}{ {\bf dimension}: $2$ \\ {\bf basis}: \\$\{\oii{a,1} = \sum_{j} \Tout{a,j},$\\ $ \oii{1,b} = \sum_{i} \Tout{i,b}\}$ } \\ \hline
$\sstair{n}$ & \parbox{1.35in}{ \vspace{.5\baselineskip} {\bf dimension}: $2n-1$ \\ {\bf basis}: \\ $\{\rrook{i,i}\colon i\in[n]\} \cup$ \\ $\{\rrook{i,i+1}\colon i \in [n-1]\}$ \\ with $\rrook{i,j}$ as in~\eqref{eqn:sstair_red_rook}} & \parbox{1.55in}{  {\bf dimension}: $2n-1$ \\ {\bf basis}: \\ $\{\sum_{j-i=k} \oii{i,j}\colon$ \\$k=0,\ldots,n-1\} \cup$ \\$\{\textrm{$n-1$ more functions}\}$}& \parbox{1.5in}{ {\bf dimension}: $n+1$ \\ {\bf basis}: \\ $\{\sum_{j\leq i}\Tout{j,i} + \sum_{i < j}\Tout{i,j}\colon$ \\ $ i \in [n]\} \cup \{\sum_{i} \Tout{i,i}\}$ } &  \parbox{1.25in}{ {\bf dimension}: 2 \\ {\bf basis}: \\ $\{\oii{1,n} = \sum_{i} \Tout{i,n},$ \\$ \sum_{i} \oii{i,i} - \sum_{i} \oii{i,i+1} $ \\ $= \sum_{i} \Tout{i,i}\}$} \\ \hline
$\arootp{n}$ &   \parbox{1.35in}{ \vspace{.5\baselineskip} {\bf dimension}: $n$ \\ {\bf basis}: \\ $\{\rrook{i}\colon i\in[n]\}$ with $\rrook{i}$ as in~\eqref{eqn:a_red_rook}} & \parbox{1.5in}{\vspace{.5\baselineskip} {\bf dimension}: $n$ \\ {\bf basis}: \\ $\{2\cdot\sum_{j-i=k} \oii{i,j}$ \\$  - \sum_{j-i=k-1} \oii{i,j}$ \\ $-\sum_{j-i=k+1} \oii{i,j}\colon$\\$ k=(n-1), (n-3),$\\$ \ldots,-(n-3),-(n-1)\}$} &\parbox{1.5in}{{\bf dimension}: 1 \\ {\bf basis}: \\ $\{\frac{1}{2}(\rrook{1}-\rrook{2}+\cdots \pm \rrook{n})\}$ with $\rrook{i}$ as in~\eqref{eqn:a_red_rook} } &\parbox{1.25in}{{\bf dimension}: $0$} \\ \hline
$\brootp{n}$ & \parbox{1.35in}{ {\bf dimension}: $2n-1$ \\ {\bf basis}: \\ $\{\rrook{i}\colon i=1,\ldots,n\} \cup$ \\ $\{\rvarrook{i}\colon i = 2,\ldots,n\}$ \\ with $\rrook{i}$ and $\rvarrook{i}$ as in~\eqref{eqn:b_red_rook} and~\eqref{eqn:b_red_var_rook}} & \parbox{1.55in}{ \vspace{.5\baselineskip} {\bf dimension}: $2n-1$ \\ {\bf basis}: \\  $\{2\cdot\sum_{j-i=k} \oii{i,j}$ \\$  - \sum_{j-i=k-1} \oii{i,j}$ \\ $-\sum_{j-i=k+1} \oii{i,j}\colon$\\$ k=2,4,\ldots,2n-2\} \cup$ \\ $\{\sum_{i} \oii{i,i}-\sum_{i} \oii{i,i+1}\}\cup$ \\$\{\textrm{$n-1$ more functions}\}$} &\parbox{1.5in}{{\bf dimension}: 2 \\ {\bf basis}: \\ $\{\sum_{i} \Tout{i,i}\}\cup$\\ $\{\rvarrook{1}-\cdots \mp \rvarrook{n-1} \pm \frac{1}{2}\rvarrook{n}\}$ with $\rvarrook{i}$ as in~\eqref{eqn:b_red_var_rook} } & \parbox{1.25in}{{\bf dimension}: $1$ \\ {\bf basis}: \\ $\{\sum_{i} \oii{i,i}-\sum_{i} \oii{i,i+1}$ \\ $=\sum_{i} \Tout{i,i}\}$}  \\ \hline
\end{tabular}
\medskip
\end{adjustwidth}
\caption{Conjectural descriptions of the toggleability spaces for our posets of interest.} \label{tab:vect_spaces}
\end{table}

We emphasize that the descriptions of these various toggleability spaces are conjectural (found via computer experimentation for specific posets belonging to the infinite family in question). For an individual poset $P$, numerical linear algebra tells us when a given collection of functions spans the whole toggleability space; for an infinite collection of posets $P$, this method is unavailable.

Looking at \cref{tab:vect_spaces}, one notices that we have apparently already found all the functions in the toggleability spaces for most of these spaces. But there are some spaces where we are ``missing'' functions: for example, with $P=\sstair{n}$, there are $n-1$ additional, linearly independent statistics in $\{\stat{f}\tequiv \const\}  \cap V_A$ we do not yet know.\footnote{However, for \emph{one} more such statistic $\stat{f}$, see \cref{subsec:more_min} below.} We also remark that there are of course other nice bases for some of these spaces than the ones listed in~\cref{tab:vect_spaces}. For example, suppose $P=\rect{a}{b}$, and let $B\subseteq P$ be any subset of size $a+b-1$ which does not contain four boxes of the form $(i_1,j_1), (i_1,j_2), (i_2,j_1), (i_2,j_2)$; then another basis of $\{\stat{f} \tequiv \const\}  \cap V_I$ is $\{\rrook{i,j}\colon (i,j)\in B\}$, where $\rrook{i,j}$ is as in~\eqref{eqn:rect_red_rook}.

There are other natural ``vector space''-based approaches to homomesy. For instance, one could directly consider the space of rowmotion homomesies that belong to the span of some set of basic observable functions like $\{\Tout{p}\colon p \in P\}$ or $\{\oii{p}\colon p \in P\}$. Experimentation shows, however, that such spaces of homomesies can be somewhat unwieldy. The toggleability spaces appear nicer: for instance, they often possess bases whose elements all have nonnegative coefficients (when expressed in the observables), and sometimes even $\{0,1\}$-coefficients.

Another vector space approach to homomesy is proposed in~\cite{proppspectrum}. There, Propp considers the space spanned by the functions $\oii{p} \circ \rowm^i$ for $p \in P$ and $i\geq 0$. Propp's space is also useful for proving rowmotion homomesy results because showing that a statistic $\stat{f}$ can be written in the form $\stat{g}-\stat{g}\circ \rowm$ implies it is homomesic under rowmotion by a telescoping argument. Propp's space neither contains nor is contained in our toggleability spaces.\footnote{Leaving aside the fact that Propp extends his original vector space to make it closed under the action of rowmotion, Propp considers functions of the form $\stat{g}-\stat{g}\circ \rowm$ where $\stat{g}$ is spanned by the statistics $\oii{i,j}$. It is not hard to see that $\T{p}$ is not of this form. However, if $\stat{g} = -\Tout{p}$, then $\T{p} = \stat{g}-\stat{g}\circ \rowm$. 
So the present paper does show the importance of coboundaries -- just not the ones Propp looked at.} 
A defect of the framework of~\cite{proppspectrum} with respect to the present paper (from a practical if not a theoretical point of view)
is that, in the framework of~\cite{proppspectrum}, a statistic $\stat{f}$ that can be written in the form $\stat{g}-\stat{g}\circ \rowm$ can be so written in infinitely many ways. Thus, it is often unclear how to make these choices of $\stat{g}$ systematically for an infinite family of posets. On the other hand, \cref{thm:linearindependence} shows there are no choices when writing $\stat{f}$ as a combination of signed toggleability statistics.

One final word about~\cref{tab:vect_spaces}: based on this table, one might be tempted to wonder if we always have $\dim(\{\stat{f}\tequiv \const\} \cap V_A)=\dim(\{\stat{f}\tequiv \const\} \cap V_I)$ for all posets. But this is not true, and indeed there is in general no relation between these dimensions: we have $\dim(\{\stat{f}\tequiv \const\} \cap V_A) > \dim(\{\stat{f}\tequiv \const\} \cap V_I)$ for some posets, and $\dim(\{\stat{f}\tequiv \const\} \cap V_A) < \dim(\{\stat{f}\tequiv \const\} \cap V_I)$ for others.

\subsection{More with minuscule posets} \label{subsec:more_min}

Recall that a minuscule poset $P$ is determined by the choice of a Dynkin diagram $\Gamma$ and a minuscule node $i \in \Gamma$. In fact, the minuscule poset $P$ also comes with a surjection $\gamma\colon P\to \Gamma$, which we refer to as a \dfn{coloring} of the elements of $P$ by the nodes of the Dynkin diagram. See, e.g.,~\cite[\S3]{okada2020birational} for a description of all of the minuscule posets with their colorings.

Rush and Wang~\cite[Theorem 1.2]{rush2015homomesy} showed that for any minuscule poset $P$ and any $j \in \Gamma$, the function $I \mapsto \#(I\cap \gamma^{-1}(j))$ is homomesic under rowmotion. Observe how these statistics refine order ideal cardinality. 

We believe these color refinements of order ideal cardinality are also~$\tequiv \const$. For example, when $P=\rect{a}{b}$, which is a Type~$A_{a+b-1}$ minuscule poset, the color preimages $\gamma^{-1}(j)$ are exactly the files, and we have seen that the file refinements of order ideal cardinality are $\tequiv \const$. On the other hand, the shifted staircase $\sstair{n}$ can be realized as either a Type~$B_n$ minuscule poset or a Type~$D_{n+1}$ minuscule poset. If we view it as a Type~$B_n$ minuscule poset, the color preimages are exactly files, and again we have already seen that these are $\tequiv \const$. However, if we view it as a Type~$D_{n+1}$ minuscule poset, the main diagonal file breaks into two color preimages. In this way, for the shifted staircase, we get two statistics $\oii{1,1}+\oii{3,3}+\oii{5,5}+\cdots$ and $\oii{2,2}+\oii{4,4}+\oii{6,6}+\cdots$ that are both separately homomesic under rowmotion and should both be $\tequiv \const$.

It should be possible to show that these color refinements of order ideal cardinality are $\tequiv \const$ in a manner similar to all of our other arguments here. But we will not do that for the sake of concision.

Speaking of minuscule posets, let us also remark that in a series of papers~\cite{rush2013orbits,rush2015homomesy,rush2016cde}, Rush (with coauthors) has developed a systematic theory of toggling for minuscule posets, which has allowed him to prove results \emph{uniformly}, i.e., without recourse to the classification. It would be interesting to give uniform proofs of some of the new results concerning minuscule posets we obtained in this paper (e.g., that $\sum_{p\in P}\oii{p}\tequiv \const$ and that $\sum_{p\in P}\Tout{p}\qtequiv \const(q)$).

\subsection{Extension of the technique with antichain toggleability statistics} \label{subsec:antichain_extension}

In \cref{subsec:nonexamples}, we saw several non-examples, where the toggleability statistics technique failed to prove a known or conjectured rowmotion homomesy result. Let us explain how a certain extension of the technique might address many of these non-examples.

For $A\in \A(P)$, define the \dfn{antichain toggleability statistics} $\Tin{A}, \Tout{A}, \T{A}\colon \J(P)\to \RR$ by
\begin{align*}
\Tin{A}(I) &\coloneqq \begin{cases} 1 &\textrm{if $A\subseteq \min(P\setminus I)$}, \\
0 &\textrm{otherwise}; \end{cases}\\
\Tout{A}(I) &\coloneqq \begin{cases} 1 &\textrm{if $A\subseteq \max(I)$}, \\
0 &\textrm{otherwise}; \end{cases}\\
\T{A}(I) &\coloneqq \Tin{A}(I) - \Tout{A}(I),
\end{align*}
for all $I\in \J(P)$. The $\Tin{A}$ and $\Tout{A}$ measure whether we can toggle the entire antichain $A$ into or out of an order ideal. Note that, equivalently, we have
\[\Tin{A} = \prod_{p\in A} \Tin{p} \qquad \text{and}\qquad \Tout{A} = \prod_{p\in A} \Tout{p}\]
as pointwise products.

It is again easy to see that $A\subseteq \min(P\setminus I)$ if and only if $A\subseteq \max(\rowm(I))$ for all $A \in \A(P)$ and $I \in \J(P)$, and hence that Striker's observation persists to these antichain toggleability statistics:

\begin{lemma} \label{lem:anti_striker}
For every $A\in \A(P)$, $\T{A}$ is $0$-mesic under rowmotion.
\end{lemma}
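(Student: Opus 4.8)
The plan is to mimic exactly the proof of \cref{lem:striker}, but with the single-element toggleability statistics replaced by their antichain analogues. The crucial combinatorial input, which is the direct generalization of the identity $\min(P\setminus I) = \max(\rowm(I))$ used before, is the observation that for any antichain $A\in\A(P)$ and any order ideal $I\in\J(P)$ we have
\[ A \subseteq \min(P\setminus I) \iff A \subseteq \max(\rowm(I)). \]
This follows immediately from the fact that $\min(P\setminus I) = \max(\rowm(I))$ as sets (which is part of the very definition of rowmotion, and was already used in the proof of \cref{lem:striker}): a subset of $\min(P\setminus I)$ is the same thing as a subset of $\max(\rowm(I))$. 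Hence $\Tin{A}(I) = \Tout{A}(\rowm(I))$ for all $I\in\J(P)$.

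With this pointwise identity in hand, the proof is a verbatim copy of the telescoping/reindexing argument in the proof of \cref{lem:striker}. Let $O\subseteq \J(P)$ be any $\rowm$-orbit. Then
\[ \sum_{I\in O} \T{A}(I) = \sum_{I\in O}\Tin{A}(I) - \sum_{I\in O}\Tout{A}(I) = \sum_{I\in O}\Tin{A}(I) - \sum_{I\in O}\Tout{A}(\rowm(I)) = \sum_{I\in O}\bigl(\Tin{A}(I)-\Tin{A}(I)\bigr) = 0, \]
where in the third equality we use that $O$ is $\rowm$-closed to substitute $I\mapsto \rowm(I)$ in the second sum (so it ranges over the same set $O$), and in the fourth equality we apply $\Tin{A}(I)=\Tout{A}(\rowm(I))$. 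Since every orbit sum of $\T{A}$ is $0$, the orbit average is $0$, i.e., $\T{A}$ is $0$-mesic under rowmotion.

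There is essentially no obstacle here: the only thing to verify carefully is the boxed biconditional above, and that reduces instantly to the known set equality $\min(P\setminus I)=\max(\rowm(I))$. (One could alternatively phrase it via the product formulas $\Tin{A}=\prod_{p\in A}\Tin{p}$ and $\Tout{A}=\prod_{p\in A}\Tout{p}$ together with $\Tin{p}(I)=\Tout{p}(\rowm(I))$, but the direct argument is cleaner.) I would also remark, as the paper does for \cref{lem:striker_permuted}, that the same argument extends without difficulty to the rank-permuted variants $\rowm_\sigma$ and, via tropicalization/specialization, to the piecewise-linear and birational levels, since in each of those settings the relation $\Tin{\bullet}(I)=\Tout{\bullet}(\rowm(I))$ (or its multiplicative analogue) was the only ingredient used — though stating and proving those extensions is not needed for the lemma as stated.
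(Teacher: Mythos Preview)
Your core argument is correct and is exactly the one the paper indicates: the sentence immediately preceding the lemma states that $A\subseteq \min(P\setminus I)$ iff $A\subseteq \max(\rowm(I))$, and the lemma is then asserted as an immediate consequence of Striker's observation, just as you have written it out.

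One caveat about your closing parenthetical: your claim that the argument ``extends without difficulty \ldots\ to the piecewise-linear and birational levels'' is not supported, and in fact the paper explicitly says the opposite a few paragraphs later (``we do not know the proper way to think about the antichain toggleability statistics at the PL and birational levels''). The problem is that $\Tin{A}=\prod_{p\in A}\Tin{p}$ is a pointwise \emph{product} of indicator functions, and there is no obvious PL or birational analogue of this operation compatible with the existing framework. Since you already note that these remarks are not needed for the lemma as stated, simply drop that sentence.
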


In fact, when working with antichain toggleability statistics, we actually have a converse to Striker's observation:

\begin{thm} \label{thm:anti_span}
The space of functions $\J(P)\to \RR$ that are $0$-mesic under rowmotion is equal to $\Span\{\T{A}\colon A \in \A(P)\}$.
\end{thm}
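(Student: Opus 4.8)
The inclusion $\Span\{\T{A}\colon A\in\A(P)\}\subseteq \{\stat{f}\colon \J(P)\to\RR \text{ that are }0\text{-mesic under }\rowm\}$ is immediate from \cref{lem:anti_striker} together with the fact that linear combinations of $0$-mesic functions are $0$-mesic. So the real content is the reverse inclusion, and I would prove it by a dimension count. The plan is to exhibit an explicit spanning set and then show that the $\T{A}$ span a space of exactly the right dimension. The key observation is that the indicator functions $\Tout{A}$ (equivalently, the ``is $A\subseteq \max(I)$?'' functions) as $A$ ranges over all antichains of $P$ (including the empty antichain, which gives the constant $1$) form a \emph{basis} for the entire space $\RR^{\J(P)}$ of all functions $\J(P)\to\RR$: indeed, via the bijection $I\mapsto \max(I)$ between $\J(P)$ and $\A(P)$, the statistic $\Tout{A}$ becomes the indicator of the order filter $\{B\in\A(P)\colon B\supseteq A\}$ in the containment order on antichains, and by Möbius inversion (triangularity with respect to containment of antichains, ordered by size) these indicators are linearly independent and span $\RR^{\A(P)}=\RR^{\J(P)}$. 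Hence $\dim\RR^{\J(P)}=\#\A(P)=\#\J(P)$, which we of course already knew, but more importantly the $\Tout{A}$ give a \emph{canonical basis} indexed by antichains.

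Now I would analyze the linear map $\Phi\colon \RR^{\A(P)}\to \RR^{\J(P)}$ sending the basis vector $e_A$ to $\T{A}=\Tin{A}-\Tout{A}$. Writing everything in the $\Tout{B}$-basis on the target: $\Tout{A}=\Tout{A}$ is a basis vector, while $\Tin{A}(I)=1$ iff $A\subseteq \min(P\setminus I)=\max(\rowm(I))$, i.e.\ $\Tin{A}=\Tout{A}\circ\rowm^{-1}$ as functions (equivalently $\Tin{A}=\Tout{\pan^{-1}(\cdot)}$ in the antichain picture, using antichain rowmotion $\pan$ from \cref{subsec:independence}). So in the $\Tout{B}$-basis, $\Phi$ is $P_{\pan} - \mathrm{Id}$, where $P_{\pan}$ is the permutation matrix of the permutation of $\A(P)$ induced by antichain rowmotion $\pan$. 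The image of a permutation-minus-identity matrix is well understood: its rank is $\#\A(P)$ minus the number of cycles of $\pan$, i.e.\ minus the number of $\rowm$-orbits; equivalently, the kernel of the \emph{transpose} $P_{\pan}^{-1}-\mathrm{Id}$ consists of the functions constant on orbits, which has dimension equal to the number of orbits. Therefore $\dim\Span\{\T{A}\}=\#\J(P)-(\text{number of }\rowm\text{-orbits})$. On the other hand, the space of $0$-mesic functions is exactly the set of $\stat{f}$ whose orbit-sums all vanish; this is the kernel of the linear map $\RR^{\J(P)}\to\RR^{\#\text{orbits}}$ that records orbit-sums, which is surjective, so the $0$-mesic space also has dimension $\#\J(P)-(\text{number of }\rowm\text{-orbits})$. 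Since one space contains the other and they have equal dimension, they coincide.

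The one place requiring genuine care — and what I expect to be the main obstacle to write cleanly — is the rank computation for $P_{\pan}-\mathrm{Id}$ acting on the $\Tout{B}$-basis. One must be careful that the $\Tout{B}$ really do form a basis (the Möbius/triangularity argument above), that $\Tin{A}$ is correctly identified as $\Tout{B}$ for $B=\pan^{-1}(A)$ — here one uses $\pan(\max(I))=\max(\rowm(I))$, so $\Tin{A}(I)=1\iff A=\max(\rowm(I))$... no: $A\subseteq \max(\rowm(I))$, not equality, so actually $\Tin{A}=\sum_{B\supseteq A}(\text{indicator }\max(\rowm(I))=B)$, and one must pass through the same Möbius bookkeeping as for $\Tout{A}$; cleanest is to work with the ``exactly'' indicators $\mathds{1}_{\max(I)=B}$ as the basis, note $\Tout{A}=\sum_{B\supseteq A}\mathds{1}_{\max(I)=B}$ and $\Tin{A}=\sum_{B\supseteq A}\mathds{1}_{\max(\rowm(I))=B}=\sum_{B\supseteq A}(\mathds{1}_{\max(I)=B})\circ\rowm$, and then change basis. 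After that, the computation that $\mathrm{rank}(P_\sigma-\mathrm{Id})=n-(\#\text{cycles of }\sigma)$ for a permutation $\sigma$ of an $n$-element set is standard linear algebra (each cycle of length $\ell$ contributes a block whose rank is $\ell-1$). Assembling these pieces gives the equality of dimensions and hence the theorem.
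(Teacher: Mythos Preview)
Your proposal is correct and is essentially the same argument as the paper's, just phrased in terms of the image rather than the kernel. The paper defines $Q(\pi)=\sum_{A}\pi(A)\T{A}$ and shows that $\pi\in\ker Q$ iff the zeta transform $L(\pi)(A)=\sum_{A'\subseteq A}\pi(A')$ is invariant under antichain rowmotion; since $L$ is invertible (lower-triangular), $\dim\ker Q$ equals the number of orbits $m$, giving $\dim\Span\{\T{A}\}=\#\A(P)-m$. Your version factors the matrix of $\Phi$ (in the exact indicator basis $\{\mathds{1}_{\max(I)=C}\}$) as $(P_{\pan}^{T}-\mathrm{Id})\cdot Z$ with $Z$ the same zeta matrix, and reads off the rank directly; the two are dual formulations of one computation. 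Your mid-proof self-correction (that $\Tin{A}$ is not literally $\Tout{A}\circ\rowm^{-1}$ because of $\subseteq$ versus $=$, so one should pass to the exact indicators and carry the zeta matrix along) is exactly the point, and matches the role of $L$ in the paper's proof.
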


\begin{proof}
Let $H_0$ be the space of functions that are $0$-mesic under rowmotion. We know by \cref{lem:anti_striker} that $\Span\{\T{A}\colon A \in \A(P)\}\subseteq H_0$, so it suffices to prove that $\dim\Span\{\T{A}\colon A \in \A(P)\}$ is greater than or equal to $\dim H_0$. Let $O_1,\ldots, O_m$ be the orbits of rowmotion on $\J(P)$. By definition, $H_0$ is the space of functions $\stat{f}\colon \J(P)\to\RR$ satisfying the linear equations $\sum_{x\in O_j}\stat{f}(x)=0$ for $1\leq j\leq m$. Hence, $\dim H_0=\#\J(P)-m=\#\A(P)-m$. Let $\A(P)^{\RR}$ be the set of real-valued functions on $\A(P)$, and let $Q\colon\A(P)^{\RR}\to \Span\{\T{A}\colon A \in \A(P)\}$ be the linear map defined by $Q(\pi)=\sum_{A\in\A(P)}\pi(A)\T{A}$. We have $\dim\Span\{\T{A}\colon A \in \A(P)\} = \dim\A(P)^{\RR}-\dim\ker Q=\#\A(P)-\dim\ker Q$. We will show that $\dim\ker Q\leq m$, which will complete the proof. 

Suppose $\pi\in\ker Q$. Recall the definition of antichain rowmotion $\pan\colon \A(P)\to\A(P)$ from the proof of \cref{thm:linearindependence}. Let $A\in\A(P)$, and let $I\in\J(P)$ be the order ideal generated by $A$ (so $\max(I)=A$). Then $0=Q(\pi)(I)=\sum_{A'\in\A(P)}\pi(A')\T{A'}(I)$. Because $A'\subseteq \min(P\setminus I)$ if and only if~$A'\subseteq \max(\rowm(I))$, this equation is equivalent to $\sum_{A'\subseteq A} \pi(A')=\sum_{A'\subseteq \pan(A)} \pi(A')$. 

Consider the linear transformation $L\colon\A(P)^{\RR}\to \A(P)^{\RR}$ given by $L(\pi)(A)=\sum_{A'\subseteq A}\pi(A)$. The map $L$ is easily seen to be invertible: for example, it is represented by a lower-triangular matrix if we order the natural basis of $\A(P)^{\RR}$ in a way that respects inclusion of antichains. Moreover, the previous paragraph tells us that $\pi\in\ker Q$ if and only if $L(\pi)$ is invariant under antichain rowmotion. This shows that the dimension of $\ker Q$ is at most the dimension of the space of functions $\A(P)\to\RR$ that are invariant under antichain rowmotion. But this latter space has dimension equal to the number of orbits of antichain rowmotion, which is $m$. 
\end{proof}

So if we have a function $\stat{f}\colon\J(P)\to\RR$ that we think is homomesic under rowmotion, then one way we could prove it is by writing $\stat{f}=c+\sum_{A\in \A(P)} c_A\T{A}$ for $c, c_A \in \RR$. Moreover, \cref{thm:anti_span} says this approach will ``always work.'' However, in practice, it is much harder to write a function as a linear combination of the $\T{A}$ than as a linear combination of the $\T{p}$ because there are many more coefficients $c_A$ to deal with. Indeed, these coefficients are not even uniquely determined: there are many linear dependencies among the $\T{A}$, which is not the case with the $\T{p}$, as we know from \cref{thm:linearindependence}.

Thus, even though this extension (using antichain toggleability statistics) of our technique for proving rowmotion homomesy ``always works,'' it is not straightforward to apply. Furthermore, we do not know the proper way to think about the antichain toggleability statistics at the PL and birational levels, so the approach sketched in this subsection does not say anything about PL or birational homomesies.

\subsection{More about \texorpdfstring{$q$}{q}-rowmotion}

We think $q$-rowmotion deserves to be studied further. For example, can anything precise be said about the order or orbit structure of $q$-rowmotion? (As mentioned, weak statements about orbit sizes can sometimes be extracted from homomesy results, like the ones we proved for $q$-rowmotion in this paper.)  Can we make sense of $q$-rowmotion with $q=-1$?

Also, in \cref{sec:pl_birational,sec:q}, we have seen two different kinds of generalizations of rowmotion: piecewise-linear and birational rowmotion; and $q$-rowmotion. Is it possible to join these two different generalizations? Let us be a little more specific. For fixed $m\geq 1$, order-preserving maps (or ``labelings'') $P\to \{0,1,\ldots,m\}$ are in bijection with the rational points in the order polytope $\mathcal{O}(P)$ with denominator dividing $m$, and for this reason carry an action of piecewise-linear rowmotion. Could we consider such labelings with several different flavors of $0$'s, $1$'s, ..., $m$'s and define a $q$-analogue of rowmotion on these flavorful labelings? Does this $q$-PL rowmotion have any nice properties?

\bibliography{bibliography}{}
\bibliographystyle{abbrv}

\end{document}